\documentclass[reqno]{amsart}
\usepackage{algorithm}
\usepackage{algorithmic}
\usepackage{bm}
\usepackage{amsrefs}
\usepackage{appendix}
\usepackage[shortlabels]{enumitem}
\usepackage{hyperref}
\usepackage[capitalize]{cleveref}
\usepackage{tikz-cd}
\usepackage{tikz-network}
\usepackage[margin=1in]{geometry}
\usepackage{xcolor}
\usepackage{siunitx}

\DeclareMathOperator{\tr}{tr}
\DeclareMathOperator{\ran}{ran}

\DeclareMathOperator{\spn}{span}
\DeclareMathOperator{\Id}{Id}
\DeclareMathOperator{\divr}{div}
\DeclareMathOperator{\supp}{supp}

\newtheorem{thm}{Theorem}
\newtheorem{cor}[thm]{Corollary}

\newtheorem{lem}[thm]{Lemma}

\theoremstyle{definition}
\newtheorem{defn}[thm]{Definition}
\newtheorem*{example*}{Example}

\theoremstyle{remark}
\newtheorem{rk}[thm]{Remark}

\AddToHook{env/thm/begin}{\crefalias{thm}{thm}}
\crefname{thm}{Theorem}{Theorems}
\Crefname{thm}{Theorem}{Theorems}

\AddToHook{env/lem/begin}{\crefalias{thm}{lem}}
\crefname{lem}{Lemma}{Lemmas}
\Crefname{lem}{Lemma}{Lemmas}

\AddToHook{env/prop/begin}{\crefalias{thm}{prop}}
\crefname{prop}{Proposition}{Propositions}
\Crefname{prop}{Proposition}{Propositions}

\AddToHook{env/cor/begin}{\crefalias{thm}{cor}}
\crefname{cor}{Corollary}{Corollaries}
\Crefname{cor}{Corollary}{Corollaries}

\AddToHook{env/rk/begin}{\crefalias{thm}{rk}}
\crefname{rk}{Remark}{Remarks}
\Crefname{rk}{Remark}{Remarks}

\crefname{prty}{property}{properties}
\Crefname{prty}{Property}{Properties}
\crefname{alg}{algorithm}{algorithms}
\Crefname{alg}{Algorithm}{Algorithms}

\AddToHook{env/appendices/begin}{\crefalias{section}{appendix}}
\AddToHook{env/appendices/begin}{\crefalias{subsection}{appendix}}

\tikzset{
  shaded/.style = {fill=red!10!blue!20!gray!30!white},
  unshaded/.style = {fill=white},
  shadedw/.style = {fill=white},
  shadedb/.style = {fill=blue!120!gray!30!white},
  shadedr/.style = {fill=red!120!gray!30!white},
  shadedg/.style = {fill=green!120!gray!30!white},
  shadedy/.style = {fill=yellow!120!gray!30!white},
  Tcirc/.style = {circle, draw, thick, fill=white, opaque},
  Tellip/.style = {ellipse, draw, thick, fill=white, opaque},
  Tbox/.style = {rounded corners,rectangle, draw, thick, fill=vertexfill, opaque},
    align/.style = {scale=.7,baseline},
    align.7/.style = {scale=.7, baseline},
    align1/.style = {scale=1.05, baseline},
    align1.5/.style = {scale=1.5,baseline},
  every picture/.style=semithick
}

\usetikzlibrary{
  hobby,
  decorations.pathreplacing,
  decorations.pathmorphing,
  shapes.geometric,
  calc
}

\tikzset{
  knot diagram/every strand/.append style={black, thick},
  stock/.style={consider self intersections=true, end tolerance=1pt, clip width=5pt, clip radius=5pt},
  stockthick/.style={consider self intersections=true, end tolerance=1pt, clip width=3pt, clip radius=20pt},
  shaded/.style = {fill=red!10!blue!20!gray!30!white},
  unshaded/.style = {fill=white},
}

\title{Tensor network approximation of Koopman operators}
\author[D.\ Giannakis et al.]{Dimitrios Giannakis}
\address{Department of Mathematics, Dartmouth College, Hanover, NH 03755, USA.}
\email{dimitrios.giannakis@dartmouth.edu}
\author[]{Mohammad Javad Latifi Jebelli}
\address{Department of Mathematics, Dartmouth College, Hanover, NH 03755, USA.}
\email{mohammad.javad.latifi.jebelli@dartmouth.edu}
\author[]{Michael Montgomery}
\address{Department of Mathematics, Dartmouth College, Hanover, NH 03755, USA.}
\email{michael.r.montgomery@dartmouth.edu}
\author[]{Philipp Pfeffer}
\address{Department of Mechanical Engineering, Technische Universit\"at Ilmenau, D-98684 Ilmenau, Germany.}
\email{philipp.pfeffer@tu-ilmenau.de}
\author[]{J\"org Schumacher}
\address{Department of Mechanical Engineering, Technische Universit\"at Ilmenau, D-98684 Ilmenau, Germany.}
\email{joerg.schumacher@tu-ilmenau.de}
\author[]{Joanna Slawinska}
\address{Department of Mathematics, Dartmouth College, Hanover, NH 03755, USA.}
\email{joanna.m.slawinska@dartmouth.edu}

\subjclass[2020]{47A35, 37Nxx, 65Pxx, 47B32, 47A80}

\begin{document}

\date{}

\begin{abstract}
    We propose a tensor network framework for approximating the evolution of observables of measure-preserving ergodic systems.
    Our approach is based on a spectrally-convergent approximation of the skew-adjoint Koopman generator by a diagonalizable, skew-adjoint operator $W_\tau$ that acts on a reproducing kernel Hilbert space $\mathcal H_\tau$ with coalgebra structure and Banach algebra structure under the pointwise product of functions.
    Leveraging this structure, we lift the unitary evolution operators $e^{t W_\tau}$ (which can be thought of as regularized Koopman operators) to a unitary evolution group on the Fock space $F(\mathcal H_\tau)$ generated by $\mathcal H_\tau$ that acts multiplicatively with respect to the tensor product.
    Our scheme also employs a representation of classical observables ($L^\infty$ functions of the state) by quantum observables (self-adjoint operators) acting on the Fock space, and a representation of probability densities in $L^1$ by quantum states.
    Combining these constructions leads to an approximation of the Koopman evolution of observables that is representable as evaluation of a tree tensor network built on a tensor product subspace $\mathcal H_\tau^{\otimes n} \subset F(\mathcal H_\tau)$ of arbitrarily high grading $n \in \mathbb N$.
    A key feature of this quantum-inspired approximation is that it captures information from a tensor product space of dimension $(2d+1)^n$, generated from a collection of $2d + 1$ eigenfunctions of $W_\tau$.
    Furthermore, the approximation is positivity preserving.
    The paper contains a theoretical convergence analysis of the method and numerical applications to two dynamical systems on the 2-torus: an ergodic torus rotation as an example with pure point Koopman spectrum and a Stepanoff flow as an example with topological weak mixing.
    The examples demonstrate improved consistency and prediction skill over conventional subspace projection methods, while also highlighting challenges stemming from numerical discretization of high-dimensional tensor product spaces.
\end{abstract}

\maketitle

\section{Introduction}
\label{sec:intro}

In recent years, there has been considerable interest in the development of operator-theoretic techniques for computational analysis and modeling of dynamical systems.
These methods leverage the linearity of the induced action of (nonlinear) state space dynamics on linear spaces of observables or measures, implemented through the Koopman and transfer operators, respectively, to carry out tasks such as mode decomposition, forecasting, uncertainty quantification, and control using linear-operator techniques.
From an analytical standpoint, the operator-theoretic approach to dynamics dates back to classical work of Koopman and von Neumann from the 1930s \cites{Koopman31,KoopmanVonNeumann32}, and has since become central to modern ergodic theory \cites{Baladi00,EisnerEtAl15}.
Starting from the late 1990s \cites{Froyland97,DellnitzJunge99,MezicBanaszuk99}, there has been a surge of research in this area from the perspective of data-driven computational techniques.
Popular examples include set-theoretic methods, \cite{DellnitzEtAl01}, Fourier analytical methods \cite{Mezic05}, and subspace projection methods; see, e.g., \cites{BruntonEtAl22,MauroyEtAl20,OttoRowley21,Colbrook24} for comprehensive surveys.

Among subspace projection methods, the dynamic mode decomposition (DMD) \cites{SchmidSesterhenn08,Schmid10,RowleyEtAl09} and the extended DMD (EDMD) \cite{WilliamsEtAl15} are some of the most popular approaches that build finite-rank approximations of the Koopman operator on spaces spanned by dictionaries of linear and possibly nonlinear observables, respectively.
These methods are related to linear inverse modeling  techniques \cite{Penland89} introduced in the 1980s, and have since been modified and generalized in various ways; e.g., by combining them with delay-embedding approaches \cites{ArbabiMezic17,BruntonEtAl17,DasGiannakis19}, methods for dictionary learning \cite{ConstanteAmoresEtAl24}, and Laplace transform techniques \cite{SusukiEtAl21}.
Extensions of (E)DMD that impose physics-informed constraints such as unitarity of the Koopman/transfer operator under measure-preserving dynamics have also been developed \cites{Colbrook23,BaddooEtAl23}.
Other approaches have focused on approximating the generator of Koopman/transfer semigroups in continuous time \cites{FroylandEtAl13b,BerryEtAl15,GiannakisEtAl15,Giannakis19,GiannakisValva24,GiannakisValva25}, also known as the Liouville operator \cites{RosenfeldEtAl19,RosenfeldEtAl22}.

While many data-driven methods perform approximation of Koopman operators in $L^p$ spaces, some approaches have explored approximations in reproducing kernel Hilbert spaces (RKHSs) \cites{Kawahara16,DasEtAl21,KlusEtAl20,KosticEtAl22} or reproducing kernel Banach spaces \cite{IkedaEtAl22}, making connections with aspects of statistical learning theory.
Yet another approach has been to build approximations of spectral measures \cites{KordaEtAl20,GovindarajanEtAl21,ColbrookTownsend24}, which allows approximation of functions of the operators via the functional calculus.
In the setting of non-autonomous dynamics, there is a rich literature on techniques based on the theory of operator cocycles \cites{Froyland13,FroylandPadberg09,GonzalezQuas14} and dynamic Laplace operators \cites{Froyland15,FroylandKoltai23} for detecting coherent sets under time-dependent dynamics.

\subsection{Connections with quantum information science}
\label{sec:intro_quantum}

Besides data-driven computational analysis and modeling, a more recent impetus for the development of operator methods for dynamical systems has been the prospect of practical quantum computing and the advances in information processing capabilities that it promises to deliver.
Since the early 2000s \cites{BenentiEtAl01,Kacewicz06,LeytonOsborne08}, efforts have been underway for designing quantum algorithms for simulation of classical systems.
This research has yielded a large body of methodological approaches \cite{BerryEtAl17,ElliottGu18,Joseph20,LloydEtAl20,KyriienkoEtAl21,KalevHen21,LiuEtAl21,GiannakisEtAl22,AndressEtAl24,JinEtAl24,StenglEtAl24,BharadwajSreenivasan25,CostaEtAl25,WuEtAl25}, along with applications in diverse domains dealing with complex systems \cites{MezzacapoEtAl15,CostaEtAl19,EngelEtAl19,BharadwajSreenivasan20,Gaitan20,DodinStartsev21,LinEtAl22,PfefferEtAl22,PfefferEtAl23,JosephEtAl23,BharadwajSreenivasan23,TenniePalmer23}.
Here, a primary challenge is that classical dynamical systems are described by generally nonlinear maps (flows) on state space, whereas quantum algorithms proceed by unitary linear operators acting on quantum states and observables.
Being firmly rooted in linear operator theory, the formulation of dynamics based on Koopman and transfer operators provides a natural bridge between classical and quantum systems that can serve as a foundation for building quantum algorithms.

In fact, the mathematical connections between ergodic and quantum theories were already explored in the foundational work of Koopman and von Neumann from the 1930s \cite{Koopman31,KoopmanVonNeumann32,VonNeumann32,VonNeumann32b}.
This work laid the foundations for a family of approaches now known as Koopman--von Neumann representations of classical and semiclassical dynamics that were developed over multiple decades \cites{DellaRiciaWiener66,WilkieBrumer97a,WilkieBrumer97b,Mauro02,BondarEtAl19,Joseph20,JosephEtAl23,StenglEtAl24,Barandes25}.
In its standard form, the Koopman--von Neumann formulation employs a mapping of the isometric evolution of the classical probability density, $p$, in $L^1$ under the transfer operator for measure-preserving dynamics to a unitary evolution of a wavefunction in $L^2$ given by the square root of $p$.
Methods in this direction include quantum algorithms that leverage Koopman--von Neumann embeddings \cites{Joseph20,GiannakisEtAl22,NovikauJoseph25} to simulate the evolution classical states and probability densities on finite-dimensional quantum systems (quantum computers).
Other works have developed ``quantum-inspired'' computational techniques that employ properties of spaces of operators in classically-implemented approximation schemes with improved structure preservation  properties (e.g., positivity-preservation) \cites{Giannakis19b,FreemanEtAl23,FreemanEtAl24} and/or computational efficiency \cite{YeLoureiro24}.

In a different context from dynamical systems, tensor networks \cites{Orus19,Banuls23} were originally introduced for calculations in statistical mechanics and many-body quantum physics \cites{Baxter68,AffleckEtAl87,KlumperEtAl93}, and have since found applications in function approximation \cites{Khoromskij11,GarciaRipoll21} and machine learning \cites{StoudenmireSchwab16,HanEtAl18} as a framework for expressing computations in high-dimensional vector spaces.
Tensor networks describe factorizations of such computations in terms of basic algebraic operations such as operator compositions and tensor products which can be carried out at a feasible computational cost when brute-force approaches would have been intractable.
Using tensor networks to express operator-theoretic computations for classical dynamical systems will be a primary focus of this work.

\subsection{Our contributions}

Leveraging mathematical connections between operator-theoretic formulations of classical dynamics and quantum theory, we introduce an approach for approximating the Koopman evolution of observables in continuous-time systems through tensor networks.
Our approach begins by a spectrally consistent approximation of the generator of the unitary Koopman group of a measure-preserving ergodic flow by skew-adjoint, diagonalizable operators $W_\tau$ with discrete spectra \cites{DasEtAl21,GiannakisValva24,GiannakisValva25}, acting on a one-parameter family of RKHSs $\mathcal H_\tau$, $\tau>0$.
In this work, we build the spaces $\mathcal H_\tau$ so as to have reproducing kernel Hilbert algebra (RKHA) structure \cites{DasGiannakis23,DasEtAl23,GiannakisMontgomery25}---this means that $\mathcal H_\tau$ admits a bounded operator $\Delta \colon \mathcal H_\tau \to \mathcal H_\tau \otimes \mathcal H_\tau$ mapping into its two-fold tensor product, whose adjoint  implements pointwise multiplication.
In other words, $\mathcal H_\tau$ is a commutative Banach algebra with respect to pointwise multiplication that has additional coalgebra structure with $\Delta$ as the comultiplication operator.
We lift the unitary group generated by $\mathcal H_\tau$ to a unitary group acting multiplicatively with respect to the tensor product on the Fock space, $F(\mathcal H_\tau)$, generated by $\mathcal H_\tau$.
The Fock space $F(\mathcal H_\tau)$ plays the role of the Hilbert space of a many-body quantum system \cite{Lehmann04} that allows us to represent (i) pointwise evaluation functionals of $\mathcal H_\tau$ as pure quantum states on $F(\mathcal H_\tau)$ with state vectors that project non-trivially to every tensor power $\mathcal H_\tau^{\otimes n} \subset F(\mathcal H_\tau)$, $n \in \mathbb N$;  (ii) classical observables in $\mathcal H_\tau$ by quantum observables (self-adjoint operators) acting on any given grading $\mathcal H_\tau^{\otimes n}$ as amplifications of multiplication operators.
See \cref{fig:dia1} for a schematic illustration of these constructions.

\begin{figure}
    \centering
    \includegraphics[draft=false, width=0.9\linewidth]{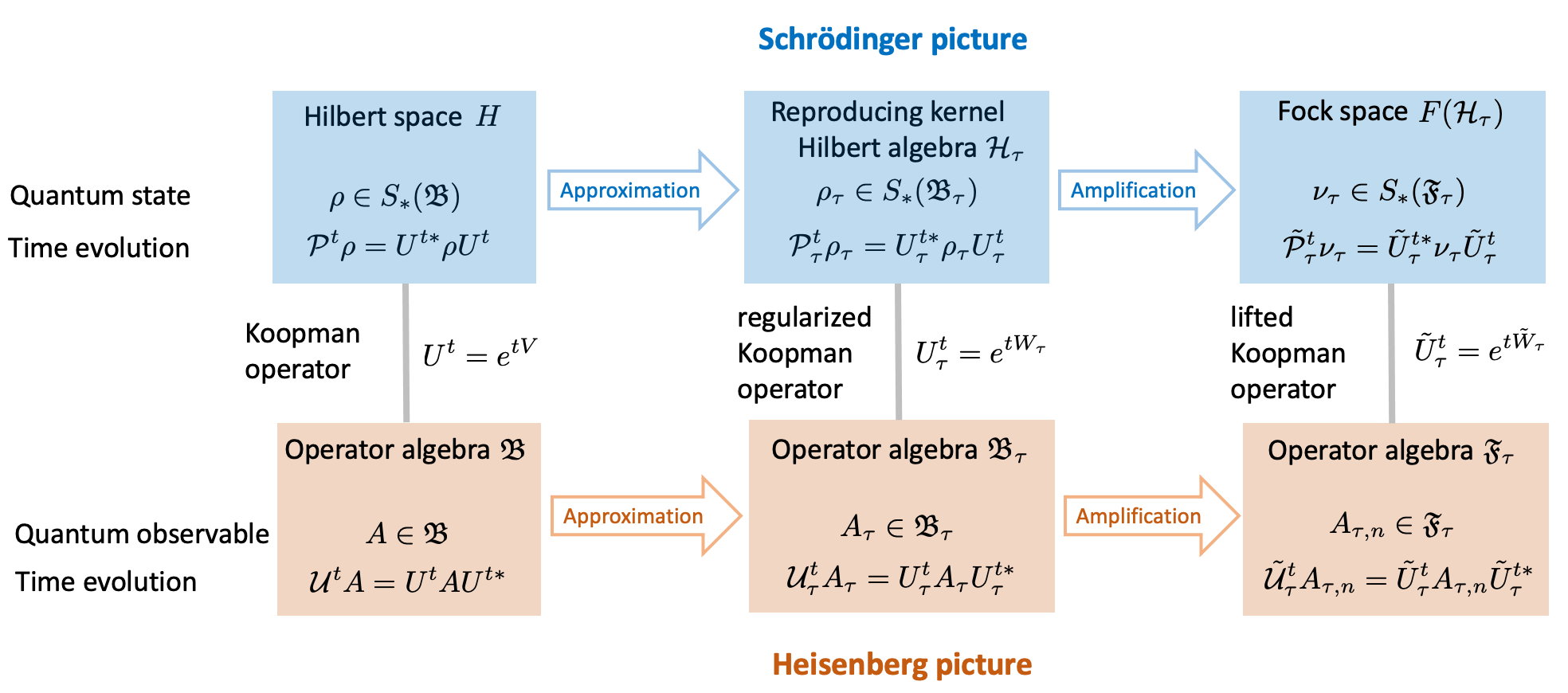}
    \caption{Evolution of quantum states (Schr\"odinger picture) and observables (Heisenberg picture) under the tensor network approximation framework described in this paper.
    Given a classical probability density $p \in S_*(\mathfrak A)$, our scheme approximates the evolution of a vector state $\rho = \langle p^{1/2}, \cdot \rangle_H p^{1/2} \in S_*(\mathfrak B)$ under the induced transfer operator $\mathcal P^t$ by a vector state $\rho_\tau = \langle \xi_\tau, \cdot \rangle_{\mathcal H_\tau} \xi_\tau$ on an RKHA $\mathcal H_\tau$ under an evolution operator $\mathcal P^t_\tau$ induced from a regularized Koopman operator $U^t_\tau$ on $\mathcal H_\tau$ with pure point spectrum.
    Similarly, the evolution of quantum observables $A \in \mathfrak B$ under the induced Koopman operator $\mathcal U^t$ is approximated by a regularized operator $\mathcal U^t_\tau$ induced from $U^t_\tau$ acting on smoothed quantum observables $A_\tau = K_\tau A K_\tau^*$, where $K_\tau\colon H \to \mathcal H_\tau$ is a kernel integral operator associated with $\mathcal H_\tau$.
    Our scheme then dilates the regularized Koopman dynamics on $\mathcal H_\tau$ to unitary dynamics $\tilde U^t \colon F(\mathcal H_\tau) \to F(\mathcal H_\tau)$ on the Fock space generated by $\mathcal H_\tau$, such that $\tilde U^t$ acts multiplicatively (tensorially) on the tensor algebra $T(\mathcal H_\tau) \subset F(\mathcal H_\tau)$.
    Furthermore, the quantum observables $A_\tau$ are amplified using the coalgebra structure of $\mathcal H_\tau$ to quantum observables $A_{\tau, n}$ that act on any grading $\mathcal H_\tau^{\otimes (n+1)} \subset F(\mathcal H_\tau)$.
    The state vector $\xi_\tau$ is also dilated to a vector $\eta_\tau \in F(\mathcal H_\tau)$ that projects non-trivially on $\mathcal H^{\otimes n}_\tau$ for every $n \in \mathbb N$, with an associated quantum state $\nu_\tau = \langle \eta_\tau, \cdot\rangle_{F(\mathcal H_\tau)} \eta_\tau$.
    The expectation of $A_{\tau, n}$ with respect to the time-evolved quantum state $\tilde\rho_\tau$ under the transfer operator $\tilde{\mathcal P}^t_\tau$ induced by $\tilde U^t_\tau$ is then used to approximate the corresponding evolution under the true dynamics on $\mathfrak B$ using a tree tensor network construction (see \cref{fig:network}).}
    \label{fig:dia1}
\end{figure}

Through this approach, we approximate the Koopman evolution of observables by a tree tensor network on $F(\mathcal H_\tau)$ that can be implemented efficiently using functional programming techniques.
Effectively, the tensor network approximates Koopman evolution on a $(2d+1)^n$-dimensional tensor product subspace of $\mathcal H_\tau^{\otimes n}$ generated by a collection of $d$ complex-conjugate pairs of eigenfunctions of $W_\tau$ together with a constant eigenfunction.
This allows devoting resources to compute a modest number ($2d+1$) of eigenfunctions of $W_\tau$, and algebraically amplifying these eigenfunctions to capture information from a high-dimensional tensor product space (vs.\ brute-force computation of large dictionaries of Koopman eigenfunctions).
Moreover, the tensor network scheme is positivity-preserving and is asymptotically consistent in a suitable joint limit of $n \to \infty$ and $\tau \to 0$.

The plan of the paper is as follows.
\Cref{sec:background} contains background on operator methods for classical dynamics, along with an illustrative example involving a circle rotation that we will reference in later sections of the paper.
This is followed by a high-level overview of Koopman von--Neumann representations and tensor networks in \cref{sec:quantum}, and a survey of relevant definitions and constructions of RKHAs in \cref{sec:rkha}.
In \cref{sec:tensornet,sec:num_impl}, we develop our tensor network approximation scheme and its numerical implementation.
In \cref{sec:pointwise} we specialize the schemes from \cref{sec:tensornet} to approximation of pointwise (as opposed to statistical) evolution of observables and study their associated convergence properties.
One of our main theoretical results, \cref{thm:torus_rotation_convergence}, is included in that section.
\Cref{sec:experiments} presents applications to two dynamical systems on the 2-torus: an ergodic rotation with pure point Koopman spectrum and a Stepanoff flow \cite{Oxtoby53} with weak topological mixing (absence of non-constant continuous Koopman eigenfunctions).
Concluding remarks and perspectives on future work are included in \cref{sec:conclusions}.
\Cref{app:symbols} contains a list of the main symbols used in the paper.
\Cref{app:spectral_approx} contains a description of the spectral approximation techniques for Koopman operators used in this paper.
Details on numerical implementation and pseudocode are included in \cref{app:numerical}.

\section{Background on operator methods for dynamical systems and a motivating example}
\label{sec:background}

We begin with a brief overview of operator-theoretic techniques for classical dynamical systems (\cref{sec:feature_extraction}) and a discussion of the analytical and numerical challenges faced by these methods (\cref{sec:challenges}).
In \cref{sec:circlerot}, we illustrate some of these challenges using a Galerkin-type discretization of the Koopman generator for a circle rotation as an example.
The circle rotation will serve as a running example when we discuss quantum mechanical approximation schemes in subsequent sections of the paper.

\subsection{Feature extraction and prediction by spectral decomposition of evolution operators}
\label{sec:feature_extraction}

A common aspect of many operator methods is computation of spectral data.
As a concrete example, consider a measure-preserving flow $\Phi^t\colon X \to X$ on a state space $X$ in continuous time $t \in \mathbb R$, with an invariant probability measure $\mu$.
The time-$t$ Koopman operator,
\begin{displaymath}
    U^t\colon L^p(\mu) \to L^p(\mu), \quad U^t f = f \circ \Phi^t,
\end{displaymath}
acts as an isometric isomorphism of the $L^p(\mu)$ spaces with $p \in [1, \infty]$ and its spectrum $\sigma(U^t)$ is a subset of the unit circle in the complex plane.
Moreover, for $p \in [1, \infty)$, $ \{ U^t \}_{t \in \mathbb R}$ is strongly continuous on $L^p(\mu)$ and, by the Hille-Yosida theorem, completely characterized by its generator.
The latter, is a closed (typically unbounded) operator $V\colon D(V) \to L^p(\mu)$ defined on a dense subspace $D(V)\subseteq L^p(\mu)$ via the norm limit
\begin{equation}
    \label{eq:generator}
    V f = \lim_{t\to 0} \frac{U^t f - f}{t}.
\end{equation}
The generator reconstructs the Koopman operator at any time $t \in \mathbb R$ through the functional calculus, $U^t = e^{t V}$.
As a result, every eigenfunction $h$ of $U^t$ has periodic evolution with period $2 \pi / \omega$, where $\omega \in \mathbb R$ is an eigenfrequency satisfying $U^t h = e^{i\omega t} h$.

A collection $ \{ h_1, \ldots, h_N \}$ of such eigenfunctions with corresponding eigenfrequencies $\omega_1, \ldots, \omega_N$ defines, for an appropriate normalization, a feature map $\vec h\colon X \to \mathbb C^N$ whose range is a subset of the $N$-dimensional torus $\mathbb T^N \subset \mathbb C^N$.
This map intertwines $\Phi^t$ with a rotation system $R^t_\alpha\colon \mathbb T^N \to \mathbb T^N$.
That is, we have
\begin{equation}
    \label{eq:rot_semiconj}
    \vec h \circ \Phi^t = R^t_\alpha \circ \vec h,
\end{equation}
$\mu$-a.e., where $R^t_\alpha(x) = (x + \alpha t) \mod 2 \pi$ and $\alpha = (\omega_1, \ldots, \omega_N).$ In other words, $\vec h$ has the property of rectifying the evolution of state space dynamics to a rotation with constant frequency parameters $\omega_j$ in feature space $\mathbb C^N$; see, e.g., \cite{FroylandEtAl21}*{Fig.~4} for an illustration.
This dynamical rectification property likely contributes to the physical interpretability of features based on eigenfunctions or approximate eigenfunctions of Koopman and transfer operators found in application domains spanning molecular dynamics \cite{SchutteEtAl01}, fluid dynamics \cite{Mezic13}, climate dynamics \cite{FroylandEtAl21}, neuroscience \cites{BruntonEtAl16b,MarrouchEtAl19}, and energy system science \cite{SusukiEtAl16}, among other fields.
In addition, observables $f \in L^p(\mu)$ that are well-represented by linear combination of Koopman eigenfunctions, $f \approx \sum_{j=0}^J c_j h_j$ for some $J \in \mathbb N$ and $c_j \in \mathbb C$, are amenable to prediction by leveraging the known evolution of the eigenfunctions,
\begin{equation}
    \label{eq:eig_approx}
    U^t f \approx \sum_{j=0}^J c_j e^{i\omega_j t} h_j.
\end{equation}
See, e.g., \cite{Giannakis19} for numerical examples and \cite{KoltaiKunde24} for a recent study that explores connections between the Wiener filter for least-squares prediction and Koopman eigenfunctions.
Eigenfunctions of the transfer operator $P^t \colon L^p(\mu) \to L^p(\mu)$, where $P^t f = f \circ \Phi^{-t}$ under invertible dynamics, are similarly useful for feature extraction and for approximating the evolution of measures $\nu$ with densities $\frac{d\nu}{d\mu} \in L^p(\mu)$.

\subsection{Practical challenges}
\label{sec:challenges}

The useful properties of Koopman and transfer operator eigenfunctions for feature extraction and prediction tasks outlined above provides ample motivation for developing algorithms and devoting computational resources for their computation.
Still, in practical applications there is a number of challenges that prevent a straightforward implementation of this program.

\subsubsection{Continuous spectrum}

At a fundamental level, Koopman operators associated with complex dynamics seldom possess sufficiently rich sets of eigenfunctions in function spaces such as $L^p(\mu)$ which are typically targeted by data-driven algorithms.
Indeed, by a fundamental result in ergodic theory (e.g., Mixing Theorem in \cite{Halmos56}*{p.~39}), the measure-preserving flow $\Phi^t$ is weak mixing if and only if the Koopman operator $U^t$ on $L^2(\mu)$ has a simple eigenvalue equal to 1, with $\mu$-a.e.\ constant corresponding eigenfunctions, and no other eigenvalues.
It follows that for weak-mixing systems there are no non-trivial analogs of the feature map from~\eqref{eq:rot_semiconj} and the prediction model~\eqref{eq:eig_approx}.
Thinking, intuitively, of weak mixing as a hallmark of dynamical complexity in a measure-theoretic sense, it consequently follows that in many real-world applications one has to seek generalizations of schemes based on pure Koopman/transfer operator eigenfunctions.

In response, a major focus of recent methodologies for Koopman operator approximation has been to consistently approximate the spectral measures of the operators, which typically have both atomic and continuous components, by discrete spectral measures that are amenable to numerical approximation, e.g., \cites{KordaEtAl20,DasEtAl21,ColbrookTownsend24,RosenfeldEtAl22,GiannakisValva24,GiannakisValva25}.
In broad terms, these methods yield versions of \eqref{eq:rot_semiconj} and~\eqref{eq:eig_approx} that are based on eigenfunctions of regularized approximations of $U^t$ (or the generator $V$) that possess non-trivial eigenfunctions and converge spectrally to $U^t$ in a suitable asymptotic limit.
In the transfer operator literature, a popular approach has been to perform approximations in Banach spaces where the operator is quasi-compact \cites{Froyland97,DellnitzEtAl00,BlankEtAl20}, allowing one to extract eigenvalues isolated from the essential spectrum.

\subsubsection{Bias--variance tradeoff}

Even if the system possesses a sufficiently rich set of eigenfunctions to well-represent the observables of interest, expansions such as~\eqref{eq:eig_approx} may require a large number, $J$, of eigenfunctions $h_j$ to achieve the desired approximation accuracy.
In practical applications, these eigenfunctions are seldom known analytically, and have to be numerically approximated, usually via data-driven algorithms.
Given fixed computational and data resources, the quality of these approximations invariably decreases with $J$, which means that  the predictive skill of a model utilizing~\eqref{eq:eig_approx} may actually decrease as $J$ increases.
This is a manifestation of the usual bias--variance tradeoff in statistical learning (e.g., \cite{CuckerSmale01}) that involves balancing bias errors due to using finitely many terms in the eigenfunction expansion for $U^t f$ (which become smaller as $J$ increases) against sampling errors associated with data-driven approximation of these eigenfunctions (which become larger as $J$ increases).
Methods for improving the quality of computed eigenfunctions include smoothing by integral operators \cites{FroylandEtAl21,JungeEtAl22}, learning of well-conditioned approximation dictionaries \cites{BerryEtAl15,Giannakis19,ConstanteAmoresEtAl24}, usage of delay-coordinate maps \cites{ArbabiMezic17,BruntonEtAl17,DasGiannakis19,Giannakis21a}, and residual computations to detect spurious eigenvalues \cites{ColbrookTownsend24}.

\subsubsection{Structure-preserving approximations}

Koopman and transfer operators exhibit important structural properties inherited from the underlying dynamical flow, which distinguish them from general operator (semi)groups.
In applications, it is desirable (and sometimes essential)  that the regularized operators preserve as many of these properties as possible.
Below, we list a few examples that will concern us in this paper:
\begin{enumerate}[wide]
    \item \emph{Unitarity under measure-preserving, invertible dynamics}.
    In both discrete and continuous time, the Koopman operator $U\colon L^2(\mu) \to L^2(\mu)$, $U f = f \circ \Phi$ associated with a measure-preserving, invertible transformation $\Phi\colon X \to X$ is unitary.
    In continuous time, the generator $V$ of the Koopman group $ \{ U^t \}_{t \in \mathbb R}$ on $L^2(\mu)$, is skew-adjoint, $V^* = - V$, by the Stone theorem on one-parameter unitary groups.
    \item \emph{Multiplicativity.} Being a composition operator, the Koopman operator is multiplicative on spaces of observables that are closed under pointwise multiplication.
    As a concrete example, the Koopman operator $U$ on $L^\infty(\mu)$ satisfies
        \begin{equation}
            \label{eq:mult_koopman}
            U(fg) = (U f)(U g), \quad \forall f, g \in L^\infty(\mu).
        \end{equation}
        In continuous time, a necessary and sufficient condition for a skew-adjoint operator $V$ on $L^2(\mu)$ to be the generator of a unitary Koopman group $ \{ U^t \}_{t\in \mathbb R}$ is that it satisfies the Leibniz rule on a suitable subspace of its domain \cite{TerElstLemanczyk17},
        \begin{equation}
            \label{eq:leibniz}
            V(fg) = (V f) g + f (V g), \quad \forall f, g \in D(V) \cap L^\infty(\mu).
        \end{equation}
        An important consequence of~\eqref{eq:mult_koopman} and~\eqref{eq:leibniz} is that the point spectrum of $U^t$ is a multiplicative subgroup of the unit circle and the point spectrum of $V$ is an additive subgroup of the imaginary line.
    \item \emph{Positivity preservation.} A related property to multiplicativity is that the Koopman operator is positivity-preserving.
    That is, for every $f \in L^p(\mu)$ such that $ f \geq 0$ $\mu$-a.e., we have $ Uf \geq 0$ $\mu$-a.e.
    Analogous results hold for continuous-time systems as well as the transfer operator.
\end{enumerate}

In continuous-time systems, unitarity-preserving approximation techniques include smoothing the generator \cite{DasEtAl21} or its resolvent \cites{GiannakisValva24,GiannakisValva25} by Markovian kernel integral operators.
For example, given a family of symmetric kernel functions $k_\tau \colon X \times X \to \mathbb R_+$ of sufficient regularity, parameterized by $\tau>0$, and a corresponding family of self-adjoint kernel integral operators $\mathcal K_\tau \colon L^2(\mu) \to L^2(\mu)$, $\mathcal K_\tau f = \int_X k_\tau(\cdot, x) f(x) \, d\mu(x)$, we have that $V_\tau := \mathcal K_\tau V \mathcal K_\tau$ is a compact operator \cite{DasEtAl21}.
This operator is automatically skew-adjoint, and thus generates a strongly continuous unitary evolution group $ \{ U^t_\tau = e^{t V_\tau} \}_{t \in \mathbb R}$ on $L^2(\mu)$.
For suitable kernel constructions, the unitaries $U^t_\tau$ converge to $U^t$ strongly as $\tau \to 0$, and the corresponding spectral measures also converge in a suitable sense.
In discrete-time systems, recently developed variants of DMD \cites{BaddooEtAl23,Colbrook23} also preserve unitarity of the Koopman operator under measure-preserving invertible dynamics.

To our knowledge, there are no DMD-type Koopman operator approximation methods that are positivity preserving (but see below for non-commutative formulations) while offering convergence guarantees.
In essence, failure to preserve positivity stems from the fact that orthogonal projections of positive observables onto the DMD approximation subspace are not necessarily positive.
Similarly, no practical methods are known to us that preserve the Leibniz rule for the generator in continuous-time systems.
Very recently, multiplicative dynamic mode decomposition (MultDMD) was introduced as a method that preserves the multiplicative structure on the Koopman operator on spaces of finitely-sampled observables using constrained optimization methods \cite{BoulleColbrook24}.  In the setting of transfer operators, the Ulam method \cite{Ulam64} is a commonly employed scheme that is automatically positivity preserving by virtue of employing characteristic functions as basis elements.
However, convergence results for Ulam-type schemes (e.g., \cites{Li76,Froyland99,JungeKoltai09,BlankEtAl20}) are generally limited to specific classes of dynamical systems such as expanding interval maps, and involve spaces such as $L^1(\mu)$ or anisotropic Banach spaces that do not have Hilbert space structure (the latter being highly useful in numerical applications).

\subsection{Circle rotation}
\label{sec:circlerot}

As a basic example illustrating the behavior of different Koopman approximation schemes, we  consider a linear rotation $\Phi^t \colon X \to X$ on the circle $X=\mathbb T$,
\begin{equation*}
    \Phi^t(\theta) = \theta + \alpha t \mod 2 \pi,
\end{equation*}
where $\alpha>0$ is a frequency parameter.
This flow has the normalized Haar measure as its unique invariant ergodic Borel probability measure, $\mu$.
Moreover, the Koopman operator $U^t$ is diagonal in the Fourier basis $\{ \phi_j\colon \theta \mapsto e^{ij\theta} \}_{j \in \mathbb Z} $ of $L^2(\mu)$,
\begin{equation*}
    U^t \phi_j = e^{i j \alpha t} \phi_j.
\end{equation*}
The associated skew-adjoint generator $V$ has the space of absolutely continuous functions on $\mathbb T$ as its domain $D(V) \subset L^2(\mu)$, and acts as a weak derivative on this space, $V f(\theta) = \alpha f'(\theta)$ for $\mu$-a.e.\ $\theta \in \mathbb T$. 
Moreover, we have the eigendecomposition
\begin{equation*}
    V \phi_j = i j \alpha \phi_j,
\end{equation*}
with eigenfrequencies $j \alpha$ given by integer multiples of the rotation frequency $\alpha$.
By virtue of these facts, the evolution of observables $f \in L^2(\mu)$ of this system can be approximated to any desired degree of accuracy via~\eqref{eq:eig_approx} for the eigenfunctions $\{h_0, h_1, \ldots\} = \{\phi_0, \phi_1, \phi_{-1}, \phi_2, \phi_{-2} \ldots\}$ and eigenfrequencies $\{\alpha_0, \alpha_1, \ldots\} = \{0, \alpha, -\alpha, 2 \alpha, -2 \alpha, \ldots \}$. 

In this example, we set the frequency parameter $\alpha=1$ and consider the evolution of an observable $f$ in the family of von Mises probability density functions on the circle.
The von Mises probability density $p_{\mu, \kappa} \in C^\infty(\mathbb T)$ is parameterized by a sharpness parameter $\kappa>0$, a parameter $\mu \in [0, 2\pi)$ controlling the mode (location of maximum value) of the distribution, and is defined as
\begin{equation}
    \label{eq:von_mises_circle}
    p_{\mu, \kappa}(\theta) = \frac{e^{\kappa\cos(\theta-\mu)}}{I_0(\kappa)}.
\end{equation}
Here, $I_\nu \in C^\infty(\mathbb R)$, $\nu \geq 0$, is the modified Bessel function of the first kind of order $\nu$.
We choose $f = p_{\pi, 6}$ as an example of a strictly positive observable with an infinite Fourier series to illustrate positivity preservation, or lack thereof, by numerical techniques that utilize approximation spaces spanned by finitely many Fourier functions. 

In \cref{fig:evo_circlerot}, we compare the Koopman evolution of $f$, which we denote by $f^{(t)}_\text{true} := U^t f$, with (i) a ``classical'' approximation, $f^{(t)}_\text{cl}$, obtained from a DMD-type method; (ii) a ``quantum mechanical'' approximation, $f^{(t)}_\text{qm}$, derived from a variant of the schemes from \cite{Giannakis19b,FreemanEtAl23,GiannakisEtAl22}; and (iii) an approximation $f^{(t)}_\text{Fock}$ obtained from the tensor network method described in this paper that employs a many-body quantum system defined on a Fock space.
These examples are essentially one-dimensional versions of the 2-torus experiments presented in \cref{sec:experiments} below; see, in particular, \cref{sec:evo} for detailed descriptions of the procedures employed in the construction of $f^{(t)}_\text{cl}$, $f^{(t)}_\text{qm}$, and $f^{(t)}_\text{Fock}$.

\begin{figure}
    \centering
    \includegraphics[draft=false, width=0.9\linewidth]{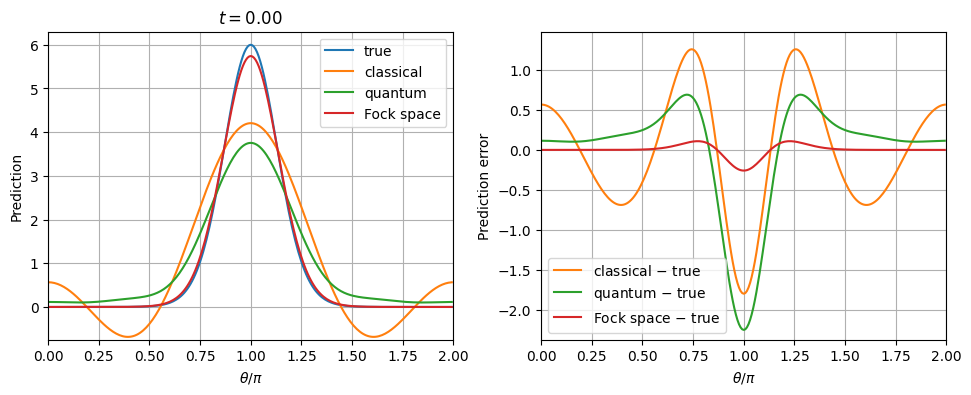}
    \includegraphics[draft=false, width=0.9\linewidth]{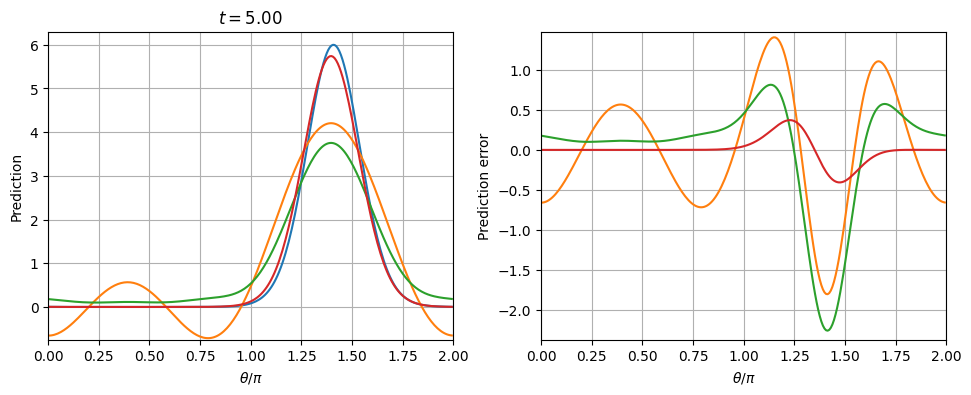}
    \caption{\label{fig:evo_circlerot}Time evolution of the von Mises density $p_{\mu, \kappa}$ with $\mu=\pi$ and $\kappa = 6$ under the circle rotation with frequency $\alpha=1$ ($f^{(t)}_\text{true}$; blue), the classical approximation ($f^{(t)}_\text{cl}$; orange), the quantum mechanical approximation ($f^{(t)}_\text{qm}$; green), and the tensor network/ Fock space approximation ($f^{(t)}_\text{Fock}$; red).
    The top and bottom rows show the evolution times $t= 0$ and $t=5$, respectively.
    The panels in the right-hand column show the pointwise approximation error $f^{(t)}_\text{approx} - f^{(t)}_\text{true}$ for each evolution time and approximation method.}
\end{figure}

Considering first the classical approximation, the function $f^{(t)}_\text{cl}$ is essentially equivalent to the Koopman eigenfunction expansion from~\eqref{eq:eig_approx}, modulo approximation errors in the eigenpairs $(\omega_j, h_j)$.
More specifically, for a regularization parameter $\tau >0$, we have
\begin{equation*}
    f^{(t)}_\text{cl} := \sum_{j=0}^{J} c_j e^{i \omega_{j,\tau} t} \zeta_{j,\tau},
\end{equation*}
where $\omega_{j,\tau} \in \mathbb R$ are approximations of the eigenfrequencies $\omega_j$, $\zeta_{j,\tau} \in \mathcal H_\tau$ are approximations of the eigenfunctions $h_j$ in an RKHA $\mathcal H_\tau \subset C(\mathbb T)$, and  $c_j \in \mathbb C$ are expansion coefficients determined from the Fourier series of $p_{\mu, \kappa}$ (see~\eqref{eq:von_mises_fourier}).  
This approximation can be thought of in a similar vein as kernelized variants of DMD or generator DMD \cite{WilliamsEtAl15,KlusEtAl20}.
Here, the eigenpairs $(\omega_{j,\tau}, \zeta_{j,\tau})$ were computed using a variant of the techniques from \cite{GiannakisValva25}.
The specific algorithm is described in \cref{app:compact_res} and will be also employed in other numerical experiments presented in this paper.
In a nutshell, this method builds a family of skew-adjoint operators $V_\tau$, $\tau >0$, on $L^2(\mu)$ with compact resolvent (and thus an associated orthonormal eigenbasis), that converge spectrally in a suitable limit of $\tau\to 0^+$, even if $V$ has continuous or mixed spectrum.
The approximate Koopman eigenfunctions $\zeta_{j,\tau}$ are eigenfunctions of an operator $W_\tau$ acting on $\mathcal H_\tau$ that is equivalent to $V_\tau$ under a natural isometric embedding of $L^2(\mu)$ into $\mathcal H_\tau$. 

\Cref{tab:circlerot_spec} lists the approximate eigenfrequencies $\omega_{j,\tau}$ with $\lvert j \rvert \leq 6$ computed from a Galerkin approximation of $V_\tau$ in the Fourier basis of $L^2(\mu)$ for the regularization parameter $\tau = 0.005$.
The table also lists additional spectral information such as the Dirichlet energies of the eigenfunctions $\zeta_{j,\tau}$ with respect to the norm of $\mathcal H_\tau$ (a measure of spatial variability akin to a Sobolev norm; see \cref{sec:tensornet}) and the eigenvalues of an auxiliary compact operator, $Q_{z,\tau}$, employed in the construction of $V_\tau$ (see \cref{app:compact_res}).  
Notice that the relative frequency error $\lvert \omega_{j,\tau} - \omega_j \rvert / \lvert \omega_j \rvert$ becomes larger as the Dirichlet energy of $\zeta_{j,\tau}$ increases---this is a generic feature that can result to buildup of phase errors as the evolution time $t$ increases when using eigenfunctions with large energy.

Arguably, the need for regularization is obviated in the case of the circle rotation (and the errors it imparts on the spectrum can be avoided) since $V$ is already diagonal in the Fourier basis of $L^2(\mu)$ and has discrete spectrum.
In our example, the approximate eigenpairs $(\omega_{j,\tau}, \zeta_{j,\tau})$ can be thought of as stand-ins for solutions of potentially expensive eigenvalue problems with regularization necessitated by the presence of continuous spectrum and/or non-isolated eigenvalues of the generator. 

\begin{table}
    \centering
    \caption{\label{tab:circlerot_spec}Representative spectral data for the regularized generator $V_\tau$ of the circle rotation computed via the method in \cref{app:compact_res}.
    The table lists the first seven eigenfrequencies, $\omega_{j,\tau}$, the Dirichlet energies of the corresponding generalized eigenfunctions, $\mathcal E(\tilde u_{j,z,\tau})$, and the eigenvalues, $\beta_{j,z,\tau}$, of the $Q_{z,\tau}$ operator used in the construction of $V_\tau$.
    See \eqref{eq:q_eig} and \eqref{eq:gev} for definitions of $\tilde u_{j,z,\tau}$ and $\beta_{j,z,\tau}$.
    The RKHA parameters are $(p,\tau) = (0.75, 0.005)$, the resolvent parameter is $z=0.01$, and the number of Fourier functions used for Galerkin approximation is $m=128$.
    In this table, and in \cref{sec:circlerot}, we suppress the resolvent parameter $z$ in our notation for $V_\tau$ and $\omega_{j,\tau}$.
    }
    \begin{tabular}{lSSS}
        \hline\hline
        & \multicolumn{1}{c}{Eigenfrequency} & \multicolumn{1}{c}{Dirichlet} energy & \multicolumn{1}{c}{$Q_{z,\tau}$ eigenvalue} \\
        \multicolumn{1}{c}{$j$} & \multicolumn{1}{c}{$\omega_{j,\tau}$} & \multicolumn{1}{c}{$\mathcal E_\tau(\zeta_{j,\tau})$} & \multicolumn{1}{c}{$\beta_{j,\tau} / i$}\\
        \hline
         0 & 0.0000   & 0.0000 & 0.0000  \\
         1 & 1.0050   & 0.0050 & 0.9949 \\
         2 & -1.0050  & 0.0050 & -0.9949  \\
         3 & 2.0169   & 0.0084 & 0.4958 \\
         4 & -2.0169  & 0.0084 & -0.4958  \\
         5 & 3.0344   & 0.0115 & 0.3296 \\
         6 & -3.0344  & 0.0115 & -0.3296  \\
        \hline\hline
    \end{tabular}
\end{table}

The classical approximation $f^{(t)}_\text{cl}$ in \cref{fig:evo_circlerot} is based on $J = 2d$ nonconstant eigenfunctions with $d=2$; this represents a severe amount of truncation whose purpose is to highlight approximation errors inherent in applications with higher state space dimension and/or dynamical complexity.
The effects of truncation are clearly evident in the discrepancy $f^{(t)}_\text{cl} - f^{(t)}_\text{true}$ (depicted in the right-hand column of \cref{fig:evo_circlerot}).
Besides pointwise reconstruction errors, an important deficiency in $f^{(t)}_\text{cl}$ is that it exhibits oscillations to negative values taking place at the tails of the von Mises density function.
Oscillations to values outside the range of $f$ are a generic feature of orthogonal $L^2$ projections, with the Gibbs phenomenon of the Fourier series approximation of the tophat function being one of the most characteristic examples. 

Failure to preserve positivity, or, more generally, respect the admissible range of values of observables, can be especially detrimental when modeling physical systems with sign-definite quantities such as energy or temperature.
In \cref{fig:evo_circlerot}, the quantum mechanical approximation $f^{(t)}_\text{qm}$ is seen to preserve positivity of the von Mises density for all evolution times $t$.
We will discuss this method in more detail in \cref{sec:quantum} below, but for now we mention that the quantum mechanical approach preserves positivity by approximating the pointwise values $f^{(t)}_\text{true}(\theta)$ by quantum expectations of a quantum observable representing $f$.
This quantum observable is a projected multiplication operator (a Toeplitz operator) on $L^2(\mu)$ which is positive whenever the underlying classical observable is positive.
At the same time, $f^{(t)}_\text{qm}$ appears to be more diffusive than its classical counterpart $f^{(t)}_\text{cl}$ and thus less skillful in capturing the peak of the von Mises distribution.
In this example, both methods utilize the span of $\zeta_{0,\tau}, \ldots, \zeta_{4,\tau}$ as the approximation space so there appears to be a tradeoff between positivity preservation and reconstruction of extremal values.  

The tensor network approximation $f^{(t)}_\text{Fock}$ put forward in this paper addresses this tradeoff by lifting the system to a Fock space generated by spans of finitely many eigenfunctions $\zeta_{j,\tau}$.
In the Fock space approach, $W_\tau$ is dilated to a free derivation satisfying the Leibniz rule with respect to the tensor product.
Moreover, $f$ is represented by an amplified multiplication operator that captures higher-order frequency content present in the pointwise products $\zeta_{j,\tau}$ and encoded as tensor products in the Fock space.
In \cref{fig:evo_circlerot}, $f^{(t)}_\text{Fock}$ is built using the Fock space generated by the span of $\zeta_{0,\tau}, \ldots, \zeta_{4,\tau}$.
It is seen to attain the highest accuracy compared to the classical and quantum mechanical approximations, while being positivity-preserving.

\section{Operator methods and quantum theory}
\label{sec:quantum}

As noted in \cref{sec:intro_quantum}, the operator-theoretic formulation of ergodic theory has close mathematical connections with quantum theory. 
In this section, we give a brief survey of relevant quantum mechanical techniques that will be useful for the development of our tensor network scheme later on.
Specifically, in \cref{sec:koopman_von_neumann} we describe the Koopman-von Neumann representation of classical dynamics as it pertains to  measure-preserving ergodic flows studied in this paper, and in \cref{sec:kvn_finite_dim} we outline positivity-preserving finite-dimensional approximation schemes based on this framework.  
In \cref{sec:tensornet_overview} we summarize a few basic definitions and constructions from the field of tensor networks.

\subsection{Koopman--von Neumann representation of classical dynamics}
\label{sec:koopman_von_neumann}

A useful starting point for building quantum mechanical representations of classical dynamics is to associate to the system under study abelian and non-abelian algebras of observables that we will consider as being ``classical'' and ``quantum'' respectively.

 In the context of the measure-preserving flow $\Phi^t\colon X \to X$ from \cref{sec:feature_extraction}, a natural abelian algebra is the space $\mathfrak A := L^\infty(\mu)$ of essentially bounded observables with respect to the invariant measure $\mu$.
This space is a von-Neumann algebra \cite{Takesaki01} with respect to pointwise function multiplication and complex conjugation; that is, it is a unital $C^*$-algebra,
\begin{displaymath}
    \lVert fg\rVert_{\mathfrak A} \leq \lVert f\rVert_{\mathfrak A} \lVert g\rVert_{\mathfrak A}, \quad \lVert f^* f\rVert_{\mathfrak A} = \lVert f\rVert_{\mathfrak A}^2,
\end{displaymath}
with unit given by the constant function $\bm 1$ equal to 1 $\mu$-a.e., and has a Banach space predual, $\mathfrak A_* := L^1(\mu)$.
The state space of $\mathfrak A$, which we denote as $S(\mathfrak A)$, is the set of (automatically continuous) positive, unital functionals, i.e.,  functionals $\omega \colon \mathfrak A \to \mathbb C$ satisfying $\omega \bm 1 = 1$ and $\omega(f^* f) \geq 0 $ for every $f \in \mathfrak A$.
Every probability density $p \in \mathfrak A_*$ induces a state $\mathbb E_p \in S(\mathfrak A)$ that acts by expectation, $\mathbb E_p f = \int_X f p \, d\mu$.
Such states $\mathbb E_p$ are called normal states, and we denote the set of probability densities $p$ that induce them  by $S_*(\mathfrak A) \subset \mathfrak A_*$.
For any $t \in \mathbb R $, the Koopman operator acts as an isomorphism of $\mathfrak A$; in particular,
\begin{displaymath}
    U^t(fg) = (U^t f)(U^t g), \quad U^t(f^*) = (U^t f)^*, \quad \lVert U^t f\rVert_{\mathfrak A} = \lVert f \rVert_{\mathfrak A}.
\end{displaymath}
Moreover, by duality with the transfer operators $P^t \colon \mathfrak A_* \to \mathfrak A_*$, we have
\begin{equation}
    \label{eq:koop_transf_duality}
    \mathbb E_p(U^t f) = \mathbb E_{P^t p} f, \quad \forall f \in \mathfrak A, \quad \forall p \in S_*(\mathfrak A).
\end{equation}

Next, as a non-abelian algebra, we consider the space $\mathfrak B := B(H)$ of bounded linear operators on $H:=L^2(\mu)$, equipped with the operator norm.
This space is a von Neumann algebra with respect to operator composition and adjunction,
\begin{displaymath}
    \lVert AB\rVert_{\mathfrak B} \leq \lVert A\rVert_{\mathfrak B} \lVert B\rVert_{\mathfrak B}, \quad \lVert A^* A\rVert_{\mathfrak B} = \lVert A\rVert^2_{\mathfrak B},
\end{displaymath}
and has the space of trace class operators on $H$, $\mathfrak B_* := B_1(H)$, as its predual.
We define the state space $S(\mathfrak B)$ analogously to $S(\mathfrak A)$.
Every positive element $\rho \in \mathfrak B_*$ of unit trace induces a normal state $\mathbb E_\rho \in S_*(\mathfrak B)$ such that $\mathbb E_\rho A = \tr(\rho A)$.
Such elements $\rho$ are known as density operators, and can be viewed as analogs of the classical probability densities in $S_*(\mathfrak A)$.
We denote the set of density operators on $H$ as $S_*(\mathfrak B) \subset \mathfrak B_*$.
The group of unitary Koopman operators $U^t H \to H$ acts on $\mathfrak B$ through the adjoint representation, $\mathcal U^t\colon \mathfrak B \to \mathfrak B$ where $\mathcal U^t A = U^t A U^{t*}$.
This action is an isomorphism of the von Neumann algebra $\mathfrak B$,
\begin{displaymath}
    \mathcal U^t(A B) = (\mathcal U^t A)(\mathcal U^t B), \quad \mathcal U^t(A^*) = (\mathcal U^t A)^*, \quad \lVert \mathcal U^t A\rVert_{\mathfrak B} = \lVert A\rVert_{\mathfrak B}.
\end{displaymath}
Defining $\mathcal P^t\colon \mathfrak B_* \to \mathfrak B_*$, $\mathcal P^t \rho = U^{t*} \rho U^t$, as a non-abelian version of the transfer operator on $\mathfrak A_*$, we have $\mathbb E_\rho(\mathcal U^t A) = \mathbb E_{\mathcal P^t \rho} A$ for every density operator $\rho \in S_*(\mathfrak B)$.
This duality relation is a non-abelian analog of~\eqref{eq:koop_transf_duality}.
In quantum mechanics, the evolution of states under $\mathcal P^t$ and the evolution of observables under $\mathcal U^t$ are known as the Schr\"odinger and Heisenberg pictures, respectively.  From a more abstract point of view, we can view a Koopman operator as an element of the automorphism group of a von Neumann algebra, here either $\mathfrak A$ or $\mathfrak B$, with a natural embedding  $\operatorname{Aut}(\mathfrak A) \hookrightarrow \operatorname{Aut}(\mathfrak B)$ given by the adjoint representation.

To establish a correspondence between the evolution of classical observables in $\mathfrak A$ with the evolution of quantum observables in $\mathfrak B$, we introduce the multiplier representation $\pi\colon \mathfrak A \to \mathfrak B$ that maps $ f \in \mathfrak A$ to the multiplication operator in $\mathfrak B$ that multiplies by that element,
\begin{displaymath}
    (\pi f) g = f g, \quad \forall g \in H.
\end{displaymath}
We also introduce the map $\Gamma\colon S_*(\mathfrak A) \to S_*(\mathfrak B)$ such that
\begin{equation}
    \label{eq:kvn}
    \Gamma(p) =  \langle p^{1/2}, \cdot\rangle_H p^{1/2}.
\end{equation}
Note that $\Gamma(p)$ above is a rank-1 operator that projects along the unit vector $p^{1/2} \in H$; this unit vector plays an analogous role to a quantum mechanical wavefunction and is an example of a Koopman--von Neumann wave \cite{Mauro02}.
The associated state $\mathbb E_{\Gamma(p)} \in S(\mathfrak B)$ is an example of a vector state.

With these definitions, one can verify the following classical--quantum compatibility relations:
\begin{equation}
    \label{eq:classical_quantum_compat}
    \mathbb E_p(U^t f) = \mathbb E_{P^t p} f = \mathbb E_{\Gamma(p)}(\mathcal U^t (\pi f)) = \mathbb E_{\mathcal P^t(\Gamma(p))}( \pi f), \quad \forall f \in \mathfrak A, \quad \forall p \in S_*(\mathfrak B).
\end{equation}
The above, describes an embedding of the statistical evolution of classical observables in the algebra $\mathfrak A$ into an evolution of quantum observables in the operator algebra $\mathfrak B$ induced by the unitary Koopman operators on the Hilbert space $H$.

\subsection{Positivity-preserving projection}
\label{sec:kvn_finite_dim}

Recent work \cites{Giannakis19b,FreemanEtAl23,FreemanEtAl24} has put forward the perspective that quantum-inspired algorithms may provide opportunities for structure-preserving approximations that are not directly available when using classical numerical methods.
For example, \cites{Giannakis19b,FreemanEtAl23} developed a quantum mechanical formulation of Bayesian sequential data assimilation (filtering) that is automatically positivity preserving despite using orthogonal projections that do not preserve the positivity of functions in $H$.

In more detail, given a sequence $\Pi_L\colon H \to H$ of finite-rank orthogonal projections converging, as $L\to \infty$, strongly to the identity, they approximate the dynamical evolution~\eqref{eq:classical_quantum_compat} by approximating the quantum observable $\pi f$ and density operator $\Gamma(p)$ by finite-rank compressions,
\begin{displaymath}
    \pi_L f := \Pi_L (\pi f) \Pi_L, \quad \Gamma_L(p) := \frac{\Pi_L \Gamma(p) \Pi_L}{\tr(\Pi_L \Gamma(p) \Pi_L)},
\end{displaymath}
respectively.
Note that $\pi_L f$ is a positive operator whenever $f$ is a positive element of $\mathfrak A$, whereas a ``classical'' approximation $\pi_L f$ of $f$ by Hilbert subspace projection may not be positive.
The resulting evolution, $\mathbb E_{\Gamma_L(p)}(\mathcal U^t(\pi_L f))$, can be implemented using numerical linear algebra methods and converges to the true evolution $\mathbb E_p(U^tf)$ as $L\to\infty$.
In essence, embedding the classical evolution into a quantum one provides access to numerical approximation algorithms that are positivity preserving while enjoying the benefits of Hilbert space structure of the underlying space $H$.

\begin{example*}[Circle rotation]
    The quantum mechanical approximation $f^{(t)}_\text{qm}$ of the evolution of the von Mises density under the circle rotation in \cref{fig:evo_circlerot} was built using a family of quantum states that approximate pointwise function evaluation.
    For $\theta \in \mathbb T$, we set $p$ to a von Mises density $p_{\theta, \kappa_\text{eval}}$ with large concentration parameter $\kappa_\text{eval}$ so that $\int_{\mathbb T} f p_{\theta, \kappa_\text{eval}} \, d\mu \approx f(\theta)$ well-approximates pointwise evaluation at $\theta$ for continuous functions $f$ on the circle. 
    In \cref{fig:evo_circlerot}, we use $\kappa_\text{eval} = 500$ which is the same value as we will use for our 2-torus experiments in \cref{sec:experiments}.
    Moreover, we choose $\Pi_L$ to be the projection operator that maps onto the leading $2d + 1 = 5$ eigenspaces of $V_\tau$ used in the classical approximation $f^{(t)}_\text{cl}$.
    The resulting approximation  $f^{(t)}_\text{qm}$ is of the form
    \begin{equation*}
        f^{(t)}_\text{qm}(\theta) = C \bm\xi_{t,\theta}^\dag \bm M \bm\xi_{t,\theta}.
    \end{equation*}
    Here, $\bm\xi_{t,\theta} \in \mathbb C^L$ is a quantum state vector with components given by the expansion of $p_{\theta, \kappa_\text{eval}}$ in the approximate Koopman eigenbasis $\zeta_{j,\tau}$ after time evolution by multiplication by the phase factors $e^{-i \omega_{j,\tau} t}$.
    Moreover, $\bm M \in \mathbb C^{L\times L}$ is the matrix representation of a multiplication operator associated with the prediction observable $f$, and $C$ is a positive normalization constant; see \cref{sec:qm_model} and \eqref{eq:ft_qm_matrix} for further details.
    Importantly, $\bm M$ is a positive-definite matrix whenever $f$ is a positive function, leading to positivity preservation of the von Mises prediction observable $f = p_{\pi,6}$.
    As noted in \cref{sec:circlerot}, a potential deficiency of $f^{(t)}_\text{qm}$ compared to a classical approximation $f^{(t)}_\text{cl}$ utilizing the same subspace of $H$ for approximation, is that it is more diffusive in nature (see again \cref{fig:evo_circlerot}).
    The tensor network approximation scheme developed in this paper overcomes this issue by lifting the problem to a higher-dimensional Fock space generated by the RKHA $\mathcal H_\tau$.
\end{example*}

\subsection{Tensor networks}
\label{sec:tensornet_overview}

Tensor networks provide a powerful diagrammatic framework for expressing computations involving multilinear maps on vector spaces, including operations on quantum mechanical states.
In this subsection, we give a brief outline of the basic constructions that are relevant for the Koopman operator approximation scheme described in this paper.
Further details can be found in one of the many surveys in the literature; e.g., \cites{Orus19,Banuls23} and references therein.

The fundamental objects in a tensor network are multidimensional arrays of finite rank, $\bm A = [A_{i_1 \cdots i_r}]$, where the indices $i_1, \ldots, i_r$ run over specified ranges, $1 \leq i_1 \leq D_i$.
Note that the term ``rank'' here corresponds to the number of indices $r$ of $\bm A$ and not the rank of $\bm A$ as a linear operator.
Diagrammatically, a rank-$r$ tensor is represented by a node in a graph with $r$ (optionally labeled) edges, sometimes referred to as ``legs''.
Scalars (rank-0 tensors) are depicted by nodes with no edges.
See \cref{fig:tensor_ex} for examples.

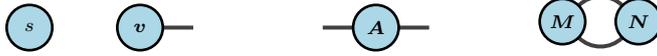
\begin{figure}
    \centering
    \begin{tikzpicture}
        \Vertex[label=$s$]{s}
    \end{tikzpicture}
    \qquad
    \begin{tikzpicture}
        \Vertex[label=$\bm v$]{v}
        \Vertex[x=1, Pseudo]{R}
        \Edge(v)(R)
    \end{tikzpicture}
    \quad
    \begin{tikzpicture}
        \Vertex[label=$\bm A$]{A}
        \Vertex[x=-1, Pseudo]{L}
        \Vertex[x=1, Pseudo]{R}
        \Edge(A)(L)
        \Edge(A)(R)
    \end{tikzpicture}
    \qquad
    \begin{tikzpicture}
        \Vertex[x=0,label=$\bm M$]{M}
        \Vertex[x=1,label=$\bm N$]{N}
        \Edge[bend=45](M)(N)
        \Edge[bend=-45](M)(N)
    \end{tikzpicture}
    \caption{Simple examples of tensor networks.
    From left to right: A scalar, $s$; a rank-1 tensor (vector), $\bm v$; a rank-2 tensor (matrix), $\bm A$; the contraction between two rank-2 tensors, $\bm M$ and $\bm N$.}
    \label{fig:tensor_ex}
\end{figure}

A tensor $\bm A$ represents a multilinear map $A\colon \mathbb C^{D_1} \times \cdots \times \mathbb C^{D_r} \to \mathbb C$ in the standard bases of $\mathbb C^{D_1}, \ldots, \mathbb C^{D_r}$.
Given a collection of $k \leq r$ vectors $\bm v^{(1)}, \ldots, \bm v^{(k)}$ with $\bm v^{(j)} = [v^{(j)}_m] \in \mathbb C^{D_i} $, we can form the multilinear map $B = A(\bm v_1, \ldots, \bm v_k, \cdot, \ldots, \cdot) \colon \mathbb C^{D_{k+1}} \times \cdots \times \mathbb C^{D_r} \to \mathbb C$.
This map is represented by a rank $r-k$ tensor $\bm B = [B_{j_1 \cdots j_{r-k}}]$ obtained by contraction (pairing) of the first $k$ indices of $\bm A$ with the indices of $\bm v^{(1)}, \ldots, \bm v^{(k)}$, i.e., $B_{j_1 \cdots j_{r-k}} = \sum_{i_1=1}^{D_1} \cdots \sum_{i_k=1}^{D_k} A_{i_1 \cdots i_k j_1 \cdots j_{r-k}} v_{i_1}^{(1)} \cdots v_{i_k}^{(k)}$.
Diagrammatically, $\bm B$ is represented by joining the first $k$ edges of $\bm A$ with the edges of $\bm v^{(1)}, \ldots, \bm v^{(k)}$.
More general contractions involving different combinations of indices and/or contractions with rank $>1$ tensors are represented similarly.
For example, joining the legs of two rank-2 tensors $\bm M$ and $\bm N$ yields the scalar $\tr(\bm M^\top \bm N)$.
See again \cref{fig:tensor_ex}.

As noted in \cref{sec:intro}, a major application of tensor networks is representation of states of many-body quantum systems and operations on such states.
Given $n$ distinguishable quantum systems with associated Hilbert spaces $\mathbb H_1, \ldots, \mathbb H_n$ the composite ($n$-body) quantum system is defined on the tensor product Hilbert space $\mathbb H:= \mathbb H_1 \otimes \cdots \otimes \mathbb H_n$.
When the Hilbert spaces $\mathbb H_i$ have finite dimension, the state vector $\xi$ associated with a density operator $\rho = \langle \xi, \cdot \rangle_{\mathbb H} \xi$ on $\mathbb H$ can be represented by a rank-$n$ tensor $\bm \xi$ storing the components of $\xi $ in a tensor product basis of $\mathbb H$.
Similarly, unitary evolution operators, $U^t$, and quantum observables, $A$, on $\mathbb H$ can be represented by rank-$2n$ tensors, $\bm U^t$ and $\bm A$, respectively.
Letting $\bm \xi^\dag$ denote the elementwise complex conjugate of $\bm \xi$ and setting $n=3$ for illustration, the time-evolved quantum expectation $\mathbb E_{\mathcal P^t \rho} A = \langle U^{t*} \xi, A U^{t*} \xi\rangle_{\mathbb H}$ is implemented by the tensor network:
\begin{displaymath}
    \begin{tikzpicture}
        \Vertex[label=$\bm A$]{A}
        \Vertex[x=1, label=$\bm U^{t*}$]{Ustar}
        \Vertex[x=2, label=$\bm \xi$]{xi}
        \Vertex[x=-1, label=$\bm U^{t}$]{U}
        \Vertex[x=-2, label=$\bm \xi^\dag$]{xibar}
        \Edge(A)(Ustar)
        \Edge[bend=45](A)(Ustar)
        \Edge[bend=-45](A)(Ustar)
        \Edge(Ustar)(xi)
        \Edge[bend=45](Ustar)(xi)
        \Edge[bend=-45](Ustar)(xi)
        \Edge(xibar)(U)
        \Edge[bend=45](xibar)(U)
        \Edge[bend=-45](xibar)(U)
        \Edge(U)(A)
        \Edge[bend=45](U)(A)
        \Edge[bend=-45](U)(A)
    \end{tikzpicture}
\end{displaymath}
Note that this network utilizes rank-$n$ and rank-$(2n)$ \emph{tensors} even though the density operator $\rho$ has rank 1 as a \emph{linear operator}.

While tensor networks can express arbitrary multilinear maps on tensor product Hilbert spaces, applications are typically based on architectures that exhibit different types of network locality, motivated by the local nature of interactions in physical systems of interest.
Examples include matrix product states (MPSs), where the tensors are arranged in one-dimensional lattices, projected entangled pairs states (PEPs), which are higher-dimensional generalizations of MPSs, and tree tensor networks (TTN) which exhibit tree network topologies.
See, e.g., \cite{Banuls23}*{Figure~1} for illustrations.
The class of tensor networks that we will build for Koopman operator approximation lie in the TTN family (see \cref{fig:network}).

Based on notions from perturbative calculations in interacting quantum field theories, the dangling (open) legs in a tensor network can be thought of as representing physical degrees of freedom (particles), whereas the internal (contracted) legs are interpreted as virtual particles.
Since one can insert a product $\bm A \bm A^{-1}$ of an arbitrary invertible matrix $\bm A$ and its inverse at any internal edge without changing the state represented by the open edges, it follows that a given tensor network has infinitely many equivalent representations corresponding to different configurations of virtual particles.
This form of ``gauge freedom'' has enabled the development of approaches for efficient evaluation and/or approximation of tensor networks.
Examples include renormalization group techniques for coarsening MPSs and TTNs \cites{McCulloch07,NakataniChen13}.

\section{Reproducing kernel Hilbert algebras}
\label{sec:rkha}

We briefly review the definition, properties, and constructions of RKHAs employed in this paper.
A detailed analysis of these objects can be found in \cite{GiannakisMontgomery25}.
For expositions of RKHS theory, see \cites{PaulsenRaghupathi16,SteinwartChristmann08}.

\subsection{Definitions and basic constructions}
\label{sec:rkha_defs}

Let $\mathcal H$ be an RKHS of complex-valued functions on a set $X$ with reproducing kernel $k\colon X \times X \to \mathbb C$ and inner product $\langle \cdot, \cdot \rangle_{\mathcal H}$.
We use $k_x := k(x, \cdot)$ to denote the kernel section at $x \in X$ and $\delta_x \colon \mathcal H \to \mathbb C$ to denote the corresponding pointwise evaluation functional, $\delta_x f = f(x) = \langle k_x, f\rangle_{\mathcal H}$.
We also let $\varphi \colon X \to \mathcal H$ denote the canonical feature map, $\varphi(x) = k_x$.

\begin{defn}[RKHA]
    An RKHS $\mathcal H$ on a set $X$ is a \emph{reproducing kernel Hilbert algebra (RKHA)} if $k_x \mapsto k_x \otimes k_x$, $x \in X$, extends to a bounded linear map $\Delta\colon \mathcal H \to \mathcal H \otimes \mathcal H$.
\end{defn}

Since
\begin{displaymath}
    \langle k_x, \Delta^*(f \otimes g)\rangle_{\mathcal H} = \langle \Delta k_x, f \otimes g \rangle_{\mathcal H} = \langle k_x \otimes k_x, f \otimes g\rangle_{\mathcal H \otimes \mathcal H} = \langle k_x, f\rangle_{\mathcal H} \langle k_x, g\rangle_{\mathcal H} = f(x)g(x),
\end{displaymath}
it follows that $\Delta^*$ is a bounded linear map that implements pointwise multiplication.
As a result, $\mathcal H$ is simultaneously a Hilbert function space and commutative algebra with respect to pointwise function multiplication.
Letting $\pi \colon \mathcal H \to  B(\mathcal H)$ be the multiplier representation that maps $f \in \mathcal H$ to the multiplication operator $(\pi f) \colon g \mapsto f g$, one readily verifies that the operator norm, $\lVert f\rVert_\text{op} := \lVert \pi f \rVert_{B(\mathcal H)}$, generates a coarser topology than the Hilbert space norm on $\mathcal H$, and satisfies
\begin{displaymath}
    \lVert fg\rVert_\text{op} \leq \lVert f\rVert_\text{op} \lVert g\rVert_\text{op}.
\end{displaymath}
Thus, $\mathcal H$ has the structure of a Banach algebra with respect to pointwise function multiplication.
When $k$ is a real-valued kernel we may further equip $\mathcal H$ with pointwise complex conjugation $^*\colon f \mapsto \bar f$ as an isometric involution, making it a Banach $^*$-algebra.
When the constant function $\bm 1\colon X \to \mathbb C$ with $\bm 1(x) = 1$ lies in $\mathcal H$, then $\mathcal H$ is a unital algebra with unit $\bm 1$.
Furthermore, the Hilbert space norm and operator norms become equivalent since $\lVert f \rVert_\mathcal H \leq \lVert \bm 1 \rVert_{\mathcal H} \cdot \lVert f \rVert_\text{op}$, i.e.,
\begin{displaymath}
    \frac{1}{\lVert \bm 1 \rVert_\mathcal H} \lVert f \rVert_\mathcal H \leq \lVert f \rVert_\text{op} \leq \lVert \Delta \rVert_{B(\mathcal H)} \lVert f \rVert_{\mathcal H}.
\end{displaymath}

In addition to having Banach algebra structure, an RKHA $\mathcal H$ is a cocommutative, coassociative coalgebra with $\Delta \colon \mathcal H \to \mathcal H \otimes \mathcal H$ as its comultiplication operator.
In particular, coassociativity of $\Delta$, i.e., $(\Delta \otimes \Id) \circ \Delta = (\Id \otimes \Delta) \circ \Delta$, is an important property that follows from associativity of multiplication (i.e., $\Delta^*$).
This property allows us to amplify $\Delta$ from $\mathcal H$ to the tensor product spaces $\mathcal H^{\otimes(n+1)}$, $n \in \mathbb N$, by defining $\Delta_n \colon \mathcal H \to \mathcal H^{\otimes (n+1)}$ as
\begin{displaymath}
    \Delta_1 = \Delta, \quad \Delta_{n} = (\Delta \otimes \Id^{\otimes (n-1)}) \Delta_{n-1} \quad \text{for } n>1.
\end{displaymath}

Next, let $\sigma(\mathcal H)$ be the spectrum of an RKHA $\mathcal H$ as Banach algebra, i.e., the set of (automatically continuous) multiplicative linear functionals $\chi \colon \mathcal H \to \mathbb C$,
\begin{displaymath}
    \chi(fg) = (\chi f)(\chi g), \quad \forall f, g \in \mathcal H,
\end{displaymath}
equipped with the weak-$^*$ topology of $\mathcal H^* \supset \sigma(\mathcal H)$.
The pointwise evaluation functionals $\delta_x = \langle \varphi(x), \cdot \rangle_{\mathcal H}$ are elements of the spectrum for every $x \in X$, inducing a map $\hat\varphi\colon X \to \sigma(\mathcal H)$ by $\hat \varphi(x) = \delta_x$.
If the feature map $\varphi$ is injective, then so is $\hat\varphi$ and the spectrum $\sigma(\mathcal H)$ contains a copy of $X$ as a subset.

In this work, our focus will be on unital RKHAs whose corresponding map $\hat\varphi$ is a bijection.
This property implies (e.g., \cite{DasGiannakis23}*{Corollary~7}) that the spectrum $\sigma_{\mathcal H}(f)$ of each element $f \in \mathcal H$ (i.e., the set of complex numbers $z$ such that $f-z$ does not have a multiplicative inverse in $\mathcal H$) is equal to its range, $\sigma_{\mathcal H}(f) = f(X)$.
This enables in turn the use of holomorphic functional calculus of functions in $\mathcal H$,
\begin{displaymath}
   a(f) = \frac{1}{2\pi i} \int_\Gamma \frac{a(z)}{z-f} \, dz,
\end{displaymath}
where $a\colon D \to \mathbb C$ is any holomorphic function on a domain $D \subseteq \mathbb C$ that contains $\sigma_{\mathcal H}(f)$ and $\Gamma$ is an appropriate union of Jordan curves that encircle $\sigma_{\mathcal H}(f)$.
In particular, for every $\xi \in \mathcal H$ satisfying $\xi(x) \geq \varepsilon > 0$ for all $x \in X$, the $n$-th root $\xi^{1/n}$ lies in $\mathcal H$ for every $n \in \mathbb N$ by the holomorphic functional calculus.
As we will see in \cref{sec:tensornet}, the well-definition of such roots as elements of $\mathcal H$ plays a key role in our tensor network approximation scheme.

\subsection{Examples of RKHAs}
\label{sec:rkha_examples}

The main class of examples considered here are built on compact abelian groups, $G$.
Let $\hat G$ be the Pontryagin dual of $\hat G$, and $\mu$, $\hat{\mu}$ the Haar measures on $G$, $\hat G$, respectively.
We use $\mathcal F \colon L^1(G) \to C_0(\hat{G})$ and $\hat{\mathcal F} \colon L^1(\hat{G}) \to C_0(G)$ to denote the Fourier transforms, and $*\colon L^1(\hat G) \times L^1(\hat G) \to L^1(\hat G)$ to denote convolution on the dual group:
$$(\mathcal F f)(\gamma) = \int_G f(x)\gamma(-x)\, d\mu(x), \quad (\hat{\mathcal F}\hat{f})(x) = \int_{\hat{G}} \hat{f}(\gamma)\gamma(x)\,d\hat{\mu}(x), \quad (\hat f * \hat g)(\gamma) = \int_{\hat G} \hat f(\gamma') g(\gamma-\gamma')\, d\hat\mu(\gamma'). $$
Note that, by compactness of $G$, the dual group $\hat G$ has a discrete topology and $\hat \mu$ is a counting measure.
See, e.g., \cite{Rudin17} for further details on Fourier analysis on groups.

By Bochner's theorem, for every positive function $\lambda \in L^1(\hat G)$ (referred to here as inverse weight) the translation-invariant function $k \colon G \times G \to \mathbb C$ defined as $k(x, x') = (\hat{\mathcal F} \lambda)(x - x')$ is a continuous, positive-definite kernel with an associated RKHS $\mathcal H$ of continuous functions.
The space $\mathcal H$ can be characterized in terms of a decay condition on Fourier coefficients,
\begin{displaymath}
    \mathcal H = \left\{ f \in C(G):\; \sum_{\gamma\in\hat G} \frac{\lvert \mathcal F f(\gamma)\rvert^2}{\lambda(\gamma)} < \infty \right\}.
\end{displaymath}
Moreover, by Mercer's theorem, the reproducing kernel $k$ can be expressed in terms of the uniformly convergent series 
\begin{displaymath}
    k(x, x') = \sum_{\gamma \in \hat G} \lambda(\gamma)\overline{\gamma(x)}\gamma(x'), \quad \forall x, x' \in G,
\end{displaymath}
and $ \{ \psi_\gamma := \sqrt{\lambda(\gamma)}\gamma \}_{\lambda(\gamma)\neq 0}$ is an orthonormal basis of $\mathcal H$.
If $\lambda$ is a strictly positive function, then $\mathcal H$ is a dense subspace of $C(G)$ (equivalently, $k$ is a so-called universal kernel \cite{SriperumbudurEtAl11}) and the corresponding feature map $\varphi\colon G \to \mathcal H$ is injective.

\begin{defn}
    A strictly positive, summable function $\lambda \colon \hat G \to \mathbb R_{>0}$ is said to be \emph{subconvolutive} if there exists $C>0$ such that $\lambda * \lambda \leq C \lambda$ pointwise on $\hat G$.
\end{defn}

Subconvolutive functions have a rich history of study in the context of Beurling convolution algebras, e.g., \cites{Feichtinger79,Grochenig07,Kaniuth09}.
In \cite{GiannakisMontgomery25}, the RKHS $\mathcal H$ induced by a subconvolutive function $\lambda$ is shown to be an RKHA and $\Delta$ is easily diagonalizable by
$$\Delta\psi_\gamma = \sum_{\alpha+\beta=\gamma} \sqrt{\frac{\lambda(\alpha)\lambda(\beta)}{\lambda(\gamma)}} \psi_\alpha \otimes \psi_\beta, \quad \gamma \in \hat{G}.$$
Standard examples for $G = \mathbb T^N$, $\hat G \cong Z^N$, include the families of functions $\{ \lambda_\tau \colon \mathbb Z^N \to \mathbb R_{>0} \}_{\tau>0}$ with subexponential decay, where
\begin{equation}
    \label{eq:lambda_subexp}
    \lambda_\tau(j) = \prod_{i=1}^N e^{-\tau \lvert j_i \rvert^p}
\end{equation}
for fixed $p \in (0,1)$.
For each such $p$, we have a one-parameter family of unital RKHAs $\mathcal H_\tau$ generated by a Markovian family of kernels $k_\tau$, i.e., $k_\tau \geq 0$ and
\begin{equation}
    \label{eq:markov}
    k_\tau(x, x') \geq 0, \quad \int_G k_\tau(x, \cdot) \, d\mu = 1, \quad \forall x, x' \in X;
\end{equation}
see \cite{DasGiannakis23}.
Moreover, in this setting the elements $\gamma$ of the dual group $\hat G$ are standard Fourier functions, $\gamma_j(x)=e^{i j \cdot x}$ for some $j \in \mathbb Z^N$ using the parameterization $x \in [0, 2\pi)^N$ for the $N$-torus.

\begin{defn}
    A function $\lambda\colon \hat G \to \mathbb R_{>0}$ is said to satisfy the \emph{Gelfand--Raikov--Shilov (GRS)} condition if
    \begin{displaymath}
        \lim_{n \to \infty} \lambda(n\gamma)^{1/n} = 1, \quad \forall \gamma \in \hat{G},
    \end{displaymath}
    and the \emph{Beurling--Domar (BD)} condition if
    \begin{displaymath}
        \sum_{n=1}^\infty \dfrac{\ln(\lambda^{-1}(n\gamma))}{n^2} < \infty, \quad \forall \gamma \in \hat{G}.
    \end{displaymath}
\end{defn}

Functions in the subexponential family from~\eqref{eq:lambda_subexp} satisfy both the GRS and BD conditions, which has two implications for $\mathcal H_\tau$.
First, the GRS condition implies that the associated map $\hat \varphi_\tau\colon G \to \sigma(\mathcal H_\tau)$ is a homeomorphism \cites{DasEtAl23,GiannakisMontgomery25}.
Second, as shown in \cite{GiannakisMontgomery25}, the BD condition implies that $\mathcal H_\tau$ contains functions with arbitrary compact support.
These two conditions play a vital role for the construction of RKHAs on generic compact Hausdorff spaces $X \subseteq \mathbb T^N$ (e.g., attractors of dynamical systems).
We may restrict to $\mathcal H_\tau(X) := \overline{\spn \left\lbrace k_{\tau,x} \mid x \in X \right \rbrace}$ which is also an RKHA with comultiplication $\Delta|_{\mathcal H_\tau(X)}$.
More importantly, the spectrum of $\mathcal H_\tau(X)$ as a Banach algebra is $X$, hence, $\sigma_{\mathcal H_\tau(X)}(f) = f(X)$.
This enables the use of the holomorphic functional calculus of functions in $\mathcal H_\tau(X)$ as outlined in \cref{sec:rkha_defs}.

\section{Tensor network approximation method}
\label{sec:tensornet}

Having surveyed the relevant notions from quantum theory and RKHS theory, in this section we describe our tensor network approximation framework for the Koopman evolution of observables.
The primary elements of the scheme are (i) spectral approximation of the generator to yield a diagonalizable operator acting on an RKHA; (ii) lifting to a Fock space generated by the RKHA where the approximate generator acts as a free derivation; and (iii) finite-rank compression performed at the level of the Fock space, resulting in an approximation implementable as a TTN.
These topics are discussed in \cref{sec:spectral_approx,sec:fock,sec:finite_rank}, respectively, following basic definitions and constructions related to the dynamical system in \cref{sec:dyn_syst}.
The main approximation result in this section is \cref{thm:tau_conv} that establishes asymptotic convergence of the quantum evolution of the Fock space to the true statistical evolution under the transfer operator.
\Cref{fig:network} displays the tensor network architecture stemming from our approach.

\begin{figure}
    \centering
    \begin{tikzpicture}[align]
    \clip[rounded corners] (-1.5,0) rectangle (19.5,6.5);

    \draw[very thick] (-0.5,1)--(10,1);
    \draw[very thick] (-0.5,4)--(5.5,4);
    \draw[very thick] (-0.5,5)--(4,5);
    \draw[very thick] (4.5,3)--(6.5,3);
    \draw[very thick] (6.5,2)--(7.5,2);

    \node at (-0.5,1)[Tbox, minimum height=.5cm, minimum width=.5cm]{\scriptsize $\xi^\dag_{\tau, n,d}$};
    \node at (-0.5,5)[Tbox, minimum height=.5cm, minimum width=.5cm]{\scriptsize $\xi^\dag_{\tau, n,d}$};
    \node at (-0.5,4)[Tbox, minimum height=.5cm, minimum width=.5cm]{\scriptsize $\xi^\dag_{\tau, n,d}$};

    \node at (2.5,4)[Tbox, minimum height=.5cm, minimum width=.5cm]{\scriptsize $\tilde G_{\tau, \sigma}$};
    \node at (2.5,5)[Tbox, minimum height=.5cm, minimum width=.5cm]{\scriptsize $\tilde G_{\tau, \sigma}$};
    \node at (2.5,1)[Tbox, minimum height=.5cm, minimum width=.5cm]{\scriptsize $\tilde G_{\tau, \sigma}$};

    \node at (1,4)[Tbox, minimum height=.5cm, minimum width=.5cm]{\scriptsize $U^t_\tau$};
    \node at (1,5)[Tbox, minimum height=.5cm, minimum width=.5cm]{\scriptsize $U^t_\tau$};
    \node at (1,1)[Tbox, minimum height=.5cm, minimum width=.5cm]{\scriptsize $U^t_\tau$};

    \node at (4,4.5)[Tbox, minimum height=1cm, minimum width=.5cm]{\scriptsize $\Delta^*$};
    \node at (5.5,3.5)[Tbox, minimum height=1cm, minimum width=.5cm]{\scriptsize $\Delta^*$};
    \node at (7.5,1.5)[Tbox, minimum height=1cm, minimum width=.5cm]{\scriptsize $\Delta^*$};

    \node at (9,1)[Tbox, minimum height=.5cm, minimum width=.5cm]{\scriptsize $M_{f,\tau}$};

    \node at (1,2){$\cdot$};
    \node at (1,2.5){$\cdot$};
    \node at (1,3){$\cdot$};

    \node at (2.5,2){$\cdot$};
    \node at (2.5,2.5){$\cdot$};
    \node at (2.5,3){$\cdot$};

    \node at (6.5,2.75){$\cdot$};
    \node at (6.5,2.5){$\cdot$};
    \node at (6.5,2.25){$\cdot$};

    \draw[very thick] (18.5,1)--(10,1);
    \draw[very thick] (18.5,4)--(12.5,4);
    \draw[very thick] (18.5,5)--(14,5);
    \draw[very thick] (13.5,3)--(11.5,3);
    \draw[very thick] (11.5,2)--(10.5,2);

    \node at (18.5,1)[Tbox, minimum height=.5cm, minimum width=.5cm]{\scriptsize $\xi_{\tau, n,d}$};
    \node at (18.5,5)[Tbox, minimum height=.5cm, minimum width=.5cm]{\scriptsize $\xi_{\tau, n,d}$};
    \node at (18.5,4)[Tbox, minimum height=.5cm, minimum width=.5cm]{\scriptsize $\xi_{\tau, n,d}$};

    \node at (17,4)[Tbox, minimum height=.5cm, minimum width=.5cm]{\scriptsize $U^{t*}_\tau$};
    \node at (17,5)[Tbox, minimum height=.5cm, minimum width=.5cm]{\scriptsize $U^{t*}_\tau$};
    \node at (17,1)[Tbox, minimum height=.5cm, minimum width=.5cm]{\scriptsize $U^{t*}_\tau$};

    \node at (15.5,4)[Tbox, minimum height=.5cm, minimum width=.5cm]{\scriptsize $\tilde G_{\tau, \sigma}$};
    \node at (15.5,5)[Tbox, minimum height=.5cm, minimum width=.5cm]{\scriptsize $\tilde G_{\tau, \sigma}$};
    \node at (15.5,1)[Tbox, minimum height=.5cm, minimum width=.5cm]{\scriptsize $\tilde G_{\tau, \sigma}$};

    \node at (14,4.5)[Tbox, minimum height=1cm, minimum width=.5cm]{\scriptsize $\Delta$};
    \node at (12.5,3.5)[Tbox, minimum height=1cm, minimum width=.5cm]{\scriptsize $\Delta$};
    \node at (10.5,1.5)[Tbox, minimum height=1cm, minimum width=.5cm]{\scriptsize $\Delta$};

    \node at (17,2){$\cdot$};
    \node at (17,2.5){$\cdot$};
    \node at (17,3){$\cdot$};

    \node at (15.5,2){$\cdot$};
    \node at (15.5,2.5){$\cdot$};
    \node at (15.5,3){$\cdot$};

    \node at (11.5,2.75){$\cdot$};
    \node at (11.5,2.5){$\cdot$};
    \node at (11.5,2.25){$\cdot$};

    \draw[rounded corners, very thick] (-1.5,0) rectangle (19.5,6.5);
    \end{tikzpicture}
    \caption{Tree tensor network used to compute the Fock space evolution $f^{(t)}_{\sigma, \tau, n, d}$ in~\eqref{eq:ft_fock_d}. Structurally, the comultiplication operators form a tree surrounded by smoothing operators $\tilde G_{\tau, \sigma}$.}
    \label{fig:network}
\end{figure}

\subsection{Dynamical system and function spaces}
\label{sec:dyn_syst}

We consider a dynamical flow $\Phi^t\colon G \to G$, $t \in \mathbb R$, on a compact Lie group $G$ generated by a continuous vector field $\vec V\colon G \to TG$, and possessing a Borel, invariant, ergodic probability measure $\mu$ with (compact) support $X \subseteq G$.
Following the notation of \cref{sec:background}, we let $U^t \colon L^p(\mu) \to L^p(\mu)$ be the associated Koopman operators, $U^t f = f \circ \Phi^t$, defined for $p \in [1, \infty]$ and $t \in \mathbb R$.
We also let $V\colon D(V) \to H$ be the skew-adjoint generator on $H=L^2(\mu)$ defined as in~\eqref{eq:generator}.
Note that $V$ reduces to a directional derivative along $\vec V$ when acting on continuously differentiable functions.
That is, we have
\begin{equation}
    \label{eq:v_dot_grad}
    V \iota f = \iota \vec V \cdot \nabla f, \quad \forall f \in C^1(G),
\end{equation}
where $\iota \colon C(G) \to L^p(\mu)$ denotes the restriction map from continuous functions to $L^p$ spaces.

We assume that $G$ is equipped with a one-parameter family of kernels $k_\tau \colon G \times G \to \mathbb R_+$, $\tau > 0$, such that the corresponding RKHSs $\mathcal H_\tau$ are unital RKHAs.
In addition, we require that the kernels $k_\tau$ have the following properties:
\begin{enumerate}[label=(K\arabic*)]
    \item \label[prty]{prty:K1} $k_\tau$ is continuous, and $\mathcal H_\tau$ is a dense subspace of $C(G)$.
    \item \label[prty]{prty:K2} For every invariant vector field $\partial$ on $G$, the derivatives $(\partial \times \partial) k_\tau$ exist and are continuous.
    \item \label[prty]{prty:K3} $k_\tau$ is Markovian with respect to $\mu$; i.e., \eqref{eq:markov} holds.
    \item \label[prty]{prty:K4} The integral operators $\mathcal K_\tau \colon L^p(\mu) \to L^p(\mu)$, where $p \in [1, \infty)$ and $\mathcal K_\tau f = \int_X k_\tau(\cdot, x) f(x) \, d\mu(x)$, are bounded and have the semigroup property, $\mathcal K_\tau \mathcal K_{\tau'} = \mathcal K_{\tau+\tau'}$.
    Moreover, $\mathcal K_\tau$ converges strongly to the identity on $L^p(\mu)$ as $\tau \to 0^+$.
    \item \label[prty]{prty:K7} The RKHAs $\mathcal H_\tau(X)$ have homeomorphic spectra $\sigma(\mathcal H_\tau)$ to $X$.
\end{enumerate}

By standard RKHS results (e.g., \cite{SteinwartChristmann08}*{Chapter~4}), $\mathcal K_\tau \colon H \to H$ is a strictly positive, self-adjoint, trace class operator.
This operator admits the factorization $\mathcal K_\tau = K_\tau^* K_\tau$, where $K_\tau \colon H \to \mathcal H_\tau$, $K_\tau f = \int_X k_\tau(\cdot, x) f(x) \, d\mu(x)$, is Hilbert--Schmidt, and $K_\tau^*$ implements the restriction map from $\mathcal H_\tau$ to $H$, $K_\tau^* = \iota \rvert_{\mathcal H_\tau}$.
This latter map restricts to a compact embedding $K_\tau^* \colon \mathcal H_\tau(X) \hookrightarrow H$ with dense image $H_\tau:= K_\tau^*(\mathcal H_\tau) \subseteq H$.
The pseudoinverse $K_\tau^{*+}\colon H_\tau \to \mathcal H_\tau(X)$ is a (possibly unbounded) operator, known as the Nystr\"om operator, that recovers the representative in $\mathcal H_\tau(X)$ of an element $f \in H_\tau$; i.e.\ $K_\tau^* K_\tau^{*+} f = f$.
The Nystr\"om operator induces a Dirichlet energy functional $\mathcal E_\tau \colon H_\tau \to \mathbb R_+$,
\begin{displaymath}
    \mathcal E_\tau(f) = \frac{\lVert K_\tau^{*+}f \rVert_{\mathcal H_\tau}^2}{\lVert f \rVert_H^2} - 1.
\end{displaymath}
Intuitively, $\mathcal E_\tau(f)$ measures an average notion of spatial variability of elements of $H_\tau$; in particular, $\mathcal E_\tau(f) = 0$ iff $f$ is $\mu$-.a.e.\ constant.
We also have that $\ran K_\tau$ is a dense subspace of $\mathcal H_\tau(X)$ and, by \Cref{prty:K2}, $\ran K_\tau^*$ lies in the domain $D(V)$ of the generator.
Since $\mathcal H_\tau$ is a subspace of $C(G)$, we have $\ran K_\tau^* \subset \mathfrak A \equiv L^\infty(\mu) $.
Thus, viewed as a map from $\mathcal H_\tau$ into $\mathfrak A$, $K_\tau^*$ is multiplicative when $\mathcal H_\tau$ is an RKHA,
\begin{displaymath}
    K_\tau^*(fg) = (K_\tau^* f)(K_\tau^* g), \quad \forall f, g \in \mathcal H_\tau.
\end{displaymath}

The following are consequences of the semigroup property in~\ref{prty:K4} \cite{DasEtAl21}:
\begin{enumerate}[label=(K\arabic*), resume]
    \item \label[prty]{prty:K5} The spaces $\mathcal H_\tau$ form a nested family, $\mathcal H_\tau \subseteq \mathcal H_{\tau'}$ for $0 < \tau' < \tau$.
    \item \label[prty]{prty:K6} $K_\tau$ admits the polar decomposition $K_\tau = T_\tau \mathcal K_{\tau/2}$, where $T_\tau\colon H \to \mathcal H_\tau$ is an isometry with $\ran T_\tau = \mathcal H_\tau(X)$ and $\ran T_\tau^* = H$.
\end{enumerate}

Define now the space $\mathcal H_\infty = \bigcap_{\tau>0} \mathcal H_\tau(X)$.
Another consequence of \cref{prty:K4} is that the operators $\mathcal K_\tau$ are simultaneously diagonalized in an orthonormal basis $ \{ \phi_j \}_{j \in \mathbb N_0}$ of $H$ with representatives $\varphi_j \in \mathcal H_\infty$, viz.
\begin{equation}
    \label{eq:k_eig}
    \mathcal K_\tau \phi_j = \Lambda_{j,\tau} \phi_j, \quad \phi_j = \iota \varphi_j, \quad \varphi_j = \frac{1}{\Lambda_{j,\tau}} K_\tau \phi_j,
\end{equation}
where the eigenvalues $\Lambda_{j,\tau}$ are strictly positive and satisfy $\Lambda_{j,\tau} \Lambda_{j,\tau'} = \Lambda_{j,\tau + \tau'}$.
The representatives $\varphi_j \in \mathcal H_\infty$ can be employed in a Mercer sum of the restriction of the kernel $k_\tau$ on $X \times X$,
\begin{displaymath}
    k_\tau(x, x') = \sum_{j=0}^\infty \Lambda_{j,\tau} \overline{\varphi_j(x)} \varphi_j(y), \quad \forall x, x' \in X.
\end{displaymath}
Moreover, the set $ \{ \psi_{j,\tau} \}_{j \in \mathbb N_0}$ with
\begin{equation}
    \label{eq:psi}
    \psi_{j,\tau} = \frac{1}{\Lambda_{j,\tau/2}} K_\tau \phi_j = \Lambda_{j,\tau/2} \varphi_j
\end{equation}
is an orthonormal basis of $\mathcal H_\tau(X)$.

For later convenience, we note that the Nystr\"om representative $K_\tau^{*+}$ of $f=\sum_{j\in \mathbb N_0} c_j \phi_j \in H_\tau$ is given by
\begin{displaymath}
    K_\tau^{*+} f = \sum_{j=0}^\infty \frac{c_j}{\Lambda_{j,\tau/2}} \psi_{j,\tau}.
\end{displaymath}
This leads to the following formula for the Dirichlet energy,
\begin{equation}
    \label{eq:dirichlet}
    \mathcal E_\tau(f) = \frac{\sum_{j=1}^\infty \lvert c_j\rvert^2\Lambda_{j,\tau}^{-1}}{\sum_{j=0}^\infty \lvert c_j\rvert^2}.
\end{equation}
Similarly, for $f = \sum_{j\in \mathbb N_0} c_j \varphi_j \in \ran K_\tau$, the pseudoinverse $K_\tau^+ f \in H$ can be computed as
\begin{equation*}
    K_\tau^+ f = \sum_{j=0}^\infty \frac{c_j}{\Lambda_{j,\tau}} \phi_j.
\end{equation*}

\begin{example*}[Circle rotation]
    In the circle rotation example of \cref{sec:circlerot} we have $G = X = \mathbb T$, and we build $k_\tau$ from the subexponential families of weights in~\eqref{eq:lambda_subexp} with $p=0.75$ and $\tau=0.005$.
    The resulting spaces $\mathcal H_\tau$ then satisfy all of \crefrange{prty:K1}{prty:K7}.
    In addition, the functions $\varphi_j \in \mathcal H_\infty$ were chosen as characters $\gamma$ of the group (Fourier functions) as described in \cref{sec:rkha_examples}.
    We will employ analogous RKHA constructions in the numerical experiments of \cref{sec:experiments} performed on the 2-torus.
\end{example*}

In addition to these examples, our approach can be generalized to cases where the support $X$ of the invariant measure is a strict subset of $G$, and/or cases where $G$ is a forward-invariant subset of an ambient metric space $\mathcal M$ in which the flow $\Phi^t \colon \mathcal M \to \mathcal M$ takes place.

Next, we consider the parameter $\tau$ in the interval $(0,1]$ and note that $\mathcal H_1 = \bigcap_{\tau \in (0,1]} \mathcal H_\tau$ since the spaces $\mathcal H_\tau$ form a nested family.
Letting $\iota\colon \mathcal H_1 \to L^r(\mu)$, $r \in [1, \infty]$, be the corresponding restriction map into any of the $L^r$ spaces associated with the invariant measure we see from~\eqref{eq:dirichlet} that $\iota(\mathcal H_1)$ contains elements of $H$ with uniformly bounded Dirichlet energies over $\tau \in (0,1]$.
Moreover, by our choice of reproducing kernels for $\mathcal H_\tau$, $\iota \mathcal H_1$ lies dense in $L^r(\mu)$ for all $r \in [1,\infty)$.
In particular, with $\mathfrak A = L^\infty(\mu)$ as in \cref{sec:quantum}, we show in the following lemma that the evolution $\mathbb E_q (U^t f)$ of an observable $f \in \mathfrak A$ with respect to an probability density $q \in S_*(\mathfrak A)$ can be approximated at arbitrary accuracy by the evolution $\mathbb E_p (U^t f)$ with respect to a density $ p \in S_*(\mathfrak A)$ with a strictly positive representative $\varrho \in \mathcal H_1$.
Strict positivity of $\varrho$ is important when $\mathcal H_1$ satisfies \cref{prty:K7} for it implies that the $n$-th root $\varrho^{1/n}$ is a well-defined element of $\mathcal H_1$ for every $n \in \mathbb N$ by the holomorphic functional calculus.

\begin{lem}
    \label{lem:prob}
    For every $q \in S_*(\mathfrak A)$ and $\epsilon>0$, there exists a probability density $p = \iota\varrho \in \mathfrak A_*$ with a representative $\varrho \in \mathcal H_1$ such that (i) $\varrho(x) > 0$ for all $x \in G$; and (ii) for every $f \in \mathfrak A$ and $t \in \mathbb R$, $ \lvert \mathbb E_p(U^t f) - E_q (U^t f) \rvert < \epsilon$.
\end{lem}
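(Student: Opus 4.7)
The plan is to construct $p$ by approximating $q$ by a strictly positive element of $\mathcal H_1$ and then bounding the difference of Koopman expectations via the usual $L^1$--$L^\infty$ duality. Concretely, since $U^t$ is an isometry of $\mathfrak A = L^\infty(\mu)$, one has
\begin{displaymath}
    \lvert \mathbb E_p(U^t f) - \mathbb E_q(U^t f)\rvert = \left\lvert \int_X (p-q)(U^t f) \, d\mu \right\rvert \leq \lVert p-q\rVert_{L^1(\mu)} \lVert f\rVert_{\mathfrak A},
\end{displaymath}
uniformly in $t \in \mathbb R$. So it suffices to produce a strictly positive $\varrho \in \mathcal H_1$ with $\int_X \iota\varrho\,d\mu = 1$ and $\lVert \iota\varrho - q\rVert_{L^1(\mu)}$ as small as needed.

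The main obstacle is that a naive $L^1$-approximation of $q$ by elements of $\mathcal H_1$ will not respect nonnegativity, let alone strict positivity. The trick I would use is to approximate the \emph{square root} $q^{1/2}$ (which lies in $L^2(\mu)$ with unit norm because $q$ is a probability density) and then exploit the RKHA algebra structure. As stated in \cref{sec:overview_fock}, $\iota\mathcal H_1$ is dense in $L^r(\mu)$ for $r \in [1,\infty)$, so for any $\epsilon_1 > 0$ I can pick a real-valued $\tilde\varrho \in \mathcal H_1$ with $\lVert \iota\tilde\varrho - q^{1/2}\rVert_{L^2(\mu)} < \epsilon_1$. Since $\mathcal H_1$ is a Banach $*$-algebra under pointwise operations, $\tilde\varrho^2 \in \mathcal H_1$ and $\iota(\tilde\varrho^2) = (\iota\tilde\varrho)^2 \geq 0$ $\mu$-a.e.

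To upgrade nonnegativity to strict positivity everywhere on $G$, I use that $\mathcal H_1$ is unital: for any $\delta > 0$, set
\begin{displaymath}
    \varrho_\delta := \tilde\varrho^2 + \delta \bm 1 \in \mathcal H_1, \qquad \varrho_\delta(x) \geq \delta > 0 \quad \forall x \in G.
\end{displaymath}
Normalize by $Z_\delta := \int_X \iota\varrho_\delta \, d\mu = \lVert \iota\tilde\varrho\rVert_{L^2(\mu)}^2 + \delta$ and define $\varrho := Z_\delta^{-1}\varrho_\delta \in \mathcal H_1$, which is again strictly positive on $G$, and set $p := \iota\varrho \in S_*(\mathfrak A)$. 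This gives property (i).

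For property (ii), a routine triangle-inequality and Cauchy--Schwarz estimate controls $\lVert p - q\rVert_{L^1(\mu)}$: first,
\begin{displaymath}
    \lVert (\iota\tilde\varrho)^2 - q\rVert_{L^1(\mu)} \leq \lVert \iota\tilde\varrho - q^{1/2}\rVert_{L^2(\mu)} \, \lVert \iota\tilde\varrho + q^{1/2}\rVert_{L^2(\mu)} \leq \epsilon_1(2 + \epsilon_1),
\end{displaymath}
from which $\lvert Z_\delta - 1\rvert \leq \epsilon_1(2 + \epsilon_1) + \delta$, and then
\begin{displaymath}
    \lVert p - q\rVert_{L^1(\mu)} \leq \lvert Z_\delta^{-1} - 1\rvert \lVert \iota\varrho_\delta\rVert_{L^1(\mu)} + \lVert \iota\varrho_\delta - q\rVert_{L^1(\mu)} \leq 2\epsilon_1(2 + \epsilon_1) + 2\delta.
\end{displaymath}
Choosing $\epsilon_1$ and $\delta$ small enough (depending on the prescribed tolerance and, if the intended bound is $\epsilon \lVert f\rVert_{\mathfrak A}$ as in the overview, on $\lVert f\rVert_{\mathfrak A}$) makes this smaller than $\epsilon$ and combines with the $L^1$--$L^\infty$ estimate above to yield (ii). The only subtlety to get right is the bookkeeping ensuring the $L^1$ error accumulated through squaring, shifting by $\delta\bm 1$, and renormalizing remains controlled by a single parameter.
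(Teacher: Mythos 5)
Your proof is correct and takes a genuinely different route from the paper's. Both arguments begin identically: by isometry of $U^t$ on $\mathfrak A$ and the bound $\lvert \mathbb E_p(U^t f) - \mathbb E_q(U^t f)\rvert \leq \lVert p-q\rVert_{L^1(\mu)}\lVert f\rVert_{\mathfrak A}$, it suffices to produce a normalized, everywhere-positive $\varrho \in \mathcal H_1$ whose restriction is $L^1$-close to $q$. Where you diverge is in how positivity is engineered. The paper first chooses a nonnegative $g \in C(G)$ with $\lVert q - \iota g\rVert_{L^1(\mu)} < \varepsilon$, shifts to $g_\delta = g + \delta$, and then uses density of $\mathcal H_1$ in $C(G)$ to find $h_\delta \in \mathcal H_1$ with $\lVert h_\delta - g_\delta\rVert_{C(G)} \leq \delta/2$; strict positivity of $h_\delta$ then falls out of the \emph{uniform} approximation of a function bounded below by $\delta$. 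You instead approximate $q^{1/2}$ in $L^2(\mu)$ by a real-valued $\tilde\varrho \in \mathcal H_1$, square it inside the Banach algebra to get a nonnegative element, and add $\delta\bm 1$. Your approach leans on two structural features the paper's proof avoids using: closure of $\mathcal H_1$ under complex conjugation (so a real-valued approximant exists) and the Banach algebra structure (so $\tilde\varrho^2 \in \mathcal H_1$), but in exchange only needs $L^2$ density rather than density in $C(G)$, and it collapses the two approximation steps into one. It also dovetails nicely with the rest of the paper, where the state vector $\xi = \varrho^{1/2}$ plays the leading role, so working at the level of square roots is a natural instinct. Your bookkeeping of the $L^1$ error through squaring, shifting, and renormalizing is correct; one tiny point worth sharpening in a final write-up is that because $\tilde\varrho$ is real-valued \emph{as a function on $G$}, the inequality $\tilde\varrho^2(x) \geq 0$ holds for every $x \in G$, not merely $\mu$-a.e., which is exactly what you need for part (i) after adding $\delta\bm 1$.
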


\begin{proof}
    Since $U^t$ acts as an isometry of $\mathfrak A$, it suffices to prove the lemma for $t=0$.
    To that end, for $\varepsilon >0$ choose $g \in C(G)$, $g \geq 0$, such that $\lVert q - \iota g\rVert_{L^1(\mu)} < \varepsilon$.
    For $\delta \geq 0$, define $g_\delta \in C(G)$ such that $g_\delta(x) = g(x) + \delta$.
    By density of $\mathcal H_1$ in $C(G)$, there exists $h_\delta \in \mathcal H_1$ such that $\lVert h_\delta - g_\delta\rVert_{C(G)} \leq \delta/2$.
    In particular, $h_\delta$ is a strictly positive function with $h_\delta(x) \geq \delta / 2$.
    We also have $\lVert \iota g_\delta \rVert_{L^1(\mu)} = 1 + \delta$, and since
    \begin{displaymath}
        \left\lvert \lVert \iota h_\delta\rVert_{L^1(G)} - \lVert \iota g_\delta\rVert_{L^1(G)} \right\rvert \leq \lVert \iota(h_\delta - g_\delta)\rVert_{L^1(\mu)} \leq \lVert h_\delta - g_\delta\rVert_{C(G)} \leq \delta/2,
    \end{displaymath}
    it follows that $ c_\delta := \lVert \iota h_\delta\rVert_{L^1(\mu)} \geq 1$.
    We let $M$ be an upper bound for $\lVert h_\delta\rVert_{C(G)} / c_\delta$ over $\delta \geq 0$.

    Next, define $\varrho_\delta = h_\delta / c_\delta$.
    Then, $p_\delta := \iota \varrho_\delta$ is a probability density in $L^1(\mu)$ with a strictly positive representative $\varrho_\delta \in \mathcal H_1$.
    In addition, we have
    \begin{align*}
        \lVert q - p_\delta \rVert_{L^1(\mu)} &= \lVert q - \iota \varrho_\delta + \iota g - \iota g \rVert_{L^1(\mu)} \leq \lVert q - \iota g \rVert_{L^1(\mu)} + \lVert \varrho_\delta - g \rVert_{C(G)} < \varepsilon + \lVert \varrho_\delta - g \rVert_{C(G)},\\
        \lVert \varrho_\delta - g \rVert_{C(G)} &\leq \lVert \varrho_\delta - g_\delta \rVert_{C(G)} + \lVert \rho_\delta - g_\delta\rVert_{C(G)} \leq \lVert \varrho_\delta - g_\delta \rVert_{C(G)} + \delta,\\
        \lVert \varrho_\delta - g_\delta\rVert_{C(G)} &\leq \lVert \varrho_\delta - h_\delta \rVert_{C(G)} + \delta / 2, \\
        \lVert \varrho_\delta - h_\delta\rVert_{C(G)} &=  \lVert h_\delta\rVert_{C(G)} \frac{\lvert 1 - c_\delta\rvert}{c_\delta} \leq M \lvert 1 - c_\delta\rvert.
    \end{align*}
    Moreover, since $c_\delta \geq 1$,
    \begin{align*}
        \lVert 1 - c_\delta \rVert &= c_\delta - 1 \\
                                   &\leq \lVert \iota h_\delta - \iota g_\delta\rVert_{L^1(\mu)} + \lVert g_\delta\rVert_{L^1(\mu)} - 1 \leq \lVert h_\delta - g_\delta\rVert_{C(G)} + \lVert g_\delta\rVert_{L^1(\mu)} - 1\\
        &\leq \frac{\delta}{2} + 1 + \delta - 1 = \frac{3\delta}{2}.
    \end{align*}
    Combining the above estimates, we obtain
    \begin{displaymath}
        \lVert q - p_\delta\rVert_{L^1(\mu)} \leq \varepsilon + \delta + \frac{\delta}{2} + \frac{3M\delta}{2} = \varepsilon + \frac{3(M+1)\delta}{2},
    \end{displaymath}
    and thus $\lVert q - p\rVert_{L^1(\mu)} < \epsilon$ for $p = p_\delta$ and sufficiently small $\varepsilon,\delta$.
    The claim of the lemma follows from $\lvert \mathbb E_p f - E_q f \rvert \leq \lVert p-q\rVert_{L^1(\mu)} \lVert f \rVert_{L^\infty(\mu)}$.
\end{proof}

\subsection{Spectral approximation}
\label{sec:spectral_approx}

We approximate the skew-adjoint generator $V\colon D(V) \to H$ on $H=L^2(\mu)$ by a family of skew-adjoint, diagonalizable operators $W_\tau \colon D(W_\tau) \to \mathcal H_\tau$, $\tau > 0$, acting on RKHSs $\mathcal H_\tau$ of complex-valued, continuous functions on the state space $X$.
The operators $W_\tau$ are built so as to converge spectrally to $V$ as $\tau \to 0^+$ in a sense that we elaborate on in this subsection.

We will make use of the following notions of convergence of skew-adjoint operators; e.g., \cites{Chatelin11,Oliveira09}.

\begin{defn} Let $A\colon D(A) \to \mathbb H$ be a skew-adjoint operator on a Hilbert space $\mathbb H$ and $A_\tau \colon D(A_\tau) \to \mathbb H$ a family of skew-adjoint operators indexed by $\tau$, with resolvents $R_z(A) = (zI - A)^{-1}$ and $R_z(A_\tau) = (zI - A_\tau)^{-1}$ respectively, for a complex number $z$ in the resolvent sets of $A$ and $A_\tau$.
    \begin{enumerate}
        \item The family $A_\tau$ is said to converge in \emph{strong resolvent sense} to $A$ as $\tau\to 0^+$ if for some (and thus, every) $z \in \mathbb C \setminus i \mathbb R$ the resolvents $R_z(A_\tau)$ converge strongly to $R_z(A)$; that is, $\lim_{\tau\to 0^+}R_z(A_\tau) f = R_z(A) f$ for every $f\in \mathbb H$.
        \item The family $A_\tau$ is said to converge in \emph{strong dynamical sense} to $A$ as $\tau\to 0^+$ if for every $t \in \mathbb R$, the unitary operators $e^{tA_\tau}$ converge strongly to $e^{tA}$; that is, $\lim_{\tau\to0^+} e^{tA_\tau} f = e^{tA} f$ for every $f\in \mathbb H$.
    \end{enumerate}
\end{defn}

It can be shown, e.g., \cite{Oliveira09}*{Proposition~10.1.8}, that strong resolvent convergence and strong dynamical convergence are equivalent notions.
For our purposes, this implies that if a family of skew-adjoint operators $V_\tau$ on $H$ converges to the Koopman generator $V$ in strong resolvent sense, the unitary evolution groups $e^{t V_\tau}$ generated by these operators consistently approximate the Koopman group $U^t = e^{t V}$ generated by $V$.
Strong resolvent convergence and strong dynamical convergence also imply a form of strong convergence of spectral measures, e.g., \cite{DasEtAl21}*{Proposition~13}.

With these definitions, and assuming that $k_\tau \colon G \times G \to \mathbb R_+$, $\tau >0$, is a family of kernels satisfying \crefrange{prty:K1}{prty:K6}. the first step in our tensor network approximation scheme is to build a family of approximating operators $V_\tau\colon D(V_\tau) \to H$ with the following properties:

\begin{enumerate}[label=(V\arabic*)]
    \item \label[prty]{prty:V1} $V_\tau$ is skew-adjoint.
    \item \label[prty]{prty:V2} $V_\tau$ is real, i.e., $\overline{V_\tau f} = V_\tau \bar f$ for all $f \in D(V_\tau)$.
    \item \label[prty]{prty:V3} $V_\tau$ is diagonalizable.
    \item \label[prty]{prty:V4} $V_\tau$ has a simple eigenvalue at 0 with $\bm 1$ as a corresponding eigenfunction.
    \item \label[prty]{prty:V5} $D(V_\tau)$ includes $\ran \mathcal K_{\tau/2}$ as a subspace.
    \item \label[prty]{prty:V6} As $\tau \to 0^+$, $V_\tau$ converges in strong resolvent sense, and thus in strong dynamical sense, to $V$.
\end{enumerate}

For any family of operators $V_\tau$ satisfying \crefrange{prty:V1}{prty:V6}, we define the skew-adjoint operators $W_\tau \colon D(W_\tau) \to \mathcal H_\tau$ on the dense domains $D(W_\tau) = T_\tau(D(V_\tau)) \oplus \mathcal H_\tau(X)^\perp $  as $W_\tau = T_\tau V_\tau T_\tau^*$.
Note that the well-definition of $W_\tau$ depends on \cref{prty:V5}.

By \cref{prty:V3}, for every $\tau>0$, $W_\tau$ is a diagonalizable operator.
Moreover, by \cref{prty:V2,prty:V4}, the restriction of $W_\tau$ on $\mathcal H_\tau(X)$ admits an eigendecomposition
\begin{equation}
    \label{eq:w_eig}
    W_\tau \zeta_{j,\tau} = i \omega_{j,\tau} \zeta_{j,\tau}, \quad j \in \mathbb N_0,
\end{equation}
where $\omega_{j,\tau}$ are real eigenfrequencies and the corresponding eigenfunctions $\zeta_{j,\tau}$ form an orthonormal basis of $\mathcal H_\tau$ restricted on the support of $\mu$.
The eigenfrequencies satisfy $\omega_{0,\tau} = 0$ and $\omega_{2j,\tau} = - \omega_{2j-1,\tau}$ for $ j \in \mathbb N$, and are ordered in increasing order of a Dirichlet energy associated with $\mathcal H_\tau$.
Moreover, we have $\zeta_{0,\tau} = \bm 1$, and the eigenvectors corresponding to nonzero eigenvalues form complex-conjugate pairs, $\zeta_{2j,\tau}^* = - \zeta_{2j-1,\tau}$.
We denote the unitary evolution operators on $\mathcal H_\tau$ generated by $W_\tau$ as $U^t_\tau := e^{t W_\tau}$, $t \in \mathbb R$.

The following lemma establishes how the unitaries $e^{t V_\tau}$  and $e^{t W_\tau}$ can be used interchangeably to approximate the Koopman evolution of observables in $H$.

\begin{lem}
    \label{lem:koopman_rkhs_approx}
    For every $f \in H$, $t \in \mathbb R$, and $\tau>0$, we have $K_\tau^* e^{t W_\tau} K_\tau f= \mathcal K_{\tau/2}e^{tV_\tau} \mathcal K_{\tau/2}f$.
    Moreover, the following hold.
     \begin{enumerate}
         \item For every $f \in H$, $\lim_{\tau\to 0^+} K_\tau^* e^{t W_\tau} K_\tau f = U^t f$.
         \item For every $f \in \mathcal H_1$, $\lim_{\tau\to 0^+} K_\tau^* e^{t W_\tau} f = U^t \iota f$.
     \end{enumerate}
\end{lem}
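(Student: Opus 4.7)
My plan is to first establish the operator identity $K_\tau^* e^{t W_\tau} K_\tau = \mathcal K_{\tau/2} e^{t V_\tau} \mathcal K_{\tau/2}$ by a direct computation using the polar decomposition $K_\tau = T_\tau \mathcal K_{\tau/2}$ from \cref{prty:K6} and the definition $W_\tau = T_\tau V_\tau T_\tau^*$. Since $T_\tau$ is an isometry, $T_\tau^* T_\tau = \Id_H$, whereas $T_\tau T_\tau^*$ is the orthogonal projection onto $\mathcal H_\tau(X)$. Consequently $W_\tau$ vanishes on $\mathcal H_\tau(X)^\perp$, so $e^{t W_\tau}$ restricts to the identity there and to $T_\tau e^{t V_\tau} T_\tau^*$ on $\mathcal H_\tau(X)$. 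Sandwiching with $K_\tau^* = \mathcal K_{\tau/2} T_\tau^*$ on the left and $K_\tau = T_\tau \mathcal K_{\tau/2}$ on the right and collapsing via $T_\tau^* T_\tau = \Id_H$ yields the identity. The same manipulation applied to a general $f \in \mathcal H_\tau$, together with the fact that $K_\tau^*$ annihilates $\mathcal H_\tau(X)^\perp$, also gives the more general formula $K_\tau^* e^{t W_\tau} f = \mathcal K_{\tau/2} e^{t V_\tau} T_\tau^* f$ that will be needed in Part~2.

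For Part~1, I would argue by triangle inequalities. \Cref{prty:K3} together with symmetry of $k_\tau$ makes $\mathcal K_{\tau/2}$ a self-adjoint contraction on $H$. Splitting
\begin{displaymath}
\mathcal K_{\tau/2} e^{tV_\tau}\mathcal K_{\tau/2} f - U^t f = \mathcal K_{\tau/2}(e^{tV_\tau} - U^t)\mathcal K_{\tau/2} f + (\mathcal K_{\tau/2} U^t \mathcal K_{\tau/2} - U^t) f,
\end{displaymath}
and further decomposing the first term as $(e^{tV_\tau} - U^t) f + (e^{tV_\tau} - U^t)(\mathcal K_{\tau/2} f - f)$, each summand is controlled by the strong convergences $\mathcal K_{\tau/2} \to \Id_H$ (\cref{prty:K4}) and $e^{tV_\tau} \to U^t$ (\cref{prty:V6} combined with \cref{def:src}), along with unitarity of $e^{tV_\tau}$ and $U^t$.

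Part~2 is the main obstacle, because $f \in \mathcal H_1$ must be related directly to an element of $H$ in the absence of the smoothing factor $K_\tau$. The key lemma is that $T_\tau^* f \to \iota f$ strongly in $H$ as $\tau \to 0^+$. To prove this I would expand $\iota f = \sum_j c_j \phi_j$ in the Mercer basis of~\eqref{eq:k_eig}. A short calculation using~\eqref{eq:psi} and $T_\tau^* T_\tau = \Id_H$ gives $T_\tau^* \psi_{j,\tau} = \phi_j$, so the coefficients of $T_\tau^* f$ in $\{\phi_j\}$ are $c_j / \Lambda_{j,\tau/2}$. Since $\mathcal H_\tau(X)^\perp \subseteq \ker K_\tau^*$, the same Mercer relations yield $\lVert f\rVert_{\mathcal H_\tau}^2 \geq \sum_j |c_j|^2 / \Lambda_{j,\tau}$, and the hypothesis $f \in \mathcal H_1$ together with \cref{prty:K5} provides $\sum_j |c_j|^2 / \Lambda_{j,1} \leq \lVert f\rVert_{\mathcal H_1}^2 < \infty$; the semigroup relation $\Lambda_{j,\tau}\Lambda_{j,\tau'} = \Lambda_{j,\tau+\tau'}$ makes $\tau \mapsto \Lambda_{j,\tau}$ monotone, so this bound is a uniform summable dominator for $\tau \in (0,1]$, and dominated convergence combined with $\Lambda_{j,\tau} \to 1$ forces $T_\tau^* f \to \iota f$ in $H$.

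With the lemma in hand, I would conclude by splitting
\begin{displaymath}
\mathcal K_{\tau/2} e^{t V_\tau} T_\tau^* f - U^t \iota f = \mathcal K_{\tau/2}\bigl(e^{t V_\tau}(T_\tau^* f - \iota f) + (e^{t V_\tau} - U^t)\iota f\bigr) + (\mathcal K_{\tau/2} - \Id_H) U^t \iota f,
\end{displaymath}
and controlling each summand by the contractivity of $\mathcal K_{\tau/2}$, unitarity of $e^{t V_\tau}$, the lemma, and the strong convergences from \cref{prty:K4,prty:V6}. The crux of the argument, and the step I expect to require the most care, is the lemma: all subsequent estimates are routine, but the uniform tail bound on $T_\tau^* f$ derived from $f \in \mathcal H_1$ is precisely what allows the unbounded unsmoothing $\mathcal K_{\tau/2}^{-1}$ implicit in $T_\tau^*$ to be controlled as $\tau \to 0^+$.
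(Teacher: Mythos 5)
Your proof is correct and follows the same overall architecture as the paper's, but with two genuine structural variations worth noting. For the operator identity you invoke unitary conjugation and the functional calculus (using that $T_\tau$ is a unitary onto $\mathcal H_\tau(X)$ and $W_\tau$ vanishes on $\mathcal H_\tau(X)^\perp$), whereas the paper establishes $T_\tau e^{t V_\tau} T_\tau^* h_0 = e^{t W_\tau} h_0$ by a uniqueness argument for the generator equation; both are valid and both handle the case of unbounded $V_\tau$. For Claim~2 the two proofs diverge more: the paper shows that the Nystr\"om pseudoinverse satisfies $K_\tau^+ f \to \iota f$ and reduces to Claim~1 via $K_\tau K_\tau^+ f = f$, whereas you prove the weaker statement $T_\tau^* f \to \iota f$ and apply it directly to the generalized identity $K_\tau^* e^{t W_\tau} f = \mathcal K_{\tau/2} e^{t V_\tau} T_\tau^* f$. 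Since $T_\tau^* = \mathcal K_{\tau/2} K_\tau^+$ is a contraction while $K_\tau^+$ is unbounded, your auxiliary lemma is the less demanding one; the Mercer coefficient calculation driving it (dominated convergence against the uniform summability $\sum_j |c_j|^2/\Lambda_{j,1} = \lVert f\rVert_{\mathcal H_1}^2 < \infty$, plus monotonicity of $\tau \mapsto \Lambda_{j,\tau}$ from the semigroup property and contractivity) is the same underlying estimate the paper uses, just with one fewer power of $\Lambda_{j,\tau/2}$ in the denominator. One small caveat: you write $\lVert f\rVert_{\mathcal H_\tau}^2 \geq \sum_j \lvert c_j\rvert^2/\Lambda_{j,\tau}$ with inequality, which is safest if $f$ may have a component in $\mathcal H_\tau(X)^\perp$; since $K_\tau^*$ and $T_\tau^*$ annihilate that component the argument is unaffected, but it would be worth remarking explicitly that only the $\mathcal H_\tau(X)$-projection of $f$ enters.
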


\begin{proof}
    For $h_0 \in D(W_\tau)$, define $h \colon \mathbb R \to \mathcal H_\tau$ by $h(t) = T_\tau e^{tV_\tau} T_\tau^*h_0$.
    Then, $T_\tau^* h_0$ lies in $D(V_\tau)$, and by the generator equation for the one-parameter unitary group $ \{ e^{t V_\tau} \}_{t \in \mathbb R}$ we have
    \begin{displaymath}
        \frac{dh(t)}{dt} = T_\tau V_\tau e^{t V_\tau} T_\tau^* h_0 = W_\tau T_\tau e^{t V_\tau} T_\tau^* h_0 = W_\tau h(t).
    \end{displaymath}
    Thus, $h(t)$ satisfies the generator equation for $ \{ e^{t W_\tau} \}_{t \in \mathbb R}$ with initial condition $h(0) = h_0$, which implies that $h(t) = e^{t W_\tau} h_0$.
    Letting $h_0 = K_\tau f$ (which is an element of $D(W_\tau)$ by \cref{prty:V5}), it follows that $T_\tau e^{t V_\tau} T_\tau^* K_\tau f = e^{t W_\tau} K_\tau f$ and thus  $T_\tau e^{t V_\tau} \mathcal K_{\tau/2} f = e^{t W_\tau} K_\tau f$ by the polar decomposition from \cref{prty:K6}.
    Pre-multiplying both sides of the last equation by $K_\tau^*$ and using again \cref{prty:K6}, we get $\mathcal K_{\tau/2}e^{tV_\tau} \mathcal K_{\tau/2}f = K_\tau^* e^{t W_\tau} K_\tau f$ as claimed.

    For Claim~1 of the lemma, we use the result just proved in conjunction with strong convergence of $e^{t V_\tau}$ to $U^t$ (\cref{prty:V6}) and strong convergence of $\mathcal K_{\tau/2}$ to $\Id$ (\cref{prty:K4}) to deduce
    \begin{displaymath}
        \lim_{\tau \to 0^+} K_\tau^* e^{t W_\tau} K_\tau f = \lim_{\tau\to 0^+} \mathcal K_{\tau/2} e^{t V_\tau} \mathcal K_{\tau/2} f = U^t f.
    \end{displaymath}

    For Claim~2, note that for $\tau \in [0,1)$, $K_\tau^+\rvert_{\mathcal H_1}$ is a right inverse for $K_\tau$, i.e., $K_\tau K_\tau^+ f = f$ for $f \in \mathcal H_1$.
    Moreover, we have  $M = \sup_{\tau\in (0,1]} \sum_{j=0}^\infty \lvert \langle \phi_j, \iota f \rangle_H\rvert^2 \Lambda^{-1}_{j,\tau} < \infty$, so for every $\epsilon>0$ there exists $J \in \mathbb N$ such that $\sum_{j=J+1}^\infty \lvert \langle \phi_j, \iota f\rangle_H\rvert^2 < \epsilon$ and, for every $\tau>0$, $\sum_{j=J+1}^\infty \lvert \langle \phi_j, \iota f \rangle_H\rvert^2 \Lambda^{-1}_{j,\tau} < \epsilon M $.
    Thus, since $\lim_{\tau \to 0^+} \Lambda_{j,\tau} = 1$ (by \cref{prty:K4}), we have
    \begin{align*}
        \limsup_{\tau\to 0} \lVert K_\tau^+ f - \iota f \rVert_H^2
            &= \limsup_{\tau \to 0} \left( \sum_{j=0}^J \lvert \langle \phi_j, \iota f\rangle_H\rvert^2 (\Lambda_{j,\tau/2}^{-1} - 1)^2 + \sum_{j=J+1}^\infty \lvert \langle \phi_j, \iota f\rangle_H\rvert^2 (\Lambda_{j,\tau/2}^{-1} - 1)^2\right) \\
            & \leq \epsilon (3M+1),
    \end{align*}
    and thus $\lim_{\tau \to 0^+} K_\tau^+ f = \iota f$ since $\epsilon$ was arbitrary.
    Therefore, using Claim~1, we obtain
    \begin{displaymath}
        \lim_{\tau\to 0^+} K_\tau^* e^{t W_\tau} f = \lim_{\tau\to 0^+} K_\tau^* e^{t W_\tau} K_\tau K_\tau^+ f = U^t \iota f.
    \end{displaymath}
    This proves Claim~2 and completes the proof of the lemma.
\end{proof}

Examples of approximation techniques leading to operators satisfying \crefrange{prty:V1}{prty:V6} are described in \cref{app:spectral_approx}.
These methods are based on the papers \cites{DasEtAl21,GiannakisValva24,GiannakisValva25}.
Besides these schemes, any approximation technique for Koopman operators satisfying the requisite properties may be employed in the subsequent steps of our tensor network framework.

\begin{example*}[Circle rotation]
    In \cref{sec:circlerot} we used the method in \cref{app:compact_res} that yields approximations $W_\tau$ with compact resolvent.
    The same method will be used in the 2-torus experiments in \cref{sec:experiments}.
    Referring again to the spectrum of $W_\tau$ listed in \cref{tab:circlerot_spec} for the circle rotation, we see that even though the eigenfrequencies $\omega_{j,\tau}$ with $j >2$ are approximately equal to integer multiples of the basic frequency $\omega_{1,\tau}$ there are clear violations of the Leibniz rule.
    In \cref{sec:fock} we will dilate $W_\tau$ to an operator acting on the Fock space generated by $\mathcal H_\tau$ that satisfies a version of the Leibniz rule with respect to the tensor product. 
\end{example*}

\subsection{Lifting to Fock space and quantum evolution}
\label{sec:fock}

Our approach builds an approximation of the statistical evolution $f^{(t)} := \mathbb E_p(U^t f)$ as a TTN on a Fock space generated by $\mathcal H_\tau$ with a dynamical evolution induced from $U^t_\tau$ from \cref{sec:spectral_approx}.

Using standard constructions from many-body quantum theory \cite{Lehmann04}, we define the Fock space $F(\mathcal H_\tau)$ as the Hilbert space closure of the tensor algebra $T(\mathcal H_\tau) := \mathbb C \oplus \mathcal H_\tau \oplus \mathcal H_\tau^{\otimes 2} \oplus \ldots$ with respect to the inner product defined by linear extension of
\begin{displaymath}
    \langle f_1 \otimes \cdots \otimes f_n, g_1 \otimes \cdots \otimes g_n \rangle_{T(\mathcal H_\tau)} = \langle f_1, g_1 \rangle_{\mathcal H_\tau} \cdots \langle f_n, g_n \rangle_{\mathcal H_\tau},
\end{displaymath}
where $f_1, \ldots, f_n, g_1, \ldots, g_n \in \mathcal H_\tau$.
Intuitively, elements of an orthonormal basis of $\mathcal H_\tau \subset F(\mathcal H_\tau)$ represent quantum states associated with individual ``particles'' of different types, and the elements of the corresponding tensor product basis of $F(\mathcal H_\tau)$ represent states of potentially multiple such particles (with the particle number corresponding to the grading $n \in \mathbb N$ of $\mathcal H_\tau^{\otimes n} \subset F(\mathcal H_\tau)$).
Meanwhile, the unit complex number $1_{\mathbb C} \subset F(\mathcal H_\tau)$ represents the ``vacuum'' state with no particles.

\subsubsection{Lifting of observables}

The $n$-fold comultiplication and multiplication operators $\Delta_{n-1}$ and $\Delta_{n-1}^*$, respectively, associated with the tensor product space $\mathcal H_\tau^{\otimes n}$ (referred to in this context as the $n$-th grading of the Fock space) canonically extend to operators on $F(\mathcal H_\tau)$.
Reusing notation, we will denote these operators as $\Delta_n\colon \mathcal H_\tau \to F(\mathcal H_\tau)$ and $\Delta_n^* \colon F(\mathcal H_\tau) \to \mathcal H_\tau$, where $\ran \Delta_n \subset \mathcal H_\tau^{\otimes n}$ and $\ker \Delta_n^* = (\mathcal H_\tau^{\otimes n})^\perp$.
Recall from \cref{sec:rkha_defs} that well-definition of $\Delta_n$ depends on coassociativity of $\Delta$.

Next, let $\mathfrak B = B(H)$ as in \cref{sec:quantum}, and define the operator algebras $\mathfrak B_\tau := B(\mathcal H_\tau)$ and $\mathfrak F_\tau := B(F(\mathcal H_\tau))$ for $\tau>0$.
Let also $\tilde G_{\tau, \sigma} : \mathcal H_\tau \to \mathcal H_\tau$, $\tau,\sigma>0$, be the compact, self-adjoint operators acting on the orthonormal basis $\{\psi_{j,\tau}\}_{j \in \mathbb N_0}$ of $\mathcal H_\tau$ as $\tilde G_{\tau, \sigma} \psi_{j,\tau} = \lambda_{j,\sigma} \psi_{j,\tau}$.
Note that
\begin{equation}
    \label{eq:g_intertwine}
    K^*_\tau \circ \tilde G_{\tau, \sigma} = G_\sigma \circ K^*_\tau
\end{equation}
which can be verified by application of the left- and right-hand sides to $\psi_{j,\tau}$.
With $f \in \mathfrak A$ a real-valued classical observable and $ \pi f \in \mathfrak B$ the corresponding (self-adjoint) multiplication operator, we define, for each $n \in \{2, 3, \ldots\}$, a self-adjoint quantum observable $A_{f, \sigma, \tau, n} \in \mathfrak F_\tau$ acting on the Fock space, where
\begin{equation}
    \label{eq:a_fock}
    A_{f, \sigma, \tau, n} = \tilde G_{\tau, \sigma}^{\otimes n}\Delta_{n-1} M_{f,\tau} \Delta_{n-1}^*\tilde G_{\tau, \sigma}^{\otimes n}, \quad M_{f,\tau} = K_\tau M_f K_\tau^* \in \mathfrak B_\tau, \quad M = \pi f \in \mathfrak B.
\end{equation}

Intuitively, we think of $M_{f,\tau}$ in~\eqref{eq:a_fock} as a smoothed multiplication operator obtained by conjugating $M_f$ by the integral operator $K_\tau \colon H \to \mathcal H_\tau$ associated with the RKHA $\mathcal H_\tau$.
The quantum observables $A_{f, \sigma, \tau, n}$ are $n$-fold amplifications of $M_{f,\tau}$ obtained by conjugation by $\Delta_n$ followed by smoothing by $\tilde G_{\tau, \sigma}^{\otimes n}$.

\subsubsection{Lifting of states}

In light of \cref{lem:prob}, we consider approximating the evolution of expectations with respect to a probability density $p \in \mathfrak A_*$ with a strictly positive representative $\varrho \in \mathcal H_1$.
For such a density the square root $\xi := \varrho^{1/2}$ and the $n$-th roots $\xi^{1/n}$, $n \in \mathbb N$, are all well-defined elements of $\mathcal H_1$ obtained from the holomorphic functional calculus (see \cref{sec:rkha_defs}).
We lift the density $p$ to a rank-1 density operator $\nu_\tau \in \mathfrak F_{\tau*}$, $\tau \in (0, 1]$, with associated state vector $\eta_\tau \in F(\mathcal H_\tau)$, i.e., $\nu_\tau = \langle \eta_\tau, \cdot \rangle_{F(\mathcal H_\tau)} \eta_\tau$.
We define this vector as
\begin{equation}
    \label{eq:statevector}
    \eta_\tau = w_1 \frac{\xi}{\lVert \xi\rVert_{\mathcal H_\tau}} + w_2 \frac{\xi^{1/2} \otimes \xi^{1/2}}{\lVert \xi^{1/2}\rVert_{\mathcal H_\tau}^2} + w_3 \frac{\xi^{1/3} \otimes \xi^{1/3} \otimes \xi^{1/3}}{\lVert \xi^{1/3}\rVert_{\mathcal H_\tau}^3} + \ldots,
\end{equation}
where $w = (w_1, w_2, \ldots) \in \ell^1(\mathbb N)$ is any probability vector with strictly positive elements (i.e., $w_n >0$ and $\sum_{n=1}^\infty w_n = 1$).
Intuitively, the unit vectors
\begin{equation}
    \label{eq:eta_root}
    \eta_{\tau, n} = \frac{\xi^{1/n} \otimes \cdots \otimes \xi^{1/n}}{\lVert \xi^{1/n}\rVert_{\mathcal H_\tau}^n}
\end{equation}
in~\eqref{eq:statevector} distribute a localized density function, $\varrho$, into an $n$-fold tensor product of coarser functions, $\xi^{1/n}$.
Note that $\eta_\tau$ has nonzero projections onto all gradings $\mathcal H^{\otimes n}_\tau \subset F(\mathcal H_\tau)$ of the Fock space.
As a result, the associated quantum state $\mathbb E_{\nu_\tau} \in S_*(\mathfrak F_\tau)$ is faithful on the quantum observables $A_{f, \sigma, \tau, n}$ from~\eqref{eq:a_fock}, meaning that $\mathbb E_{\nu_\tau} A_{f, \sigma, \tau, n}$ vanishes for $f \geq 0$ iff $f=0$.

\begin{rk}
    \label{rk:roots}
    Throughout this paper, our approach for dilating the vector $\xi \in \mathcal H_\tau$ to the Fock space is based on the roots $\xi^{1/n}$ in accordance with~\eqref{eq:eta_root}.
    It should be noted that many other choices are available besides this scheme.
    In particular, \eqref{eq:eta_root} can be replaced by
    \begin{displaymath}
        \eta_{\tau, n} = \frac{\xi^{(1)} \otimes \cdots \otimes \xi^{(n)}}{\lVert \xi^{(1)}\rVert_{\mathcal H_\tau} \cdots \lVert \xi^{(n)}\rVert_{\mathcal H_\tau}}
    \end{displaymath}
    for any collection of vectors $\xi^{(i)} \in \mathcal H_\tau$ whose pointwise product $\prod_{i=1}^n \xi^{(i)}$ is equal to $\xi$.
    We comment on possibilities of using such generalized dilation schemes in \cref{sec:conclusions}.
\end{rk}

\subsubsection{Lifting of dynamics}

We lift the unitary evolution operators $U^t_\tau\colon \mathcal H_\tau \to \mathcal H_\tau$ to unitaries $\tilde U^t_\tau\colon F(\mathcal H_\tau) \to F(\mathcal H_\tau)$ that act multiplicatively on the tensor algebra $T(\mathcal H_\tau)$,
\begin{equation*}
    \tilde U^t_\tau 1_{\mathbb C} = 1_{\mathbb C}, \quad \tilde U^t_\tau(g_1 \otimes \cdots \otimes g_n) = (U^t_\tau g_1) \otimes \cdots \otimes (U^t_\tau g_n),
\end{equation*}
where $1_{\mathbb C}$ is the unit vacuum vector in $F(\mathcal H_\tau)$ and $g_1, \ldots, g_n$ are vectors in $\mathcal H_\tau$.
Note that by~\eqref{eq:w_eig} the point spectrum of $U^t_\tau$, denoted by $\sigma_p(U^t_\tau)$, is a countable subset of the unit circle, but since, in general, the regularized generator $W_\tau$ does not satisfy the Leibniz rule (cf.\ \eqref{eq:leibniz}), $\sigma_p(U^t_\tau)$ does not have group structure.
On the other hand, by multiplicativity of $\tilde U^t_\tau$, the point spectrum $\sigma_p(\tilde U^t_\tau)$ \emph{is} a union of multiplicative subgroups of $S^1$ (generated by $\sigma_p(U^t_\tau)$).
Specifically, $\tilde U^t_\tau$ admits the eigendecomposition
\begin{equation*}
    \tilde U^t_\tau 1_{\mathbb C} = 1_{\mathbb C}, \quad \tilde U^t_\tau \zeta_{\vec\jmath,\tau} = e^{i\omega_{\vec\jmath,\tau} t} \zeta_{\vec\jmath,\tau},
\end{equation*}
where $\vec\jmath = (j_1, \ldots, j_N) \in \mathbb N_0^N$ is a multi-index of arbitrary length $N \in \mathbb N$, $\omega_{\vec\jmath,\tau} = \omega_{j_1,\tau} + \ldots + \omega_{j_N,\tau}$ are eigenfrequencies, and the corresponding eigenvectors $\zeta_{\vec\jmath,\tau} = \zeta_{j_1,\tau} \otimes \cdots \otimes \zeta_{j_N,\tau}$ form an orthonormal basis of $F(\mathcal H_\tau)$.

From the point of view of generators, multiplicativity on $T(\mathcal H_\tau)$ implies that the strongly continuous group of unitary operators $\tilde U^t_\tau$ is generated by a skew-adjoint operator $\tilde W_\tau \colon D(\tilde W_\tau) \to F(\mathcal H_\tau)$ that satisfies
\begin{equation*}
    \tilde W_\tau(g_1 \otimes \cdots \otimes g_n) = (W_\tau g_1) \otimes g_2 \otimes \cdots \otimes g_n + \ldots + g_1 \otimes \cdots \otimes g_{n-1} \otimes (W_\tau g_n)
\end{equation*}
for all $g_1, \ldots, g_n \in D(W_\tau)$.
In other words, $\tilde W_\tau$ is a dilation of $W_\tau$ to a free derivation with respect to the tensor product on $T(\mathcal H_\tau)$.
The point spectrum of $\tilde W_\tau$ consists of all integer linear combinations $i q_1 \omega_{1,\tau} + \ldots + i q_m \omega_{m,\tau}$, $q_1, \ldots, q_m \in \mathbb Z$, of elements of $\sigma_p(W_\tau)$.
As a result, the unitary evolution $\tilde U^t_\tau$ captures frequency content from the entire lattice of such linear combinations.

Intuitively, if $W_\tau$ well-approximates $V$ in the sense of the Leibniz rule~\eqref{eq:leibniz}, then $\omega_{\vec\jmath, \tau}$ can be viewed as an approximate eigenfrequency of $V$, and the pointwise product $\prod_{i=1}^N \zeta_{j_i,\tau} \in \mathcal H_\tau$ associated with eigenvector $\zeta_{\vec\jmath,\tau}$ behaves as an approximate corresponding eigenfunction.
Importantly, a finite collection of incommensurate eigenfrequencies $\omega_{i,\tau}$ and their corresponding eigenfunctions $\zeta_{i,\tau}$ can generate arbitrarily large families of such approximate Koopman eigenfrequencies and eigenfunctions.

\subsubsection{Quantum evolution}

The unitary operators $U^t_\tau$ induce evolution operators on quantum observables and states on the RKHA $\mathcal H_\tau$, $\mathcal U^t_\tau\colon \mathfrak B_\tau \to \mathfrak B_\tau$ and $\mathcal P^t_\tau\colon \mathfrak B_{\tau*} \to \mathfrak B_{\tau*}$, respectively, where $\mathcal U^t_\tau A = U^t_\tau A U^{t*}_\tau$ and $\mathcal P^t_\tau \rho = U^{t*}_\tau \rho U^t_\tau$.
Similarly, $\tilde U^t_\tau \colon F(\mathcal H_\tau) \to F(\mathcal H_\tau)$ induce quantum evolution operators on the Fock space, $\tilde{\mathcal U}^t_\tau\colon \mathfrak F_\tau \to \mathfrak F_\tau$ and $\tilde{\mathcal P}^t_\tau\colon \mathfrak F_{\tau*} \to \mathfrak F_{\tau*}$, where
\begin{equation*}
    \tilde{\mathcal U}^t_\tau A = \tilde U^t_\tau A \tilde U^{t*}_\tau, \quad \tilde{\mathcal P}^t_\tau \rho = \tilde U^{t*}_\tau \rho \tilde U^t_\tau.
\end{equation*}
Note that all of the quantum evolutions $\mathcal U^t_\tau$, $\mathcal P^t_\tau$, $\tilde{\mathcal U}^t_\tau$, and $\tilde{\mathcal P}^t_\tau$ are automatically multiplicative.
In particular, $\mathcal U^t_\tau$ and $\mathcal P^t_\tau$ are multiplicative even though $U^t_\tau$ is not.

For any $\tau,\sigma >0$ and $n \in \mathbb N$, the quantum observables $A_{f, \sigma, \tau, n}$ from~\eqref{eq:a_fock} evolve under $\tilde{\mathcal U}^t_\tau$ defined above.
In the dual (``Schr\"odinger") picture, the vector states $\mathbb E_{\nu_\tau} \in S_*(\mathfrak F_\tau)$ associated with $\eta_\tau$ from~\eqref{eq:statevector}evolve under $\tilde{\mathcal P}^t_\tau$.
Using any of these evolutions, we define
\begin{align*}
    g^{(t)}_{\sigma, \tau, n} &:= \mathbb E_{\nu_\tau} (\tilde{\mathcal U}^t_\tau A_{f, \sigma, \tau, n}) \equiv \mathbb E_{\tilde{\mathcal P}^t_\tau \nu_\tau} A_{f, \sigma, \tau, n} \\
    &= \frac{w_n}{\lVert \xi^{1/n}\rVert^n_{\mathcal H_\tau}} \langle (\tilde G_{\tau, \sigma} U^{t*}_\tau \xi^{1/n})^{\otimes n}, \Delta_{n-1} M_{f,\tau} \Delta_{n-1}^*  (\tilde G_{\tau, \sigma} U^{t*}_\tau \xi^{1/n})^{\otimes n} \rangle_{F(\mathcal H_\tau)}.
\end{align*}
Defining, in addition, the normalization function 
\begin{displaymath}
    C^{(t)}_{\sigma, \tau, n} :=\mathbb E_{\nu_\tau} (\tilde{\mathcal U}^t_\tau A_{\bm 1, \sigma, \tau, n}) \equiv \mathbb E_{\tilde{\mathcal P}^t_\tau \nu_\tau} A_{\bm 1, \sigma, \tau, n} = \frac{w_n}{\lVert \xi^{1/n}\rVert^n_{\mathcal H_\tau}} \lVert K_\tau^* (\tilde G_{\tau, \sigma}U^{t*}_\tau \xi^{1/n})^n\rVert^2_H,
\end{displaymath}
our Fock space approximation of the statistical evolution $\mathbb E_{\mathcal P^t p} f$ is given by the normalized quantum mechanical expectation
\begin{equation}
    \label{eq:ft_fock}
    f^{(t)}_{\sigma, \tau, n} := \frac{g^{(t)}_{\sigma, \tau, n}}{C^{(t)}_{\sigma, \tau, n}}.
\end{equation}
Equivalently, we have
\begin{align*}
    f^{(t)}_{\sigma, \tau, n} &= \frac{\langle\Delta_{n-1}^*(\tilde G_{\tau, \sigma}U^{t*}_\tau \xi^{1/n})^{\otimes n}, M_{f,\tau} \Delta_{n-1}^*(\tilde G_{\tau, \sigma}U^{t*}_\tau \xi^{1/n})^{\otimes n} \rangle_{\mathcal H_\tau}}{\lVert K_\tau^*( U^{t*}_\tau \xi^{1/n})^n\rVert^2_H} \\
    &= \frac{\langle K_\tau^* (\tilde G_{\tau, \sigma} U^{t*}_\tau \xi^{1/n})^n, (\pi f) K_\tau^*(\tilde G_{\tau, \sigma} U^{t*}_\tau \xi^{1/n})^n \rangle_H}{\lVert K_\tau^* (\tilde G_{\tau, \sigma} U^{t*}_\tau \xi^{1/n})^n\rVert^2_H}.
\end{align*}
Note that $f^{(t)}_{\sigma, \tau, n}$ is independent of the weights $w$ employed in the definition of the state vector $\eta_\tau$ (see~\eqref{eq:statevector}).
Moreover, one can verify (using a similar approach as in~\eqref{eq:g_sigma_lim} below to take the limit) that as $\sigma \to 0^+$, $f^{(0)}_{\sigma, \tau, n}$ converges to the expectation $\mathbb E_p f$ for any (fixed) $n \in \{ 2, 3, \ldots \}$ and $\tau >0$,
\begin{equation}
    \label{eq:sigma_lim_t0}
    \lim_{\sigma \to 0^+} f^{(0)}_{\sigma, \tau, n} = \lim_{\sigma \to 0^+} \frac{\langle K_\tau^* (\tilde G_{\tau, \sigma} \xi^{1/n})^n, (\pi f) K_\tau^*(\tilde G_{\tau, \sigma} \xi^{1/n})^n \rangle_H}{\lVert K_\tau^* (\tilde G_{\tau, \sigma} \xi^{1/n})^n\rVert^2_H}
    = \frac{\langle K_\tau^* \xi, (\pi f) K_\tau^*\xi \rangle_H}{\lVert K_\tau^* \xi\rVert^2_H} = \frac{\langle p^{1/2}, f p^{1/2} \rangle_H}{\lVert p^{1/2}\rVert^2_H} = \mathbb E_p f.
\end{equation}
This means that in the limit of $\sigma \to 0^+$ the trajectories $t \mapsto f^{(t)}_{\sigma, \tau, n}$ start from the same initial value at $t=0$ but follow different subsequent evolutions for different $n$ and $\tau$.

The behavior of the quantum expectations $\mathbb E_{\tilde{\mathcal P}^t_\tau \rho_\tau} A_{f, \sigma, \tau, n}$ and $\mathbb E_{\tilde{\mathcal P}^t_\tau \rho_\tau} A_{\bm 1, \sigma, \tau, n}$ in~\eqref{eq:ft_fock} giving $f^{(t)}_{\sigma, \tau, n}$ through $g^{(t)}_{\tau, n,\epsilon}$ and $C_{\sigma, \tau, n}^{(t)}$, respectively, can be further understood as follows.
\begin{itemize}[wide]
    \item As mentioned above, the component of the vector $\eta_\tau$ in the subspace $\mathcal H_\tau^{\otimes n} \subset F(\mathcal H_\tau)$ is a scalar multiple of $(\xi^{1/n})^{\otimes n}$.
    Evolving the density operator $\rho_\tau$ by $\tilde{\mathcal P^t_\tau}$ evolves this vector to $(U^{t*}_\tau\xi^{1/n})^{\otimes n}$.
    This evolution captures frequency content from all $n$-fold integer linear combinations of the eigenfrequencies $\omega_{i,\tau}$ (i.e., all frequencies $\omega_{\vec\jmath,\tau}$ with $\vec\jmath \in \mathbb N_0^n$).
    \item With the above time evolution of the state, multiplicativity of $K_\tau^*$, the intertwining relation~\eqref{eq:g_intertwine}, and up to a proportionality constant that is eliminated upon division by $C_{\sigma, \tau, n}^{(t)}$, the quantum expectation $\mathbb E_{\tilde{\mathcal P}^t_\tau \rho_\tau} A_{f, \sigma, \tau, n}$ reduces to
        \begin{multline}
            \label{eq:quantum_approx1}
            \langle \Delta_{n-1}^*(\tilde G_{\tau, \sigma} U^{t*}_\tau \xi^{1/n})^{\otimes n}, M_{f,\tau} \Delta_{n-1}^* (\tilde G_{\tau, \sigma} U^{t*}_\tau \xi^{1/n})^{\otimes n} \rangle_{\mathcal H_\tau} \\
            = \langle (G_\sigma K_\tau^* U^{t*}_\tau\xi^{1/n})^n, (\pi f) (G_\sigma K_\tau^*U^{t*}_\tau\xi^{1/n})^n \rangle_H.
        \end{multline}
        In essence, this corresponds (again up to proportionality) to expectation of the multiplication operator $\pi f \in \mathfrak B$ with respect to a vector state induced by the vector $(K_\tau^* G_\sigma U^{t*}_\tau\xi^{1/n})^n \in \mathfrak A \subset H$.
        This latter vector reassembles the dynamically evolved factors $\tilde G_{\tau, \sigma} U^{t*}_\tau\xi^{1/n} \in \mathcal H_\tau$ from the tensor product $(\tilde G_{\tau, \sigma} U^{t*}_\tau\xi^{1/n})^{\otimes n} \in \mathcal H_\tau$ into a pointwise product, while capturing higher-order frequency content from $\omega_{\vec\jmath,\tau}$.
    \end{itemize}

As $\tau \to 0^+$, the restriction of $K_\tau^* U^{t*}_\tau$ on $\mathcal H_1$ converges strongly to $U^t \iota$; see \cref{lem:koopman_rkhs_approx}.
In addition, we have:
\begin{lem}
    \label{lem:koopman_rkhs_approx_pwr}
    For every $h \in \mathcal H_1$, $n \in \mathbb N$, and $\sigma >0$, $(G_\sigma K_\tau^* U^t_\tau h)^n$ converges, as $\tau \to 0^+$, to $(G_\sigma U^t \iota h)^n$ in the norm of $H$. 
\end{lem}

\begin{proof}
    First, note that the vectors $a_\tau := G_\sigma K_\tau^* U^t_\tau h$ are uniformly bounded in $L^\infty(\mu)$ norm over $\tau \in (0,1)$ since 
    \begin{equation*}
        \lVert G_\sigma K_\tau^* U^t_\tau h \rVert_{L^\infty(\mu)} \leq \lVert k_\tau \rVert_{C(X\times X)} \lVert K_\tau^* U^t_\tau h \rVert_{L^1(\mu)}  \leq \lVert k_\tau \rVert_{C(X\times X)} \lVert K_\tau^* U^t_\tau h \rVert_H  
    \end{equation*}
    and $K_\tau^* U^t_\tau h$ norm-converges in $H$ to $a := U^t \iota h$ by \cref{lem:koopman_rkhs_approx}(2). 
    As, a result, with $C =\sup_{\tau \in (0,1)} \lVert a_\tau \rVert_{L^\infty(\mu)}$ and $n \in \mathbb N$ we have
    \begin{align*}
        \lVert a_\tau^{n+1} - a^{n+1} \rVert_{H} &= \lVert a_\tau^{n+1} - a a_\tau^n + a a_\tau^n - a^{n+1} \rVert_H \\
        & \leq \lVert a_\tau^n \rVert_{L^\infty(\mu)} \lVert a_\tau - a \rVert_H+ \lVert a \rVert_{L^\infty(\mu)} \lVert a_\tau^n - a^n \rVert_H,\\
        & \leq C^n \lVert a_\tau - a \rVert_H+ \lVert a \rVert_{L^\infty(\mu)} \lVert a_\tau^n - a^n \rVert_H,
    \end{align*}
    and the claim of the lemma follows by induction from the base case $n=0$.
    
\end{proof}

The following theorem uses this fact in conjunction with multiplicativity of $U^t$ to establish that the Fock-space-based approximation $f^{(t)}_{\sigma, \tau, n}$ from~\eqref{eq:ft_fock} recovers the true statistical evolution of $f$ for any $n \in \mathbb N$.

\begin{thm}
    \label{thm:tau_conv}
    Assume that the kernels $k_\tau$ and regularized generators $V_\tau$ have \crefrange{prty:K1}{prty:K7} and \crefrange{prty:V1}{prty:V6}, respectively.
    Suppose also that $p \in \mathfrak A_*$ is a probability density with a strictly positive representative $\varrho \in \mathcal H_1$.
    Then, for every classical observable $f \in \mathfrak A$, evolution time $t \in \mathbb R$, and $n \in \mathbb N$, the Fock space evolution $f^{(t)}_{\sigma, \tau, n}$ converges to the classical statistical evolution $f^{(t)} = \mathbb E_{P^tp} f$ in the iterated limit of $\sigma \to  0^+$ after $\tau \to 0^+$, i.e., $\lim_{\sigma \to 0^+}\lim_{\tau\to 0^+} f^{(t)}_{\sigma, \tau, n} = f^{(t)}$.
\end{thm}

\begin{proof}
    Since $\xi^{1/n} \in \mathcal H_1$, \cref{lem:koopman_rkhs_approx_pwr} implies that $\lim_{\tau\to 0^+} (G_\sigma K_\tau^* U^{t*}_\tau \xi^{1/n}) = (G_\sigma U^{t*} \iota \xi^{1/n})^n$, where the limit was taken in $H$ norm.
    Moreover, since $G_\sigma$ is an averaging operator and $U^{t*}$ acts as an isometry on $L^\infty(\mu)$, we have
    \begin{equation*}
        \lVert G_\sigma U^{t*} \iota \xi^{1/n} \rVert_{L^\infty(\mu)} \leq \lVert \xi^{1/n} \rVert_{C(X)} \leq \max \{1, \lVert \xi \rVert_{C(X)} \}.
    \end{equation*}
    As a result, $\{ G_\sigma U^{t*} \iota \xi^{1/n}\}_{\sigma > 0}$ is an $L^\infty(\mu)$-norm-bounded family that converges as $\sigma \to 0^+$ to $U^{t*} \iota \xi^{1/n}$ in $H$ norm.
    Thus, by a similar argument as in the proof of \cref{lem:koopman_rkhs_approx_pwr} and multiplicativity of $U^{t*} = U^{-t}$ it follows that 
    \begin{equation}
        \label{eq:g_sigma_lim}
        \lim_{\sigma \to 0^+} (G_\sigma U^{t*} \iota \xi^{1/n})^n = (U^{t*} \iota \xi^{1/n})^n = U^t \iota \xi = U^t p^{1/2}.
    \end{equation}
    Using~\eqref{eq:quantum_approx1} and the classical--quantum correspondence~\eqref{eq:classical_quantum_compat}, we deduce
    \begin{align*}
        \lim_{\sigma \to 0^+}\lim_{\tau\to 0^+} f^{(t)}_{\sigma, \tau, n} &= \lim_{\sigma \to 0^+} \lim_{\tau\to 0^+}\frac{\langle (G_\sigma K_\tau^* U^{t*}_\tau \xi^{1/n})^n, (\pi f) (G_\sigma K_\tau^* U^{t*}_\tau \xi^{1/n})^n \rangle_H}{\lVert G_\sigma K_\tau^* (U^{t*}_\tau \xi^{1/n})^n\rVert^2_H} \\
        &= \frac{\langle U^{t*} p^{1/2}, (\pi f) U^{t*} p^{1/2} \rangle_H}{\lVert U^{t*} p^{1/2} \rVert^2_H} = \frac{\mathbb E_{\mathcal P^t(\Gamma(p))}(\pi f)}{\lVert p^{1/2} \rVert_H} = f^{(t)}. \qedhere
    \end{align*}
\end{proof}

\subsection{Finite-rank compression}
\label{sec:finite_rank}

For $d \in \mathbb N$, let $\mathcal Z_{\tau,d}$ be the $(2d+1)$-dimensional subspace of $\mathcal H_\tau$ spanned by $\zeta_{0,\tau}, \ldots, \zeta_{2d,\tau}$, and $F(\mathcal Z_{\tau,d}) \subset F(\mathcal H_\tau)$ the Fock space generated by $\mathcal Z_{\tau,d}$.
Note that $\mathcal Z_{\tau,d}$ is spanned by the constant function $\zeta_{0,\tau} = \bm 1$ (the unit of $\mathcal H_\tau$) and, for $1\leq j \leq d$, complex-conjugate pairs of eigenfunctions $\{\zeta_{2j,\tau}, \zeta_{2j-1,\tau}\}$ with corresponding eigenfrequencies $\omega_{2j,\tau} = - \omega_{2j-1,\tau}$.
Moreover, the image of $\mathcal Z_{\tau,d}^{\otimes n} \subset F(\mathcal Z_{\tau,d})$ is spanned by polynomials in $\zeta_{0,\tau}, \ldots, \zeta_{2d,\tau}$ of degree up to $n$.

We approximate the observable evolution $f^{(t)}_{\sigma, \tau, n}$ from~\eqref{eq:ft_fock} by projecting the state vector $\eta_\tau$ onto the finitely generated Fock space $F(\mathcal Z_{\tau,d})$.
Our approach results in approximations of $g^{(t)}_{\tau, n}$ and $C_{\sigma, \tau, n}^{(t)}$ that can be efficiently evaluated using TTNs, as follows.

Let $Z_{\tau,d}\colon \mathcal H_\tau \to \mathcal H_\tau$ and $\tilde Z_{\tau,d} \colon F(\mathcal H_\tau) \to F(\mathcal H_\tau)$ be the orthogonal projections with $\ran Z_{\tau,d} = \mathcal Z_{\tau,d}$ and $\ran \tilde Z_{\tau,d} = F(\mathcal Z_{\tau,d})$, respectively.
Defining the projected state vector $\eta_{\tau,d} = \tilde Z_{\tau, d} \eta_\tau / \lVert \tilde Z_{\tau,d} \eta_\tau \rVert_{F(\mathcal H_\tau)}$, we obtain a compressed density operator $\nu_{\tau,d} := \langle \eta_{\tau,d}, \cdot \rangle_{F(\mathcal H_\tau)} \eta_{\tau,d}$ that will serve as our approximation of $\nu_\tau$.
As $d \to \infty$, the density operators $\nu_{\tau,d}$ converge to $\nu_\tau$ in the trace norm of $\mathfrak F_{\tau*}$.

The density operators $\nu_{\tau,d}$ lead to the following approximation of $f^{(t)}_{\sigma, \tau, n}$:
\begin{equation}
    \label{eq:ft_fock_d}
    f^{(t)}_{\sigma, \tau, n, d} = \frac{g^{(t)}_{\sigma, \tau, n, d}}{C_{\sigma, \tau, n, d}^{(t)}}, \quad g^{(t)}_{\sigma, \tau, n, d} = \mathbb E_{\tilde{\mathcal P}^t_\tau \nu_{\tau,d}} A_{f, \sigma, \tau, n}, \quad C_{\sigma, \tau, n, d}^{(t)} = \mathbb E_{\tilde{\mathcal P}^t_\tau \nu_{\tau,d}} A_{\bm 1, \sigma, \tau, n}.
\end{equation}
A key property of this approximation is that it reduces to computing expectations of finite-rank quantum observables.
Indeed, noticing that $\tilde U^t_\tau$ and $\tilde Z_{\tau,d}$ commute, we have that $g^{(t)}_{\sigma, \tau, n, d}$ equals, up to an unimportant proportionality constant,
\begin{multline*}
    \langle  \tilde G_{\tau, \sigma}^{\otimes n} \tilde U^{t*}_\tau \tilde Z_{\tau,d} \eta_\tau, \Delta_{n-1} M_{f,\tau}\Delta_{n-1}^* \tilde G_{\tau, \sigma}^{\otimes n} \tilde U^{t*}_\tau \tilde Z_{\tau,d} \eta_\tau\rangle_{F(\mathcal H_\tau)} \\
    \begin{aligned}
        & = \langle \tilde G_{\tau, \sigma}^{\otimes n} \tilde Z_{\tau,d} \tilde U^{t*}_\tau \eta_\tau, \Delta_{n-1} M_{f,\tau}\Delta_{n-1}^* \tilde G_{\tau, \sigma}^{\otimes n} \tilde Z_{\tau,d} U^{t*}_\tau \eta_\tau\rangle_{F(\mathcal H_\tau)}\\
        & = \langle \tilde U^{t*}_\tau \eta_\tau, A_{f, \sigma, \tau, n, d} \tilde U^{t*}_\tau \eta_\tau\rangle_{F(\mathcal H_\tau)},
    \end{aligned}
\end{multline*}
where
\begin{equation}
    \label{eq:quantum_obs_d}
    A_{f, \sigma, \tau, n, d} := \tilde Z_{\tau,d} A_{f, \sigma, \tau, n} \tilde Z_{\tau,d} \in \mathfrak F_\tau
\end{equation}
is a quantum observable on the Fock space that has finite rank.
The latter, stems from the fact that the restriction of $\tilde Z_{\tau,d}$ on $\mathcal H_\tau^{\otimes n}$ is equal to $Z_{\tau,d}^{\otimes n}$; in particular, $\ran\tilde Z_{\tau,d}\rvert_{\mathcal H_\tau^{\otimes n}} = \mathcal Z_{\tau,d}^{\otimes n}$, and thus $\tilde Z_{\tau,d}\rvert_{\mathcal H_\tau^{\otimes n}}$ has rank $(2d+1)^n$.
Since $\ran \Delta_{n-1} \subset \mathcal H_\tau^{\otimes n}$, it follows that $\Delta_{n-1}^* \tilde Z_{\tau,d} = (\tilde Z_{\tau,d} \Delta_{n-1})^*$ is a finite-rank operator, and thus so is $A_{f, \sigma, \tau, n, d}$.

By multiplicativity of $\tilde U^t_\tau$, we also have
\begin{multline}
    \label{eq:quantum_approx2}
    \langle \tilde G_{\tau, \sigma}^{\otimes n} U^{t*}_\tau \tilde Z_{\tau,d} \eta_\tau, \Delta_{n-1} M_{f,\tau}\Delta_{n-1}^* \tilde G_{\tau, \sigma}^{\otimes n} \tilde U^{t*}_\tau \tilde Z_{\tau,d} \eta_\tau\rangle_{F(\mathcal H_\tau)}\\
    \begin{aligned}
        &\propto \langle \Delta_{n-1}^*(\tilde G_{\tau, \sigma} U^{t*}_\tau\xi_{\tau, n,d})^{\otimes n}, M_{f,\tau}\Delta_{n-1}^*(\tilde G_{\tau, \sigma} U^{t*}_\tau\xi_{\tau, n,d})^{\otimes n}\rangle_{\mathcal H_\tau} \\
        &= \langle K^*_\tau (\tilde G_{\tau, \sigma} U^{t*}_\tau \xi_{\tau, n,d})^n, (\pi f)K^*_\tau (\tilde G_{\tau, \sigma} U^{t*}_\tau \xi_{\tau, n,d})^n\rangle_H,
    \end{aligned}
\end{multline}
where $\xi_{\tau, n,d} = Z_{\tau,d} \xi_\tau^{1/n} / \lVert Z_{\tau,d} \xi_\tau^{1/n}\rVert_{\mathcal H_\tau} $ and the proportionality constant in the first line is again unimportant.
Comparing~\eqref{eq:quantum_approx1} and~\eqref{eq:quantum_approx2}, we see that the approximation $f^{(t)}_{\sigma, \tau, n, d}$ essentially involves approximating the vector state on $\mathfrak B$ induced by the pointwise product $K_\tau^*(G_\sigma U^{t*}_\tau \xi_\tau^{1/n})^n$ by the pointwise product $K_\tau^*(G_\sigma U^{t*}_\tau \xi_{\tau, n,d})^n$ involving the projected $n$-th root $\xi_{\tau, n,d}$.
Note that
\begin{equation*}
    K_\tau^*(\tilde G_{\tau, \sigma} U^{t*}_\tau \xi_{\tau, n,d})^n = (K_\tau^* \tilde G_{\tau, \sigma} U^{t*}_\tau \xi_{\tau, n,d})^n = (G_\sigma K_\tau^* U^{t*}_\tau \xi_{\tau, n,d})^n
\end{equation*}
by multiplicativity of $K_\tau^*$ and~\eqref{eq:g_intertwine}.

Using the tensor product terms $(G_\sigma U^{t*}_\tau\xi_{\tau, n,d})^{\otimes n}$ appearing in the right-hand side of~\eqref{eq:quantum_approx2} and associativity of $\Delta_{n-1}^*$, we can express the computation of $f^{(t)}_{\sigma, \tau, n, d}$ through a tree tensor network where the operator $\Delta^*$ is applied recursively to assemble the pointwise product $(G_\sigma U^{t*}_\tau\xi_{\tau, n,d})^n$ without requiring explicit formation of the $(2d+1)^n$-dimensional tensor product $(G_\sigma U^{t*}_\tau\xi_{\tau, n,d})^{\otimes n}$.
See \cref{fig:network} for a diagrammatic illustration.
This tensor network architecture forms the centerpiece of our Koopman operator approximation framework.
The tensor network in \cref{fig:network} represents a combination of infinite-dimensional operators that must be approximated by a finite cutoff. This cutoff and network resembles a tensor train decomposition (see \cite{Oseledets11}). While tensor train methods may be beneficial here, we will develop an approximation specific to the tensor network Koopman approach.

\subsubsection{Two forms of approximation}

For any fixed $\sigma,\tau>0$ and $n \in \mathbb N$, the Fock space approximation $f^{(t)}_{\sigma, \tau, n, d}$ from~\eqref{eq:ft_fock_d} converges to $f^{(t)}_{\sigma, \tau, n}$ as $d\to\infty$.
This convergence is an elementary consequence of the fact that the eigenfunctions $\zeta_{j,\tau}$ form an orthonormal basis of $\mathcal H_\tau$---this means that the subspaces $\mathcal Z_{\tau,d}$ increase to $\mathcal H_\tau$, and thus that the associated Fock spaces $F(\mathcal Z_{\tau,d})$ increase to $F(\mathcal H_\tau)$, recovering the formula for $f^{(t)}_{\sigma, \tau, n}$ in~\eqref{eq:ft_fock} which holds for any $n \mathbb N$.

Besides approximations of this type, one of our primary interests in this work is on approximations performed at \emph{fixed} $d$ with \emph{increasing} $n \to \infty$.
In this scenario, the approximation space is a fixed, infinite-dimensional but strict subspace $F(\mathcal Z_{\tau,d})$ of the full Fock space $F(\mathcal H_\tau)$, and within this subspace we compute expectations of the quantum observables $A_{f, \sigma, \tau, n, d}$ from~\eqref{eq:quantum_obs_d}.
As $n$ increases, these observables have increasingly high rank and access increasingly high gradings $\mathcal Z_{\tau,d}^{\otimes n}$ of $F(\mathcal Z_{\tau,d})$.
To the extent that $W_\tau$ sufficiently well-approximates $V$ for $n$-fold products of eigenfunctions $\zeta_{0,\tau}, \ldots, \zeta_{2d,\tau}$ to behave approximately as Koopman eigenfunctions (as would have been the case if $W_\tau$ perfectly recovered $V$ as a derivation obeying the Leibniz rule), increasing $n$ at sufficiently large, but fixed, $d$ can potentially reconstruct the full evolution $f^{(t)}$.
As a simple example, if $V$ is the generator of a circle rotation, $\Phi^t(\theta) = \theta + \alpha t \mod 2\pi$, the eigenfunctions of $V$ are Fourier functions, $\phi_j(\theta) = e^{ij\theta}$ with $j \in \mathbb Z$, so if $\zeta_{0,\tau}$, $\zeta_{1,\tau}$, and $\zeta_{2,\tau}$ well-approximate $\phi_0$, $\phi_1$, and $\phi_{-1}$, respectively, fixing the Fock space $F(\mathcal Z_{\tau,d})$ with $d=1$ and increasing $n$ while decreasing $\tau$ and $\sigma$ could potentially approximate $f^{(t)}$ to arbitrarily high precision.
In \cref{thm:torus_rotation_convergence} below, we show that such a result indeed holds for $d=2$ in a joint limit of $n \to \infty$ and $\tau\to 0^+$.

\begin{example*}[Circle rotation]
    In \cref{sec:circlerot}, the Fock space approximation $f^{(t)}_\text{Fock}$ of the evolution of the von Mises density under the circle rotation was based on~\eqref{eq:ft_fock_d} with dimension parameter $d=2$ and a grading of $n=10$.
    The improved approximation accuracy over the classical and quantum approximation methods operating at the same $d$ (see \cref{fig:evo_circlerot}) provides a numerical illustration of \cref{thm:tau_conv}, as well as the related \cref{thm:torus_rotation_convergence} in \cref{sec:convergence_torus} below.
\end{example*}

We have two major motivations to explore approximations of this latter type:
\begin{enumerate}[wide]
    \item In practical applications, the eigenfrequencies $\omega_{j,\tau}$ and corresponding eigenfunctions $\zeta_{j,\tau}$ can seldom be computed exactly.
    Instead, one has to resort to numerical approximations obtained from some type of discretization of the eigenvalue problem for $W_\tau$.
    Obtaining high-fidelity approximations of large numbers $d$ of eigenfunctions can be computationally costly, and in certain important cases (e.g., data-driven approximations using a fixed amount of available training data), altogether infeasible.
    An algebraic approach for building high-dimensional approximation spaces from a modest number, $2d +1$, of basic functions (here, via tensor products) is a promising strategy for amplifying the utility of numerical Koopman spectral decompositions.
    \item Working at fixed $d$ as $n$ increases allows one to leverage the tensor network construction in \cref{fig:network}, and access approximation spaces $\mathcal Z_{\tau,d}^{\otimes n}$ of exponentially high dimension, $(2d+1)^n$, by composing operations on $(2d+1)$-dimensional vectors.
\end{enumerate}

\section{Numerical implementation}
\label{sec:num_impl}

We now consider the numerical implementation of the tensor network scheme described in \cref{sec:tensornet}.

\subsection{Generator approximation}
\label{sec:num_impl_generator}

Our implementation begins by discretizing the eigenvalue problem for $W_\tau$ in~\eqref{eq:w_eig} in finite-dimensional subspaces $\mathcal H_{\tau, m}$ of the RKHA $\mathcal H_\tau$,
\begin{equation}
    \label{eq:w_tau_eig}
    W_{\tau,m} \zeta_{j,\tau,m} = i \omega_{j,\tau, m} \zeta_{j,\tau, m}.
\end{equation}
Here, $W_{\tau,m}\colon \mathcal H_{\tau, m} \to \mathcal H_{\tau,m} $ is a skew-adjoint operator that approximates $W_\tau$, $\omega_{j,\tau,m} \in \mathbb R$ are its eigenfrequencies, and $ \zeta_{j,\tau,m}$ are corresponding eigenfunctions that form an orthonormal basis of $\mathcal H_{\tau,m}$.
The specific choice of approximation spaces $\mathcal H_{\tau,m}$ will depend on the application at hand, but in general we require that $W_{\tau,m}$ converges spectrally to $W_\tau$ as $m \to \infty$.
Note that since $\mathcal H_\tau$ is a space of functions (as opposed to equivalence classes of functions in $L^p$ spaces), the approximate eigenfunctions $\zeta_{j,\tau,m}$ are everywhere-defined functions that can be represented computationally as function objects (as opposed to arrays containing values of functions).
Similarly, $W_{\tau,m}$ can be implemented natively as an operator on functions using automatic differentiation of the reproducing kernel of $\mathcal H_\tau$; see \cref{app:numerical}.

In what follows, we let $W_{\tau,m} \colon \mathcal H_\tau \to \mathcal H_\tau$ represent finite-rank approximations of either $W_\tau$ or $\tilde W_{z,\tau}$ from \cref{app:compact_approx,app:compact_res}.
Moreover, we will continue to use $\omega_{j,\tau,m} \in \mathbb R$ and $\zeta_{j,\tau,m} \in \mathcal H_\tau$ to denote corresponding eigenfrequencies and eigenfunctions, respectively.
We also let $U^t_{\tau,m} = e^{t W_{\tau,m}}$, $t \in \mathbb R$, denote the unitary evolution operators generated by $W_{\tau,m}$.

\subsection{Tensor network scheme}

With the approximations for $W_\tau$ and its eigenvalues/eigenfunctions from \cref{sec:num_impl_generator}, the numerical implementation of the tensor network scheme from \cref{sec:finite_rank} based on subspaces $\mathcal Z_{\tau,d} \subset \mathcal H_\tau$ proceeds in a structurally similar manner, replacing $\mathcal Z_{\tau,d}$ by $\mathcal Z_{\tau,d,m} = \spn \{ \zeta_{0,\tau,m}, \ldots, \zeta_{2d,\tau,m} \} \subseteq \mathcal H_{\tau,m}$ and the unitaries $U^t_\tau$ by $U^t_{\tau,m} := e^{t W_{\tau,m}}$.

For $d \in \mathbb N$ chosen such that $2d + 1 \leq \dim \mathcal H_{\tau,m}$, consider the subspace $\mathcal Z_{\tau,d,m} = \spn \{ \zeta_{0,\tau,m}, \ldots, \zeta_{2d,\tau,m} \} \subseteq \mathcal H_{\tau,m} $, and let $Z_{\tau,d,m}\colon \mathcal H_\tau \to \mathcal H_\tau$ be the orthogonal projection with $\ran Z_{\tau,d,m} = \mathcal Z_{\tau,d,m}$; that is, $Z_{\tau,d,m} = \sum_{j=0}^{2d} \langle \zeta_{j,\tau,m}, f\rangle_{\mathcal H_\tau} \zeta_{j,\tau,m}$.
Defining the projected state vector
\begin{equation*}
    \xi_{\tau, n,d,m} = \frac{1}{\lVert Z_{\tau,d,m} \xi^{1/n}\rVert_{\mathcal H_\tau}} Z_{\tau,d,m} \xi^{1/n},
\end{equation*}
we approximate the functions $U^{t*}_\tau\xi_{\tau, n,d} \in \mathcal H_\tau$ appearing in~\eqref{eq:quantum_approx2} by $U^{t*}_{\tau,m}\xi_{\tau, n,d,m} \in \mathcal H_{\tau,m}$.
We similarly approximate $(U^{t*}_\tau\xi_{\tau, n,d})^n$ by the $n$-fold pointwise function product $(U^{t*}_{\tau,m}\xi_{\tau, n,d,m})^n$, but note that since, in general, the subspace $\mathcal H_{\tau,m}$ is not an algebra, $(U^{t*}_{\tau,m}\xi_{\tau, n,d,m})^n$ must be viewed as an element of the RKHA $\mathcal H_\tau \supseteq \mathcal H_{\tau,m}$.
Similarly to the approximations $W_{\tau,m}$ of $W_\tau$, these approximations are again implemented using function objects, and converge as $m\to \infty$ to $(U^{t*}_\tau\xi_{\tau, n,d})^n$ in the norm of $\mathcal H_\tau$.

The remaining ingredients needed to approximate the right-hand side of \eqref{eq:quantum_approx2} are approximations of (i) the multiplication operator $\pi f \in \mathfrak B \equiv B(H)$; and (ii) $L^2$ inner products $\langle \cdot, \cdot \rangle_H$ with respect to the invariant measure $\mu$.
To that end, for each $l \in \mathbb N$ we introduce a sequence of points $x_0^{(l)}, \ldots, x_{N_l-1}^{(l)} \in G$ whose associated sampling measures
\begin{equation}
    \label{eq:sampling_meas}
    \mu_l := \frac{1}{N_l} \sum_{i=0}^{N_l-1} \delta_{x_i}
\end{equation}
converge as $l\to \infty$ to the invariant measure $\mu$ in weak-$^*$ sense.
This means $\lim_{l \to \infty} \int_G \tilde f \, d\mu_l = \int_G \tilde f \, d\mu$ for every continuous function $\tilde f \colon G \to \mathbb C$.
Possible choices for the $x_i^{(l)}$ are grid nodes on $G$ of increasingly small spacing as $l$ increases, or trajectories of points under discrete-time dynamics having $\mu$ as an ergodic invariant measure (obtained, e.g., by sampling the continuous-time dynamical flow $\Phi^t$ at an interval $\Delta t > 0$).
Assuming that $f$ has a representative $\tilde f \in C(G)$ (we will relax this assumption below), we further require that the values $y_i^{(l)} = \tilde f(x_i^{(l)}) \in \mathbb R$ are known on the $x_i^{(l)}$.

For each $l \in \mathbb N$ the samples $y_i^{{(l)}}$ induce an element $f_l$ in the finite-dimensional von Neumann algebra $\mathfrak A_l = L^\infty(\mu_l)$.
Letting $H_l = L^2(\mu_l)$ and $\mathfrak B_l = B(H_l)$, $f_l$ induces in turn a multiplication operator $(\pi_l f_l) \in \mathfrak B_l$, where $ (\pi_l g) h = g h$ for every $g \in \mathfrak A_l$ and $h \in H_l$.
Moreover, analogously to $K_\tau \colon H \to \mathcal H_\tau$, for every $l \in \mathbb N$ there is a finite-rank integral operator $K_{\tau,l}\colon H_l \to \mathcal H_\tau$, where
\begin{equation}
    \label{eq:k_op_discrete}
    K_{\tau,l} f = \int_G k_\tau(\cdot, x) f(x) \, d\mu_l(x) = \frac{1}{l}\sum_{i=0}^{N_l-1} k_\tau(\cdot, x^{(l)}_i) f(x^{(l)}_i).
\end{equation}
The adjoint $K_{\tau,l}^*$ is a multiplicative map that implements restriction (sampling) of functions in $\mathcal H_\tau$ to equivalence classes of functions in $H_l$.

With these definitions, we approximate $\langle K^*_\tau (\tilde G_{\tau, \sigma} U^{t*}_\tau \xi_{\tau, n,d})^n, (\pi f)K^*_\tau (\tilde G_{\tau, \sigma} U^{t*}_\tau \xi_{\tau, n,d})^n\rangle_H$
in the last line of~\eqref{eq:quantum_approx2} by
\begin{displaymath}
    \langle (\tilde G_{\tau, \sigma} U^{t*}_\tau \xi_{\tau, n,d})^n, M_{f,\tau,l} (\tilde G_{\tau, \sigma} U^{t*}_\tau \xi_{\tau, n,d})^n\rangle \equiv \langle K^*_{\tau,l} (\tilde G_{\tau, \sigma} U^{t*}_{\tau, m} \xi_{\tau, n,d,m})^n, (\pi_l f_l)K^*_{\tau,l} (\tilde G_{\tau, \sigma} U^{t*}_{\tau, m} \xi_{\tau, n,d,m})^n\rangle_{H_l},
\end{displaymath}
where
\begin{equation}
    \label{eq:discrete_mult_op}
    M_{f,\tau,l} = K_{\tau,l} \pi_l f_l, K_{\tau,l}^*.
\end{equation}
This approximation converges as $l,m\to \infty$, where the limits can be taken in either order.
Using this approximation of~\eqref{eq:quantum_approx2}, we define $g^{(t)}_{\sigma, \tau, n, d, m, l}\colon X \to \mathbb R$ and $C_{\sigma, \tau, n, d, m, l}^{(t)} >0$ analogously to $g^{(t)}_{\sigma, \tau, n, d}$ and $C_{\sigma, \tau, n, d}^{(t)}$ from~\eqref{eq:ft_fock_d}, respectively, using the evolution operators $\tilde{\mathcal U}^t_{\tau,m}: \mathfrak F_\tau \to \mathfrak F_\tau$ and $\tilde{\mathcal P}^t_{\tau,m}: \mathfrak F_{\tau*} \to \mathfrak F_{\tau*}$ defined as $\tilde{\mathcal U}^t_{\tau,m} = U^t_{\tau,m} A U^{t*}_{\tau,m}$ and  $\tilde{\mathcal P}^t_{\tau,m} = U^{t*}_{\tau,m} \rho U^t_{\tau,m}$, together with the quantum observable
\begin{equation}
    \label{eq:discrete_a_fock}
    A_{f, \sigma, \tau, n, l} = \tilde G_{\tau, \sigma}^{\otimes n} \Delta_n M_{f,\tau,l} \Delta_n^* \tilde G_{\tau, \sigma}^{\otimes n} \in \mathfrak F_\tau
\end{equation}
acting on the Fock space (cf.\ \eqref{eq:a_fock}).
Our numerical approximation of the statistical evolution $f^{(t)}$ is given by
\begin{equation}
    \label{eq:ft_fock_d_num}
    f^{(t)}_{\sigma, \tau, n, d, m, l} = \frac{g^{(t)}_{\sigma, \tau, n, d, m, l}}{C_{\sigma, \tau, n, d, m, l}^{(t)}}, \quad g^{(t)}_{\sigma, \tau, n, d, m, l} = \mathbb E_{\tilde{\mathcal P}^t_{\tau,m}\nu_{\tau,d,m}}A_{f, \sigma, \tau, n, l}, \quad C^{(t)}_{\sigma, \tau, n, d, m, l} = \mathbb E_{\tilde{\mathcal P}^t_{\tau,m}\nu_{\tau,d,m}}A_{\bm 1, \sigma, \tau, n, l},
\end{equation}
where $\nu_{\tau,d,m} = \langle \eta_{\tau,d,m}, \cdot \rangle_{\mathcal H_\tau} \eta_{\tau,d,m} \in \mathfrak F_{\tau*}$ is the density operator associated with the state vector $\eta_{\tau,d,m} = \sum_{n=1}^\infty w_n \xi_{\tau, n,d,m}^{\otimes n} \in F(\mathcal Z_{\tau,d,m})$.
This approximation converges to $f^{(t)}_{\sigma, \tau, n, d}$ as $l,m \to \infty$.

Finally, if $f$ does not have a continuous representative, for a tolerance $\epsilon>0$, pick $\tilde f \in C(G)$ such that $\lVert f - \iota \tilde f \rVert_{L^1(\mu)} < \epsilon$.
Since $H_\tau$ is a subspace of $L^\infty(\mu)$, for every $g \in \mathcal H_\tau$ we have
\begin{align*}
    \lvert \langle K_\tau^* g, (f - \iota \tilde f) K_\tau^* g \rangle_H\rvert & \leq \lVert f - \iota \tilde f\rVert_{L^1(\mu)} \lVert K_\tau^* g \rVert_{L^\infty(\mu)}^2\\
                                                                             & \leq \lVert f - \iota \tilde f\rVert_{L^1(\mu)} \lVert k_\tau\rVert_{C(G \times G)}^2 \lVert g \rVert_{\mathcal H_\tau}^2\\
                                                                             &< \epsilon \lVert k_\tau\rVert_{C(G \times G)}^2 \lVert g \rVert_{\mathcal H_\tau}^2.
\end{align*}
This means that for sufficiently small $\epsilon$ and large $l$, $\langle K^*_\tau (\tilde G_{\tau, \sigma} U^{t*}_\tau \xi_{\tau, n,d})^n, (\pi f)K^*_\tau (\tilde G_{\tau, \sigma} U^{t*}_\tau \xi_{\tau, n,d})^n\rangle_H$ can be approximated to any desired accuracy by $\langle K^*_{\tau,l} (\tilde G_{\tau, \sigma} U^{t*}_{\tau, m} \xi_{\tau, n,d,m})^n, (\pi_l f_l)K^*_{\tau,l} (\tilde G_{\tau, \sigma} U^{t*}_{\tau, m} \xi_{\tau, n,d,m})^n\rangle_{H_l}$, where $f_l$ is the element of $\mathfrak A_l$ represented by $\tilde f$.
Correspondingly, $f^{(t)}_{\sigma, \tau, n, d}$ can also be represented to any accuracy by $f^{(t)}_{\sigma, \tau, n, d, m, l}$ from~\eqref{eq:ft_fock_d_num} computed using samples of $\tilde f$.

\section{Approximation of pointwise evaluation}
\label{sec:pointwise}

Thus far, we have considered approximation of the evolution of expectation values of observables with respect to general probability densities $p \in \mathfrak A_*$ using quantum states on the Fock space $F(\mathcal H_\tau)$.
In this subsection, we specialize these constructions to quantum states that approximate pointwise evaluation of continuous functions on the $N$-torus, $G = \mathbb T^N$.
Following \cite{GiannakisEtAl22}, we build these states via a ``quantum feature map'', i.e., a map $\Xi_{\kappa, \tau}\colon G \to S_*(\mathfrak F_\tau)$, which we parameterize here by a sharpness parameter $\kappa > 0$.
For $\tilde f \in C(G)$, our goal is to show that the observable $\tilde f^{(t)}_{\kappa, \sigma, \tau, n} \in C(G)$, where $\tilde f^{(t)}_{\kappa, \sigma, \tau, n}(x)$ is obtained by applying~\eqref{eq:ft_fock} to the observable $f = \iota \tilde f \in \mathfrak A$ and the quantum state $\Xi_{\kappa, \tau}(x)$, well-approximates $(U^t \tilde f)(x) = \tilde f(\Phi^t(x))$, where $U^t\colon C(G) \to C(G)$ denotes the time-$t$ Koopman operator on continuous functions.

\subsection{Feature maps based on von Mises densities}
\label{sec:von_mises}

Our construction employs isotropic von Mises density functions on $\mathbb T^N$, which we build starting from the circle, $N=1$, using $p_{\mu, \kappa}$ from~\eqref{eq:von_mises_circle}.
In higher dimensions, we consider the product density $p_{\bm \mu, \bm \kappa} \in C^\infty(\mathbb T^N)$ with $\bm \kappa = (\kappa_1, \ldots, \kappa_N) \in \mathbb R^N_+$ and $\bm \mu = (\mu_1, \ldots, \mu_N) \in [0, 2\pi)^N$, given by
\begin{equation}
    \label{eq:von_mises}
    p_{\bm \mu, \bm \kappa}(x)= \prod_{i=1}^N p_{\mu_i, \kappa_i}(\theta_i), \quad x = (\theta_1, \ldots, \theta_N).
\end{equation}
By properties of Bessel functions, $p_{\bm \mu, \bm \kappa}$ integrates to 1 with respect to the Haar probability measure on $\mathbb T^N$.

The following are useful properties of von Mises density functions in the context of the approximation schemes studied in this paper.
\begin{itemize}[wide]
    \item $p_{\mu, \kappa}$ has known expansion coefficients in the character (Fourier) basis of $ \mathbb T$.
    Specifically, for $j \in \mathbb Z \cong \widehat{\mathbb T}$, we have
        \begin{equation*}
            \hat p_{\mu,\bm \kappa}(j) := \langle \iota \gamma_j, \iota p_{\mu, \kappa}\rangle_{L^2(\mathbb T)} = \frac{I_{\lvert j\rvert}(\kappa)e^{ij\mu}}{I_0(\kappa)}.
        \end{equation*}
        Similarly, the Fourier coefficients of $p_{\bm \mu, \bm \kappa}$ on $\mathbb T^N$ are given by
        \begin{equation}
            \label{eq:von_mises_fourier}
            \hat p_{\bm \mu, \bm \kappa}(j) := \langle \iota \gamma_{j}, \iota p_{\bm \kappa, \bm \mu}\rangle_{L^2(\mathbb T^N)} = \prod_{i=1}^N \hat\sigma_{\kappa_i,\mu_i}(j_i), \quad j = (j_1, \ldots, j_N).
        \end{equation}
    \item The $n$-th root of a von Mises density function is a von Mises density function up to a proportionality constant,
        \begin{equation*}
            p_{\bm \mu, \bm \kappa}^{1/n}(x) = \prod_{i=1}^N\frac{I_0(\kappa_i/n)}{I_0^{1/n}(\kappa_i)} p_{\mu_i, \kappa_i/n}(\theta_i), \quad x = (\theta_1, \ldots, \theta_N).
        \end{equation*}
    \item Recall the series representation \cite{AbramowitzStegun64}*{Chapter~9}
        \begin{equation}
            \label{eq:bessel_series}
            I_j(\kappa) = (\kappa/2)^j \sum_{r=0}^\infty \frac{(\kappa/2)^{2r}}{r! (r+j)!}, \quad j \in \mathbb N_0.
        \end{equation}
        The above implies the upper bound
        \begin{equation}
            \label{eq:bessel_uppper}
            I_j(\kappa/n) = \left(\frac{\kappa}{2n}\right)^j \sum_{r=0}^\infty \frac{(\frac{\kappa}{2n})^{2r}}{r! (r+j)!} \leq \frac{ \kappa^j}{(2n)^j j!} e^{\kappa^2/4n^2}.
        \end{equation}
        For the family of inverse weights~\eqref{eq:lambda_subexp} on $\mathbb T^N$, the rapid decay of $I_j(\kappa)$ as $\lvert j\rvert$ grows ensures that $p_{\bm\mu,\bm \kappa}$ lies in the RKHA $\mathcal H_\tau$ for every $\tau>0$.
\end{itemize}

Since $p_{\bm\mu, \bm \kappa} \in \mathcal H_\tau$, it follows that $F_{\kappa, \tau} \colon \mathbb T^N \to \mathcal H_\tau$ with $\kappa>0$,
\begin{equation*}
    F_{\kappa, \tau}(x) = p_{x,\bm \kappa} / \lVert p_{x,\bm \kappa}\rVert_{\mathcal H_\tau}, \quad \bm \kappa = (\kappa, \ldots, \kappa) \in \mathbb R^N_+,
\end{equation*}
is a well-defined feature map for every $\tau>0$.
We lift $F_{\kappa, \tau}$ to a quantum feature map $\Xi_{\kappa, \tau}\colon \mathbb T^N \to S_*(\mathfrak B_\tau)$, where $\mathfrak B_\tau = B(\mathcal H_\tau)$ and $\Xi_{\kappa, \tau}(x)$ is the rank-1 density operator that projects along $F_{\kappa, \tau}(x)$,
\begin{equation*}
    \Xi_{\kappa, \tau}(x) = \langle F_{\kappa, \tau}(x), \cdot \rangle_{\mathcal H_\tau} F_{\kappa, \tau}(x).
\end{equation*}
Given a probability vector $w \in \ell^1(\mathbb N)$ with strictly positive entries (as in \cref{sec:fock}), we also define variants $\tilde F_{\kappa, \tau}\colon \mathbb T^N \to F(\mathcal H_\tau)$ and $\tilde \Xi_{\kappa, \tau} \to S_*(\mathfrak F_\tau)$ of $F_{\kappa, \tau}$ and $\Xi_{\kappa, \tau}$, respectively, mapping into the Fock space,
\begin{displaymath}
    \tilde F_{\kappa, \tau}(x) = \eta_\tau, \quad \tilde\Xi_{\kappa, \tau}(x) = \langle \eta_\tau, \cdot \rangle_{F(\mathcal H_\tau)} \eta_\tau,
\end{displaymath}
where $\eta_\tau$ is obtained by applying~\eqref{eq:statevector} to the vector $\xi = F_{\kappa, \tau}(x) \in \mathcal H_\tau$.

The following lemma establishes that these feature maps can be used to approximate pointwise evaluation for continuous functions.

\begin{lem}
    \label{lem:pointwise}
    Assume the choice of inverse weights $\lambda_\tau$ in \eqref{eq:lambda_subexp}.
    Then for every $n \in \mathbb N$, $\tilde f \in C(\mathbb T^N)$, and $x \in X \equiv \supp(\mu)$, we have
    \begin{displaymath}
        \lim_{\kappa \to \infty} \frac{\mathbb E_{\Xi_{\kappa, \tau}(x)} M_{f, \tau}}{\mathbb E_{\Xi_{\kappa, \tau}(x)} M_{\bm 1, \tau}}
        = \lim_{\kappa \to \infty} \lim_{\sigma \to 0^+} \frac{\mathbb E_{\tilde \Xi_{\kappa, \tau}(x)} A_{f, \sigma, \tau, n}}{\mathbb E_{\tilde \Xi_{\kappa, \tau}(x)} A_{\bm 1, \sigma, \tau, n}} = \tilde f(x),
    \end{displaymath}
    where the quantum observables $M_{f, \tau}, M_{\bm 1, \tau} \in \mathfrak B_\tau$ and $A_{f, \sigma, \tau, n}, A_{\bm 1, \sigma, \tau, n} \in \mathfrak F_\tau$ are determined from~\eqref{eq:a_fock} for $f = \iota \tilde f \in \mathfrak A$.
\end{lem}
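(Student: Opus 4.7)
The plan rests on a striking algebraic coincidence: the two ratios in the lemma are in fact \emph{equal} at every $\kappa$, $\tau$, and $n$, not merely in the limit. With $\xi = F_{\kappa,\tau}(x) \in \mathcal H_\tau$ (strictly positive on $G$, so $\xi^{1/n} \in \mathcal H_\tau$ by the holomorphic functional calculus), the Fock-space state vector~\eqref{eq:statevector} has a single component in the $n$-th grading $\mathcal H_\tau^{\otimes n}$. Since $\ker \Delta_{n-1}^* = (\mathcal H_\tau^{\otimes n})^\perp$ and $\Delta_{n-1}^*$ acts on elementary tensors by iterated pointwise multiplication, I obtain
\begin{equation*}
    \Delta_{n-1}^* \eta_\tau = \frac{w_n}{\lVert \xi^{1/n}\rVert_{\mathcal H_\tau}^n}\, (\xi^{1/n})^n = \frac{w_n}{\lVert \xi^{1/n}\rVert_{\mathcal H_\tau}^n}\, \xi,
\end{equation*}
and consequently $\mathbb E_{\tilde \Xi_{\kappa,\tau}(x)} A_{f,\tau,n} = \bigl(w_n^2/\lVert \xi^{1/n}\rVert_{\mathcal H_\tau}^{2n}\bigr)\, \mathbb E_{\Xi_{\kappa,\tau}(x)} M_{f,\tau}$. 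The $f$-independent prefactor cancels between numerator and denominator, reducing the problem to proving just the first equality in the lemma.

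For the first equality, I expand $M_{f,\tau} = K_\tau M_f K_\tau^*$ and use $K_\tau^* = \iota\rvert_{\mathcal H_\tau}$ to rewrite
\begin{equation*}
    \frac{\mathbb E_{\Xi_{\kappa,\tau}(x)} M_{f,\tau}}{\mathbb E_{\Xi_{\kappa,\tau}(x)} M_{\bm 1,\tau}} = \frac{\int_X \tilde f(y)\, \sigma_{x,\bm\kappa}(y)^2\, d\mu(y)}{\int_X \sigma_{x,\bm\kappa}(y)^2\, d\mu(y)},
\end{equation*}
with the $\mathcal H_\tau$-normalization of $F_{\kappa,\tau}(x)$ cancelling. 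The product structure~\eqref{eq:von_mises} together with the elementary identity $\sigma_{\mu,\kappa}^2 = (I_0(2\kappa)/I_0(\kappa)^2)\,\sigma_{\mu,2\kappa}$ pulls a $y$-independent constant out of both integrals, so matters reduce to showing
\begin{equation*}
    \lim_{\kappa\to\infty}\frac{\int_X \tilde f(y)\, \sigma_{x,2\bm\kappa}(y)\, d\mu(y)}{\int_X \sigma_{x,2\bm\kappa}(y)\, d\mu(y)} = \tilde f(x).
\end{equation*}

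This is a standard approximate-identity statement, with the twist that the concentration occurs against the (possibly singular) invariant measure $\mu$. Given $\epsilon>0$, choose $\delta>0$ so that $|\tilde f(y)-\tilde f(x)|<\epsilon/2$ on a geodesic ball $B(x,\delta) \subset \mathbb T^N$ by continuity of $\tilde f$, and split each integral over $B(x,\delta)$ and $B(x,\delta)^c$. On $B(x,\delta)$ the Laplace-type asymptotic $I_0(s) \sim e^s/\sqrt{2\pi s}$ (immediate from~\eqref{eq:bessel_series}) combined with $\cos(y_i-x_i) \geq 1-\delta^2/2$ gives the lower bound $\sigma_{x,2\bm\kappa}(y) \geq c_1\kappa^{N/2}e^{-N\kappa\delta^2/2}$, while on $B(x,\delta)^c$ the strict concavity of cosine yields $\sigma_{x,2\bm\kappa}(y) \leq c_2\kappa^{N/2}e^{-c_3 N\kappa\delta^2}$ with $c_3>1/2$. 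Since $x\in X=\supp\mu$, the ball $B(x,\delta)$ has strictly positive $\mu$-mass, so the denominator is bounded below by $c_1\mu(B(x,\delta))\kappa^{N/2}e^{-N\kappa\delta^2/2}$, which dominates the tail contribution of $\lVert \tilde f\rVert_\infty \cdot \sigma_{x,2\bm\kappa}$ on $B(x,\delta)^c$ by an exponentially small factor as $\kappa\to\infty$. The main obstacle is organizing these two-sided Laplace estimates so that the tail is absorbed uniformly independent of $\delta$'s subsequent fine-tuning; once this is done, the continuity estimate on $B(x,\delta)$ delivers the limit.
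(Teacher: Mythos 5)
Your overall strategy is sound and follows the paper's high-level route: reduce the Fock-space ratio to the RKHS ratio via the observation that $\Delta_{n-1}^*$ picks out only the $n$-th grading of $\eta_\tau$, and then prove the concentration limit for the resulting von Mises quotient. The first reduction is done in the paper as well (at the end of its proof, by noting the two ratios agree for every $n$), and the rewriting of the RKHS ratio as $\mathbb E_{p_\kappa}f$ for a normalized von Mises density $p_\kappa \propto \sigma_{x,2\bm\kappa}$ is precisely what the paper does using $h_\kappa^2 = h_{2\kappa}$ and $\mathcal K_\tau = K_\tau^*K_\tau$.

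The genuine difference lies in the concentration step, where you attempt an explicit Laplace estimate while the paper invokes the distributional limit $\varrho_\kappa \to \delta_x$ on $\supp\mu$ directly (asserted rather than proved). Your estimate as written has a gap. You bound $\sigma_{x,2\bm\kappa}$ from below on $B(x,\delta)$ and from above on $B(x,\delta)^c$ \emph{using the same $\delta$} for both regions. But the lower bound on the ball is governed by the worst case $|y-x| = \delta$, and the upper bound on the complement is governed by the best case $|y-x| = \delta$; these are the \emph{same} boundary value, so the two bounds coincide asymptotically in $\kappa$ (both are $\asymp \kappa^{N/2}e^{-2\kappa(1-\cos\delta)}$ up to $\kappa$-independent factors). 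There is no exponential gap with matching radii. Your claimed constant ``$c_3 > 1/2$'' in the tail bound is what the argument requires, but strict concavity of cosine does not deliver it: $1 - \cos\delta \leq \delta^2/2$, which works \emph{against} a tail exponent exceeding the ball exponent. Moreover the factor of $N$ in your tail exponent cannot come from a single coordinate leaving $B(x,\delta)$. The standard repair is to fix the continuity radius $\delta$ first, then take a strictly smaller $\delta' < \delta$ (small enough that $N(1-\cos\delta') < 1-\cos\delta$, e.g.\ $\delta' \lesssim \delta/\sqrt{N}$) for the lower bound; use $\mu(B(x,\delta')) > 0$ from $x \in \supp\mu$ and compare $\int_{B(x,\delta')}\sigma\,d\mu$ against $\int_{B(x,\delta)^c}\sigma\,d\mu$. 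With the two radii decoupled the exponential separation is genuine and the argument closes. As written, your ``main obstacle'' sentence gestures at this but does not resolve it, and the bound with $c_3 > 1/2$ would need to be withdrawn.
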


\begin{proof}
    By translation invariance of the kernel we may assume, without loss of generality, that the origin lies in $X$ and the evaluation point is $x=0$.

    Let $h_\kappa(\theta) = e^{\kappa \cos\theta}$ and observe that the feature vector $F_{\kappa, \tau}(0)$ is equal to $h_\kappa / \lVert h_\kappa\rVert_{\mathcal H_\tau}$.
    Moreover, we have $h_\kappa = \sqrt{h_{2\kappa}}$, so
    \begin{displaymath}
        F_{\kappa, \tau}(0) = \frac{\sqrt{\lVert \iota h_{2\kappa}\rVert_{L^1(\mu)}}}{\lVert h_\kappa\rVert_{\mathcal H_\tau}} \sqrt{\frac{h_{2\kappa}}{\lVert \iota h_{2\kappa}\rVert_{L^1(\mu)}}} = \frac{\lVert \iota h_\kappa\rVert_H}{\lVert h_\kappa\rVert_{\mathcal H_\tau}} \varrho_\kappa^{1/2},
    \end{displaymath}
    where $\varrho_\kappa = h_{2\kappa} / \lVert \iota h_{2\kappa}\rVert_{L^1(\mu)} \in \mathcal H_\tau$ is the representative of a probability density $p_\kappa = \iota \varrho_\kappa \in L^1(\mu)$.
    Thus, using~\eqref{eq:kvn}, we get
    \begin{align*}
        \mathbb E_{\Xi_{\kappa, \tau}(0)} M_{f,\tau}
        &= \langle F_{\kappa, \tau}(0), K_\tau (\pi f) K_\tau^* F_{\kappa, \tau}(0)\rangle_{\mathcal H_\tau} = \langle K_\tau^* F_{\kappa, \tau}(0), (\pi f) K_\tau^* F_{\kappa, \tau}(0)\rangle_H\\
        &= \frac{\lVert\iota h_\kappa\rVert_H^2}{\lVert h_\kappa\rVert_{\mathcal H_\tau}^2} \mathbb E_{\Gamma(p_\kappa)}(\pi f) = \frac{\lVert\iota h_\kappa \rVert_H^2}{\lVert h_\kappa\rVert_{\mathcal H_\tau}^2} \mathbb E_{p_\kappa}f,
    \end{align*}
    and thus
    \begin{displaymath}
        \frac{\mathbb E_{\Xi_{\kappa, \tau}(0)} M_{f,\tau}}{\mathbb E_{\Xi_{\kappa, \tau}(0)} M_{\bm 1,\tau}} = \mathbb E_{p_\kappa}f.
    \end{displaymath}

    Next, since $0$ lies in the support of $\mu$, for every neighborhood $O$ of 0 and $\epsilon>0$ there exists $\kappa_* >0$ such that for all $\kappa \geq \kappa_*$, $\int_O \varrho_\kappa \, d\mu \geq 1 - \epsilon$.
    As a result, for every $g \in C(\mathbb T^N)$, we have $\lvert\int_{\mathbb T^N} g \varrho_\kappa \, d\mu - g(0) \rvert \leq \epsilon \lVert g\rVert_{C(G)} + (\int_O \varrho_\kappa \, d\mu) \sup_{x\in O} \lvert g(0) - g(x)\rvert$.
    Since $O$ and $\epsilon$ are arbitrary, it follows that $\varrho_\kappa \to \delta_0$ as $\kappa \to \infty$ in a distributional sense, so
    \begin{displaymath}
        \lim_{\kappa\to\infty} \frac{\mathbb E_{\Xi_{\kappa, \tau}(0)} M_{f,\tau}}{\mathbb E_{\Xi_{\kappa, \tau}(0)} M_{\bm 1,\tau}} = \lim_{\kappa\to\infty} \mathbb E_{p_\kappa} f = \tilde f(0),
    \end{displaymath}
    as claimed.
    We deduce the second claim of the lemma using~\eqref{eq:sigma_lim_t0}, 
    \begin{displaymath}
        \lim_{\kappa \to \infty} \lim_{\sigma \to 0^+} \frac{\mathbb E_{\tilde\Xi_{\kappa, \tau}(x)} A_{f, \sigma, \tau, n}}{\mathbb E_{\tilde\Xi_{\kappa, \tau}(x)} A_{\bm 1, \sigma, \tau, n}}
        = \lim_{\kappa \to \infty} \frac{\mathbb E_{\Xi_{\kappa, \tau}(x)} M_{f,\tau}}{\mathbb E_{\Xi_{\kappa, \tau}(x)} M_{\bm 1,\tau}} = \tilde f(0). \qedhere
    \end{displaymath}
\end{proof}

Combining \cref{thm:tau_conv} with \cref{lem:pointwise}, the same feature maps can also be used to approximate pointwise evaluation of time-evolved continuous observables by taking an additional $\tau \to 0^+$ limit.

\begin{cor}
    \label{cor:pointwise}
    With the assumptions and notation of \cref{thm:tau_conv,lem:pointwise}, define $\tilde f^{(t)}_{\kappa,\tau}, \tilde f^{(t)}_{\kappa, \sigma, \tau, n} \in C(\mathbb T^N)$, where 
\begin{displaymath}
    \tilde f^{(t)}_{\kappa, \tau} = \frac{\mathbb E_{\mathcal P^t_\tau(\Xi_{\kappa, \tau}(x))} M_{f, \tau}}{\mathbb E_{\mathcal P^t_\tau(\Xi_{\kappa, \tau}(x))} M_{\bm 1, \tau}}, \quad
    \tilde f^{(t)}_{\kappa, \sigma, \tau, n} = \frac{\mathbb E_{\tilde{\mathcal P}^t_\tau(\tilde\Xi_{\kappa, \tau}(x))} A_{f, \sigma, \tau, n}}{\mathbb E_{\tilde{\mathcal P}^t_\tau(\tilde\Xi_{\kappa, \tau}(x))} A_{\bm 1, \sigma, \tau, n}}.
\end{displaymath}
     Then, the following hold for every $x \in X$:
    \begin{enumerate}
        \item $\lim_{\kappa\to\infty} \lim_{\tau\to 0^+} \tilde f^{(t)}_{\kappa, \tau}(x) = U^t \tilde f(x)$.
        \item For every $n \in \{2, 3, \ldots \}$, $\lim_{\kappa\to\infty} \lim_{\sigma \to 0{^+}}\lim_{\tau\to 0^+} \tilde f^{(t)}_{\kappa, \sigma, \tau, n}(x) = U^t \tilde f(x)$.
    \end{enumerate}
 \end{cor}

\subsection{Numerical implementation}

In the context of the numerical approximations from \cref{sec:num_impl}, we modify the constructions in \cref{cor:pointwise} by replacing (i) $M_{f, \tau}$ and $A_{f, \sigma, \tau, n}$ by the finite-rank quantum observables $M_{f, \tau, l}$ and $A_{f, \sigma, \tau, n, l}$ obtained via~\eqref{eq:discrete_mult_op} and~\eqref{eq:discrete_a_fock}, respectively; and (ii) $F_{\kappa, \tau}$ by the projected feature map $F_{\kappa, \tau,d,m} \colon \mathbb T^N \to \mathcal H_{\tau,m}$, where $F_{\kappa, \tau,d,m}(x) = Z_{\tau,d,m} F_{\kappa, \tau}(x) / \lVert Z_{\tau,d,m} F_{\kappa, \tau}(x)\rVert_{\mathcal H_\tau}$.
As above, $F_{\kappa, \tau,d,m}$ lifts to a Fock-space-valued feature map $\tilde F_{\kappa, \tau,d,m}\colon \mathbb T^N \to F(\mathcal H_\tau)$, where
\begin{equation*}
    \tilde F_{\kappa, \tau,d,m}(x) = \sum_{n=1}^\infty w_n \xi_{\tau, n,d,m}^{\otimes n}.
\end{equation*}
Moreover, $F_{\kappa, \tau,d,m}$ and $\tilde F_{\kappa, \tau,d,m}$ induce quantum feature maps $\Xi_{\kappa, \tau,d,m}\colon \mathbb T^N \to S_*(\mathfrak B_\tau)$ and $\tilde\Xi_{\kappa, \tau,d,m}\colon \mathbb T^N \to S_*(\mathfrak F_\tau)$ analogously to $\Xi_{\kappa, \tau}$ and $\tilde\Xi_{\kappa, \tau}$, leading to approximations $\tilde f^{(t)}_{\kappa, \tau, d, m}$ and $\tilde f^{(t)}_{\kappa, \sigma, \tau, n, d, m}$ of $f^{(t)}_{\kappa, \tau}$ and $f^{(t)}_{\kappa, \sigma, \tau, n}$ from \cref{cor:pointwise}, respectively, given by
\begin{equation}
    \label{eq:pointwise_eval_d}
    \tilde f^{(t)}_{\kappa, \tau, d, m}(x) = \frac{\mathbb E_{\mathcal P^t_{\tau,m}(\Xi_{\kappa, \tau,d,m}(x))} M_{f, \tau}}{\mathbb E_{\mathcal P^t_{\tau,m}(\Xi_{\kappa, \tau,d,m}(x))} M_{\bm 1,\tau}},
    \quad \tilde f^{(t)}_{\kappa, \sigma, \tau, n, d, m}(x) = \frac{\mathbb E_{\tilde{\mathcal P}^t_{\tau,m}(\tilde\Xi_{\kappa, \tau,d,m}(x))} A_{f, \sigma, \tau, n}}{\mathbb E_{\tilde{\mathcal P}^t_{\tau,m}(\tilde\Xi_{\kappa, \tau,d,m}(x))} A_{\bm 1, \sigma, \tau, n}}.
\end{equation}
We further approximate $\tilde f^{(t)}_{\kappa, \sigma, \tau, d, m}$ and $\tilde f^{(t)}_{\kappa, \sigma, \tau, n, d, m}$ by $\tilde f^{(t)}_{\kappa,\sigma,\tau,d,m,l}$ and $\tilde f^{(t)}_{\kappa, \tau, n,d,m,l}$, computed from the quantum expectations of $M_{f,\tau,l}$ and $A_{f, \sigma, \tau, n, l}$, respectively, viz.
\begin{equation}
    \label{eq:pointwise_eval_d_l}
    \tilde f^{(t)}_{\kappa,\tau,d,m,l}(x) = \frac{\mathbb E_{\mathcal P^t_{\tau,m}(\Xi_{\kappa, \tau,d,m}(x))} M_{f, \tau,l}}{\mathbb E_{\mathcal P^t_{\tau,m}(\Xi_{\kappa, \tau,d,m}(x))} M_{\bm 1,\tau,l}},
    \quad \tilde f^{(t)}_{\kappa, \sigma, \tau, n,d,m,l}(x) = \frac{\mathbb E_{\tilde{\mathcal P}^t_{\tau,m}(\tilde\Xi_{\kappa, \tau,d,m}(x))} A_{f, \sigma, \tau, n, l}}{\mathbb E_{\tilde{\mathcal P}^t_{\tau,m}(\tilde\Xi_{\kappa, \tau,d,m}(x))} A_{\bm 1, \sigma, \tau, n, l}}.
\end{equation}
Note, in particular, that $\tilde f^{(t)}_{\kappa, \sigma, \tau, n,d,m,l}(x)$ is obtained via~\eqref{eq:ft_fock_d_num} for the quantum state $\tilde\Xi_{\kappa, \tau,d,m}(x)$.
The approximations $\tilde f^{(t)}_{\kappa, \tau, d, m, l}$ and $\tilde f^{(t)}_{\kappa,\sigma,\tau,d,m,l}$ converge pointwise on $X$ to $\tilde f^{(t)}_{\kappa,\tau}$ and $\tilde f^{(t)}_{\kappa, \sigma, \tau, n}$ in the iterated limits of $d\to\infty$ after $m,l \to \infty$ (see \cref{sec:num_impl}).

\subsection{Convergence analysis at fixed $d$}
\label{sec:convergence_torus}

As discussed in \cref{sec:finite_rank}, an interesting possibility of the tensor network framework is convergence as $n$ increases (and possibly other parameters vary), with the dimension parameter $d$ held fixed.
In this section, we study this possibility in the context of the scheme described in \cref{sec:pointwise} for pointwise approximation of bounded functions on $\mathbb T^N$ using classical and quantum feature maps built from von Mises distributions.
Specifically, the conjecture we wish to analyze is the pointwise convergence of $\tilde f^{(t)}_{\kappa, \sigma, \tau, n,d,m}$ from \eqref{eq:pointwise_eval_d} as $n, \frac{1}{\tau}, \frac{1}{\sigma} \to \infty$ jointly in an appropriately chosen manner and $\kappa$, $t$, $m$, and (importantly) $d$ held constant.

We focus on the simplest possible case of a circle rotation.
As it turns out, in this case we can set the smoothing parameter $\sigma = 0$ from the outset since the kernel integral operator $G_\sigma$ commutes with the Koopman operator $U^t$.
In \cref{sec:convergence_torus_proof}, we prove the following theorem.

\begin{thm}
    \label{thm:torus_rotation_convergence}
    Set $N=1$ and  $\Phi^t(\theta) = t +\alpha\theta \mod 2\pi$.
    Then, with the notation given above, for every $f \in L^\infty(\mathbb T)$ there are constants $c_1,c_2 \in \mathbb R_{>0}$ such that for every $\theta \in \mathbb T$, $\tilde f^{(t)}_{\kappa, 0, \tau, n,d,m}(\theta)$ from~\eqref{eq:pointwise_eval_d} converges as $\varepsilon \to 0^+$ to $ \mathbb E_{p_{\theta,2\kappa}} (U^t f)$ for $p_{\theta,2\kappa}$, and $(n,\tau) = (\lceil \frac{c_1}{\varepsilon}\rceil,c_2\varepsilon^2)$, and with $d=2$, $t$, $\kappa$, and $m \geq 2d + 1$ held constant.
\end{thm}

A similar argument to the $N=1$ case implies convergence for torus rotation dynamical systems of arbitrary dimension $N$ with $d$ at least $\lceil 1+\frac{4}{N} \rceil ^N$ and $n \sim \frac{1}{\varepsilon}$, $\tau \sim \varepsilon^2$.
We omit further discussion of this result in the interest of brevity.
In \cref{sec:convergence_general}, we will consider the challenges for analytically proving convergence for a generic smooth dynamical system.

\subsubsection{Proof of \cref{thm:torus_rotation_convergence}}
\label{sec:convergence_torus_proof}

Multiplicativity of the Koopman operator on $L^\infty(\mu)$ and the following lemma are the main components for proving convergence analytically for the circle rotation.
These two facts also require the estimate to be done with the infinity norm instead of the weaker two-norm which would otherwise be sufficient.

\begin{lem}
    \label{lem:pwr}
    Let $y, \tilde{y} \in \mathbb C$.
    If $|y - \tilde{y}| \leq \frac{1}{n}$ then $|y^n - \tilde{y}^n| \leq e n\cdot \min\{|y|^n,|\tilde{y}|^n\} |y-\tilde{y}|$.
\end{lem}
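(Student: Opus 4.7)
The approach rests on the standard polynomial identity
\begin{equation*}
y^n - \tilde y^n = (y - \tilde y) \sum_{k=0}^{n-1} y^k \tilde y^{n-1-k},
\end{equation*}
which reduces the task to bounding an $n$-term mixed-power sum. Taking moduli and bounding each summand by $\max\{|y|,|\tilde y|\}^{n-1}$ gives the crude Lipschitz estimate $|y^n-\tilde y^n|\leq n\max\{|y|,|\tilde y|\}^{n-1}|y-\tilde y|$; the substance of the lemma lies in converting the $\max$ on the right into the $\min$ appearing in the statement, which is where both the hypothesis $|y-\tilde y|\leq 1/n$ and the constant $e$ must enter.

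Assuming without loss of generality that $|y|\leq|\tilde y|$, the triangle inequality gives $|\tilde y|\leq|y|+1/n$, and the key estimate I would establish is
\begin{equation*}
|\tilde y|^{n-1} \;\leq\; |y|^{n-1}\Bigl(1+\tfrac{1}{n|y|}\Bigr)^{n-1} \;\leq\; |y|^{n-1}\,e^{1/|y|},
\end{equation*}
where the last step uses the classical inequality $(1+1/N)^N\leq e$ applied with $N=n|y|$. Substituting into the crude bound and rewriting $|y|^{n-1}=|y|^n/|y|$ produces $|y^n-\tilde y^n|\leq n|y|^n\,\bigl(e^{1/|y|}/|y|\bigr)\,|y-\tilde y|$. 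Since the function $t\mapsto e^{1/t}/t$ is monotone decreasing on $(0,\infty)$ and takes the value $e$ at $t=1$, one has $e^{1/|y|}/|y|\leq e$ for every $|y|\geq 1$, which closes the inequality with the asserted constant.

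The main obstacle is precisely this $\max$-to-$\min$ conversion: the exponential factor $(1+1/(n|y|))^{n-1}$ is well controlled only when $\min\{|y|,|\tilde y|\}$ is bounded below. In the intended application to \cref{TorusRotationConvergence}, the values $y,\tilde y$ arise as pointwise evaluations of normalized $n$-th roots of von Mises densities at a peak, and the Bessel-function asymptotics recorded in~\eqref{eq:bessel_uppper} deliver the lower bound that keeps the estimate inside the regime $\min\{|y|,|\tilde y|\}\geq 1$; away from the peak one would fall back on the $\max$ form of the Lipschitz estimate, which is already strong enough for the telescoping argument employed downstream. Thus the plan is: first derive the universal max-bound from the polynomial factorisation, second deploy the Bernoulli/exponential inequality to trade the max for the min modulo the factor $e^{1/|y|}/|y|$, and finally invoke monotonicity of $t\mapsto e^{1/t}/t$ to recognise $e$ as the optimal constant.
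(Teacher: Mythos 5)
Your proof is correct and takes a genuinely different algebraic route from the paper's. The paper expands $\tilde y^n = (y + (\tilde y - y))^n$ via the binomial theorem about the smaller-modulus argument, absorbs $|\tilde y - y|^{m-1} \leq n^{-(m-1)}$, and bounds the resulting series; this delivers the factor $\min\{|y|^n,|\tilde y|^n\}$ directly because only powers of $y$ appear. You instead start from the factorization $y^n-\tilde y^n = (y-\tilde y)\sum_{k=0}^{n-1} y^k \tilde y^{n-1-k}$, which naturally produces $\max\{|y|,|\tilde y|\}^{n-1}$, and then convert the $\max$ to a $\min$ via $|\tilde y|^{n-1}\leq |y|^{n-1}(1+\tfrac{1}{n|y|})^{n-1}\leq |y|^{n-1}e^{1/|y|}$. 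Both arguments turn on controlling $(1+\tfrac{1}{n|y|})$ raised to a power near $n$; indeed, the paper's intermediate sum can be collapsed to $|y|^n[(1+\tfrac{1}{n|y|})^n - 1]$ and compared directly with your $|y|^{n-1}\cdot\tfrac{1}{|y|}(1+\tfrac{1}{n|y|})^{n-1}$. Your explicit flag that the constant $e$ closes only when $\min\{|y|,|\tilde y|\}\geq 1$ is a correct and perceptive observation: as stated, the lemma is false without such a hypothesis (take $y=0$, $\tilde y = 1/n$, giving LHS $=1/n^n>0$ and RHS $=0$), and the paper's own proof relies on the same implicit lower bound at the step $\sum_{m=1}^n |y|^{n-m}/m! \leq (e-1)|y|^n$. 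Your version of the argument makes that hidden hypothesis visible, at the minor cost of a slightly narrower threshold (you need $|y|\geq 1$ exactly, while the paper's factor $(1+\tfrac{1}{n|y|})^n-1\leq e$ tolerates $|y|\gtrsim 1/\ln(e+1)\approx 0.76$).
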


\begin{proof}
Without loss of generality $|y|^n \leq |\tilde{y}|^n$, and so
\begin{align*}
    |y^n - \tilde{y}^n| &= \left\lvert \sum_{m=1}^n \binom{n}{m} y^{n-m} (\tilde{y} - y)^m \right\rvert \leq n |\tilde{y} - y| \sum_{m=1}^n \frac{(n-1)\cdots (n-m+1)}{m!} |y|^{n-m} \frac{1}{n^{(m-1)}} \\
                        & \leq e n \cdot \min \{|y|^n, |\tilde{y}|^n\} |y - \tilde{y}|. \qedhere
\end{align*}
\end{proof}

By translation invariance of the kernel $k_\tau$, the associated integral operator $\mathcal K_\tau\colon L^2(\mu) \to L^2(\mu)$ commutes with the Koopman operator $U^t$ of the circle rotation.
As result, for any of the approximation schemes in \cref{app:compact_approx,app:compact_res}, the regularized generator $V_\tau$ commutes with $\mathcal K_\tau$, and we may choose the Fourier functions $\{\phi_j\}_{j \in \mathbb Z}$ and the scaled Fourier functions $ \{ \psi_j = \sqrt{\lambda_\tau(j)} \varphi_j  \}_{j \in \mathbb Z}$ as eigenfunctions of $V_{\tau,m}$ and $W_{\tau,m}$, respectively, for any $\tau >0$ and $m \in \mathbb N$.
The Dirichlet energies of the Fourier functions are $\mathcal E_\tau(\phi_j) = 1/\lambda_\tau(j)$, giving the eigenfunctions $\zeta_{\tau,m,0}, \zeta_{\tau,m,1,j}, \zeta_{\tau,m,2}, \zeta_{\tau,m,3}, \zeta_{\tau,m,4},\ldots$ of $W_{\tau,m}$ as $\psi_{0,\tau}, \psi_{1,\tau}, \psi_{-1,\tau}, \psi_{2,\tau}, \psi_{-2,\tau},\ldots$, respectively, in accordance with our energy-based ordering from \cref{sec:spectral_approx}.
For simplicity of exposition, we will assume that $V_\tau$ is obtained via the generator compactification scheme from \cref{app:compact_approx}, so that the corresponding eigenfrequencies $\omega_{0,\tau,m}, \ldots, \omega_{4,\tau,m}$ are $0, \alpha \lambda_\tau(1), - \alpha\lambda_\tau(-1), 2\alpha \lambda_\tau(2), -2 \alpha \lambda_\tau(-2), \ldots$.
The analysis in the case of the resolvent compactification scheme from \cref{app:compact_res} proceeds in a similar manner after modification of the eigenfrequencies.
For any value of the dimension parameter $d \leq (m-1)/2$, the projection of $f = \sum_{j\in \mathbb Z} f_j \psi_j$ onto $\mathcal Z_{\tau,d,m} \subset \mathcal H_\tau$ is given by $Z_{\tau,d,m} f = \sum_{j=-d}^d f_j \psi_j$.
Moreover, for any $j_1, \ldots, j_n \in \mathbb Z$ with $\lvert j_i\rvert \leq d$, the lifted Koopman operator $\tilde U^t_\tau \colon F(\mathcal H_\tau) \to F(\mathcal H_\tau)$ on the Fock space satisfies
\begin{equation}
    \label{eq:eig_fock_2}
    \tilde U^t_{\tau,m} (\psi_{j_1} \otimes \cdots \otimes \psi_{j_n}) = e^{i\alpha t \sum_{r=-d}^d j_r \lambda_\tau(j_r)} \psi_{j_1} \otimes \cdots \otimes \psi_{j_n}.
\end{equation}

Next, set the evaluation point $x=0$ without loss of generality, define $p_\kappa := p_{0,\kappa}$, and note, using~\eqref{eq:classical_quantum_compat}, that
\begin{displaymath}
    \mathbb E_{p_{2\kappa}}(U^t f) = \mathbb E_{\Gamma(p_{2\kappa})}(\mathcal U^t f) = \langle U^{-t} p_{2\kappa}^{1/2}, M_f U^{-t} p_{2\kappa}^{1/2}\rangle_{L^2(\mathbb T)} = \langle \iota U^{-t} \tilde p_\kappa, M_f \iota U^{-t} \tilde p_\kappa\rangle_{L^2(\mathbb T)},
\end{displaymath}
where
\begin{displaymath}
    \tilde  p_\kappa = \frac{h_\kappa}{c_\kappa}, \quad h_\kappa(\theta) = e^{\kappa\cos \theta}, \quad c_\kappa = \lVert \iota h_\kappa\rVert_{L^2(\mathbb T)}.
\end{displaymath}
Moreover, we have $F_{\kappa, \tau}(0) = h_\kappa / \lVert h_\kappa\rVert_{\mathcal H_\tau}$ so that
\begin{displaymath}
    \tilde F_{\kappa, \tau,d,m}(0) = \sum_{n=1}^\infty w_n \tilde p_{\kappa, \tau, n,d}, \quad \tilde p_{\kappa, \tau, n,d} = \frac{1}{\lVert Z_{\tau,d,m} h_\kappa^{1/n}\rVert^n_{\mathcal H_\tau}} (Z_{\tau,d,m} (h_\kappa^{1/n}))^{\otimes n},
\end{displaymath}
giving
\begin{align*}
    \tilde f^{(t)}_{\kappa, 0, \tau, n,d,m}(0) &= \frac{\mathbb E_{\tilde\Xi_{\kappa, \tau,d,m}(0)} A_{f, 0, \tau, n}}{\mathbb E_{\tilde\Xi_{\kappa, \tau,d,m}(0)} A_{\bm 1, 0, \tau, n}} = \frac{\langle K_\tau^* \Delta_n^* \tilde U^{-t}_\tau \tilde F_{\kappa, \tau,d,m}(0), M_f K_\tau^*  \Delta_n^*\tilde U^{-t}_\tau \tilde F_{\kappa, \tau,d,m}(0)\rangle_{L^2(\mathbb T)}}{\langle K_\tau^*  \Delta_n^*\tilde U^{-t}_\tau \tilde F_{\kappa, \tau,d,m}(0),  K_\tau^*  \Delta_n^*\tilde U^{-t}_\tau \tilde F_{\kappa, \tau,d,m}(0)\rangle_{L^2(\mathbb T)}}\\
                                          &= \frac{\langle K_\tau^*  \Delta_n^*\tilde U^{-t}_\tau \tilde p_{\kappa, \tau, n,d}, M_f K_\tau^*  \Delta_n^*\tilde U^{-t}_\tau  \tilde p_{\kappa, \tau, n,d}\rangle_{L^2(\mathbb T)}}{\langle K_\tau^*  \Delta_n^*\tilde U^{-t}_\tau \tilde p_{\kappa, \tau, n,d},  K_\tau^*  \Delta_n^*\tilde U^{-t}_\tau \tilde p_{\kappa, \tau, n,d}\rangle_{L^2(\mathbb T)}}.
\end{align*}

Our first goal is to estimate the difference between the $n^{th}$ root of $c_\kappa U^{-t}\tilde  p_\kappa$ and $c_\kappa^{1/n}U^{-t}_\tau Z_{\tau,d,m} (\tilde  p_\kappa^{1/n})$.
Due to multiplicativity of $U^{-t}$ on continuous functions, $\left(U^{-t}\tilde p_\kappa\right)^{1/n} = U^{-t}(\tilde p_\kappa^{1/n})$, and using the series representation of $I_j$ from~\eqref{eq:bessel_series}, we get
\begin{displaymath}
    c_\kappa^{1/n} (U^{-t}\tilde  p_\kappa)^{1/n}(\theta) = e^{\kappa \cos(\theta-\alpha t)/n} = \sum_{j=-\infty}^\infty I_{|j|}(\kappa/n)e^{ij(\theta-\alpha t)}.
\end{displaymath}
Similarly, using the eigenvalue relation $U^t_\tau \psi_j = e^{-i\alpha t j \lambda_\tau(j)} \psi_j$, we obtain
\begin{displaymath}
    c_\kappa^{1/n} U^{-t}_\tau Z_{\tau,d,m}(\tilde  p_\kappa^{1/n})(\theta) = \sum_{j=-d}^d I_{|j|}(\kappa/n)e^{ij(\theta-\alpha t)}.
\end{displaymath}
The series representation of $I_j$ also leads to the following useful inequality:
\begin{equation}
    \label{eq:bessel_sum}
    \left\lvert \sum_{j=d}^\infty I_{j}(\kappa) \right\rvert = \left\lvert \sum_{j=d}^\infty \sum_{r=0}^\infty \frac{(\kappa/2)^j (\kappa/2)^{2r}}{(r+j)! r!} \right\rvert \leq \left\lvert (\kappa/2)^d \sum_{j=d}^\infty \sum_{r=0}^\infty \frac{(\kappa/2)^{j-d} (\kappa/2)^{2r}}{(j-d)! (r+d)! r!} \right\rvert \leq e^{\kappa/2} I_d(\kappa).
\end{equation}

We may now estimate the difference of $n^{th}$ roots mentioned above.
For simplicity, we can assume that $n \geq \kappa$, however the following estimates do not rely on this being the case.
Using~\eqref{eq:bessel_uppper}, \eqref{eq:bessel_sum} and \cref{lem:pwr}, we obtain
\begin{multline*}
    \left\lvert c_\kappa^{1/n} (U^{-t}\tilde  p_\kappa)^{1/n}(\theta) -c_\kappa^{1/n} U^{-t}_\tau Z_{\tau,d,m}(\tilde  p_\kappa^{1/n})(\theta)\right\rvert\\
    \begin{aligned}
        &=  \left\lvert \sum_{j=-\infty}^\infty I_{|j|}(\kappa/n) e^{-i\alpha j t} e^{ij \theta}  - \sum_{j=-d}^d I_{|j|}(\kappa/n) e^{i \alpha j \lambda_\tau(j) t} e^{i j \theta} \right\rvert \\
        & \leq \left\lvert \sum_{j=-d}^d I_{|j|}(\kappa/n) e^{ij\theta} \left( e^{-i \alpha j t} - e^{-i \alpha j \lambda_\tau(j)t}\right) \right\rvert + 4 \left\lvert \sum_{j=d}^\infty I_{|j|}(\kappa/n) \right\rvert\\
        & \leq 4 e^{\kappa/2n}I_d(\kappa/n) + \sum_{j=-d}^d I_{|j|}(\kappa/n) \left\lvert 1- e^{-i\alpha j t (1-\lambda_\tau(j))} \right\rvert\\
        &\leq 4 e^{\kappa/2n}I_d(\kappa/n) + 2 \max_{1 \leq j \leq d} \left\lvert \sin(\alpha j t (1-\lambda_\tau(j))/2)\right\rvert e^{\kappa/2n} I_0(\kappa/n) \\
        &\leq 4 e^{\kappa/n} \left( I_d(\kappa/n) + \max_{1\leq j \leq d} |\sin(\alpha j t ( 1-\lambda_\tau(j))/2)|\right).
    \end{aligned}
\end{multline*}
Fix $\varepsilon >0$.
For $d \geq 2$ there exists an $n$ such that
\begin{displaymath}
    I_d(\kappa/n) \leq \frac{ \kappa^d}{(2n)^d d!} e^{\kappa^2/4n^2} \leq \frac{\varepsilon}{n e^{\kappa+1}} \frac{c_\kappa}{8 e^{\kappa/n}}.
\end{displaymath}
Thus, for fixed $\varepsilon >0$, $\alpha$, and $t$ there is a $\tau >0$ such that
$$\max_{1 \leq l \leq d} \left\lvert \sin(\alpha l t (1 - \lambda_\tau(l))/2)\right\rvert < \frac{\varepsilon}{n e^{\kappa+1}} \frac{c_\kappa}{8 e^{\kappa/n}}.$$
Then for such an $n$ and $\tau$,
\begin{equation}
    \label{eq:sigma_root_diff}
    \left\lvert c_\kappa^{1/n} (U^{-t}\tilde  p_\kappa)^{1/n}(\theta) -c_\kappa^{1/n} U^{-t}_\tau Z_{\tau,d,m}(\tilde  p_\kappa^{1/n})(\theta)\right\rvert \leq \frac{\varepsilon c_\kappa}{n e^{\kappa+1}}
\end{equation}

Observe now that~\eqref{eq:eig_fock_2} implies
\begin{align*}
    \Delta_n^* \tilde{U}^{-t}_{\tau, n} \tilde p_{\kappa, \tau, n,d} (\theta) &= \left( \frac{1}{\lVert Z_{\tau,d,m}(h_\kappa^{1/n}) \rVert_{\mathcal H_\tau}} \sum_{j=-d}^d I_{|j|}(\kappa/n)e^{-i\alpha j \lambda_\tau(j) t} e^{ij\theta} \right)^n \\
                                                                                 &= \frac{1}{\lVert Z_{\tau,d,m}(h_\kappa^{1/n})\rVert_{\mathcal H_\tau} }U_\tau^{-t} Z_{\tau,d,m}(\tilde p_\kappa^{1/n})
\end{align*}
Using~\eqref{eq:sigma_root_diff}, it follows that
\begin{displaymath}
    \left \lVert  c_\kappa^{1/n} U^{-t}(\tilde p_\kappa^{1/n}) - \lVert Z_{\tau,d,m}(h_\kappa^{1/n})\rVert_{\mathcal H_\tau} \left(\Delta_n^* \tilde{U}^{-t}_{\tau, n}(\tilde p_{\kappa, \tau, n,d})\right)^{1/n} \right\rVert_{C(\mathbb T)} \leq \frac{\varepsilon c_\kappa}{n e^{\kappa+1}},
\end{displaymath}
and thus, by \cref{lem:pwr},
\begin{displaymath}
    \left \lVert U^{-t}\tilde p_\kappa - \frac{\lVert Z_{\tau,d,m}(h_\kappa^{1/n})\rVert^n_{\mathcal H_\tau}}{c_\kappa}  \Delta_n^* \tilde{U}_{\tau, n}^{-t}\tilde p_{\kappa, \tau, n,d} \right\rVert_{C(\mathbb T)} \leq \varepsilon.
\end{displaymath}
Taking $\eta = \iota U^{-t}\tilde  p_\kappa$ and $\tilde{\eta} = \frac{\lVert Z_{\tau,d,m}(h_\kappa^{1/n})\rVert^n_{\mathcal H_\tau}}{c_\kappa}  K_\tau^* \Delta_n^* \tilde{U}_{\tau, n}^{-t}(\tilde p_{\kappa, \tau, n,d})$, we have $\lVert \eta - \tilde{\eta}\rVert_{L^\infty(\mathbb T)} \leq \varepsilon$ and
\begin{displaymath}
    \left\lvert \langle \eta, f \eta \rangle_{L^2(\mathbb T)} - \langle \tilde \eta, f \tilde \eta\rangle_{L^2(\mathbb T)}\right\rvert \leq 2 \lVert f\rVert_{L^\infty(\mathbb T)} \varepsilon,
\end{displaymath}
and so
\begin{align*}
    \left\lvert \mathbb E_{p_{2\kappa}}(U^t f) - \tilde f^{(t)}_{\kappa, 0, \tau, n,d,m}(0) \right\rvert &= \left\lvert \langle \iota U^{-t} \tilde p_\kappa, f \iota U^{-t} \tilde p_\kappa\rangle_{L^2(\mathbb T)} -  \frac{\langle K_\tau^*\Delta_n^* \tilde U^{-t}_\tau \tilde p_{\kappa, \tau, n,d}, f K_\tau^* \Delta_n^*\tilde U^{-t}_\tau  \tilde p_{\kappa, \tau, n,d}\rangle_{L^2(\mathbb T)}}{\langle K_\tau^* \Delta_n^*\tilde U^{-t}_\tau \tilde p_{\kappa, \tau, n,d},  K_\tau^* \Delta_n^*\tilde U^{-t}_\tau \tilde p_{\kappa, \tau, n,d}\rangle_{L^2(\mathbb T)}}\right\rvert \\
                                                                                                              &= \left\lvert \langle \iota U^{-t} \tilde p_\kappa, f \iota U^{-t} \tilde p_\kappa\rangle_{L^2(\mathbb T)} -  \frac{\langle \tilde\eta, f \tilde\eta\rangle_{L^2(\mathbb T)}}{\langle \tilde \eta, \tilde \eta\rangle_{L^2(\mathbb T)}}\right\rvert \\
                                                                                                              &\leq 4 \lVert f \rVert_{L^\infty(\mathbb T)} \varepsilon
\end{align*}
since $\lvert 1 -\lVert\tilde{\eta}\rVert_{L^2(\mathbb T)} \rvert \leq 2 \varepsilon$.

Finally, we analyze $c_\kappa$ to determine the rate of convergence in terms of $n$ and $\tau$.
Since $\lVert \iota \tilde  p_\kappa^2\rVert_{L^1(\mathbb T)} = 1$, we have $I_0(2\kappa) = c_\kappa^2$.
Observe that
\begin{displaymath}
    \sqrt{I_0(2\kappa)} = \left\lVert \left(\frac{\kappa^m}{m!}\right)_m \right\rVert_2 \geq \frac{\left\lVert \left(\frac{\kappa^m}{(m+1)!}\right)_m \right\rVert_1}{\left\lVert \left(\frac{1}{m+1}\right)_m \right\rVert_2} \geq \frac{\sqrt{6}(e^\kappa -1)}{\pi \kappa}.
\end{displaymath}
Hence, the two requirements for $n$ and $\tau$ above become
\begin{displaymath}
    n^{d-1} \geq \frac{8 \kappa^d e^{\kappa/n +\kappa^2/4n^2}}{d! \varepsilon} \frac{e^{\kappa+1}}{\sqrt{I_0(2\kappa)}}, \quad \lvert 1 - \lambda_\tau(d) \rvert \leq \frac{ \sqrt{6} \varepsilon (1-e^{-\kappa})}{8e\lvert \alpha t \rvert d n e^{\kappa/n}},
\end{displaymath}
and it suffices for
\begin{displaymath}
    n^{d-1} \geq \frac{8 e \pi \kappa^{d+1} e^{\kappa/n +\kappa^2/4n^2}}{\sqrt{6}  d! \varepsilon (1-e^{-\kappa})}, \quad \tau \leq \frac{ \sqrt{6} \varepsilon (1-e^{-\kappa})}{8\pi e\lvert \alpha t \rvert \kappa d^{1+p} n e^{\kappa/n}}.
\end{displaymath}
With these inequalities we conclude how the joint $(n,\tau)$ limit must be taken.
Asymptotically, for large $n$, small $\tau$, and $d=2$ we see that
\begin{displaymath}
    n \sim  \left( \frac{\kappa^{d+1}}{d! \varepsilon}\right)^{1/(d-1)}, \quad \tau \sim \frac{\varepsilon}{\lvert \alpha t \rvert \kappa d^{1+p} n},
\end{displaymath}
and \cref{thm:torus_rotation_convergence} follows with a joint limit $n \sim \frac{1}{\varepsilon}$ and $\tau \sim \varepsilon^2$. \qedhere

\subsection{Towards a general convergence analysis}
\label{sec:convergence_general}

Let $(X, \mu,\Phi^t)$ be a measure-preserving, ergodic dynamical system as in \cref{sec:dyn_syst}.
Using the $L^\infty(\mu)$ norm for the main estimate in \cref{sec:convergence_torus_proof} is likely much stronger and more restrictive than is necessary for a general such system.
This choice is due to the ability to convert an estimate for $\left\lVert y^{1/n} -  \tilde{y}^{1/n} \right\rVert_{L^\infty(\mu)}$ into an estimate for $\left\lVert y -  \tilde{y} \right\rVert_{L^\infty(\mu)}$.
One encouraging feature is the convergence of $p^{1/n} \to 1$ for a positive element $p \in L^\infty(\mu)$ and the fact that $U^t \bm 1 = \bm 1 = K_\tau^*U^t_\tau \bm 1$.
As $n$ grows, $\bm 1$ becomes the main component of $p^{1/n}$ in $L^2$-norm and is preserved by the approximate Koopman operator.

In analogy with Sobolev-type estimates of $L^\infty$ norms, we may be able to obtain $L^\infty(\mu)$ estimates from $L^2(\mu)$ estimates such as
\begin{displaymath}
    \left\lVert U^t(p^{1/n}) - K_\tau^* \tilde{U}^{-t}_{\tau,1} Z_{\tau,d,m}(p^{1/n}) \right \rVert_{L^2(\mu)} \leq \left\lVert (U^tK_\tau^* - K_\tau^* U^{-t}_{\tau,1}) p_\perp^{1/n} \right\rVert_{L^2(\mu)} + \left\lVert p_\perp^{1/n} - Z_{\tau,d,m}(p_\perp^{1/n})\right\rVert_{L^2(\mu)}
\end{displaymath}
and of its derivatives, where $p_\perp^{1/n} = p^{1/n} - \langle \bm 1, p^{1/n}\rangle_{L^2(\mu)} \bm 1$.
This separates the estimate into two components, convergence of the smoothed Koopman operator, and the efficiency of approximating $p_\perp^{1/n}$ with $2d+1$ eigenfunctions of $U^t_\tau$.
In general, we expect a joint limit in $n$, $d$, and $\tau$ to be necessary.
Further analytical results from this approach are unlikely without specializing to specific families of dynamical systems.
We now turn to numerical experiments to verify the torus rotation case and explore more nontrivial dynamics.

\section{Numerical experiments}
\label{sec:experiments}

We present applications of the tensor network approximation framework described in the preceding sections to two dynamical systems on the 2-torus: an ergodic rotation (\cref{sec:torusrot}) and a Stepanoff flow \cite{Oxtoby53} (\cref{sec:stepanoff}).
These examples were chosen on the basis of exhibiting qualitatively different types of spectral characteristics, the former being a prototypical example of a pure-point-spectrum system while the latter is characterized by absence of non-constant continuous Koopman eigenfunctions.
Yet, despite their different spectral characteristics, both systems have the Haar probability measure on $\mathbb T^2$ as an ergodic invariant measure $\mu$, meaning that in both cases $G= X = \mathbb T^2$ and our method can be implemented using the characters (Fourier functions) of the group as basis functions.
This in turn allows assessing the behavior of our approach independently of errors due to data-driven computation of basis functions.

As in \cref{sec:rkha_examples}, for $j = (j_1, j_2) \in \mathbb Z^2$, we let $\gamma_j \in \widehat G$ be the Fourier function with wavenumbers $j_1$ and $j_2$ along the two torus dimensions, $\gamma_j(x) = e^{i j \cdot x}$ where $x = (\theta_1, \theta_2) \in [0, 2\pi)^2$.
We will sometimes use overarrows to distinguish multi-indices from integer-valued indices, i.e., $\vec\jmath \equiv j = (j_1, j_2)$.
In what follows, $\bm A^\dag$ will denote the complex-conjugate transpose of a matrix $\bm A$.
Technical details on numerical implementation and pseudocode are included in \ref{app:numerical}.

\subsection{Dynamical systems}
\label{sec:dyn_syst_examples}

\subsubsection{Torus rotation}
Our first example is an ergodic torus rotation generated by the smooth vector  field $\vec V \colon G \to TG$, where
\begin{equation}
    \label{eq:vec_torusrot}
    \vec V = \frac{\partial\ }{\partial \theta_1} + \alpha \frac{\partial\ }{\partial \theta_2}.
\end{equation}
Here, $\alpha$ is an irrational parameter which we set to $\sqrt{30} \approx 5.477$.
A quiver plot of the vector field for our choice of $\alpha$ is displayed in \cref{fig:vectorfield}(left).

The resulting flow is given by
\begin{equation}
    \label{eq:phi_rot}
    \Phi(x) = x + \alpha t \mod 2\pi,
\end{equation}
and has the Haar measure $\mu$ as its unique Borel ergodic invariant probability measure.
Moreover, the generator $V$ on $H$ has pure point spectrum with eigenfrequencies $\omega_{\vec\jmath} = j_1 + j_2 \alpha$ and corresponding eigenfunctions $u_{\vec\jmath} = \iota \gamma_{\vec \jmath} \in \mathfrak A \subset H$ with continuous representatives given by the characters $\gamma_{\vec \jmath} \in \widehat G$.
In this system, any pair of eigenfunctions $\{u_{\vec\jmath_1}, u_{\vec\jmath_2},\}$ with rationally independent eigenfrequencies $\omega_{\vec\jmath_1}$ and $\omega_{\vec\jmath_2}$ provides a conjugacy with a rotation system in the form of~\eqref{eq:rot_semiconj}.
The pair $ \{ \omega_{\vec\jmath_1}, \omega_{\vec\jmath_2} \}$ generates the point spectrum  of $V$ as an additive group, $\sigma_p(V) =  \{i (  r_1 \omega_{\vec\jmath_1} + r_2 \omega_{\vec\jmath_2} )\}_{r_1, r_2 \in \mathbb Z}$, and pointwise products of the corresponding eigenfunctions generate an orthonormal basis of $H$ as a multiplicative group, $\{ u_{\vec\jmath_1}^{r_1} u_{\vec\jmath_2}^{r_2} \}_{r_1,r_2 \in \mathbb Z}$.

It should be noted that despite the highly structured nature of this system, numerically approximating its point spectrum and associated eigenfunctions is non-trivial, for $\sigma_p(V)$ is a dense subset of the imaginary line and thus there are no isolated points in the spectrum (cf.\ the circle rotation example in \cref{sec:circlerot}, where all eigenvalues of $V$ are isolated). 

\begin{figure}
    \includegraphics[draft=false, width=0.48\linewidth]{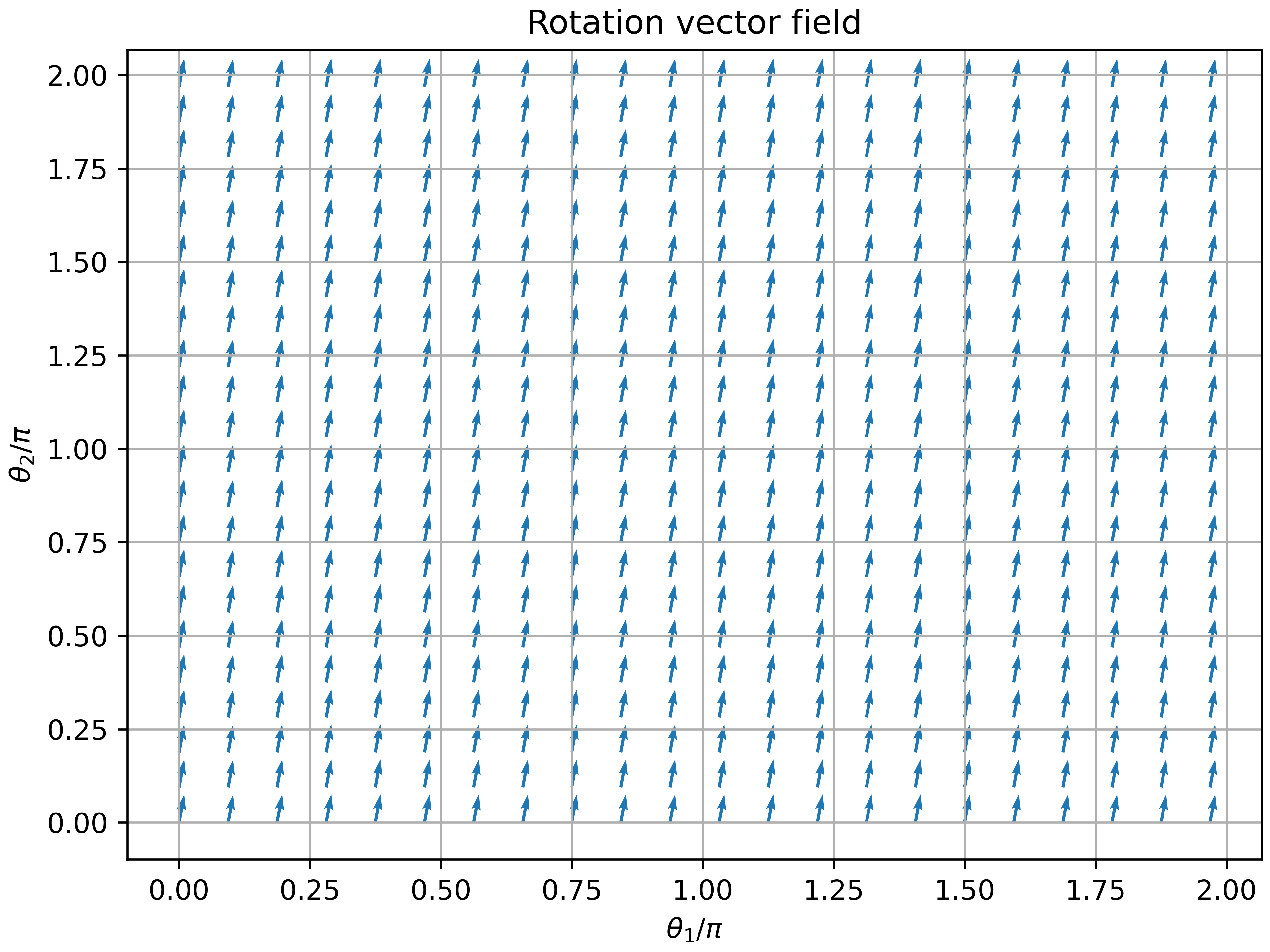}
    \hfill
    \includegraphics[draft=false, width=0.48\linewidth]{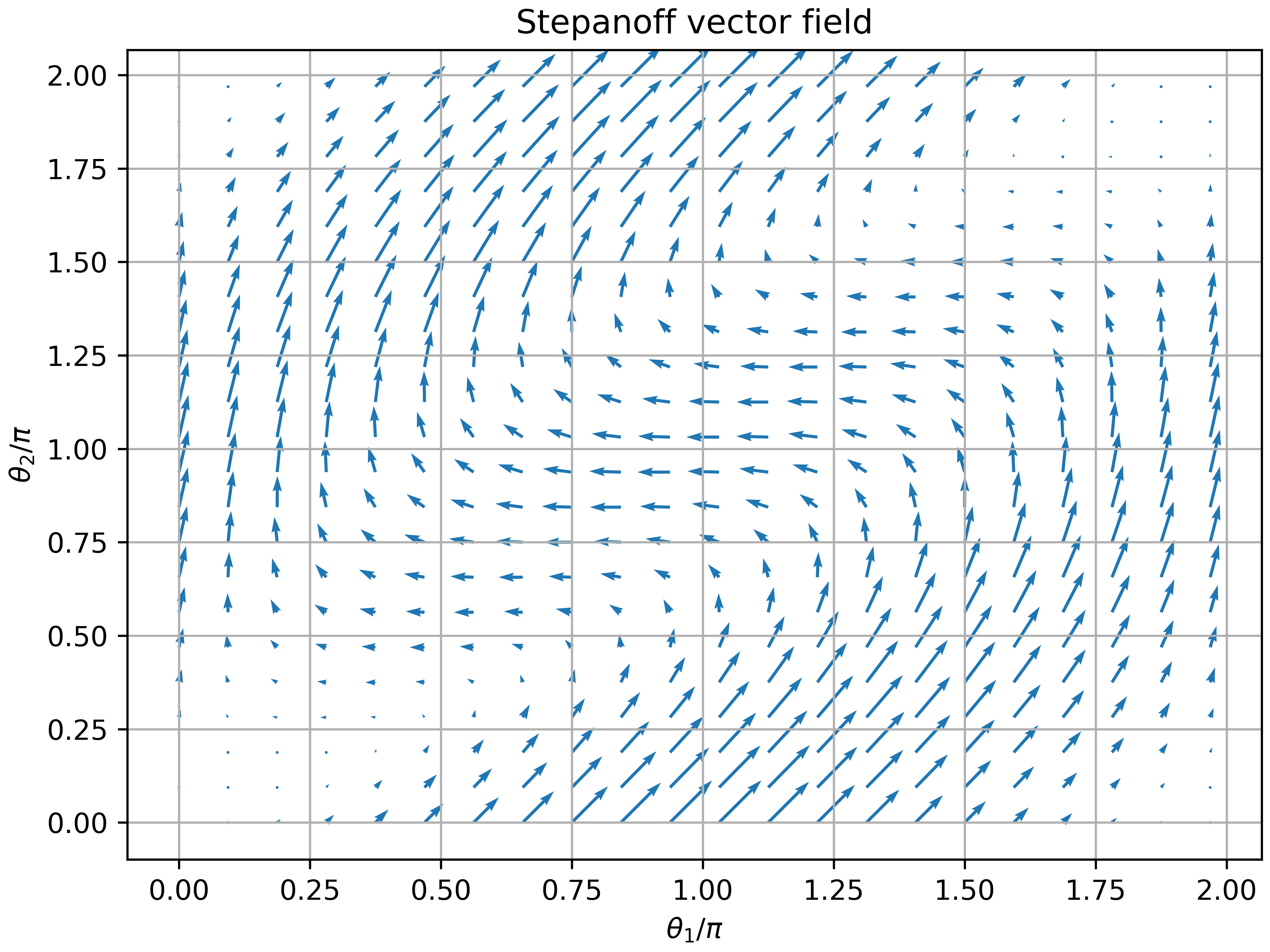}
    \caption{Quiver plots of the dynamical vector fields $\vec V$ for the ergodic torus rotation (left) and Stepanoff flow (right) examples.}
    \label{fig:vectorfield}
\end{figure}

\subsubsection*{Stepanoff flow}
Our second example comes from a class of Stepanoff flows on the 2-torus studied in a paper of Oxtoby \cite{Oxtoby53}.
The dynamical vector field $\vec V\colon G \to TG$ is defined in terms of smooth functions $V_1,V_2 \in C^\infty(\mathbb T^2)$ as
\begin{equation}
    \label{eq:vec_stepanoff}
    \begin{gathered}
        \vec V = V_1 \frac{\partial\ }{\partial \theta_1} + V_2 \frac{\partial\ }{\partial \theta_2}, \\
        V_1(x) = V_2(x) + (1- \alpha) (1 - \cos \theta_2), \quad V_2(x) = \alpha(1 - \cos(\theta_1 - \theta_2)),
    \end{gathered}
\end{equation}
where $\alpha$ is a real parameter.
One readily verifies that $\vec V$ has zero divergence with respect to the Haar measure $\mu$,
\begin{displaymath}
    \divr_\mu \vec V = \frac{\partial V_1}{\partial \theta_1} + \frac{\partial V_2}{\partial \theta_2} = 0,
\end{displaymath}
which implies that $\mu$ is an invariant measure under the associated flow $\Phi^t$.
Moreover, the system exhibits a fixed point at $x=0$.
In \cite{Oxtoby53} it is shown that the Haar measure is the unique invariant Borel probability measure of this flow that assigns measure 0 to the singleton set $ \{ 0 \} \subset G$ containing the fixed point.
In our experiments, we set $\alpha = \sqrt{20}$ leading to the vector field depicted in \cref{fig:vectorfield}(right).
Notice the characteristic ``S''-shape formed by the arrows as well as the presence of the fixed point at $x=0$.

Since any continuous, non-constant Koopman eigenfunction induces a semi-conjugacy with circle rotation (i.e., a version of~\eqref{eq:rot_semiconj} with $N=1$), the existence of the fixed point at $x=0$ implies that the system has no continuous Koopman eigenfunctions; i.e., it is topologically weak-mixing.
To our knowledge, there are no results in the literature on the measure-theoretic mixing properties of Stepanoff flows.
While topological mixing is, in general, independent of measure-theoretic mixing, the absence of continuous Koopman eigenfunctions implies at the very least that data-driven spectral computations are non-trivial for this class of systems (even if eigenfunctions exist in $L^2(\mu)$).

\subsection{Spectral decomposition}
\label{sec:spec_decomp}

In each example, we work with the RKHA $\mathcal H_\tau$ on $G$ induced from the family of weights $\lambda_\tau$ in~\eqref{eq:lambda_subexp}.
Fixing a maximal wavenumber parameter $J \in \mathbb N$ and setting $m = (2 J+ 1)^2 - 1$, we order the characters $\gamma_{\vec\jmath_0}, \ldots, \gamma_{\vec\jmath_m}$ in the set $ \widehat\Gamma_J = \{ \gamma_{\vec\jmath} \in \widehat G: \gamma_{\vec\jmath}(x) = e^{i \vec\jmath \cdot x}\colon \vec\jmath = (j_1, j_2): \lvert j_i \rvert \leq J \} \subset \widehat G$ in order of increasing $\lvert j_1 \rvert + \lvert j_2\rvert$.
This leads to the $m$-dimensional subspace $\mathcal H_{\tau,m} = \spn \widehat\Gamma_J \subset \mathcal H_\tau$ on which we define the approximate generator $W_{\tau,m}$ for each system.
For consistency with the notation for kernel eigenfunctions in \eqref{eq:k_eig}, we denote the ordered characters in $\widehat\Gamma_J$ as $\varphi_i \equiv \gamma_{\vec\jmath_i}$ and their $L^2$ equivalence classes as $\phi_i = \iota \varphi_i$.
We also let $\Lambda_{i,\tau} \equiv \lambda_\tau(\vec\jmath_i)$ be the corresponding eigenvalues with associated orthonormal basis vectors $\psi_{i, \tau} = \Lambda_{i,\tau}^{1/2}$ of $\mathcal H_\tau$.

In the experiments reported here, we set the RKHA parameters $p=0.75$, $\tau= 0.001$, and the maximal wavenumber parameter $J = 128$ corresponding to approximation space dimension $m = \text{16,641}$.  We build $W_{\tau,m}$ using \cref{alg:eigs}, described in \cref{app:compact_res}, with resolvent parameter $z=0.1$.
The numerical procedure solves an $m \times m$ matrix generalized eigenvalue problem~\eqref{eq:gev_mat} that approximates the eigendecomposition of an auxiliary skew-adjoint compact operator, $Q_{z,\tau} \in B(H)$.
The eigenvalues, $\beta_{j,z,\tau,m} \in i \mathbb R$, are then converted into eigenfrequencies, $\omega_{j,z,\tau,m}$, of $W_{\tau,m}$, and the corresponding generalized eigenvectors, $\bm {\tilde c}_j \in \mathbb C^m$, are mapped into coefficient vectors $\bm c_j = (c_{0j}, \ldots, c_{mj})^\top \in \mathbb C^{m+1}$, giving the expansions $\zeta_{j,z,\tau,m} = \sum_{i=0}^m c_{ij} \psi_{i, \tau}$ of $W_{z,\tau,m}$; see~\eqref{eq:vtilde_eig} and~\eqref{eq:zeta_expansion}.

In our examples, the components of the dynamical vector field $\vec V$ are polynomials in the characters $\gamma_j$, which allows us to compute the elements of the various operator matrices appearing in \cref{alg:eigs} analytically.
In the case of the rotation vector field \eqref{eq:vec_torusrot}, the characters are eigenfunctions of the generator and we immediately obtain
\begin{equation}
    \label{eq:gen_rot}
    \langle \varphi_r, \iota \vec V \cdot \nabla \varphi_s \rangle_H = i (j_1 + j_2 \alpha ) \delta_{rs},
\end{equation}
where $ (j_1, j_2) = \vec\jmath_s$.
In the case of the Stepanoff vector field \eqref{eq:vec_stepanoff}, we make use of the relations
\begin{displaymath}
    \cos \theta_2 = \frac{\gamma_{0,1}(x) + \gamma_{0,-1}(x)}{2}, \quad \cos(\theta_1 - \theta_2) = \frac{\gamma_{(1,-1)}(x) + \gamma_{(-1, 1)}(x)}{2}, \quad x = (\theta_1, \theta_2),
\end{displaymath}
together with the group structure of $\widehat G$, $\gamma_{\vec\jmath_r + \vec\jmath_s} = \gamma_{\vec\jmath_r} \gamma_{\vec\jmath_s}$, to compute
\begin{multline}
    \label{eq:gen_stepanoff}
    \langle \varphi_r, \iota \vec V \cdot \nabla \varphi_s \rangle_H \\
    \begin{aligned}
        &= i j_1 \delta_{rs} - \frac{ij_1\alpha}{2}(\delta_{i_1,1+j_1}\delta_{i_2,j_2-1} + \delta_{i_1, j_1 -1}\delta_{i_2,j_2+1}) - \frac{ij_1(1-\alpha)}{2}(\delta_{i_1,j_1}\delta_{i_2,j_2+1} + \delta_{i_1,j_1}\delta_{i_2,j_2-1})\\
        & \quad + i j_2 \delta_{rs} - \frac{ij_2\alpha}{2}(\delta_{i_1,j_1+1}\delta_{i_2,j_2-1} + \delta_{i_1,j_1-1}\delta_{i_2,j_2+1}),
    \end{aligned}
\end{multline}
where $(i_1,i_2) = \vec\jmath_r$ and $(j_1,j_2)=\vec\jmath_s$.
The above formulas are sufficient to execute \cref{alg:eigs} for the torus rotation and Stepanoff flow.

We solve~\eqref{eq:gev_mat} using iterative methods, computing $n_\text{eig} = 1024$ eigenvalues $\beta_{j,z,\tau,m}$ of largest magnitude and the corresponding generalized eigenvectors $\bm{\tilde c}_{j,z,\tau,m}$ for each system.
Note that our choice of computing largest-magnitude eigenvalues $\beta_{j,z,\tau,m}$ is equivalent to computing lowest-magnitude nonzero eigenfrequencies $\omega_{j,z,\tau,m}$.
We order the computed eigenvalue/eigenfunction pairs for the two systems in order of increasing Dirichlet energy, computed by applying~\eqref{eq:dirichlet_eig} to the coefficient vectors $\bm{c}_{j,z,\tau,m}$.
Further details on numerical implementation can be found in \cref{app:numerical}.
For the remainder of this section we will sometimes use the abbreviated notation $\omega_j \equiv \tilde\omega_{j,z,\tau,m}$ and $\zeta_j \equiv \tilde\zeta_{j,z,\tau,m}$.

\Cref{fig:generator_spec} shows scatterplots of the $n_\text{eig}$ computed eigenfrequencies for the torus rotation (left) and Stepanoff flow (right) plotted against their corresponding Dirichlet energies.
Perhaps unsurprisingly, the generator spectrum for the torus rotation exhibits regular structure, with the eigenfrequencies visually forming spectral ``lines'' consistent with well-defined functional relationships between frequency and Dirichlet energy.
On the other hand, the Stepanoff spectrum appears markedly less structured, which could be the outcome of discretization of a continuum.

\begin{figure}
    \centering
    \includegraphics[draft=false, width=0.48\linewidth]{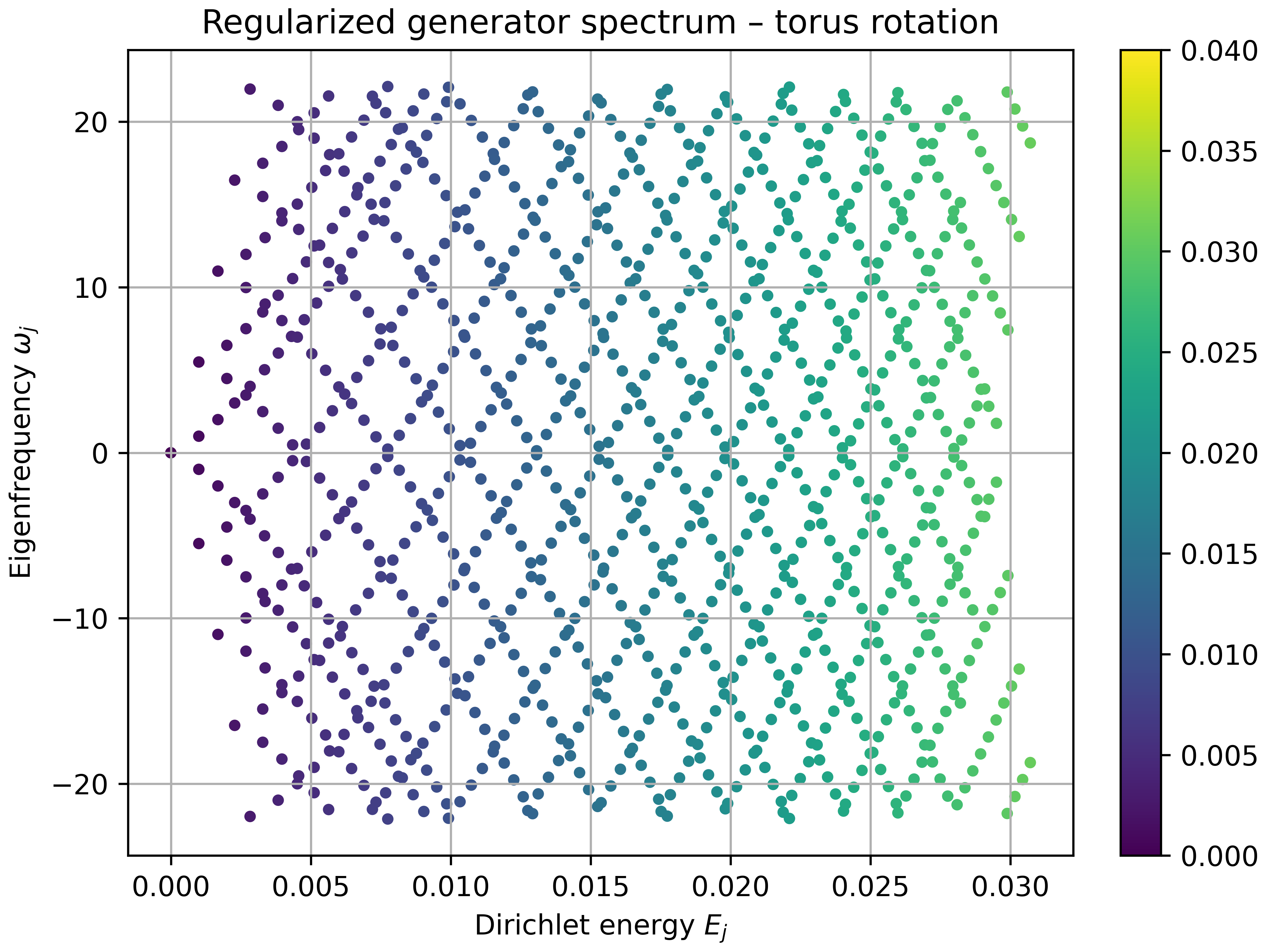}
    \hfill
    \includegraphics[draft=false, width=0.48\linewidth]{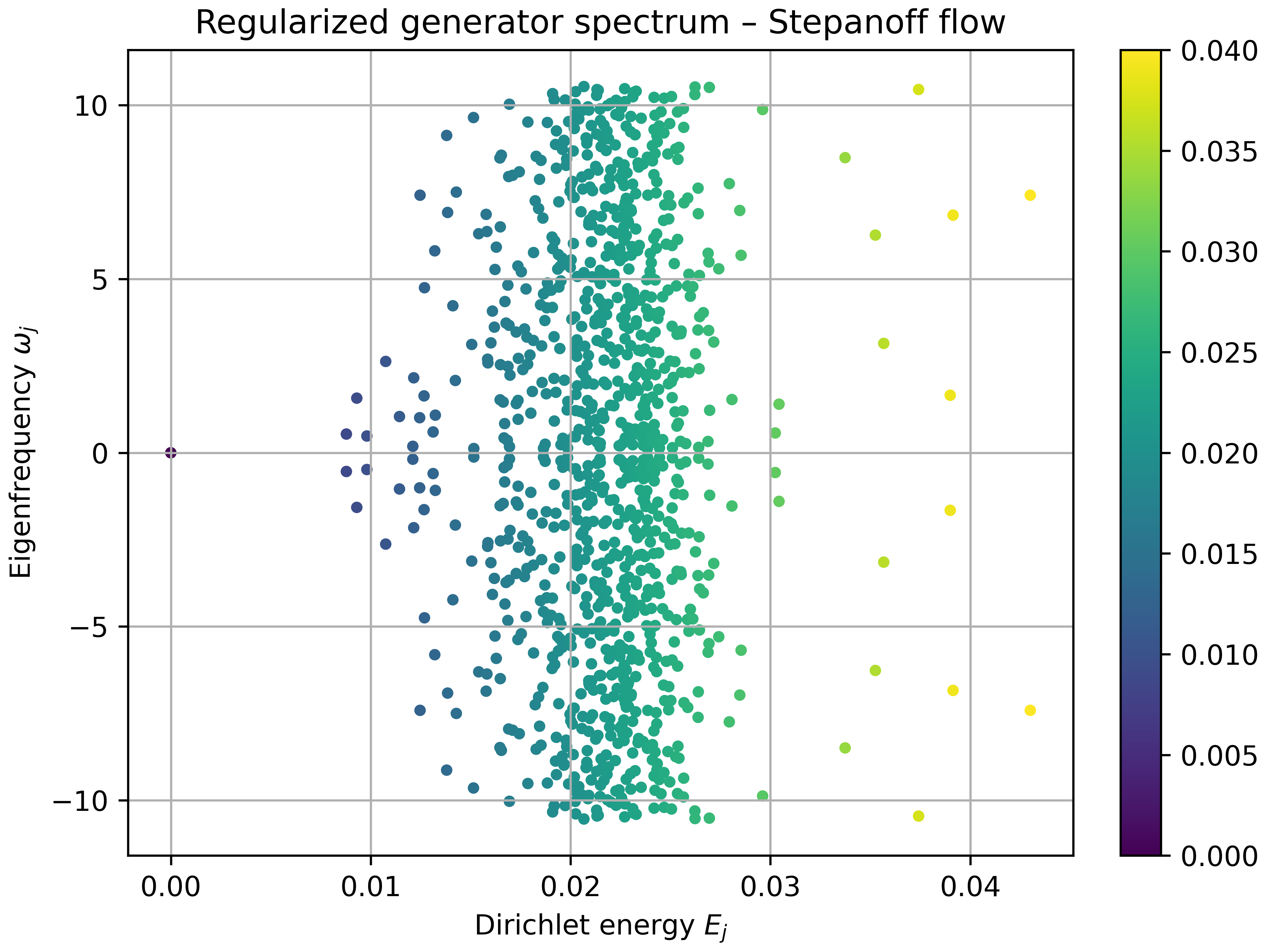}
    \caption{Spectrum of the regularized generator $W_{\tau,m}$ for the torus rotation (left) and Stepanoff flow (right).
    The eigenfrequencies $\omega_j$ are plotted versus the corresponding Dirichlet energies $E_j$ from the RKHA $\mathcal H_\tau$.
    The plotted points are colored by Dirichlet energy to facilitate comparison between the two cases.}
    \label{fig:generator_spec}
\end{figure}

In \cref{fig:generator_eig}, we plot the real parts of representative eigenfunctions $\zeta_j$ from the spectra in \cref{fig:generator_spec} on a regular $512 \times 512$ grid on $\mathbb T^2$.
In the case of the torus rotation (left-hand column), the plots well-approximate the planar wave patterns expected for real parts of Fourier functions.
In particular, eigenfunctions $\zeta_2$ (top left) and $\zeta_4$ (center left) exhibit wavenumber-1 oscillations along the $\theta_1$ and $\theta_2$ directions, which is consistent with the corresponding eigenfrequencies $\omega_2 \approx 1.00$ and $\omega_4 \approx 5.48$, respectively.
Meanwhile, $\zeta_{12}$ (bottom left) exhibits wavenumber-1 oscillations along both the $\theta_1$ and $\theta_2$ directions and a corresponding eigenfrequency $\omega_{12} \approx \omega_4 - \omega_2$, consistent with a product structure $\zeta_{12} \approx \zeta_2^* \zeta_4$.

\begin{figure}
    \centering
    \includegraphics[draft=false, width=0.48\linewidth]{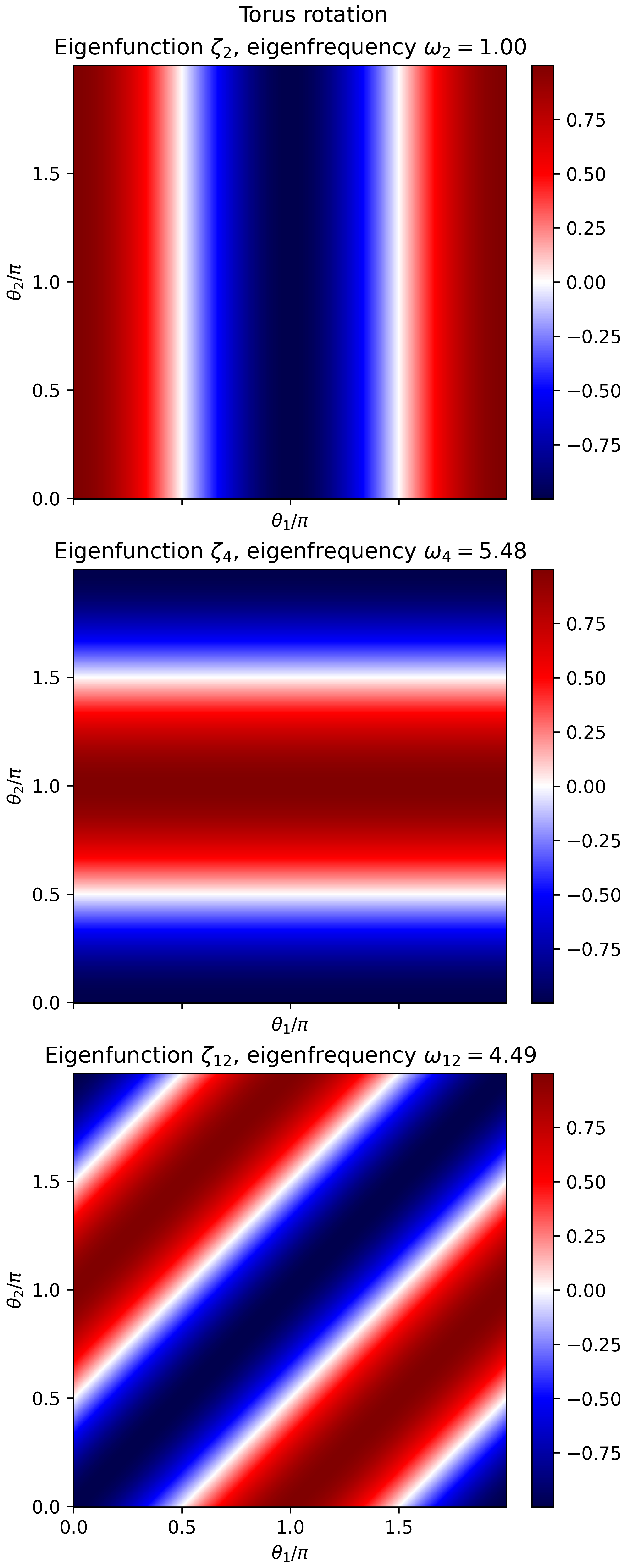}
    \hfill
    \includegraphics[draft=false, width=0.48\linewidth]{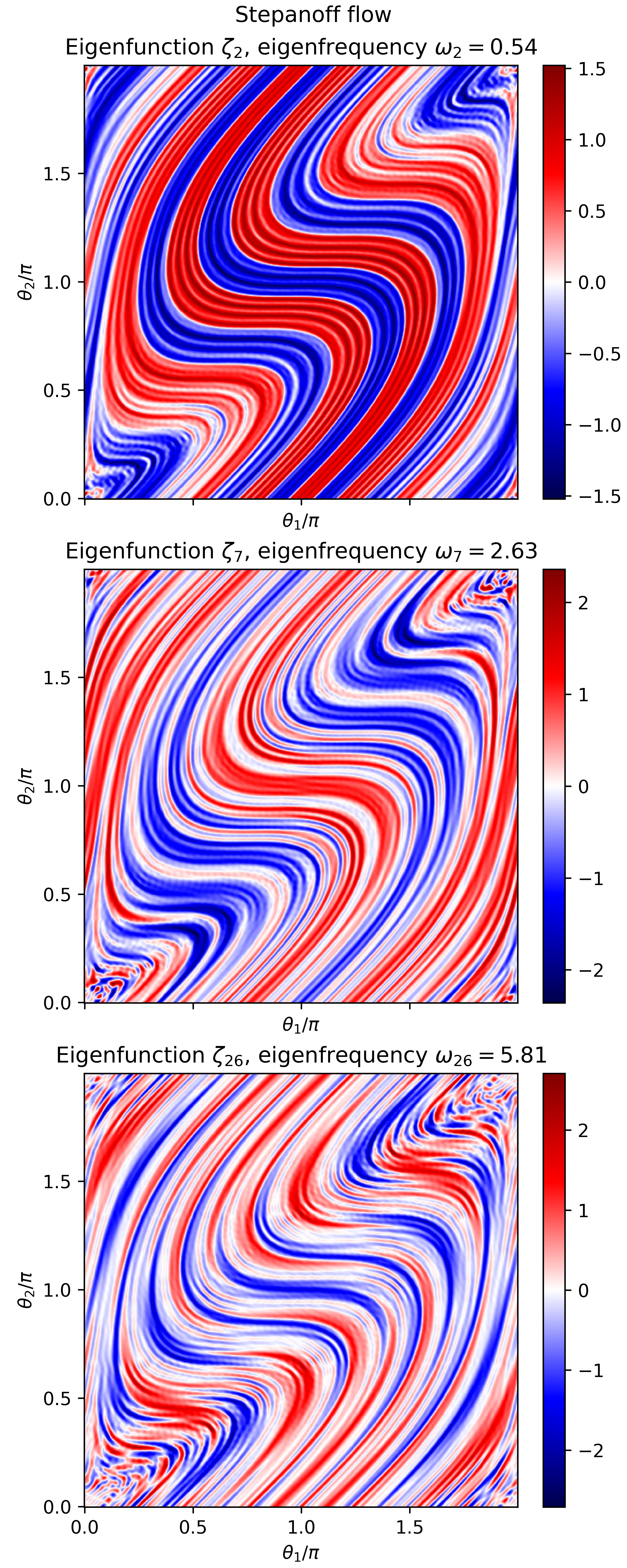}
    \caption{Real parts of representative eigenfunctions $\zeta_j$ of the regularized generator for the torus rotation (left) and Stepanoff flow (right).}
    \label{fig:generator_eig}
\end{figure}

Turning to the Stepanoff flow, the eigenfunctions in this case (right-hand column in \cref{fig:generator_eig}) exhibit considerably more intricate spatial structure than their rotation counterparts, in line with the fact that the unperturbed generator $V$ does not have non-constant continuous eigenfunctions.
The plotted eigenfunctions are the second, seventh, and 26th with respect to the Dirichlet energy ordering; i.e., are among the least oscillatory examples among the 1024 computed eigenfunctions.
In regions away from the fixed point at $x=0$, the level sets of the plotted eigenfunctions appear to be broadly aligned with the dynamical flow; compare, e.g., $\zeta_2$ in the top-right panel of \cref{fig:generator_eig} with the quiver plot of the Stepanoff vector field in \cref{fig:vectorfield}.
In regions closer to the fixed point, the eigenfunctions develop small-scale oscillations, whose presence is qualitatively consistent with the slowing down of the dynamical flow near the fixed point.
Note, in particular, that an orbit passing from a point $x $near the fixed point has to cross more eigenfunction wavefronts in a given time interval to produce consistent phase evolution $\zeta_j(\Phi^t(x)) \approx e^{i\omega_j t} \zeta_j(x) $ with more rapidly evolving orbits away from the fixed point.

In directions transverse to orbits, all computed eigenfunctions exhibit significant oscillatory behavior.
For instance, even eigenfunction $\zeta_2$, which has the smallest nonzero Dirichlet energy among eigenfunctions in our computed set, exhibits an apparent wavenumber-$4$ oscillation along the circle $\theta_1=\theta_2$ (bottom-left to top-right corner in the $[0,2\pi)^2$ periodic box), as opposed to the wavenumber-1 eigenfunctions from the rotation system.
As we will see in \cref{sec:evo} below, the strongly oscillatory nature of the eigenfunctions computed from the Stepanoff system will have repercussions on their ability to represent smooth, isotropic observables in EDMD-type approximations.

\subsection{Dynamical evolution experiments}
\label{sec:evo}

We use the dynamical systems and spectral decomposition results from \cref{sec:dyn_syst_examples,sec:spec_decomp} to benchmark the efficacy of our tensor network scheme based on the Fock space $F(\mathcal H_\tau)$ in approximating the Koopman evolution of observables relative to (i) the ``true'' model, using the Koopman operator implemented as a composition map by an analytically known flow (the torus rotation), or a numerical flow obtained by an ordinary differential equation (ODE) solver (applied to the Stepanoff system); (ii) a ``classical'', EDMD-type approximation using the eigenfunctions from \cref{sec:spec_decomp} as basis functions; and (iii) a ``quantum mechanical'' approximation that approximates pointwise evaluation of the forecast observable as expectation of a smoothed multiplication operator with respect to a quantum state.
The latter is effectively a tensor network model utilizing only the $n=1$ grading of the Fock space; see \cref{sec:von_mises}.
Pseudocode for the classical, quantum mechanical, and Fock space approximations is listed in \Cref{alg:classical,alg:qm}, and~\ref{alg:fock}, respectively.

In all experiments, we consider the evolution of an observable $p_{\bm \mu, \bm \kappa} \in C^\infty(\mathbb T^2)$ from the von Mises family of probability density functions on the 2-torus, \eqref{eq:von_mises} with $N=2$.
The following two reasons motivate our use of observables in this family as our forecast observables:
\begin{itemize}[wide]
    \item For any admissible value of the parameters $\bm \mu$ and $\bm \kappa$, $p_{\bm \mu, \bm \kappa}$ is a strictly positive function, and is thus suitable for testing the ability of approximation schemes to preserve positivity.
    \item For any admissible $\bm \mu$ and $\bm \kappa = (\kappa, \kappa)$, $\kappa>0$, $p_{\bm \mu,\bm \kappa}$ is an isotropic function, i.e., $p_{\bm \mu, \bm \kappa}(x) = p_{\bm \mu, \bm \kappa}(x')$ whenever $x - x' = (y, y) \mod 2\pi$.
    Given their highly anisotropic nature, it is expected that the approximate Koopman eigenfunctions for the Stepanoff flow do not provide an efficient basis for representing $p_{\bm \mu, \bm \kappa}$ for such parameter values.
    As a result, approximating the evolution of $p_{\bm \mu, \bm \kappa}$ constitutes a challenging test of prediction techniques utilizing approximate Koopman eigenfunctions.
\end{itemize}

In the sequel, we let $\bm{\hat p}_{\bm \mu, \bm \kappa} = (\langle \phi_0, p_{\bm \mu, \bm \kappa}\rangle_H, \ldots,\langle \phi_m, p_{\bm \mu, \bm \kappa}\rangle_H)^\top$ be the vector in $\mathbb C^{m+1}$ containing the leading $m + 1$ Fourier coefficients of $p_{\bm \mu, \bm \kappa}$, determined via~\eqref{eq:von_mises_fourier}.
Moreover, using the coefficient vectors $\bm{c}_j = (c_{0j}, \ldots, c_{m,j})^\top \in \mathbb C^{m+1}$ and the corresponding eigenfrequencies $\omega_{j,z,\tau,m}$ we define, for an integer $d \leq (n_\text{eig} - 1)/2$, the $(m+1)\times (2d + 1)$ matrix $\bm C_d = (\bm{c}_0, \ldots, \bm{c}_{2d})$, and the $(2d + 1) \times (2d + 1)$ diagonal unitary matrix $\bm U^t_d = [U^t_{ij}]$ with diagonal entries $U^t_{jj} = e^{i \omega_{j,z,\tau,m} t}$.
As in \cref{sec:num_impl}, $\mathcal Z_{\tau,d,m} = \spn \{\zeta_{0,z,\tau,m}, \ldots, \zeta_{2d,z,\tau,m} \}$ will be the subspace of $\mathcal H_{\tau,m}$ spanned by the leading $2d+1$ eigenfunctions of $W_{\tau,m}$ and $Z_{\tau,d,m}$ the orthogonal projection onto $\mathcal Z_{\tau,d,m}$.
With these definitions, the four types of predictive models for $p_{\bm \mu, \bm \kappa}$ that we examine below are as follows.

\subsubsection{True model}

The true model employs the Koopman evolution $f^{(t)}_\text{true} := p_{\bm \mu, \bm \kappa} \circ \Phi^t$ associated with the dynamical flow $\Phi^t\colon G \to G$.
As mentioned above, in the case of the torus rotation $\Phi^t$ is known analytically, $\Phi^t(x) = x + (1, \alpha) t \mod 2 \pi$.
In the case of the Stepannof flow, $\Phi^t$ is approximated numerically by means of an ODE solver (see \ref{app:numerical} for further information).  

\subsubsection{Classical approximation}

For a dimension parameter $d_\text{cl} \leq (n_\text{eig} - 1)/2$, the classical approximation utilizes the unitary evolution operators $U^t_{\tau,m}$ to approximate $f^{(t)}_\text{true}$ by
\begin{displaymath}
    f^{(t)}_\text{cl} := U^t_{\tau,m} Z_{\tau,d_\text{cl},m} p_{\bm \mu, \bm \kappa} = \sum_{j=0}^{2d_\text{cl}} e^{i \omega_{j,z,\tau,m}t} \langle \zeta_{j,z,\tau,m}, p_{\bm \mu, \bm \kappa} \rangle_{\mathcal H_\tau} \zeta_{j,z,\tau,m}.
\end{displaymath}
We may expand
\begin{displaymath}
    \zeta_{j,z,\tau,m} = \sum_{i=0}^{m} c_{ij} \psi_{i,\tau} = \sum_{i=0}^{m} \frac{c_{ij}}{\Lambda_{i,\tau/2}} K_\tau \phi_i,
\end{displaymath}
giving
\begin{displaymath}
    \langle \zeta_{j,z,\tau,m}, p_{\bm \mu, \bm \kappa}\rangle_{\mathcal H_\tau} = \sum_{i=0}^{m} \frac{\overline{c_{ij}}}{\Lambda_{i,\tau/2}}\langle K_\tau \phi_j, p_{\bm \mu, \bm \kappa} \rangle_{\mathcal H_\tau} = \sum_{i=0}^{m} \frac{\overline{c_{ij}}}{\Lambda_{i,\tau/2}}\langle \phi_j, K_\tau^* p_{\bm \mu, \bm \kappa} \rangle_H = \bm{c}_j^\dag \bm \Lambda_{\tau/2}^{-1} \bm{\hat f},
\end{displaymath}
where $\bm{\hat f} = \bm{\hat p}_{\bm \mu, \bm \kappa}$ and $\bm \Lambda_{\tau/2}$ is the ${m + 1} \times {m + 1}$ diagonal matrix with diagonal entries $\Lambda_{0,\tau/2}, \ldots, \Lambda_{m,\tau/2}$.
Thus, defining the vector-valued function $\bm \zeta_d\colon G \to \mathbb C^{2d+1} $ as $\bm \zeta_d(x) = (\zeta_{0,z,\tau,m}(x), \ldots, \zeta_{2d,z,\tau,m}(x))^\top$, we can succinctly express $f^{(t)}_\text{cl}$ as
\begin{equation}
    \label{eq:f_cl}
    f^{(t)}_\text{cl} = (\bm C^\dag_{d_\text{cl}}  \bm{\hat f})^\top  \bm U^t_{d_\text{cl}} \bm \zeta_{d_\text{cl}}.
\end{equation}

\subsubsection{Quantum mechanical approximation}
\label{sec:qm_model}

Following the approach described in \cref{sec:quantum,sec:pointwise}, the quantum mechanical approximation represents $p_{\bm \mu, \bm \kappa}$ by the quantum mechanical observable $M_{f,\tau,l} \in \mathfrak B_\tau$ from~\eqref{eq:discrete_mult_op} for $f = \iota p_{\bm \mu, \bm \kappa}$, and employs the quantum feature maps from \cref{sec:pointwise} to approximate pointwise evaluation at $x \in \mathbb T^2$ by the vector state $\Xi_{\kappa_\text{eval},\tau,d,m}(x) \in S_*(\mathfrak B_\tau)$ for a concentration parameter $\kappa_\text{eval} > 0$.
Quantum states $\rho \in S_*(\mathfrak B_\tau)$ (including $\Xi_{\kappa_\text{eval},\tau,d,m}(x)$) evolve under the induced action $\mathcal P^t_{\tau,m}$ of the unitary $U^t_{\tau,m}$, i.e., $\mathcal P^t_{\tau,m} \rho = U^{t*}_{\tau,m} \rho U^t_{\tau,m}$.

As discussed in \cref{sec:num_impl}, the quantum observable $M_{f,\tau,l}$ is built by numerical quadrature associated with a finite-dimensional discretization $H_l$ of the Hilbert space $H$.
Let $x^{(l)}_0, \ldots, x^{(l)}_{l^2-1} $ be the nodes of a uniform $l \times l$ grid on $\mathbb T^2$ (with grid spacing $2\pi / l$).
Let $\mu_l$ be the corresponding sampling measure from~\eqref{eq:sampling_meas}, and $H_l = L^2(\mu_l)$ and $\mathfrak A_l = L^\infty(\mu_l)$ the associated Hilbert space and algebra of discretely sampled classical observables, respectively.
Let also $K_{\tau,l}\colon H_l \to \mathcal H_\tau$ be the integral operator from~\eqref{eq:k_op_discrete}.
We use the values $p_{\bm \mu, \bm \kappa}(x^{(l)}_0), \ldots, p_{\bm \mu, \bm \kappa}(x^{(l)}_{l^2-1}) \in \mathbb R$ and $K_{\tau,l}$ to build the quantum observable $M_{f,\tau,l} \in \mathfrak B_\tau$ from~\eqref{eq:discrete_mult_op}.

Below, we will be making use of the $l\times l$ diagonal matrix $\bm M$ with diagonal entries $\tilde f(x_0^{(l)}), \ldots, \tilde f(x_{l^2 -1}^{(l)})$ and the $l^2 \times (m+1)$ matrix $\bm \Phi_m = [\Phi_{ij}]$, whose entries contain the values of the leading $m+1$ Fourier functions on the points $x^{(l)}_i$; i.e., $\Phi_{ij} = \phi_j(x^{(l)}_i)$.
We also introduce a vector-valued function $\bm{\hat{F}}_\kappa\colon \mathbb T^2 \to \mathbb C^{m+1}$, $\bm{\hat{F}}_{\kappa}(x) = \bm{\hat p}_{x,(\kappa, \kappa)}$, that returns, up to proportionality, the leading $m+1$ Fourier coefficients of the feature vectors $F_{\kappa, \tau}(x)$.

With these definitions, and for an integer parameter $d_\text{qm} \leq (n_\text{eig}-1)/2$, the quantum mechanical model approximates the evolution $f^{(t)}_\text{true}$ by $f^{(t)}_\text{qm} \in C(\mathbb T^2)$ using the first formula in~\eqref{eq:pointwise_eval_d_l}, i.e.,
\begin{align}
    \nonumber f^{(t)}_\text{qm}(x)
    &= \frac{\mathbb E_{\mathcal P^t_{\tau,m}(\Xi_{\kappa_\text{eval},\tau,d_\text{qm},m}(x))}M_{f,\tau,l}}{\mathbb E_{\mathcal P^t_{\tau,m}(\Xi_{\kappa_\text{eval},\tau,d_\text{qm},m}(x))}M_{\bm 1,\tau,l}} \\
    \label{eq:ft_qm} & \equiv \frac{\langle K_{\tau,l}^* U^{t*}_{\tau,m} F_{\kappa_\text{eval},\tau,d_\text{qm},m}(x), (\pi_l f_l) K_{\tau,l}^* U^{t*}_{\tau,m} F_{\kappa_\text{eval},\tau,d_\text{qm},m}(x) \rangle_{H_l}}{\lVert \langle K_{\tau,l}^* U^{t*}_{\tau,m} F_{\kappa_\text{eval},\tau,d_\text{qm},m}(x)\rVert^2_{H_l}}.
\end{align}
In matrix notation, \eqref{eq:ft_qm} can be equivalently expressed as
\begin{equation}
    \label{eq:ft_qm_matrix}
    f^{(t)}_\text{qm}(x) = \frac{\bm\xi_{t,x}^\dag \bm M \bm\xi_{t,x}}{\lVert \bm\xi_{t,x}\rVert^2},
    \quad \bm \xi_{t,x} = \bm \Phi_m \bm C_{d_\text{qm}} \bm U^{t\dag}_{d_\text{qm}} \bm C^\dag_{d_\text{qm}} \bm{\hat{F}}_{\kappa_\text{eval}}(x).
\end{equation}

\subsubsection{Fock space approximation}
\label{sec:fock_space_model}

The Fock space approximation implements the scheme described in \cref{sec:tensornet} using the quantum observable $A_{f, \sigma, \tau, n, l}$ from~\eqref{eq:discrete_a_fock} and the feature map $\tilde \Xi_{\kappa_\text{eval},\tau,d,m}$ derived from the projections of the roots of the feature vectors $F_{\kappa_\text{eval},\tau}(x)$ onto $\mathcal Z_{\tau,d,m}$.

Setting an integer parameter $d_\text{Fock} \leq (n_\text{eig}-1)/2$ and using the above feature maps, the Fock space approximation $f^{(t)}_\text{Fock} \in C(\mathbb T^2)$, obtained via the second formula in~\eqref{eq:pointwise_eval_d_l}, becomes
\begin{align}
    \nonumber
    f^{(t)}_\text{Fock}(x)
    &= \frac{\mathbb E_{\tilde{\mathcal P}^t_{\tau,m}(\tilde \Xi_{\kappa_\text{eval},\tau, n,d_\text{Fock},m}(x))}A_{f, \sigma, \tau, n, l}}{\mathbb E_{\tilde{\mathcal P}^t_{\tau,m}(\tilde\Xi_{\kappa_\text{eval},\tau,d_\text{Fock},m}(x))}A_{\bm 1,n,\tau,l}} \\
    \label{eq:f_fock} &\equiv \frac{\langle (G_\sigma K_\tau^* U^{t*}_{\tau,m}F_{\tau, n,d,m}(x))^n,(\pi_l f_l) (G_\sigma K_\tau^* U^{t*}_{\tau,m}\tilde F_{\tau,d,m}(x))^n\rangle_{H_l}}{\lVert  (G_\sigma K_\tau^* U^{t*}_{\tau,m}\tilde F_{\tau,d,m}(x))^n\rVert_{H_l}^2}.
\end{align}
Defining $\bm{\hat{F}}_{\kappa, n}:G \to \mathbb C^m$ as $\bm{\hat{F}}_{\kappa,n}(x) = \bm{\hat p}_{x,(\kappa, \kappa)/n}$, and using $\bm v^{\bm . n}$ to denote the elementwise $n$-th power of a vector $\bm v \in \mathbb C^{l^2}$, we can express~\eqref{eq:f_fock} in matrix form as
\begin{equation}
    \label{eq:ft_fock_matrix}
    f^{(t)}_\text{Fock}(x) = \frac{\bm\xi_{n,t,x}^\dag \bm M \bm\xi_{n,t,x}}{\lVert \bm\xi_{n,t,x}\rVert^2},
    \quad \bm \xi_{n,t,x} = (\bm \Phi_m \bm \Lambda_\sigma \bm C_{d_\text{Fock}} \bm U^{t\dag}_{d_\text{Fock}} \bm C^\dag_{d_\text{Fock}} \bm{\hat{F}}_{\kappa_\text{eval},n}(x))^{\bm{.}n}.
\end{equation}
Note that, up to proportionality constants that cancel upon taking the ratio, the numerator and denominator in~\eqref{eq:f_fock} are evaluations of the tensor network in \cref{fig:network}.

\subsubsection{Torus rotation}
\label{sec:torusrot}

In our torus rotation experiments, the forecast observable is an anisotropic von Mises density $p_{\bm \mu,\bm \kappa}$ with $\bm \mu = (0, 0)$ and $\bm \kappa = (\kappa_1, \kappa_2) \equiv (1, 6)$.
Since $\kappa_2 > \kappa_1$, $p_{\bm \mu, \kappa}$ has higher sharpness (faster rate of decay) in the $\theta_2$ dimension than along $\theta_1$, resulting in an oblate appearance of its level sets when visualized in the $(\theta_1, \theta_2)$ periodic domain.
Correspondingly, well-approximating $p_{\bm \mu, \bm \kappa}$ by truncated Fourier series requires more coefficients (higher bandwidth) along the $\theta_2$ dimension than $\theta_1$.

\Cref{fig:evo_torusrot} shows scatterplots of the evolution of $p_{\bm \mu, \bm \kappa}$ under the true model, and the classical, quantum mechanical, and Fock space approximations for representative evolution times $t \in \{ 0, 1, 2, 4 \}$, plotted on a $256 \times 256$ uniform grid on $\mathbb T^2$.
The approximate models use the parameters $d_\text{cl} = d_\text{qm} = d_\text{Fock} = 128$, $l = 512$, $\kappa_\text{eval} = 500$, $\sigma = 0$, and $n=10$.
Note that working with the same values of $d_\text{cl}$, $d_\text{qm}$, and $d_\text{Fock}$ allows us to compare approximations originating from the same set of eigenfunctions and eigenfrequencies.
Moreover, since $\kappa_\text{eval} \gg \max \{ \kappa_1, \kappa_2 \}$ (and as we discuss momentarily, the initial condition $p_{\bm \mu,  \bm \kappa}$ does not experience deformation of its level sets under rotation dynamics), the quantum states utilized by the quantum mechanical and Fock space models are expected to well-approximate pointwise evaluation of the forecast observables.

\begin{figure}
    \centering
    \includegraphics[draft=false, width=\textwidth]{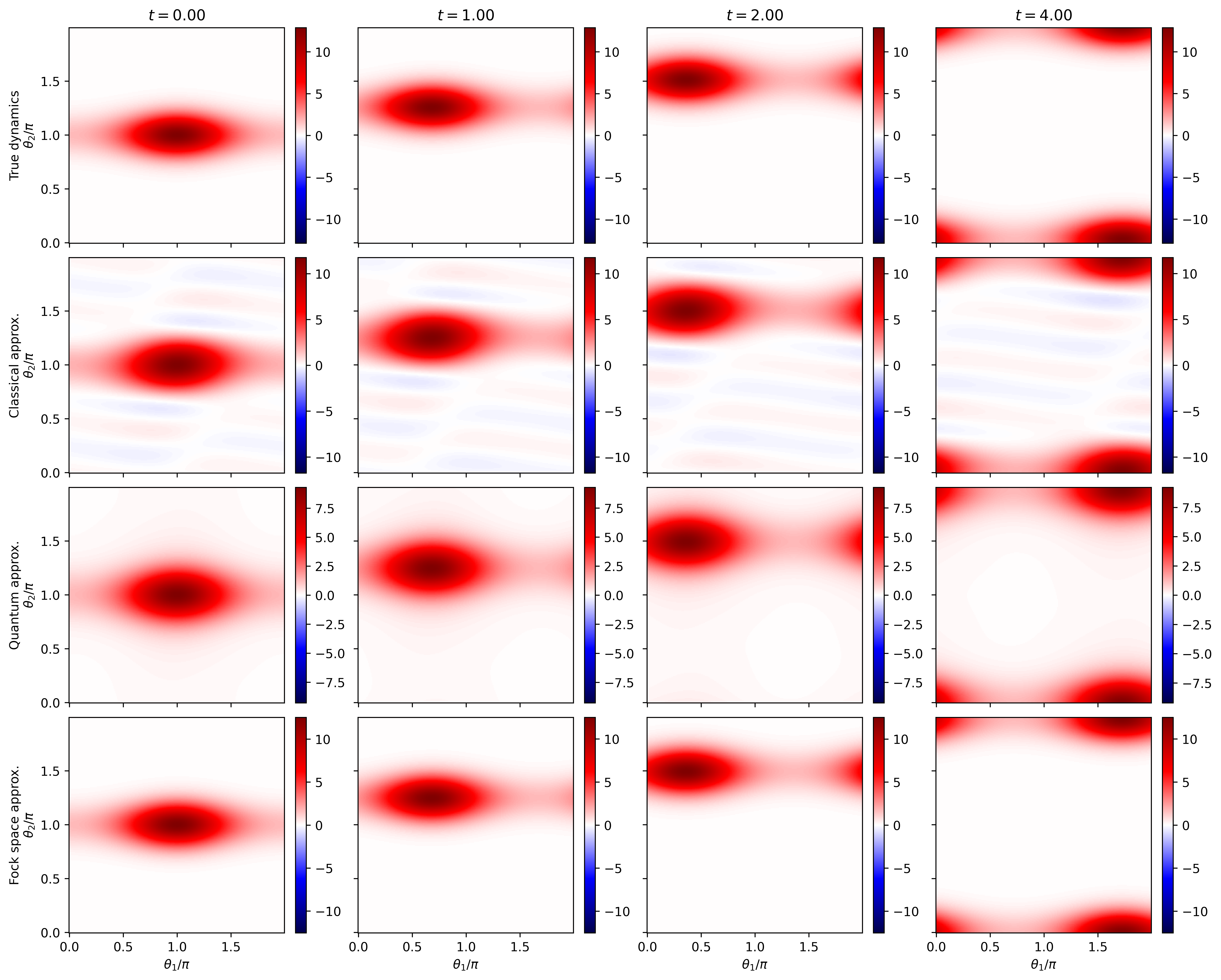}
    \caption{Time evolution of the anisotropic von Mises density $p_{\bm \mu, \bm \kappa}$ with $\bm \mu = (0, 0)$ and $\bm \kappa = (1, 6)$ under the ergodic torus rotation ($f^{(t)}_\text{true}$; top row), the classical approximation ($f^{(t)}_\text{cl}$; second row from top), the quantum mechanical approximation ($f^{(t)}_\text{qm}$; third row from top), and the Fock space approximation ($f^{(t)}_\text{Fock}$; bottom row).
    Columns from left-to-right show snapshots of the evolution times $t=0, 1, 2, 4$.}
    \label{fig:evo_torusrot}
\end{figure}

As expected from the analytically known flow map~\eqref{eq:phi_rot}, under the true model $f^{(t)}_\text{true}$ the initial condition $f^{(0)}_\text{true} = p_{\bm \mu, \bm \kappa}$ is advected as a rigid body along lines of (irrational) slope $\alpha_2/\alpha_1$.
This behavior is qualitatively captured by all three models, but there are notable differences in the details of the three approximations:
\begin{itemize}[wide]
    \item Being an orthogonal projection method, the classical approximation $f^{(t)}_\text{cl}$ is not positivity preserving (see \cref{sec:feature_extraction}).
    Indeed, oscillations of $f^{{(t)}}_\text{cl}$ to negative values are evident in the results shown in the second row of \cref{fig:evo_torusrot}.
    Notice that these oscillations take place primarily along the $\theta_2$ direction.
    This is consistent with the fact that $p_{\bm \mu, \bm \kappa}$ is more susceptible to Fourier truncation errors along $\theta_2$ than $\theta_1$ (since $\kappa_2>\kappa_1$).
    \item The quantum mechanical approximation $f^{(t)}_\text{qm}$ is positivity preserving (see \cref{sec:quantum}), which eliminates the oscillations to negative values seen in $f^{(t)}_\text{cl}$.
    In some applications, e.g., modeling sign-definite physical quantities, positivity preservation is a highly desirable, or even essential, feature.
    However, as can be seen in the third row of \cref{fig:evo_torusrot}, $f^{(t)}_\text{qm}$ does not recover as accurately the pointwise value of $f^{(t)}_\text{true}$ near its peak, and generally appears to be more diffusive than $f^{(t)}_\text{cl}$.
    \item The Fock space approximation $f^{(t)}_\text{Fock}$ is positivity-preserving similarly to $f^{(t)}_\text{qm}$, but also overcomes the shortcomings of the latter method in reproducing the extremal values of $f^{(t)}_{\text{true}}$.
    Moreover, the Fock space scheme has access to approximation spaces lying in the tensor product space $\mathcal H_\tau^{\otimes n}$.
    As we have already seen in \cref{sec:circlerot} in the context of the circle rotation, the higher dimensionality of these spaces may avoid the apparent positivity preservation and extremal value reproduction tradeoffs seen in the classical and quantum mechanical approximation schemes.
\end{itemize}

Next, to study the accuracy of the three approximation schemes in more detail, in \cref{fig:err_torusrot} we show plots of their errors, $f^{(t)}_\text{cl} - f^{(t)}_\text{true}$, $f^{(t)}_\text{qm} - f^{(t)}_\text{true}$, and $f^{(t)}_\text{Fock} - f^{(t)}_\text{true}$, relative to the true evolution.
At least for $t \leq 2$, the results verify the qualitative observations made above; namely, $f^{(t)}_\text{cl}$ performs better than $f^{(t)}_\text{qm}$ in terms of reconstruction of the extremal values of $f^{(t)}$ but fails to preserve positivity, and $f^{(t)}_\text{Fock}$ provides ``the best of both worlds'' in the sense of being positivity-preserving and yielding the best reconstruction accuracy of extremal values.
At the same time, the $t=4$ results in \cref{fig:err_torusrot} indicate that $f^{(t)}_\text{Fock}$ has faster error growth than the other methods.
This can be understood from the fact that the amplification to $\mathcal H_\tau^{\otimes n}$ generates frequencies as high as $n=10$ times the maximal eigenfrequency corresponding to the approximation space $\mathcal Z_{\tau,d_\text{Fock},m}$.
Any errors in the eigenfrequencies may thus be amplified, causing a build-up of phase errors over time.

\begin{figure}
    \centering
    \includegraphics[draft=false, width=\textwidth]{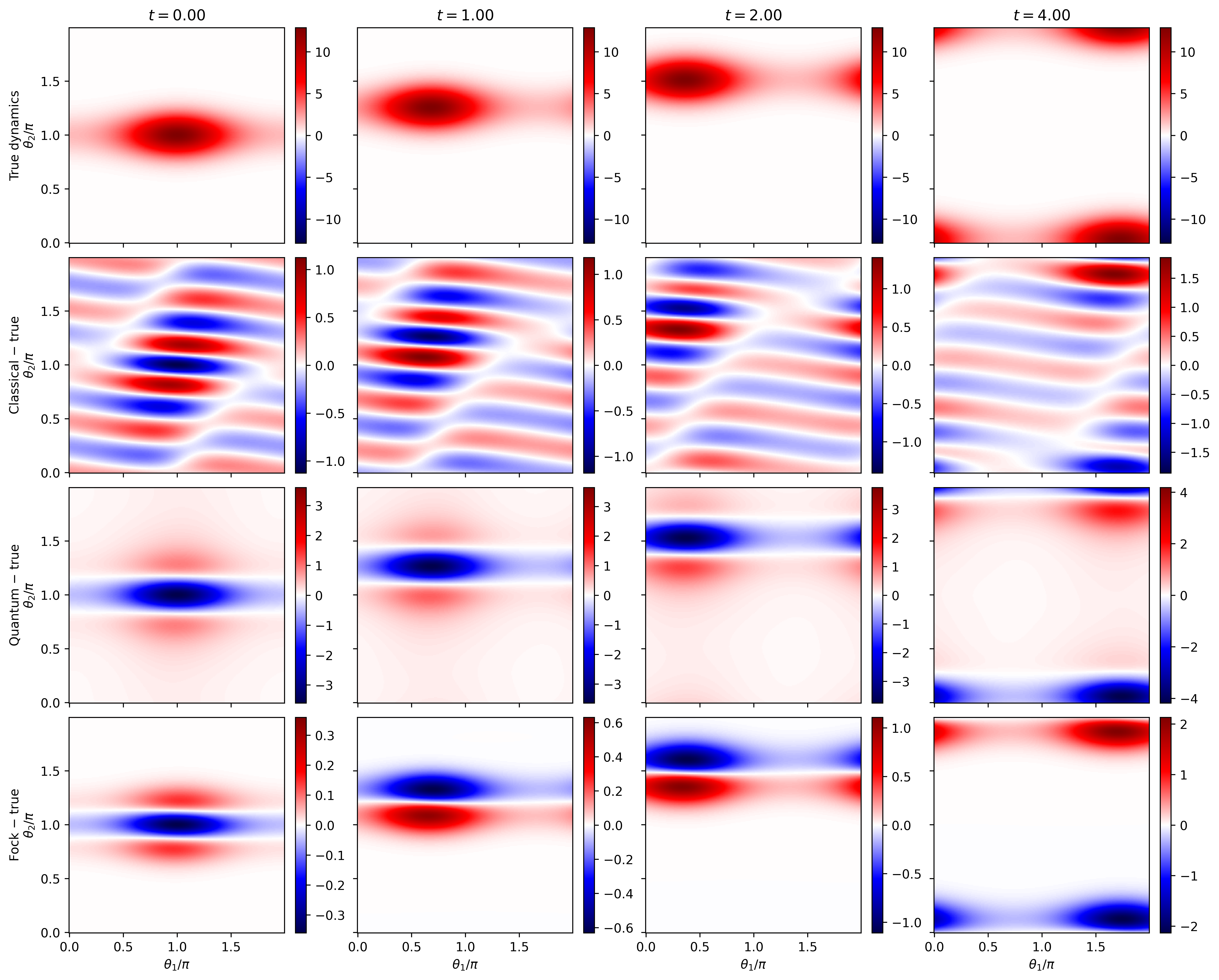}
    \caption{Errors in the classical, quantum mechanical, and Fock space approximations from \cref{fig:evo_torusrot} (second from top to bottom rows, respectively) relative to the true evolution.
    The true evolution is plotted in the first row for reference.}
    \label{fig:err_torusrot}
\end{figure}

A possible remedy to such issues would be to set $d_\text{Fock}$ and/or $n$ in a $t$-dependent manner, with values determined in an offline training procedure.
Here, we have opted not to explore such additional tuning steps in order to more directly highlight the benefits and shortcomings of each method.

\subsubsection{Stepanoff flow}
\label{sec:stepanoff}

For our Stepanoff flow experiments, we consider the isotropic von Mises density function $p_{\bm \mu, \bm \kappa}$ with $\bm \mu = (0,0)$ and $\bm \kappa = (1, 1)$ as the forecast observable.
We use the approximation parameters $d_\text{cl} = d_\text{qm} = d_\text{Fock} = 256$, $l = 1024$, $\kappa_\text{eval} = 500$, $\sigma = 0$, and $n=10$.
\Cref{fig:evo_stepanoff,fig:err_stepanoff} show representative dynamical evolution and error results for time $t \in \{ 0, 1, 2, 4 \}$ similarly to \cref{fig:evo_torusrot,fig:evo_stepanoff}, respectively.

\begin{figure}
    \centering
    \includegraphics[draft=false, width=\textwidth]{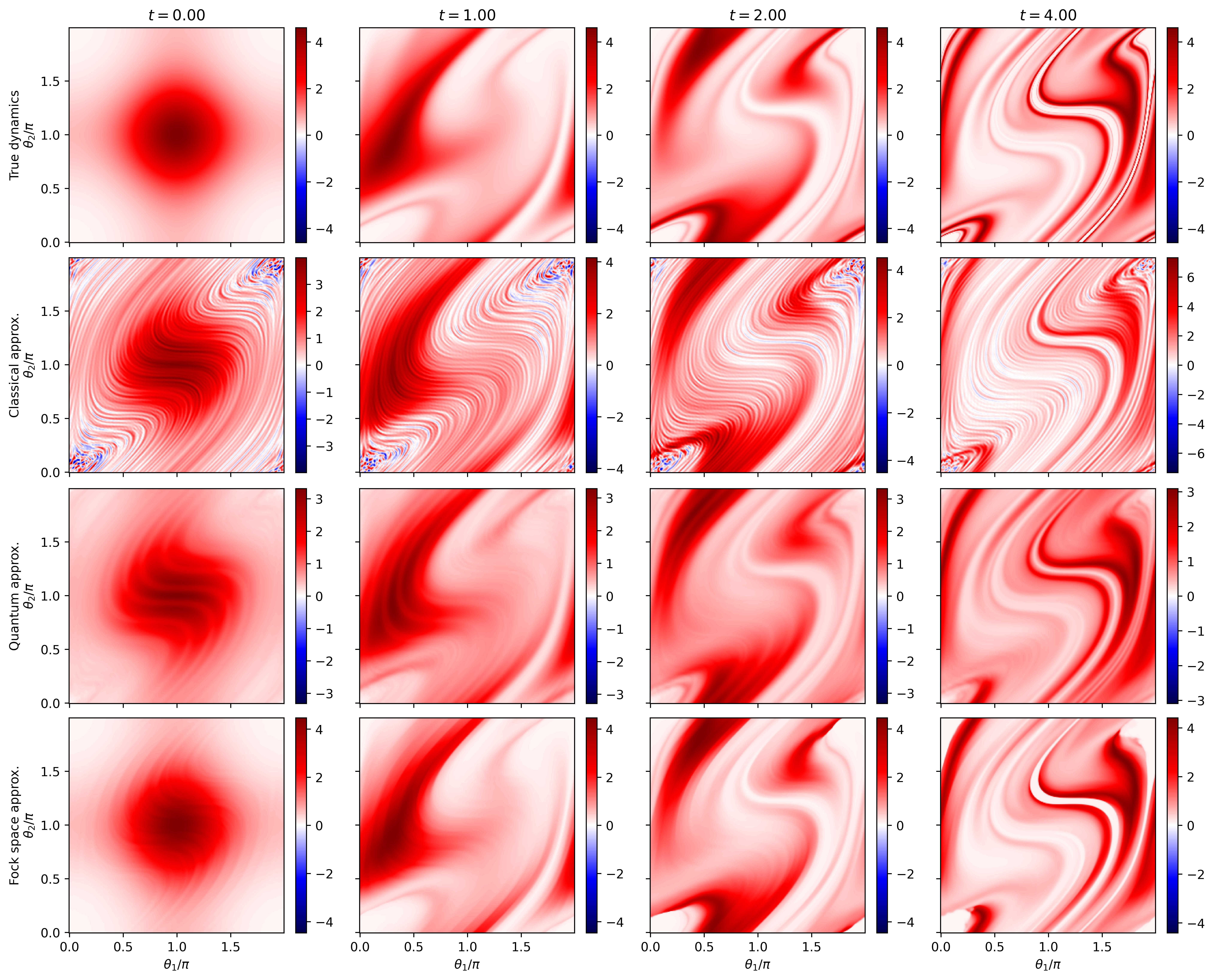}
    \caption{Time evolution of the isotropic von Mises density $p_{\bm \mu, \bm \kappa}$ with $\bm \mu = (0, 0)$ and $\bm \kappa = (1, 1)$ under the Stepanoff flow (top row), and the classical quantum mechanical, and Fock space approximations (second to last rows).
    The plots are organized similarly to \cref{fig:evo_torusrot}}.
    \label{fig:evo_stepanoff}
\end{figure}

\begin{figure}
    \centering
    \includegraphics[draft=false, width=\textwidth]{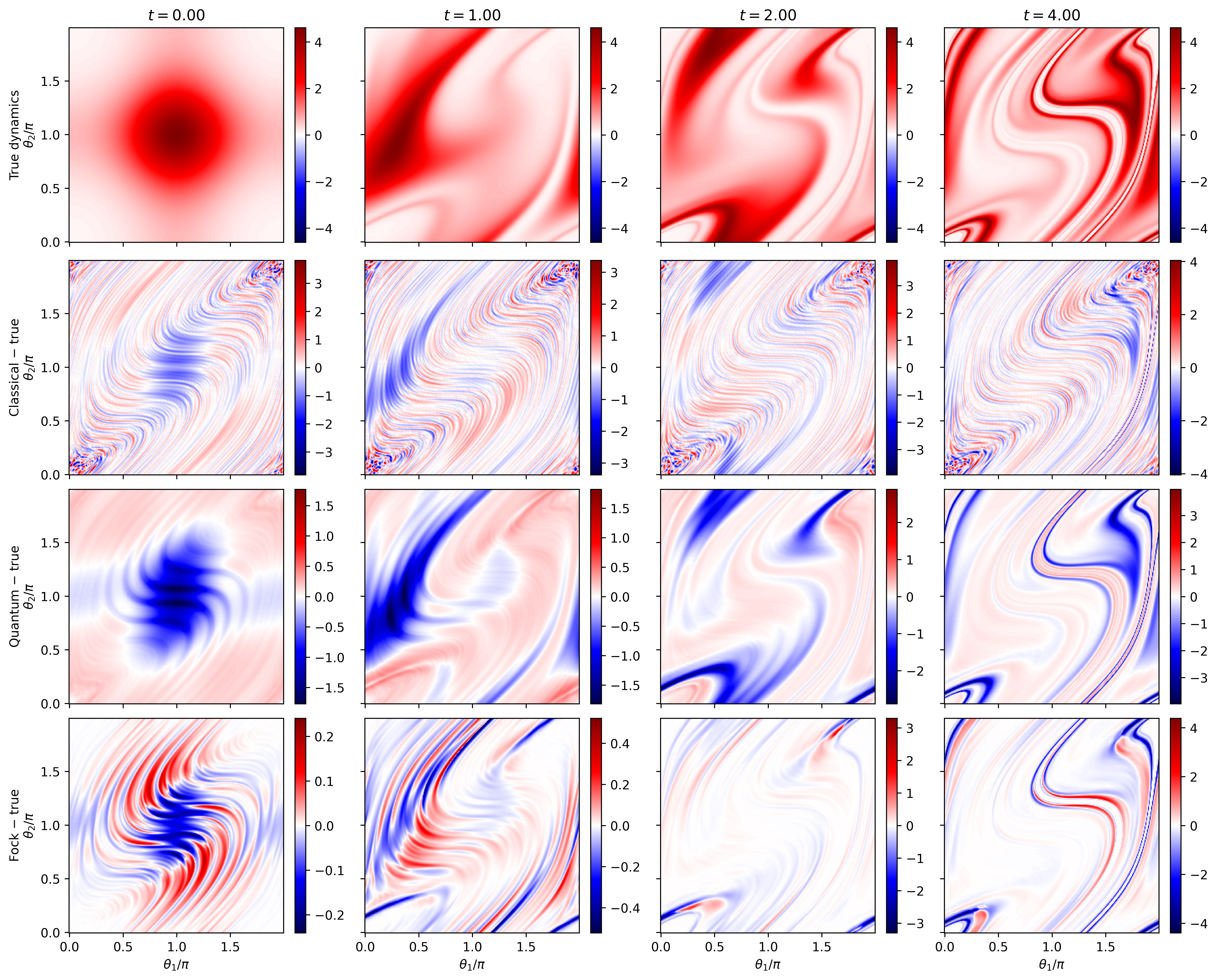}
    \caption{Errors in the classical, quantum mechanical, and Fock space approximations from \cref{fig:evo_stepanoff} (second from top to bottom rows, respectively) relative to the true evolution.
    The true evolution is plotted in the first row for reference.}
    \label{fig:err_stepanoff}
\end{figure}

First, before delving into the behavior of the dynamics and approximations at $t>0$, it is worthwhile examining the $t=0$ results to highlight that even reconstruction of the initial condition $f^{(0)}_\text{true}$ is non-trivial using the basis of approximate Koopman eigenfunctions computed for this system (as alluded to in \cref{sec:spec_decomp}).
This is especially evident in the case of the classical approximation, where the initial reconstruction $f^{(0)}_\text{cl}$ in \cref{fig:evo_stepanoff} has clear imprints of the characteristic sigmoid structure of the eigenfunction level sets (see right-hand column of \cref{fig:generator_eig}) and exhibits significant excursions to negative values in regions near the fixed point $x=(0,0)$.
This is despite the fact that the true initial condition $f^{(0)}$ is a smooth, isotropic function (and thus might be considered a ``nice'' observable) and the number eigenfunctions used, $2 d_\text{cl} + 1 = 513$, is arguably not insignificant.
The quantum mechanical reconstruction, $f^{(0)}_\text{qm}$, improves reconstruction quality by smoothing and eliminating negative values, but the sigmoid pattern inherited from the eigenfunctions is still clearly visible, and as in the case of the torus rotation the amplitude of extremal values is not well-reproduced.
The Fock space reconstruction $f^{(0)}_\text{Fock}$, on the other hand, offers a solid improvement over both its classical and quantum mechanical counterparts.
Even though some undulations attributable to the form of the eigenfunctions are still present in $f^{(0)}_\text{Fock}$, they are considerably weaker than those seen in $f^{(0)}_\text{cl}$ and $f^{(0)}_\text{qm}$.
Moreover, as can be seen from the error plots in \cref{fig:err_stepanoff}, the amplitude of extremal values of $f^{(0)}$ is reproduced with significantly higher accuracy.

Next, turning to the $t>0$ results, a comparison of the top rows of \cref{fig:evo_torusrot,fig:evo_stepanoff} clearly illustrates the higher dynamical complexity of the Stepanoff flow over the torus rotation, with the former producing stretching and folding of the initially circular level sets of $p_{\bm \mu,\bm \kappa}$ versus the rigid translation seen in the latter.
All three approximations track the large-scale behavior of $f^{(t)}_\text{true}$ from the true system, although the classical approximation $f^{(t)}_\text{cl}$ continues to be affected by imprinting of eigenfunction the patterns as discussed above for $t=0$.
Again, $f^{(t)}_\text{Fock}$ provides the best overall prediction results, particularly for $t \leq 2$, demonstrating the benefits of Fock space amplification.

Inspecting the $t=2, 4$ results in \cref{fig:evo_stepanoff,fig:err_stepanoff}, we see that the largest errors in $f^{(t)}_\text{Fock}$ are concentrated in regions such as the vicinity of the fixed point, where the approximation appears to be rounded off (visually ``whitened out'') to near-zero values.
In separate calculations, we have verified that at such points $x \in \mathbb T^2$ the state vector $\tilde F_{\kappa_\text{eval}, \sigma, \tau, n, d, m}(x) \in F(\mathcal H_\tau)$ becomes an extremely localized function upon application of $\Delta_n^*$.
Thus, the forecast $f^{(t)}_\text{Fock}(x)$ is dominated by an exceedingly small number of sampled values $p_{\bm \mu, \bm \kappa}(x_i^{(l)})$ of the forecast observable.
In other words, the approximation $f^{(t)}_\text{Fock}(x)$ at these points appears to be affected by some type of overfitting.
In \cref{fig:evo_stepanoff,fig:err_stepanoff}, the quantum mechanical approximation $f^{(t)}_\text{qm}(x)$ does not seem to be impacted by the same issue.
Since $f^{(t)}_\text{qm}$ is essentially an $n=1$, $\sigma=0$, version of the Fock space scheme, decreasing $n$ and/or increasing $\sigma$ would be a way of restoring smoothness of the prediction $f^{(t)}_\text{Fock}(x)$ under changes in $x$.

Of course, sensitive dependence on initial conditions is a hallmark of complex dynamics, so the fact that $f^{(t)}_\text{Fock}(x)$ appears to have higher sensitivity on $x$ than other methods is not necessarily a disadvantage of the tensor network scheme.
Nonetheless, the presence of the whitened-out regions in \cref{fig:evo_stepanoff} indicates that $t$-dependent tuning of $n$/ $\sigma$ or modification of the quantum observable $A_{f, \sigma, \tau, n}$ used for prediction may be beneficial.
We comment on possibilities for the latter approach in \cref{sec:conclusions}.

\section{Concluding remarks}
\label{sec:conclusions}

In this paper, we developed a framework for approximating the statistical evolution of continuous observables of measure-preserving, ergodic dynamical systems based on a combination of techniques from Koopman/transfer operator theory and many-body quantum theory.
For this framework, the working hypothesis was that the multiplicative structure of approximate eigenfunctions of Koopman operators contains information about the underlying dynamical system that is not utilized by classical Koopman operator techniques.
The principal elements of this scheme are (i) spectral regularization for consistently approximating the skew-adjoint Koopman generator (which is typically unbounded and non-diagonalizable) by a family of skew-adjoint, diagonalizable operators, $V_\tau$, on $L^2(\mu)$; (ii) lift of the evolution of observables and probability densities under the dynamics generated by $V_\tau$ to the setting of a Fock space $F(\mathcal H_\tau)$ generated by a reproducing kernel Hilbert algebra (RKHA) of observables, $\mathcal H_\tau$.
Notable features of this construction are:
\begin{enumerate}[wide]
    \item The lifted Koopman/transfer operators act multiplicatively on the tensor algebra $T(\mathcal H_\tau) \subset F(\mathcal F_\tau)$.
    This means that their eigenvalues are generative as an additive group, and the corresponding eigenfunctions are generative as a multiplicative group under the tensor product.
    \item The RKHA $\mathcal H_\tau$ is simultaneously an RKHS, a coalgebra, and a Banach algebra with respect to the pointwise product of functions.
    This structure enables a mapping of probability densities on state space into quantum state vectors in the Fock space that project non-trivially to all of its gradings $\mathcal H_\tau^{\otimes n} \subset F(\mathcal H_\tau)$.
    Moreover, classical observables $f \in L^\infty$ are mapped into quantum observables $A_{f, \sigma, \tau, n}$ acting on the Fock space.
    The observables $A_{f, \sigma, \tau, n}$ are built as smoothed multiplication operators, amplified to act on any of the tensor product spaces $\mathcal H^{\otimes (n+1)}$, $n \in \mathbb N$, via the comultiplication operator $\Delta \colon \mathcal H_\tau \to \mathcal H_\tau \otimes \mathcal H_\tau$ of the RKHA.
    \item The lifted system on the Fock space is projected onto finite-dimensional subspaces generated by tensor products of eigenfunctions of $V_\tau$.
    The resulting quantum state evaluation can be expressed as a tree tensor network (\cref{fig:network}), and is positivity preserving and asymptotically consistent in appropriate limits.
    Importantly, the tensor network approach provides a route for algebraically building high-dimensional approximation spaces from a modest number of approximate Koopman/transfer operator eigenfunctions.
\end{enumerate}

In broad terms, our approach can be thought of as (i) distributing an initial sharp probability density function $\varrho \in \mathcal H_\tau$ into a linear combination of tensor products $\xi^{1/n} \otimes \cdots \otimes \xi^{1/n} \in \mathcal H_\tau^{\otimes n} $ of coarser functions by the $n$-th roots of the quantum mechanical state vector $\xi \propto \varrho^{1/2}$; (ii) dynamically evolving each of the factors $\xi^{1/n}$; and (iii) recombining the results using into a sharp, time-evolved quantum state that approximates the evolution of densities under the transfer operator of the dynamical system.
Our analysis characterized the convergence properties of the scheme (\cref{thm:tau_conv,thm:torus_rotation_convergence}), and its efficacy was assessed with numerical prediction experiments involving an irrational rotation and a Stepanoff flow on $\mathbb T^2$.
In both cases, approximations utilizing the Fock space were found to perform better than conventional subspace projection methods in terms of approximation accuracy, consistent with the working hypothesis, and with the added benefit of being positivity preserving.

In terms of future work, the tensor network approach presented in this paper provides a flexible approximation framework with several possible directions for further development.
First, as noted in \cref{rk:roots}, the scheme based on the $n$-th roots $\xi^{1/n}$ is one of many possibilities of lifting the state vector $\xi \in \mathcal H_\tau$ to the Fock space.
From the point of view of asymptotic convergence, it suffices to identify any set of functions $\xi^{(1)}, \ldots, \xi^{(n)}$ whose pointwise product $\Delta_n^* (\xi^{(1)} \otimes \ldots \otimes \xi^{(n)})$ recovers $\xi$.
Second, the amplification scheme leading to the quantum observables $A_{f, \sigma, \tau, n}$ that act on individual gradings $\mathcal H^{\otimes(n+1)}$ could be generalized to yield quantum observables acting on multiple gradings, e.g., through (possibly infinite) linear combinations $\sum_n c_n A_{f, \sigma, \tau, n}$.
Exploring these more general schemes would be a fruitful avenue of future work that could potentially improve the performance of the method at large evolution time $t$ and/or in regions of state space such as fixed points with highly oscillatory eigenfunctions (see \cref{sec:evo}).
The tensor network we utilized and the finite cut off made resembles a tensor train along with a finite rank approximation (see \cite{Oseledets11}).
Future work may focus on optimizations from tensor trains for the tensor network approach to Koopman operators.
A related research direction would be to characterize the structure of the resulting tree tensor networks using methods from renormalization group theory.

Finally, even though the methods presented in this paper are already beneficial in the context of classical numerical computation, their common mathematical foundation with many-body quantum physics motivates the development of efficient implementations on quantum computing platforms.
Here, a challenge stems from the fact that the comultiplication operators $\Delta$ employed in the scheme are not unitary, and thus cannot be directly implemented using quantum logic gates.
Recent work on representing non-unitary operators in tensor networks through orthogonal forms \cite{Ran20} may be a promising way of addressing these challenges.
More generally, we are hopeful that the work presented in this paper will stimulate fruitful interactions between the fields of classical dynamics and many-body quantum theory.

\section*{Acknowledgments}

Dimitrios Giannakis acknowledges support from the U.S.\ Department of Defense, Basic Research Office under Vannevar Bush Faculty Fellowship grant N00014-21-1-2946 and the U.S.\ Office of Naval Research under MURI grant N00014-19-1-242.
Mohammad Javad Latifi Jebelli and Michael Montgomery were supported as postdoctoral fellows from these grants.
Philipp Pfeffer is supported by the project no.
P2018-02-001 "DeepTurb -- Deep Learning in and of Turbulence" of the Carl Zeiss Foundation, Germany.
J\"org Schumacher is supported by the European Union (ERC, MesoComp, 101052786).
Views and opinions expressed are however those of the authors only and do not necessarily reflect those of the European Union or the European Research Council.
Neither the European Union nor the granting authority can be held responsible for them.

\begin{appendices}

\section{List of symbols}
\label{app:symbols}

\Cref{tab:notation} provides a summary of the main symbols related to dynamics, vector spaces, and operators used in this paper.

\begin{table}[ht]
    \caption{Summary of symbols.}
    \label{tab:notation}
    \centering
    \begin{tabular}{ll}
        \hline\hline
        \textbf{Dynamics} & \textbf{Description} \\
        \hline
        $X$ & Smooth finite dimensional compact manifold\\
        $\mu$ & Invariant probability measure on $X$\\
        $\Phi^t$ & Measure preserving flow on $X$\\
        \hline\\
        \textbf{Vector spaces} & \textbf{Description} \\
        \hline
        $\mathcal H_\tau$ & Reproducing kernel Hilbert algebra on $X$\\
        $\zeta_{j,\tau}$ & Eigenfunctions of $W_\tau$\\
        $\omega_{j,\tau}$ & Eigenvalues of $W_\tau$\\
        $T(\mathcal H_\tau)$ & Tensor algebra generated by $\mathcal H_\tau$\\
        $F(\mathcal H_\tau)$ & Fock space construction of $\mathcal H_\tau$\\
        $\xi=\rho^{1/2}$ & State vector in $\mathcal H_\tau$\\
        $\eta_{\tau, n}$ & State vector in $\mathcal H_\tau^{\otimes n}$\\
        $\xi_{\tau, n,d}$ & Truncated state vector in $F(\mathcal Z_{\tau,d})$\\
        $\mathcal Z_{\tau,d}$ & Finite dimension truncation of $\mathcal H_\tau$\\
        $F(\mathcal Z_{\tau,d})$ & Fock space of finite dimensional truncation of $\mathcal H_\tau$\\
        $\overrightarrow{j}$ & Multi-index $(j_1,\cdots,j_N)$\\
        $\zeta_{\overrightarrow{j},\tau}$ & Eigenfunctions of $\tilde{U}^t_\tau$\\
        $\omega_{\overrightarrow{j},\tau}$ & Eigenvalues of $\tilde{U}^t_\tau$\\
        $p_{\bm \mu, \bm \kappa}$ & Von Mises distribution on an $N$-dimensional torus\\
        \hline\\
        \textbf{Operators} & \textbf{Description} \\
        \hline
        $U^t$ & Koopman operator associated to $\Phi^t$ \\
        $V$ & Generator of the Koopman operator\\
        $k_\tau$ & Reproducing kernel of $\mathcal H_\tau$\\
        $\mathcal K_\tau$ & Kernel operator associated to $k_\tau$ and $\mu$\\
        $V_\tau$ & Smoothed generator $\mathcal K_\tau V \mathcal K_\tau$\\
        $T_\tau$ & Kernel operator associated to $k_\tau$ from $L^2(\mu)$ to $\mathcal H_\tau$\\
        $W_\tau$ & Smoothed generator $T_\tau V T_\tau^*$\\
        $U^t_\tau$ & Smoothed Koopman operator generated by $V_\tau$\\
        $\Delta$ & Comultiplication operator on $\mathcal H_\tau$\\
        $\tilde{U}^t_\tau$ & Amplification of smoothed Koopman operator\\
        $\tilde{\mathcal U}^t_\tau$ & Adjoint action on observables under $\tilde{U}^t_\tau$\\
        $A_{f, \sigma, \tau, n}$ & Amplified observable associated to $f$ on $F(\mathcal H_\tau)$\\
        $\mathbb E_{\rho}$ & Expectation with respect to the density operator $\rho$\\
        \hline\hline
    \end{tabular}
\end{table}

\section{Spectral approximation techniques}
\label{app:spectral_approx}

In this appendix, we give an overview of approximation techniques for the Koopman generator $V$ from the papers \cites{DasEtAl21,GiannakisValva24,GiannakisValva25}.
These methods utilize two alternative approximation strategies---one that approximates $V$ by a family of skew-adjoint compact operators (\cref{app:compact_approx}) and another one that approximates it by unbounded operators with compact resolvent (\cref{app:compact_res}).
Both approaches satisfy \crefrange{prty:V1}{prty:V6} assumed for the tensor network schemes described in this paper.

\subsection{Compact approximations}%
\label{app:compact_approx}

The scheme of \cite{DasEtAl21} produces a family of skew-adjoint compact operators $V_\tau\colon H \to H$ by smoothing $V$ by conjugation with the integral operators $\mathcal K_\tau$.
These operators are defined as $V_\tau = \mathcal K_{\tau/2} V \mathcal K_{\tau/2}$, giving $W_\tau = K_\tau^* V K_\tau $ for the corresponding operators on the RKHA $\mathcal H_\tau$ which are also skew-adjoint and compact.
It is shown \cite{DasEtAl21}*{Proposition~19} that under analogous assumptions to \crefrange{prty:K1}{prty:K6}, as $\tau \to 0^+$ $V_\tau$ converges strongly to $V$ on $D(V)$.
For skew-adjoint operators this implies strong resolvent convergence (e.g., \cite{Oliveira09}*{Propositions~10.1.8, 10.1.8}) and thus strong dynamical convergence, as required.
Moreover, skew-adjointness and compactness of $V_\tau$ means that its eigendecomposition can be computed stably and consistently using finite-rank compressions of the operator.

Fixing $\tau>0$ and choosing $m \in \mathbb N$ such that $\Lambda_{m + 1,\tau} \neq \Lambda_{m,\tau}$, we have that the orthogonal projection $\Pi_m \colon H \to H$ with $\ran \Pi_m = \spn \{ \phi_0, \ldots, \phi_{m} \}$ projects onto a union of eigenspaces of $\mathcal K_\tau$.
Our-finite rank approximations to $V_\tau$ are given by $V_{\tau,m} = \Pi_m \mathcal K_{\tau/2} V \mathcal K_{\tau/2} \Pi_m$, and are concretely represented by $(m + 1) \times (m + 1)$ skew-symmetric matrices $\bm V_{\tau,m} = [V^{(\tau)}_{ij}]$ with elements $V^{(\tau)}_{ij} = \Lambda_{\tau/2, i} \langle \phi_i, V \phi_j \rangle_H \Lambda_{\tau/2,j}$.
Similarly, using the basis vectors $\psi_{j,\tau}$ from~\eqref{eq:psi}, we define the subspaces $\mathcal H_{\tau,m} = \spn \{ \psi_{0,\tau}, \ldots, \psi_{m,\tau} \} \subset \mathcal H_\tau$, the orthogonal projections $\Psi_{\tau,m} \colon \mathcal H_\tau \to \mathcal H_\tau$ with $\ran \Psi_{\tau,m} = \mathcal H_{\tau,m}$, and the compressed operators $W_{\tau,m} = \Psi_{\tau,m} W_\tau \Psi_{\tau,m}$.
By construction, the matrix elements $W_{ij} = \langle \psi_{i,\tau}, W_\tau \psi_{j,\tau}\rangle_{\mathcal H_\tau}$ are equal to $V_{ij}$.

To compute these matrix elements, we take advantage of~\eqref{eq:v_dot_grad} in conjunction with the fact that the $\phi_j$ have smooth representatives $\varphi_j \in \mathcal H_\infty$, giving
\begin{equation*}
    V_{ij} = \Lambda_{\tau/2,i}\langle \iota \varphi_i, \iota \vec V \cdot \nabla \varphi_j \rangle_H \Lambda_{\tau/2,j}.
\end{equation*}
In \cite{DasEtAl21}, the directional derivatives $\vec V \cdot \nabla \varphi_j$ are approximated using finite-difference methods along sampled dynamical trajectories.
Another approach \cite{GiannakisValva25} takes advantage of the kernel integral representation of $\varphi_j$ (see~\eqref{eq:k_eig}) to evaluate these derivatives via automatic differentiation.
Specifically, we have
\begin{equation*}
    \vec V \cdot \nabla \varphi_j = \frac{1}{\Lambda_{j,\tau}} \int_X \vec V \cdot \nabla k_\tau(\cdot, x) \phi_j(x) \, d\mu(x),
\end{equation*}
and the function $\vec V \cdot \nabla k_\tau(\cdot, x)$ can be computed via automatic differentiation of the kernel section $k_\tau(\cdot, x)$ at any point $x' \in G$ where the tangent vector $\vec V(x') \in T_{x'}G$ is known.
In scenarios with known equations of motion, this will include every point $x' \in G$.

Finally, computation of $V_{ij}$ requires a means of evaluating $\langle \cdot, \cdot \rangle_H$ inner products.
A common data-driven approach (adopted, e.g., in \cite{DasEtAl21}) is to approximate the invariant measure $\mu$ by a time average along a dynamical trajectory.
By ergodicity, such an approximation converges for $\mu$-a.e.\ initial point $x_0 \in G$ of that trajectory.
In systems with so-called observable invariant measures (e.g., \cite{Blank17}), convergence holds for initial conditions $x_0$ in a set of positive ambient (Haar) measure.
In the examples of \cref{sec:experiments}, the components of $\vec V$ in a frame of $G$-invariant vector fields are polynomials in the characters $\gamma$ of the group.
This allows evaluation of the matrix elements $V_{ij}$ analytically.

Next, we compute the eigenvalues and corresponding eigenvectors of $V_{\tau,m}$ and $W_{\tau,m}$ by solving the matrix eigenvalue problem
\begin{displaymath}
    \bm V_{\tau, m} \bm c_j = i \omega_{j,\tau,m} \bm c_j,
\end{displaymath}
where the eigenvectors $\bm c_j = (c_{0j}, \ldots, c_{mj})^\top \in \mathbb C^{m + 1}$ are orthonormal with respect to the standard inner product on $\mathbb C^{m + 1}$.
The elements of these eigenvectors are expansion coefficients of eigenvectors $u_{j,\tau,m} = \sum_{i=0}^{m} c_{ij} \phi_j \in H_\tau$ of $V_{\tau,m}$ and eigenvectors $\zeta_{j,\tau,m} = \sum_{i=0}^{m} c_{ij} \psi_{i,\tau} \in \mathcal H_\tau(X)$ of $W_{\tau,m}$, both with corresponding eigenfrequencies $\omega_{j,\tau,m}$ (cf.~\eqref{eq:w_tau_eig}).
Note that the eigenvectors $\zeta_{j,\tau,m}$ are everywhere-defined functions (vs.\ equivalence classes of $\mu$-a.e.\ defined functions in the case of $u_{j,\tau,m}$) and can be manipulated as function objects in computational implementations (allowing, e.g., for automatic differentiation and lazy evaluation).

Following \cite{DasEtAl21}, we order eigenvectors and eigenfrequencies in order of increasing Dirichlet energy $\mathcal E_\tau(u_{j,\tau,m})$.
The latter, can be computed directly from the $\bm c_j$ through the formula
\begin{equation}
    \label{eq:dirichlet_eig}
    \mathcal E_\tau(u_{j,\tau,m}) = \frac{\sum_{i=1}^{m} \lvert c_{ij}\rvert^2\Lambda_{i,\tau}^{-1}}{\sum_{i=0}^{m} \lvert c_{ij}\rvert^2}.
\end{equation}
With this choice, we build the approximation spaces in subsequent steps of our approach so as to contain functions of high regularity with respect to $\mathcal E_\tau$.

\subsection{Approximations with compact resolvent}
\label{app:compact_res}

Rather than approximating the generator $V$ by compact operators, the schemes of \cite{GiannakisValva24,GiannakisValva25} approximate it by a two-parameter family of skew-adjoint operators $V_{z,\tau} \colon D(V_{z,\tau}) \to H$, $z,\tau>0$, with \emph{compact resolvent}.
An advantageous aspect of this form of approximation is that infinite-rank operators with compact resolvents have genuinely discrete spectra.
This is in contrast to infinite-rank compact operators such as $V_\tau$ from \cref{app:compact_approx}, which exhibit accumulation of the eigenvalues at 0 causing potentially high sensitivity of the spectrum to approximation parameters such as $\tau$.
Infinite-rank operators with compact resolvents are also unbounded, and thus structurally closer in some sense to the unperturbed generator.
In this subsection, we give an overview of the resolvent compactification scheme proposed in \cite{GiannakisValva25}.

For $z>0$, define the resolvent function $r_z\colon i \mathbb R \to \mathbb C$ on the imaginary line as
\begin{displaymath}
    r_z(i\omega) = \frac{1}{z - i \omega}.
\end{displaymath}
This function gives the resolvent $R_z\colon H \to H$ of the generator via the Borel functional calculus,
\begin{displaymath}
    R_z = r_z(V) = \frac{1}{z - V}.
\end{displaymath}

Let $\tilde H = \{f \in H: \langle \bm 1, f \rangle_H = 0 \}$ be closed subspace of $H$ consisting of zero-mean functions with respect to the invariant measure $\mu$.
Let also $\tilde \Pi = \Id - \langle \bm 1, \cdot \rangle_H \bm 1 \in B(H)$ be the orthogonal projection mapping into $\tilde H$. 
Since $V \bm 1 = \bm 0$, $\tilde H$ is an invariant subspace of $V$.
The scheme of \cite{GiannakisValva25} considers the skew-adjoint, bounded operator $Q_z\colon \tilde H \to \tilde H$,
\begin{displaymath}
    Q_z = q_z(V\rvert_{\tilde H}) = (R_z^* V R_z)\rvert_{\tilde H},
\end{displaymath}
obtained from the function $q_z\colon i \mathbb R \to \mathbb C$ (again via the Borel functional calculus), where
\begin{displaymath}
    q_z(i\omega) = \frac{i\omega}{z^2 + \omega^2}, \quad \ran q_z = i\left[-\frac{1}{2z}, \frac{1}{2z}\right].
\end{displaymath}
While $q_z$ is not an invertible function, its restriction $\tilde q_z$ on the subset $\Omega_z := i(-\infty, -z] \cup i[z, \infty)$ of the imaginary line \emph{is} invertible, with inverse $\tilde q_z^{-1} \colon i[-(2z)^{-1}, (2z)^{-1}] \setminus \{ 0 \} \to i \mathbb R$,
\begin{equation}
    \label{eq:q_inv}
    \tilde q_z^{-1}(i\omega) = i \frac{1 + \sqrt{1 - 4 z^2 \omega^2}}{2\omega}.
\end{equation}
As a result, $\tilde V_z := \tilde q_z^{-1}(Q_z)$ is a skew-adjoint operator that reconstructs the generator $V$ on the spectral domain $\Omega_z$.
We extend $\tilde V_z$ to a skew-adjoint operator $V_z \colon D(V_z) \to H$, where $D(V_z) \subset H$ and $V_z = \tilde V_z \tilde \Pi$.
It is shown \cite{GiannakisValva25}*{Theorem~5} that as $z \to 0^+$, $V_z$ converges to $V$ in strong resolvent sense.
In what follows, we let $\overline{\tilde q_{z}^{-1}}\colon i \mathbb R \to i \mathbb R$ be any continuous extension of $\tilde q_z^{-1}$ on $i \mathbb R \setminus \{0\}$.

Next, for a choice of kernels $k_\tau$ satisfying \crefrange{prty:K1}{prty:K6}, we approximate $Q_z$ by the compact operators $Q_{z,\tau} \colon \tilde H \to \tilde H$, where
\begin{displaymath}
    Q_{z,\tau} f = R_z^* \mathcal K_{\tau/2} V \mathcal K_{\tau/2} R_z f \equiv R_z^* V_\tau R_z f, \quad f \in \tilde H.
\end{displaymath}
It can be shown that, as $\tau \to 0^+$, $Q_{z,\tau}$ converges strongly to $Q_z$.
Thus, $V_{z,\tau} \colon D(V_{z,\tau}) \to H$ with $V_{z,\tau} = \tilde V_{z,\tau} \tilde \Pi$ and $\tilde V_{z,\tau} := \overline{\tilde q_{z}^{-1}}(Q_{z,\tau})$ is a family of skew-adjoint operators with compact resolvents that converge in strong resolvent sense to $V$ in the iterated limits $z\to 0^+$ after $\tau \to 0^+$.
In particular, the two-parameter family $ \{ V_{z,\tau} \}_{z,\tau>0}$ satisfies \crefrange{prty:V1}{prty:V6}, with the $\tau \to 0^+$ limit in \cref{prty:V6} replaced by the iterated limit just mentioned.

Let
\begin{equation}
    \label{eq:q_eig}
    Q_{z,\tau} u_{j,z,\tau} = \beta_{j,z,\tau} u_{j,z,\tau}, \quad \beta_{j,z,\tau} \in i \mathbb R,
\end{equation}
be an eigendecomposition of $Q_{z,\tau}$, where the eigenvalues/eigenvectors are indexed by $j \in \mathbb N$, and  $ \{ u_{j,z,\tau} \}_{j \in \mathbb N}$ is an orthonormal basis of $\tilde H$ (such a basis exists by skew-adjointness and compactness of $Q_{z,\tau}$).
Similarly to the solutions of~\eqref{eq:w_eig}, the eigenvalues/eigenvectors corresponding to nonzero eigenvalues come in complex-conjugate pairs
and we can choose an ordering satisfying $\beta_{2j, \tau, z} = \overline{\beta_{2j-1,z,\tau}}$, $u_{2j,z,\tau}^* = u_{2j-1,z,\tau}$ for $j \in \mathbb N$.
Applying $\overline{\tilde q_{z}^{-1}}$ to the eigenvalues of $Q_{z,\tau}$ leads to an eigendecomposition of $\tilde V_{z,\tau}$,
\begin{equation}
    \label{eq:vtilde_eig}
    \tilde V_{z,\tau} u_{j,z,\tau} = i \omega_{j,z,\tau} u_{j,z,\tau}, \quad i \omega_{j,z,\tau} = \overline{\tilde q_{z}^{-1}}(\beta_{j,z,\tau}).
\end{equation}
We arrive at an eigendecomposition of $V_{z,\tau}$ by appending the zero-frequency eigenpair $(\omega_{0,z,\tau}, \zeta_{0,z,\tau}) = (0, \bm 1)$ to the eigenpairs $\{(\omega_{j,z,\tau}, \zeta_{j,z,\tau})\}_{j \in \mathbb N}$ of $\tilde V_{z,\tau}$. 
Note that unlike the compact operator $V_\tau$ from \cref{app:compact_approx}, whose eigenvalues cluster at 0, the spectrum of $\tilde V_{z,\tau}$ consists entirely of isolated eigenvalues of finite multiplicity.
In particular, $\tilde V_{z,\tau}$ has eigenvalues of minimal modulus $\lvert\omega_{j,z,\tau}\rvert$, corresponding to eigenvalues of $Q_{z,\tau}$ of maximal modulus $\lvert \beta_{j,z,\tau}\rvert$.
This allows the use of iterative numerical algorithms to obtain low-frequency eigenvalues/eigenvectors of $\tilde V_{z,\tau}$ by computing large-modulus eigenvalues/eigenvectors of $Q_{z,\tau}$.

\begin{rk}
    \label{rk:q_inv}
    In general, the spectrum of $Q_{z,\tau}$ need not be a subset of the interval $i[-(2z)^{-1}, (2z)^{-1}]$ that contains the spectrum of $Q_z$.
    Still, in the applications presented in this paper we have not observed any cases where the inclusion $\sigma(Q_{z,\tau}) \subset i[-(2z)^{-1}, (2z)^{-1}]$ is violated.
    With such an inclusion, we have $\overline{\tilde q_{z}^{-1}}(\beta_{j,z,\tau}) = \tilde q_z^{-1}(\beta_{j,z,\tau})$ and an explicit choice of extension $\overline{\tilde q_{z}^{-1}}$ is not needed.
\end{rk}

The approach proposed in \cite{GiannakisValva25} solves~\eqref{eq:q_eig} by solving an equivalent generalized eigenvalue problem,
\begin{equation}
    \label{eq:gev}
    V_\tau \tilde u_{j,z,\tau} = \beta_{j,z,\tau} (z^2 - V^2) \tilde u_{j,z,\tau}, \quad \tilde u_{j,z,\tau} \in D(V^2),
\end{equation}
with $V_\tau \in B(H)$ defined as in \cref{app:compact_approx}.
Since $\ran V_\tau \subset H_\tau$, from the above it follows that the generalized eigenvectors $\tilde u_{j,z,\tau}$ corresponding to nonzero $\beta_{j,z,\tau}$ lie in $H_\tau$.
We may therefore order these solutions in increasing order of Dirichlet energy $\mathcal E_\tau(\tilde u_{j,z,\tau})$.
One readily verifies that the vectors $u_{j,z,\tau} = (z- V) \tilde u_{j,z,\tau}$ corresponding to nonzero eigenvalue $\beta_{j,z,\tau}$ are eigenvectors of $Q_{z,\tau}$ and can thus be normalized to form an orthonormal set in $\tilde H$.
   
Analogously to $W_\tau$ from \cref{app:compact_approx}, we define the skew-adjoint operator $W_{z,\tau} = T_\tau V_{z,\tau} T_\tau^*$ with domain $D(W_{z,\tau}) = T_\tau(D(V_{z,\tau})) \oplus \mathcal H_\tau(X)^\perp$ and eigendecomposition
\begin{displaymath}
    W_{z,\tau} \zeta_{j,z,\tau} = i \omega_{j,z,\tau} \zeta_{j,z,\tau}, \quad \zeta_{j,z,\tau} = T_\tau u_{j,z,\tau},
\end{displaymath}
where the eigenfunctions $\zeta_{j,z,\tau}$ form an orthonormal basis of $\mathcal H_\tau(X)$.

Let $\bm{\tilde V}_m = [\tilde V_{ij}]$, $\tilde V_{ij} = \langle \phi_i, V \phi_j \rangle_H$, be the $m \times m$ matrix representing the generator projected onto the subspace of $\tilde H$ spanned by $\{\phi_1, \ldots, \phi_m\}$.
Let also $\bm I_m$ be the $m \times m$ identity matrix.
Approximate solutions of the generalized eigenvalue problem~\eqref{eq:gev} can be computed using similar techniques to those described in \cref{app:compact_approx} for $V_\tau$.
These methods result in a matrix generalized eigenvalue problem
\begin{equation}
    \label{eq:gev_mat}
    \bm{\tilde V}_{\tau, m} \bm{\tilde c}_j = i \beta_{j,z,\tau,m} \bm B_m \bm{\tilde c}_j,
\end{equation}
where $\bm{\tilde V}_{\tau, m}$ is an $m \times m$ regularized generator matrix defined as in \cref{app:compact_approx}, $\bm B_m$ is the positive-definite $m \times m$ matrix
\begin{equation*}
    \bm B_m = \bm A^\top_m \bm A_m, \quad \bm A_m = (z \bm I_m - \bm{\tilde V}_m),
\end{equation*}
and the generalized eigenvectors $\bm{\tilde c}_j = (\tilde c_{1j}, \ldots, \tilde c_{mj} )^\top$ give the expansion coefficients of $\tilde u_{j,z,\tau,m}$ with respect to the $\phi_i$ basis vectors of $\tilde H$.

The matrix elements of $\bm{\tilde V}_m$ (and thus $\bm A_m$ and $\bm B_m$) can be computed by means of analogous automatic differentiation and/or ergodic averaging methods as in the computation of $V_{ij}$.
Dirichlet energies $\mathcal E_\tau(\tilde u_{j,z,\tau,m})$ can similarly be computed by applying~\eqref{eq:dirichlet_eig} to the generalized eigenvectors $\bm{\tilde c_j}$.
The corresponding eigenvectors $u_{j,z,\tau}$ of $Q_{z,\tau}$ are then approximated by
\begin{equation*}
    u_{j,z,\tau,m} = \sum_{i=1}^m c_{ij} \phi_i, \quad c_{ij} = \sum_{k=1}^m (z \delta_{ik} - V_{ik}) \tilde c_{kj},
\end{equation*}
Letting $\bm c_0 = (1, 0, \ldots, 0)^\top \in \mathbb C^{m + 1}$ and $\bm c_j = (0, c_{1j}, \ldots c_{mj})^\top \in \mathbb C^{m + 1}$ for $j \in \{1, \ldots, m\}$, we define the skew-adjoint operator $W_{z, \tau, m} \colon \mathcal H_{\tau, m} \to \mathcal H_{\tau, m}$ to have eigendecomposition
\begin{equation*}
    W_{z, \tau, m} \zeta_{j,z,\tau,m} = i \omega_{j,z,\tau,m}, \quad j \in \{0, \ldots, m\},
\end{equation*}
where
\begin{equation}
    \label{eq:zeta_expansion}
    \zeta_{j,z,\tau,m} = \sum_{i=0}^m c_{ij} \phi_i.
\end{equation}
The eigenpairs $(\omega_{j,z,\tau,m}, \zeta_{j,z,\tau,m})$ are approximation of $(\omega_{j,z,\tau}, \zeta_{j,z,\tau})$ that converge as $m \to \infty$ in the norm of $\mathcal H_\tau$.

Pseudocode for the spectral decomposition technique summarized in this section, as applied to the 2-torus examples in \cref{sec:experiments}, can be found in \cref{alg:eigs}.

\section{Numerical implementation}%
\label{app:numerical}

This appendix provides pseudocode for the numerical schemes employed in the experiments described in \cref{sec:experiments}.
 This includes \cref{alg:eigs} (eigendecomposition of the Koopman generator), \cref{alg:classical} (prediction using the true model), and \cref{alg:classical,alg:qm,alg:fock} (prediction using the classical, quantum mechanical, and Fock space approximations, respectively).
These algorithms were implemented in Python.
Code reproducing the results in \crefrange{fig:generator_spec}{fig:err_stepanoff} can be found in the repository \url{https://dg227.github.io/NLSA}.
Our implementation uses JAX \cite{JAX18} as the main numerical library.
We found that JAX's functional programming features, such as composable function transformations and lazy evaluation, lend themselves well to computational implementation of the operator-theoretic methods in the paper, including eigendecomposition of the regularized generator and evaluation of quantum states.
Additional notes on numerical implementation are as follows:
\begin{itemize}[wide]
    \item To solve the generalized eigenvalue problem~\eqref{eq:gev_mat} we used the \texttt{eigs} iterative solver from ScipPy, using JAX to implement the action of the $\bm V_m$ and $\bm B_m$ matrices on vectors.
    To our knowledge, no JAX-native generalized eigenvalue solvers are available in the public domain at the time of writing this article.
    \item Following solution of~\eqref{eq:gev_mat}, our code proceeds by implementing the eigenfunctions $\tilde\zeta_{j,z,\tau,m}$, the quantum feature maps $\Xi_{\kappa_\text{eval}, \tau, d, m}$ and $\Xi_{\kappa_\text{eval},\tau, n, d, m}$, and the discrete multiplication operators $\pi_l f$ associated with the prediction observables as Python function objects.
    The prediction functions $f^{(t)}_\text{qm}$ and $f^{(t)}_\text{Fock}$ are also built as function objects by operating on the implementations of the eigenfunctions, feature maps, and multiplication operators.
    These computations are lazy, in the sense that no computation of intermediate function values is performed prior to formation of $f^{(t)}_\text{qm}$ and  $f^{(t)}_\text{Fock}$.
    Once formed, the prediction functions are vectorized (to allow simultaneous evaluation and multiple prediction points), and just-in-time (JIT) compiled using JAX to yield efficient implementations capable of running on hardware accelerators such as GPUs.
    \item Numerical integrations of the Stepanoff system~\eqref{eq:vec_stepanoff} (providing the ``true model'' data in \cref{fig:evo_stepanoff,fig:err_stepanoff}) were obtained using the \texttt{Dopri5} solver from the JAX-based Diffrax library \cite{Kidger21}.
\end{itemize}

\begin{algorithm}
    \caption{Eigendecomposition of the regularized generator $W_{z, \tau, m}$ in \cref{app:compact_res} for the ergodic rotation and Stepanoff flow on $\mathbb T^2$.
    The notation $(a, \bm v)$ with $a \in \mathbb C$ and $\bm v \in \mathbb C^m$ in Steps~3 and 9 represents the $(m+1)$-dimensional vector obtained by appending $a$ to the elements of $\bm v$.
    See \cref{rk:q_inv} regarding application of the function $\tilde q_z^{-1}$ in Step~8.}
    \label{alg:eigs}
    \begin{algorithmic}[1]
        \REQUIRE Resolvent parameter $z>0$; RKHA parameters $p \in (0,1)$, $\tau>0$; maximal Fourier wavenumber $J \in \mathbb N$; number of eigenfrequency pairs $d \leq ((2J+1)^2 - 1)/2$.
        \ENSURE With $m= (2J+1)^2 - 1$: Eigenfrequencies $\bm \omega = (\omega_0, \ldots, \omega_{2d}) \in \mathbb R^{2d+1}$; eigenvectors $\bm C = (\bm c_0, \ldots, \bm c_{2d+1}) \in \mathbb C^{(m + 1) \times (2d+1)}$; Dirichlet energies $\bm E = (E_0, \ldots, E_{2d}) \in \mathbb R^{(2d+1)}$; eigenfunctions $ \{ \zeta_0, \ldots, \zeta_{2d} \in \mathcal H_\tau \}$.
        \STATE Form the index set $\mathcal J = \{ (j_1, j_2): j_1,j_2 \in \{ -J, \ldots, J \} \}$ with elements $(j_1, j_2)$ ordered in order of increasing $\lvert j_1 \rvert + \lvert j_2 \rvert$.
            Set $\tilde{\mathcal J} = \mathcal J  \setminus \{(0, 0)\}$.
        \STATE Form the generator matrix $\bm{\tilde V}_m \in \mathbb R^{m\times m} = [V_{rs}]_{r,s: \vec j_r, \vec j_s \in \tilde{\mathcal J}}$ using~\eqref{eq:gen_rot} (ergodic rotation) or~\eqref{eq:gen_stepanoff} (Stepanoff flow).
        \STATE Compute the RKHA inverse weight vectors $\bm{\tilde \lambda}_{\tau,m} \in \mathbb R^m = [e^{-\tau (\lvert j_1 \rvert^p + \lvert j_2 \rvert^p)/2}]_{(j_1, j_2) \in \tilde{\mathcal J}}$ and $\bm \lambda_{\tau, m} = (\lambda_0, \ldots, \lambda_m) \equiv (1, \bm{\tilde \lambda}_{\tau, m})$.
        \STATE Form the regularized generator matrix $\bm{\tilde V}_{\tau,m} \in \mathbb R^{m \times m} = \bm{\tilde \Lambda}_{\tau, m} \bm{\tilde V}_m \bm{\tilde \Lambda}_{\tau, m}$, where $\bm{\tilde \Lambda}_{\tau, m} = \texttt{diag}(\bm{\tilde \lambda}_{\tau, m})$.
        \STATE Form the mass matrix $\bm B_m \in \mathbb R^{m \times m} = z^2 \bm I_m - \bm {\tilde V}^T_m \bm {\tilde V}_m $, where $\bm I_m$ is the $m\times m$ identity matrix.
        \STATE Compute $2d$ eigenvalues $\beta_1, \ldots, \beta_{2d} \in \mathbb C$ of largest modulus from the generalized eigenvalue problem~\eqref{eq:gev_mat} and the corresponding eigenvectors $\bm{\tilde c}_1, \ldots, \bm{\tilde c}_{2d} \in \mathbb C^m$.
        \STATE Compute the Dirichlet energies $E_j$ of the eigenvectors $\bm{\tilde c}_j$ using~\eqref{eq:dirichlet_eig}.
        Sort $(\beta_1, \ldots, \beta_{2d})$, $(\bm{\tilde c}_1, \ldots, \bm{\tilde c}_{2d})$, and $(E_1, \ldots, E_{2d})$ in order of increasing Dirichlet energy $E_j$.
        \STATE Compute the eigenfrequencies $\omega_r$ corresponding to $\beta_r$ using~\eqref{eq:q_inv}.
        \STATE Compute the vectors $\bm c_1, \ldots, \bm c_d \in \mathbb C^{r+1}$, where $\bm c_r = (0, (z \bm I_m - \bm V_m) \bm{\tilde c}_r)$.
        Normalize the $\bm c_r$ to unit 2-norm.
        \STATE For each $\bm c_r = (c_{0r}, \ldots, c_{mr})^\top$, form the function $\zeta_r \colon \mathbb T^2 \to \mathbb C$, where $\zeta_r(x) = \sum_{s: \vec j_s \in  \mathcal J} c_{sr} \lambda_s e^{i j_s \cdot x} $.
        \STATE Set $\omega_0 = 0$, $\bm c_0 = [\delta_{\vec j,(0,0)}]_{\vec j\in \mathcal J}$, $E_0 = 0$, $\zeta_0(x) = 1$.
        \RETURN $\bm \omega = (\omega_0, \ldots, \omega_{2d})$, $\bm C = (\bm c_0, \ldots, \bm c_{2d})$, $\bm E = (E_0, \ldots, E_{2d})$, $ \{ \zeta_0, \ldots, \zeta_{2d} \}$.
    \end{algorithmic}
\end{algorithm}

\begin{algorithm}
    \caption{\label{alg:classical}Classical prediction using~\eqref{eq:f_cl}.}
    \begin{algorithmic}[1]
        \REQUIRE  Eigenfrequencies $\bm \omega \in \mathbb R^{2d+1}$, eigenvectors $\bm C \in \mathbb C^{(m+1) \times (2d+1)}$, and eigenfunctions $ \{ \zeta_0, \ldots, \zeta_{2d} \in \mathcal H_\tau \}$ from \cref{alg:eigs}; column vector of Fourier coefficients $\bm{\hat f} \in \mathbb C^m$ of prediction observable (ordered according to the index set $\mathcal J$ from \cref{alg:eigs}); evolution time $t \in \mathbb R$; evaluation point $x \in \mathbb T^2$.
        \ENSURE Value $y= f^{(t)}_\text{cl}(x) \in \mathbb R$ of classical prediction function.
        \STATE Form unitary evolution matrix $\bm U^t \in \mathbb C^{(2d+1) \times (2d+1)} = \texttt{diag}([e^{i\omega_j t}]_{j\in \{ 0, \ldots, 2d +1 \}})$.
        \STATE Compute the column vector $\bm \zeta = (\zeta_0(x), \ldots, \zeta_{2d}(x))^\top$ of eigenfunction values.
        \RETURN $y = (\bm C^\dag \bm{\hat f})^\top  \bm U^t \bm \zeta$.
    \end{algorithmic}
\end{algorithm}

\begin{algorithm}
    \caption{\label{alg:qm}Quantum mechanical prediction using~\eqref{eq:ft_qm_matrix}.}
    \begin{algorithmic}[1]
        \REQUIRE  Eigenfrequencies $\bm \omega \in \mathbb R^{2d+1}$, eigenvectors $\bm C \in \mathbb C^{(m + 1) \times (2d+1)}$, and eigenfunctions $ \{ \zeta_0, \ldots, \zeta_{2d} \in \mathcal H_\tau \}$ from \cref{alg:eigs}; sharpness parameter $\kappa_\text{eval} >0$ for quantum feature map; values $\bm f = (\tilde f(x_0^{(l)}), \ldots, \tilde f(x^{(l)})_{l^2-1}) \in \mathbb C^{l^2}$ of prediction observable on $l\times l$ grid on $\mathbb T^2$; evolution time $t \in \mathbb R$; evaluation point $x \in \mathbb T^2$.
        \ENSURE Value $y= f^{(t)}_\text{qm}(x) \in \mathbb R$ of quantum mechanical prediction function.
        \STATE Form unitary evolution matrix $\bm U^t \in \mathbb C^{(2d+1) \times (2d+1)} = \texttt{diag}([e^{i\omega_j t}]_{j\in \{ 0, \ldots, 2d +1 \}})$.
        \STATE Form the matrix $\bm \Phi \in \mathbb C^{l^2 \times (m + 1)} = [\Phi_{ij}]$ of Fourier function values, where $\Phi_{ij} = \phi_j(x^{(l)}_i)$.
        \STATE Compute the column vector $\bm \xi_{t,x} \in \mathbb C^{l^2} = \bm \Phi \bm C \bm U^{t\dag} \bm C^\dag \bm{\hat{F}}_{\kappa_\text{eval}}(x)$ using $\bm{\hat{F}}_{\kappa_\text{eval}}$ from \cref{sec:qm_model}.
        \STATE Return $y = \bm\xi_{t,x}^\dag \bm M \bm\xi_{t,x} / \bm \xi_{t,x}^\dag \bm \xi_{t,x}$.
    \end{algorithmic}
\end{algorithm}

\begin{algorithm}
    \caption{\label{alg:fock}Fock space prediction using~\eqref{eq:ft_fock_matrix}. The index set $\mathcal J$ in Step~2 is built similarly to Step~1 of \cref{alg:eigs}.
    The notation $\bm v^{.n}$ represents the elementwise $n$-th power of column vector $\bm v$.}
    \begin{algorithmic}[1]
        \REQUIRE  Eigenfrequencies $\bm \omega \in \mathbb R^{2d+1}$, eigenvectors $\bm C \in \mathbb C^{(m + 1) \times (2d+1)}$, and eigenfunctions $ \{ \zeta_0, \ldots, \zeta_{2d} \in \mathcal H_\tau\}$ from \cref{alg:eigs}; Fock space grading $n \in \mathbb N$; kernel smoothing parameter $\sigma > 0$; sharpness parameter $\kappa_\text{eval} >0$ for quantum feature map; values $\bm f = (\tilde f(x_0^{(l)}), \ldots, \tilde f(x^{(l)})_{l^2-1}) \in \mathbb C^{l^2}$ of prediction observable on $l\times l$ grid on $\mathbb T^2$; evolution time $t \in \mathbb R$; evaluation point $x \in \mathbb T^2$.
        \ENSURE Value $y= f^{(t)}_\text{qm}(x) \in \mathbb R$ of Fock space prediction function.
        \STATE Form unitary evolution matrix $\bm U^t \in \mathbb C^{(2d+1) \times (2d+1)} = \texttt{diag}([e^{i\omega_j t}]_{j\in \{ 0, \ldots, 2d +1 \}})$.
        \STATE Compute the RKHA inverse weight vector $\bm \lambda_\sigma \in \mathbb R^{m + 1} = [e^{-\sigma (\lvert j_1 \rvert^p + \lvert j_2 \rvert^p)/2}]_{(j_1, j_2) \in \mathcal J}$ and form the associated kernel smoothing matrix $\bm \Lambda_\sigma \in \mathbb C^{(2d+1) \times (2d+1)} = \texttt{diag}(\bm \lambda_\sigma)$.
        \STATE Form the matrix $\bm \Phi \in \mathbb C^{l^2 \times (m + 1)} = [\Phi_{ij}]$ of Fourier function values, where $\Phi_{ij} = \phi_j(x^{(l)}_i)$.
        \STATE Compute the column vector $\bm \xi_{n,t,x} \in \mathbb C^{l^2} = (\bm \Phi \bm \Lambda_\sigma \bm C \bm U^{t\dag} \bm C^\dag \bm{\hat{F}}_{\kappa_\text{eval},n}(x))^{.n}$ using $\bm{\hat{F}}_{\kappa_\text{eval,n}}$ from \cref{sec:fock_space_model}.
        \STATE Return $y = \bm\xi_{n,t,x}^\dag \bm M \bm\xi_{n,t,x} / \bm \xi_{n,t,x}^\dag \bm \xi_{n,t,x}$.
    \end{algorithmic}
\end{algorithm}

\end{appendices}


\begin{bibdiv}
\begin{biblist}

\bib{AbramowitzStegun64}{book}{
      author={Abramowitz, M.},
      author={Stegun, I.~A.},
       title={Handbook of mathematical functions},
    subtitle={With formulas, graphs, and mathematical tables},
   publisher={United States Department of Commerce, National Bureau of
  Standards},
        date={1964},
}

\bib{AffleckEtAl87}{article}{
      author={Affleck, I.},
      author={Kennedy, T.},
      author={Lieb, E.~H.},
      author={Tasaki, H.},
       title={Rigorous results on valence-bond ground states in
  antiferromagnets},
        date={1987},
     journal={Phys. Rev. Lett.},
      volume={59},
      number={7},
       pages={799\ndash 802},
}

\bib{AndressEtAl24}{article}{
      author={Andress, J.},
      author={Engel, A.},
      author={Shi, Y.},
      author={Parker, S.},
       title={Quantum simulation of nonlinear dynamical systems using repeated
  measurement},
        date={2024},
         url={https://arxiv.org/abs/2410.03838},
}

\bib{ArbabiMezic17}{article}{
      author={Arbabi, H.},
      author={Mezi\'c, I.},
       title={Ergodic theory, dynamic mode decomposition and computation of
  spectral properties of the {K}oopman operator},
        date={2017},
     journal={SIAM J. Appl. Dyn. Sys.},
      volume={16},
      number={4},
       pages={2096\ndash 2126},
}

\bib{BaddooEtAl23}{article}{
      author={Baddoo, P.~J.},
      author={Herrmann, B.},
      author={McKeon, B.~J.},
      author={Kutz, J.~N.},
      author={Brunton, S.~L.},
       title={Physics-informed dynamic mode decomposition},
        date={2023},
     journal={Proc. R. Soc. A},
      volume={479},
}

\bib{Baladi00}{book}{
      author={Baladi, V.},
       title={Positive transfer operators and decay of correlations},
      series={Advanced Series in Nonlinear Dynamics},
   publisher={World Scientific},
     address={Singapore},
        date={2000},
      volume={16},
}

\bib{Banuls23}{article}{
      author={Ba{\~{n}}uls, M.~C.},
       title={Tensor network algorithms: {A} route map},
        date={2023},
     journal={Annu. Rev. Condens. Matter Phys.},
      volume={14},
       pages={173\ndash 91},
}

\bib{Barandes25}{misc}{
      author={Barandes, J.~A.},
        date={2025},
         url={https://philsci-archive.pitt.edu/26043/},
}

\bib{Baxter68}{article}{
      author={Baxter, R.~J.},
       title={Dimers on a rectangular lattice},
        date={1968},
     journal={J. Math. Phys.},
      volume={9},
      number={4},
       pages={650\ndash 654},
}

\bib{BenentiEtAl01}{article}{
      author={Benenti, G.},
      author={Casati, G.},
      author={Montangero, S.},
      author={Shepelyansky, D.~L.},
       title={Efficient quantum computing of complex dynamics},
        date={2001},
     journal={Phys. Rev. Lett.},
      volume={87},
       pages={227901},
}

\bib{BerryEtAl17}{article}{
      author={Berry, D.~W.},
      author={Childs, A.~M.},
      author={Ostrander, A.},
      author={Wang, G.},
       title={Quantum algorithm for linear differential equations with
  exponentially improved dependence on precision},
        date={2017},
     journal={Commun. Math. Phys.},
      volume={356},
       pages={1057\ndash 1081},
}

\bib{BerryEtAl15}{article}{
      author={Berry, T.},
      author={Giannakis, D.},
      author={Harlim, J.},
       title={Nonparametric forecasting of low-dimensional dynamical systems},
        date={2015},
     journal={Phys. Rev. E.},
      volume={91},
}

\bib{BharadwajSreenivasan20}{article}{
      author={Bharadwaj, S.},
      author={Sreenivasan, K.~R.},
       title={Quantum computation of fluid dynamics},
        date={2020},
     journal={Indian Acad. Sci. Conf. Ser.},
      volume={3},
      number={1},
       pages={77\ndash 96},
}

\bib{BharadwajSreenivasan23}{article}{
      author={Bharadwaj, S.},
      author={Sreenivasan, K.~R.},
       title={Hybrid algorithms for flow problems},
        date={2023},
     journal={Proc. Natl. Acad. Sci.},
      volume={120},
      number={49},
       pages={e2311014120},
}

\bib{BharadwajSreenivasan25}{article}{
      author={Bharadwaj, S.},
      author={Sreenivasan, K.~R.},
       title={Compact quantum algorithms for time-dependent differential
  equations},
        date={2025},
     journal={Phys. Rev. Research},
      volume={7},
}

\bib{BlankEtAl20}{article}{
      author={Blank, C.},
      author={Park, D.~K.},
      author={Rhee, J.-K.~K.},
      author={Petruccione, F.},
       title={Quantum classifier with tailored quantum kernel},
        date={2020},
     journal={npj Quantum Inf.},
      volume={6},
       pages={41},
}

\bib{Blank17}{article}{
      author={Blank, M.},
       title={Egodic averaging with and without invariant measures},
        date={2017},
     journal={Nonlinearity},
      volume={30},
       pages={4649\ndash 4664},
}

\bib{BondarEtAl19}{article}{
      author={Bondar, D.~I.},
      author={Gay-Balmaz, F.},
      author={Tronci, C.},
       title={Koopman wavefunctions and classical–quantum correlation
  dynamics},
        date={2019},
     journal={Proc. Roy. Soc. A},
      volume={475},
       pages={20180879},
}

\bib{BoulleColbrook24}{misc}{
      author={Boull{\'e}, N.},
      author={Colbrook, M.},
       title={Multiplicative {D}ynamic {M}ode {D}ecomposition},
        date={2024},
         url={https://arxiv.org/abs/2405.05334},
}

\bib{JAX18}{misc}{
      author={Bradbury, James},
      author={Frostig, Roy},
      author={Hawkins, Peter},
      author={Johnson, Matthew~James},
      author={Leary, Chris},
      author={Maclaurin, Dougal},
      author={Necula, George},
      author={Paszke, Adam},
      author={Vander{P}las, Jake},
      author={Wanderman-{M}ilne, Skye},
      author={Zhang, Qiao},
       title={{JAX}: {C}omposable transformations of {P}ython+{N}um{P}y
  programs},
        date={2018},
         url={http://github.com/google/jax},
}

\bib{BruntonEtAl16b}{article}{
      author={Brunton, B.~W.},
      author={Johnson, L.~A.},
      author={Ojemann, J.~G.},
      author={Kutz, J.~N.},
       title={Extracting spatial--temporal coherent patterns in large-scale
  neural recordings using dynamic mode decomposition},
        date={2016},
     journal={J. Neurosci. Methods},
      volume={258},
       pages={1\ndash 15},
}

\bib{BruntonEtAl17}{article}{
      author={Brunton, S.~L.},
      author={Brunton, B.~W.},
      author={Proctor, J.~L.},
      author={Kaiser, E.},
      author={Kutz, J.~N.},
       title={Chaos as an intermittently forced linear system},
        date={2017},
     journal={Nat. Commun.},
      volume={8},
}

\bib{BruntonEtAl22}{article}{
      author={Brunton, S.~L.},
      author={Budisi{\'c}, M.},
      author={Kaiser, E.},
      author={Kutz, J.~N.},
       title={Modern {K}oopman theory for dynamical systems},
        date={2022},
     journal={SIAM Rev.},
      volume={64},
      number={2},
       pages={229\ndash 340},
}

\bib{Chatelin11}{book}{
      author={Chatelin, F.},
       title={Spectral approximation of linear operators},
      series={Classics in Applied Mathematics},
   publisher={Society for Industrial and Applied Mathematics},
     address={Philadelphia},
        date={2011},
}

\bib{Colbrook24}{incollection}{
      author={Colbrook, M.},
       title={The multiverse of dynamic mode decomposition algorithms},
        date={2024},
   booktitle={Handbook of numerical analysis},
   publisher={Amsterdam},
       pages={127\ndash 230},
}

\bib{Colbrook23}{article}{
      author={Colbrook, M.~J.},
       title={The {mpEDMD} algorithm for data-driven computations of
  measure-preserving dynamical systems},
        date={2023},
     journal={SIAM J. Numer. Anal.},
      volume={61},
      number={3},
       pages={1585\ndash 1608},
}

\bib{ColbrookTownsend24}{article}{
      author={Colbrook, M.~J.},
      author={Townsend, A.},
       title={Rigorous data-driven computation of spectral properties of
  {K}oopman operators for dynamical systems},
        date={2024},
     journal={Commun. Pure Appl. Math.},
      volume={77},
       pages={221\ndash 283},
}

\bib{ConstanteAmoresEtAl24}{article}{
      author={Constante-Amores, C.~R.},
      author={Linot, A.~J.},
      author={Graham, M.~D.},
       title={Enhancing predictive capabilities in data-driven dynamical
  modeling with automatic differentiation: {K}oopman and neural {ODE}
  approaches},
        date={2024},
     journal={Chaos},
      volume={34},
}

\bib{CostaEtAl19}{article}{
      author={Costa, P. C.~S.},
      author={Jornan, S.},
      author={Ostrander, A.},
       title={Quantum algorithm for simulating the wave equation},
        date={2019},
     journal={Phys. Rev. A},
      volume={99},
}

\bib{CostaEtAl25}{article}{
      author={Costa, P. C.~S.},
      author={Schleich, P.},
      author={Morales, M. E.~S.},
      author={Berry, D.~W.},
       title={Further improving quantum algorithms for nonlinear differential
  equations via higher-order methods and rescaling},
        date={2025},
     journal={Npj Quantum Inf.},
      volume={11},
}

\bib{CuckerSmale01}{article}{
      author={Cucker, F.},
      author={Smale, S.},
       title={On the mathematical foundations of learning},
        date={2001},
     journal={Bull. Amer. Math. Soc.},
      volume={39},
      number={1},
       pages={1\ndash 49},
}

\bib{DasGiannakis19}{article}{
      author={Das, S.},
      author={Giannakis, D.},
       title={Delay-coordinate maps and the spectra of {K}oopman operators},
        date={2019},
     journal={J. Stat. Phys.},
      volume={175},
      number={6},
       pages={1107\ndash 1145},
}

\bib{DasGiannakis23}{article}{
      author={Das, S.},
      author={Giannakis, D.},
       title={On harmonic {H}ilbert spaces on compact abelian groups},
        date={2023},
     journal={J. Fourier Anal. Appl.},
      volume={29},
      number={1},
       pages={12},
}

\bib{DasEtAl23}{article}{
      author={Das, S.},
      author={Giannakis, D.},
      author={Montgomery, M.},
       title={Correction to: On harmonic {H}ilbert spaces on compact abelian
  groups},
        date={2023},
     journal={J. Fourier Anal. Appl.},
      volume={29},
      number={6},
       pages={67},
}

\bib{DasEtAl21}{article}{
      author={Das, S.},
      author={Giannakis, D.},
      author={Slawinska, J.},
       title={Reproducing kernel {H}ilbert space compactification of unitary
  evolution groups},
        date={2021},
     journal={Appl. Comput. Harmon. Anal.},
      volume={54},
       pages={75\ndash 136},
}

\bib{Oliveira09}{book}{
      author={de~Oliveira, C.~R.},
       title={Intermediate spectral theory and quantum dynamics},
      series={Progress in Mathematical Physics},
   publisher={Birkh{\"a}user},
     address={Basel},
        date={2009},
      volume={54},
}

\bib{DellaRiciaWiener66}{article}{
      author={Della~Ricia, G.},
      author={Wiener, N.},
       title={Wave mechanics in classical phase space, {B}rownian motion, and
  quantum theory},
        date={1966},
     journal={J. Math. Phys.},
      volume={7},
      number={8},
       pages={1732\ndash 1383},
}

\bib{DellnitzEtAl01}{incollection}{
      author={Dellnitz, M.},
      author={Froyland, G.},
      author={Junge, O.},
       title={The algorithms behind {GAIO} -- set oriented numerical methods
  for dynamical systems},
        date={2001},
   booktitle={Ergodic theory, analysis, and efficient simulation of dynamical
  systems},
      editor={Fiedler, B.},
   publisher={Springer},
     address={Berlin, Heidelberg},
       pages={145\ndash 174},
}

\bib{DellnitzEtAl00}{article}{
      author={Dellnitz, M.},
      author={Froyland, G.},
      author={Sertl, S.},
       title={On the isolated spectrum of the {P}erron–{F}robenius operator},
        date={2000},
     journal={Nonlinearity},
      volume={13},
       pages={1171\ndash 1188},
}

\bib{DellnitzJunge99}{article}{
      author={Dellnitz, M.},
      author={Junge, O.},
       title={On the approximation of complicated dynamical behavior},
        date={1999},
     journal={SIAM J. Numer. Anal.},
      volume={36},
       pages={491},
}

\bib{DodinStartsev21}{article}{
      author={Dodi, I.~Y.},
      author={Startsev, E.~A.},
       title={On applications of quantum computing to plasma simulations},
        date={2021},
     journal={Phys. Plasmas},
      volume={28},
}

\bib{EisnerEtAl15}{book}{
      author={Eisner, T.},
      author={Farkas, B.},
      author={Haase, M.},
      author={Nagel, R.},
       title={Operator theoretic aspects of ergodic theory},
      series={Graduate Texts in Mathematics},
   publisher={Springer},
     address={Cham},
        date={2015},
      volume={272},
}

\bib{ElliottGu18}{article}{
      author={Elliott, T.~J.},
      author={Gu, M.},
       title={Superior memory efficiency of quantum devices for the simulation
  of continuous-time stochastic processes},
        date={2018},
     journal={npj Quantum Inf.},
      volume={4},
       pages={18},
}

\bib{EngelEtAl19}{article}{
      author={Engel, A.},
      author={Smith, G.},
      author={Parker, S.~E.},
       title={Quantum algorithm for the {V}lasov equation},
        date={2019},
     journal={Phys. Rev. A},
      volume={100},
}

\bib{Feichtinger79}{article}{
      author={Feichtinger, H.~G.},
       title={Gewichtsfunktionen auf lokalkompakten {G}ruppen},
        date={1979},
     journal={{\"O}sterreich. Akad. Wiss. Math.-Natur. Kl. Sitzungsber. II},
      volume={188},
      number={8--10},
       pages={451\ndash 471},
}

\bib{FreemanEtAl23}{article}{
      author={Freeman, D.~C.},
      author={Giannakis, D.},
      author={Mintz, B.},
      author={Ourmazd, A.},
      author={Slawinska, J.},
       title={Data assimilation in operator algebras},
        date={2023},
     journal={Proc. Natl. Acad. Sci.},
      volume={120},
      number={8},
}

\bib{FreemanEtAl24}{article}{
      author={Freeman, D.~C.},
      author={Giannakis, D.},
      author={Slawinska, J.},
       title={Quantum mechanics for closure of dynamical systems},
        date={2024},
     journal={Multiscale Model. Simul.},
      volume={22},
      number={1},
       pages={283\ndash 333},
}

\bib{FroylandPadberg09}{article}{
      author={Froland, G.},
      author={Padberg, K.},
       title={Almost-invariant sets and invariant manifolds -- {C}onnecting
  probabilistic and geometric descriptions of coherent structures in flows},
        date={2009},
     journal={Phys. D},
      volume={238},
       pages={1507\ndash 1523},
}

\bib{Froyland97}{article}{
      author={Froyland, G.},
       title={Computer-assisted bounds for the rate of decay of correlations},
        date={1997},
     journal={Commun. Math. Phys.},
      volume={189},
      number={Nn},
       pages={237\ndash 257},
}

\bib{Froyland99}{article}{
      author={Froyland, G.},
       title={Ulam's method for random interval maps},
        date={1999},
     journal={Nonlinearity},
      volume={12},
      number={4},
       pages={1029\ndash 1052},
}

\bib{Froyland13}{article}{
      author={Froyland, G.},
       title={An analytic framework for identifying finite-time coherent sets
  in time-dependent dynamical systems},
        date={2013},
     journal={Phys. D},
      volume={250},
       pages={1\ndash 19},
}

\bib{Froyland15}{article}{
      author={Froyland, G.},
       title={Dynamic isoperimetry and the geometry of {L}agrangian coherent
  structures},
        date={2015},
     journal={Nonlinearity},
       pages={3587\ndash 3622},
}

\bib{FroylandEtAl21}{article}{
      author={Froyland, G.},
      author={Giannakis, D.},
      author={Lintner, B.},
      author={Pike, M.},
      author={Slawinska, J.},
       title={Spectral analysis of climate dynamics with operator-theoretic
  approaches},
        date={2021},
     journal={Nat. Commun.},
      volume={12},
}

\bib{FroylandEtAl13b}{article}{
      author={Froyland, G.},
      author={Junge, O.},
      author={Koltai, P.},
       title={Estimating long-term behavior of flows without trajectory
  integration: The infinitesimal generator approach},
        date={2013},
     journal={SIAM J. Numer. Anal.},
      volume={51},
      number={1},
       pages={223\ndash 247},
}

\bib{FroylandKoltai23}{article}{
      author={Froyland, G.},
      author={Koltai, P.},
       title={Detecting the birth and death of finite-time coherent sets},
        date={2023},
     journal={Commun. Pure Appl. Math},
}

\bib{Gaitan20}{article}{
      author={Gaitan, F.},
       title={Finding flows of a {N}avier--{S}tokes fluid through quantum
  computing},
        date={2020},
     journal={npj Quantum Inf.},
      volume={6},
       pages={61},
}

\bib{GarciaRipoll21}{article}{
      author={Garc{\'i}a-Ripoll, J.~J.},
       title={Quantum-inspired algorithms for multivariate analysis: From
  interpolation to partial differential equations},
        date={2021},
     journal={Quantum},
      volume={5},
       pages={431},
}

\bib{Giannakis19}{article}{
      author={Giannakis, D.},
       title={Data-driven spectral decomposition and forecasting of ergodic
  dynamical systems},
        date={2019},
     journal={Appl. Comput. Harmon. Anal.},
      volume={47},
      number={2},
       pages={338\ndash 396},
}

\bib{Giannakis19b}{article}{
      author={Giannakis, D.},
       title={Quantum mechanics and data assimilation},
        date={2019},
     journal={Phys. Rev. E},
      volume={100},
}

\bib{Giannakis21a}{article}{
      author={Giannakis, D.},
       title={Delay-coordinate maps, coherence, and approximate spectra of
  evolution operators},
        date={2021},
     journal={Res. Math. Sci.},
      volume={8},
}

\bib{GiannakisMontgomery25}{article}{
      author={Giannakis, D.},
      author={Montgomery, M.},
       title={An algebra structure for reproducing kernel {H}ilbert spaces},
        date={2025},
     journal={Banach J. Math. Anal.},
      volume={19},
}

\bib{GiannakisEtAl22}{article}{
      author={Giannakis, D.},
      author={Ourmazd, A.},
      author={Pfeffer, P.},
      author={Schumacher, J.},
      author={Slawinska, J.},
       title={Embedding classical dynamics in a quantum computer},
        date={2022},
     journal={Phys. Rev. A},
      volume={105},
}

\bib{GiannakisEtAl15}{inproceedings}{
      author={Giannakis, D.},
      author={Slawinska, J.},
      author={Zhao, Z.},
       title={Spatiotemporal feature extraction with data-driven {K}oopman
  operators},
        date={2015},
   booktitle={Proceedings of the 1st international workshop on feature
  extraction: Modern questions and challenges at nips 2015},
      editor={Storcheus, D.},
      editor={Rostamizadeh, A.},
      editor={Kumar, S.},
      series={Proceedings of Machine Learning Research},
      volume={44},
   publisher={Pmlr},
     address={Montreal, Canada},
       pages={103\ndash 115},
         url={https://proceedings.mlr.press/v44/giannakis15.html},
}

\bib{GiannakisValva24}{article}{
      author={Giannakis, D.},
      author={Valva, C.},
       title={Consistent spectral approximation of {K}oopman operators using
  resolvent compactification},
        date={2024},
     journal={Nonlinearity},
      volume={37},
      number={7},
}

\bib{GiannakisValva25}{article}{
      author={Giannakis, D.},
      author={Valva, C.},
       title={Physics-informed spectral approximation of {K}oopman operators},
        date={2025},
     journal={Phys. D},
      volume={482},
}

\bib{GonzalezQuas14}{article}{
      author={Gonz{\'a}lez-Tokman, C.},
      author={Quas, A.},
       title={A semi-invertible operator {O}seledets theorem},
        date={2014},
     journal={Ergod. Theory Dyn. Syst.},
      volume={34},
      number={4},
       pages={1230\ndash 1272},
}

\bib{GovindarajanEtAl21}{article}{
      author={Govindarajan, N.},
      author={Mohr, R.},
      author={Chandrasekaran, S.},
      author={Mezi{\'c}, I.},
       title={On the approximation of {K}oopman spectra of measure-preserving
  flows},
        date={2021},
     journal={SIAM J. Appl. Dyn. Syst.},
      volume={20},
      number={1},
       pages={232\ndash 261},
}

\bib{Grochenig07}{incollection}{
      author={Gr{\"o}chenig, K.},
       title={Weight functions in time-frequency analysis},
        date={2007},
   booktitle={Pseudodifferential operators: Partial differential equations and
  time-frequency analysis},
      editor={Rodino, L.},
      editor={others},
      series={Fields Inst. Commun.},
      volume={52},
   publisher={American Mathematical Society},
     address={Providence},
       pages={343\ndash 366},
}

\bib{Halmos56}{book}{
      author={Halmos, P.~R.},
       title={Lectures on ergodic theory},
   publisher={American Mathematical Society},
     address={Providence},
        date={1956},
}

\bib{HanEtAl18}{article}{
      author={Han, Z.-Y.},
      author={Wang, J.},
      author={Fan, H.},
      author={Wang, L.},
      author={Zhang, P.},
       title={Unsupervised generative modeling using matrix product states},
        date={2018},
     journal={Phys. Rev. X},
      volume={8},
      number={031012},
}

\bib{IkedaEtAl22}{article}{
      author={Ikeda, M.},
      author={Ishikawa, I.},
      author={Schlosser, C.},
       title={Koopman and {P}erron--{F}robenius operators on reproducing kernel
  {B}anach spaces},
        date={2022},
     journal={Chaos},
      volume={32},
}

\bib{JinEtAl24}{article}{
      author={Jin, S.},
      author={Liu, N.},
      author={Yu, Y.},
       title={Quantum simulation of partial differential equations via
  {S}chr{\"o}dingerization},
        date={2024},
     journal={Phys. Rev. Lett.},
      volume={133},
}

\bib{Joseph20}{article}{
      author={Joseph, I.},
       title={Koopman-von {N}eumann approach to quantum simulation of nonlinear
  classical dynamics},
        date={2020},
     journal={Phys. Rev. Research},
      volume={2},
       pages={043102},
}

\bib{JosephEtAl23}{article}{
      author={Joseph, I.},
      author={Shi, Y.},
      author={Porter, M.~D.},
      author={Castelli, A.~R.},
      author={Geyko, V.~I.},
      author={Graziani, F.~R.},
      author={Libby, S.~B.},
      author={DuBois, J.~L.},
       title={Quantum computing for fusion energy science applications},
        date={2023},
     journal={Phys. Plasmas},
      volume={30},
      number={1},
}

\bib{JungeKoltai09}{article}{
      author={Junge, O.},
      author={Koltai, P.},
       title={Discretization of the {F}robenius--{P}erron operator using a
  sparse {H}aar tensor basis: {T}he sparse {U}lam method},
        date={2009},
     journal={SIAM J. Numer. Anal.},
      volume={47},
       pages={3464\ndash 2485},
}

\bib{JungeEtAl22}{misc}{
      author={Junge, O.},
      author={Matthes, D.},
      author={Schmitzer, B.},
       title={Entropic transfer operators},
        date={2022},
         url={https://arxiv.org/abs/2204.04901},
}

\bib{Kacewicz06}{article}{
      author={Kacewicz, B.},
       title={Almost optimal solution of initial-value problems by randomized
  and quantum algorithms},
        date={2006},
     journal={J. Complex.},
      volume={22},
       pages={676\ndash 690},
}

\bib{KalevHen21}{article}{
      author={Kalev, A.},
      author={Hen, I.},
       title={Quantum algorithm for simulating {H}amiltonian dynamics with an
  off-diagonal series expansion},
        date={2021},
     journal={Quantum},
      volume={5},
       pages={426\ndash 449},
}

\bib{Kaniuth09}{book}{
      author={Kaniuth, E.},
       title={A course in commutative {B}anach algebras},
      series={Graduate Texts in Mathematics},
   publisher={Springer Science+Media},
        date={2009},
      volume={246},
}

\bib{Kawahara16}{inproceedings}{
      author={Kawahara, Y.},
       title={Dynamic mode decomposition with reproducing kernels for {K}oopman
  spectral analysis},
        date={2016},
   booktitle={Advances in neural information processing systems},
      editor={Lee, D.~D.},
      editor={Sugiyama, M.},
      editor={von Luxburg, U.},
      editor={Guyon, I.},
      editor={Garnett, R.},
   publisher={Curran Associates},
       pages={911\ndash 919},
  url={http://papers.nips.cc/paper/6583-dynamic-mode-decomposition-with-reproducing-kernels-for-koopman-spectral-analysis.pdf},
}

\bib{Khoromskij11}{article}{
      author={Khoromskij, B.~N.},
       title={{$O(d\log{N})$}-quantics approximation of {$N$-$d$} tensors in
  high-dimensional numerical modeling},
        date={2011},
     journal={Constr. Approx.},
      volume={34},
       pages={257\ndash 280},
}

\bib{Kidger21}{thesis}{
      author={Kidger, Patrick},
       title={{O}n {N}eural {D}ifferential {E}quations},
        type={Ph.D. Thesis},
        date={2021},
}

\bib{KlumperEtAl93}{article}{
      author={Kl{\"u}mper, A.},
      author={Schadschneider, A.},
      author={Zittartz, J.},
       title={Matrix product ground states for one-dimensional spin-1 quantum
  antiferromagnets},
        date={1993},
     journal={Europhys. Lett.},
      volume={24},
      number={4},
       pages={293\ndash 297},
}

\bib{KlusEtAl20}{article}{
      author={Klus, S.},
      author={N\"uske, F.},
      author={Peitz, S.},
      author={Niemann, J.-H.},
      author={Clementi, C.},
      author={Sch{\"u}tte, C.},
       title={Data-driven approximation of the {K}oopman generator: {M}odel
  reduction, system identification, and control},
        date={2020},
     journal={Phys. D},
      volume={406},
}

\bib{KoltaiKunde24}{article}{
      author={Koltai, P.},
      author={Kunde, P.},
       title={A {K}oopman--{T}akens theorem: {L}inear least squares prediction
  of nonlinear time series},
        date={2024},
     journal={Commun. Math. Phys.},
      volume={405},
}

\bib{Koopman31}{article}{
      author={Koopman, B.~O.},
       title={Hamiltonian systems and transformation in {H}ilbert space},
        date={1931},
     journal={Proc. Natl. Acad. Sci.},
      volume={17},
      number={5},
       pages={315\ndash 318},
}

\bib{KoopmanVonNeumann32}{article}{
      author={Koopman, B.~O.},
      author={von Neumann, J.},
       title={Dynamical systems of continuous spectra},
        date={1932},
     journal={Proc. Natl. Acad. Sci.},
      volume={18},
      number={3},
       pages={255\ndash 263},
}

\bib{KordaEtAl20}{article}{
      author={Korda, M.},
      author={Putinar, M.},
      author={Mezi\'c, I.},
       title={Data-driven spectral analysis of the {K}oopman operator},
        date={2020},
     journal={Appl. Comput. Harmon. Anal.},
      volume={48},
      number={2},
       pages={599\ndash 629},
}

\bib{KosticEtAl22}{inproceedings}{
      author={Kostic, V.~R.},
      author={Novelli, P.},
      author={Mauer, A.},
      author={Ciliberto, C.},
      author={Rosasco, L.},
      author={Pontil, M.},
       title={Learning dynamical systems with {K}oopman operator regression in
  reproducing kernel {H}ilbert spaces},
        date={2022},
   booktitle={Advances in neural information processing systems 35 (neurips
  2022)},
      editor={Koyejo, S.},
      editor={Mohamed, S.},
      editor={Agarwal, A.},
      editor={Belgrave, D.},
      editor={Cho, K.},
      editor={Oh, A.},
       pages={4017\ndash 4031},
}

\bib{KyriienkoEtAl21}{article}{
      author={Kyriienko, O.},
      author={Paine, A.~E.},
      author={Elfving, V.~E.},
       title={Solving nonlinear differential equations with differentiable
  quantum circuits},
        date={2021},
     journal={Phys. Rev. A},
      volume={103},
}

\bib{Lehmann04}{book}{
      author={Lehmann, D.},
       title={Mathematical methods of many-body quantum field theory},
      series={Research Notes in Mathematics},
   publisher={Chapman \& Hall/CRC},
     address={Boca Raton},
        date={2004},
      volume={436},
}

\bib{LeytonOsborne08}{misc}{
      author={Leyton, S.~K.},
      author={Osborne, T.~J.},
       title={A quantum algorithm to solve nonlinear differential equations},
        date={2008},
         url={https://arxiv.org/abs/0812.4423},
}

\bib{Li76}{article}{
      author={Li, T.-Y.},
       title={Finite approximation for the {F}robenius--{P}erron operator. a
  solution to {U}lam's conjecture},
        date={1976},
     journal={J. Approx. Theory},
      volume={17},
      number={2},
       pages={177\ndash 186},
}

\bib{LinEtAl22}{misc}{
      author={Lin, Y.~T.},
      author={Lowrie, R.~B.},
      author={Aslangil, D.},
      author={Subasi, Y.},
      author={Sronborger, A.~T},
       title={Challenges for quantum computation of nonlinear dynamical systems
  using linear representations},
        date={2022},
         url={https://arxiv.org/abs/2202.02188},
}

\bib{LiuEtAl21}{article}{
      author={Liu, J.-P.},
      author={Kolden, H.~{\O}.},
      author={Krovi, H.~K.},
      author={Childs, A.~M.},
       title={Efficient quantum algorithm for dissipative nonlinear
  differential equations},
        date={2021},
     journal={Proc. Natl. Acad. Sci.},
      volume={118},
      number={35},
       pages={e2026805118},
}

\bib{LloydEtAl20}{misc}{
      author={Lloyd, S.},
      author={DePalma, G.},
      author={Gokler, C.},
      author={Kiani, B.},
      author={Liu, Z.-W.},
      author={Marvian, M.},
      author={Tennie, F.},
      author={Palmer, T.},
       title={Quantum algorithm for nonlinear differential equations},
        date={2020},
         url={https://arxiv.org/abs/2011.06571.pdf},
}

\bib{MarrouchEtAl19}{article}{
      author={Marrouch, N.},
      author={Slawinska, J.},
      author={Giannakis, D.},
      author={Read, H.~L.},
       title={Data-driven {K}oopman operator approach for computational
  neuroscience},
        date={2019},
     journal={Ann. Math. Artif. Intel.},
      volume={88},
       pages={1155\ndash 1173},
}

\bib{Mauro02}{article}{
      author={Mauro, D.},
       title={On {K}oopman–von {N}eumann waves},
        date={2002},
     journal={Int. J. Mod. Phys. A},
      volume={17},
       pages={1301\ndash 1325},
}

\bib{MauroyEtAl20}{book}{
      editor={Mauroy, A.},
      editor={Mezi{\'c}, I.},
      editor={Susuki, Y.},
       title={The {K}oopman operator in systems and control},
      series={Lecture Notes in Control and Information Sciences},
   publisher={Springer},
        date={2020},
      number={484},
}

\bib{McCulloch07}{article}{
      author={McCulloch, I.},
       title={From density-matrix renormalization group to matrix product
  states},
        date={2007},
     journal={J. Stat. Mech.},
      volume={2007},
}

\bib{Mezic05}{article}{
      author={Mezi\'c, I.},
       title={Spectral properties of dynamical systems, model reduction and
  decompositions},
        date={2005},
     journal={Nonlinear Dyn.},
      volume={41},
       pages={309\ndash 325},
}

\bib{Mezic13}{article}{
      author={Mezi\'c, I.},
       title={Analysis of fluid flows via spectral properties of the {K}oopman
  operator},
        date={2013},
     journal={Annu. Rev. Fluid Mech.},
      volume={45},
       pages={357–\ndash 378},
}

\bib{MezicBanaszuk99}{inproceedings}{
      author={Mezi{\'c}, I.},
      author={Banaszuk, A.},
       title={Comparison of systems with complex behavior: Spectral methods},
        date={1999},
   booktitle={Proceedings of the 39th {IEEE} conference on decision and
  control},
   publisher={Ieee},
     address={Sydney, Australia},
       pages={1224\ndash 1231},
}

\bib{MezzacapoEtAl15}{article}{
      author={Mezzacapo, A.},
      author={Sanz, M.},
      author={Lamata, L.},
      author={Egusquiza, I.L.},
      author={Succi, S.},
      author={Solano, E.},
       title={Quantum simulator for transport phenomena in fluid flows},
        date={2015},
     journal={Sci. Rep.},
      volume={5},
       pages={13153},
}

\bib{NakataniChen13}{article}{
      author={Nakatani, N.},
      author={Chan, G. K.-L.},
       title={Efficient tree tensor network states ({TTNS}) for quantum
  chemistry: {G}eneralizations of the density matrix renormalization group
  algorithm},
        date={2013},
     journal={J. Chem. Phys.},
      volume={138},
}

\bib{NovikauJoseph25}{article}{
      author={Novikau, I.},
      author={Joseph, I.},
       title={Quantum algorithm for the advection-diffusion equation and the
  {K}oopman-von {N}eumann approach to nonlinear dynamical systems},
        date={2025},
     journal={Comput. Phys. Commun.},
      volume={309},
}

\bib{Orus19}{article}{
      author={Or{\'u}s, R.},
       title={Tensor networks for complex quantum systems},
        date={2019},
     journal={Nat. Rev. Phys.},
      volume={1},
       pages={538\ndash 550},
}

\bib{Oseledets11}{article}{
      author={Oseledets, I.~V.},
       title={Tensor-train decomposition},
        date={2011},
     journal={SIAM J. Sci. Comput.},
      volume={33},
      number={5},
       pages={2295\ndash 2317},
}

\bib{OttoRowley21}{article}{
      author={Otto, S.~E.},
      author={Rowley, C.~W.},
       title={Koopman operators for estimation and control of dynamical
  systems},
        date={2021},
     journal={Annu. Rev. Control Robot. Auton. Syst.},
      volume={4},
       pages={59\ndash 87},
}

\bib{Oxtoby53}{article}{
      author={Oxtoby, J.~C.},
       title={Stepanoff flows on the torus},
        date={1953},
     journal={Proc. Amer. Math. Soc.},
      volume={4},
       pages={982\ndash 987},
}

\bib{PaulsenRaghupathi16}{book}{
      author={Paulsen, V.~I.},
      author={Raghupathi, M.},
       title={An introduction to the theory of reproducing kernel {H}ilbert
  spaces},
      series={Cambridge Studies in Advanced Mathematics},
   publisher={Cambridge University Press},
     address={Cambridge},
        date={2016},
      volume={152},
}

\bib{Penland89}{article}{
      author={Penland, C.},
       title={Random forcing and forecasting using principal oscillation
  pattern analysis},
        date={1989},
     journal={Mon. Weather Rev.},
      volume={117},
      number={10},
       pages={2165\ndash 2185},
}

\bib{PfefferEtAl22}{article}{
      author={Pfeffer, P.},
      author={Heyder, F.},
      author={Schumacher, J.},
       title={Hybrid quantum-classical reservoir computing of thermal
  convection flow},
        date={2022},
     journal={Phys. Rev. Research},
      volume={4},
}

\bib{PfefferEtAl23}{article}{
      author={Pfeffer, P.},
      author={Heyder, F.},
      author={Schumacher, J.},
       title={{Reduced-order modeling of two-dimensional turbulent
  Rayleigh-B{\'e}nard flow by hybrid quantum-classical reservoir computing}},
        date={2023},
     journal={Phys. Rev. Research},
      volume={5},
}

\bib{Ran20}{article}{
      author={Ran, S.-J.},
       title={Encoding of matrix product states into quantum circuits of one-
  and two-qubit gates},
        date={2020},
     journal={Phys. Rev. A},
      volume={101},
}

\bib{RosenfeldEtAl22}{article}{
      author={Rosenfeld, J.~A.},
      author={Kamalapurkar, R.},
      author={Gruss, L.~F.},
      author={Johnson, T.~T.},
       title={Dynamic mode decomposition for continuous time systems with the
  {L}iouville operator},
        date={2022},
     journal={J. Nonlinear Sci.},
      volume={32},
}

\bib{RosenfeldEtAl19}{inproceedings}{
      author={Rosenfeld, J.~A.},
      author={Kamalapurkar, R.},
      author={Russo, B.},
      author={Johnson, T.~T.},
       title={Occupation kernels and densely defined {L}iouville operators for
  system identification},
        date={2019},
   booktitle={2019 ieee 58th conference on decision and control (cdc)},
       pages={6455\ndash 6460},
}

\bib{RowleyEtAl09}{article}{
      author={Rowley, C.~W.},
      author={Mezi\'c, I.},
      author={Bagheri, S.},
      author={Schlatter, P.},
      author={Henningson, D.~S.},
       title={Spectral analysis of nonlinear flows},
        date={2009},
     journal={J. Fluid Mech.},
      volume={641},
       pages={115\ndash 127},
}

\bib{Rudin17}{book}{
      author={Rudin, W.},
       title={Fourier analysis on groups},
   publisher={Dover Publications},
     address={Mineola},
        date={2017},
}

\bib{Schmid10}{article}{
      author={Schmid, P.~J.},
       title={Dynamic mode decomposition of numerical and experimental data},
        date={2010},
     journal={J. Fluid Mech.},
      volume={656},
       pages={5\ndash 28},
}

\bib{SchmidSesterhenn08}{inproceedings}{
      author={Schmid, P.~J.},
      author={Sesterhenn, J.~L.},
       title={Dynamic mode decomposition of numerical and experimental data},
        date={2008},
   booktitle={Bull. {A}mer. {P}hys. {S}oc., 61st {APS} meeting},
     address={San Antonio},
       pages={208},
}

\bib{SchutteEtAl01}{incollection}{
      author={Sch\"utte, Ch.},
      author={Huisinga, W.},
      author={Deuflhard, P.},
       title={Transfer operator approach to conformational dynamics in
  biomolecular systems},
        date={2001},
   booktitle={Ergodic theory, analysis, and efficient simulation of dynamical
  systems},
      editor={Fiedler, B.},
   publisher={Springer-Verlag},
     address={Berlin},
       pages={191\ndash 223},
}

\bib{SriperumbudurEtAl11}{article}{
      author={Sriperumbudur, B.~K.},
      author={Fukumizu, K.},
      author={Lanckriet, G.~R.},
       title={Universality, characteristic kernels and {RKHS} embedding of
  measures},
        date={2011},
     journal={J. Mach. Learn. Res.},
      volume={12},
       pages={2389\ndash 2410},
}

\bib{SteinwartChristmann08}{book}{
      author={Steinwart, I.},
      author={Christmann, A.},
       title={Support vector machines},
      series={Information Science and Statistics},
   publisher={Springer},
     address={New York},
        date={2008},
}

\bib{StenglEtAl24}{article}{
      author={Stengl, M.},
      author={Gel{\ss}, P.},
      author={Klus, S.},
      author={Pokutta, S.},
       title={Existence and uniqueness of solutions of the {K}oopman--von
  {N}eumann equation on bounded domains},
        date={2024},
     journal={J. Phys. A: Math. Theor.},
      volume={57},
}

\bib{StoudenmireSchwab16}{inproceedings}{
      author={Stoudenmire, E.~M.},
      author={Schwab, D.~J.},
       title={Supervised learning with tensor networks},
        date={2016},
   booktitle={Advances in neural information processing systems 30 ({NeurIPS}
  2016)},
      editor={Lee, D.},
      editor={Sugiyama, M.},
      editor={Luxburg, U.},
      editor={Guyon, I.},
      editor={Garnett, R.},
       pages={1\ndash 9},
}

\bib{SusukiEtAl21}{article}{
      author={Susuki, Y.},
      author={Mauroy, A.},
      author={Mezi{\'c}, I.},
       title={Koopman resolvent: {A} {L}aplace-domain analysis of nonlinear
  autonomous dynamical systems},
        date={2021},
     journal={SIAM J. Appl. Dyn. Syst.},
      volume={20},
      number={4},
       pages={2013\ndash 2036},
}

\bib{SusukiEtAl16}{article}{
      author={Susuki, Y.},
      author={Mezi{\'c}, I.},
      author={Raak, F.},
      author={Hikihara, T.},
       title={Applied {K}oopman operator theory for power systems technology},
        date={2016},
     journal={Nolta},
      volume={7},
      number={4},
       pages={430\ndash 459},
}

\bib{Takesaki01}{book}{
      author={Takesaki, M.},
       title={Theory of operator algebras {I}},
      series={Encyclopaedia of Mathematical Sciences},
   publisher={Springer},
     address={Berlin},
        date={2001},
      volume={124},
}

\bib{TenniePalmer23}{article}{
      author={Tennie, F.},
      author={Palmer, T.~N.},
       title={Quantum computers for weather and climate prediction},
        date={2023},
     journal={Bull. Amer. Math. Soc.},
      volume={104},
      number={2},
       pages={E488\ndash e500},
}

\bib{TerElstLemanczyk17}{article}{
      author={ter Elst, A. F.~M.},
      author={Lema\'nczyk, M.},
       title={On one-parameter {K}oopman groups},
        date={2017},
     journal={Ergodic Theory Dyn. Syst.},
      volume={37},
       pages={1635\ndash 1656},
}

\bib{Ulam64}{book}{
      author={Ulam, S.~M.},
       title={Problems in modern mathematics},
   publisher={Dover Publications},
     address={Mineola},
        date={1964},
}

\bib{VonNeumann32}{article}{
      author={v.~Neumann, J.},
       title={Zur operatorenmethode in der klassischen mechanik},
        date={1932},
     journal={Ann. Math.},
      volume={33},
      number={3},
       pages={587\ndash 642},
}

\bib{VonNeumann32b}{article}{
      author={v.~Neumann, J.},
       title={Zus{\"a}tze zur arbeit ``zur operatorenmethode\ldots''"},
        date={1932},
     journal={Ann. Math.},
      volume={33},
      number={4},
       pages={789\ndash 791},
}

\bib{WilkieBrumer97b}{article}{
      author={Wilkie, J.},
      author={Brumer, P.},
       title={Quantum-classical correspondence via {L}iouville dynamics. {II}.
  {C}orrespondence for chaotic {H}amiltonian systems},
        date={1997},
     journal={Phys. Rev. A},
      volume={55},
      number={1},
       pages={43\ndash 61},
}

\bib{WilkieBrumer97a}{article}{
      author={Wilkie, J.},
      author={Brumer, P.},
       title={Quanum-classical correspondence via {L}iouville dynamics. {I.}
  {I}ntegrable systems and chaotic spectral decomposition},
        date={1997},
     journal={Phys. Rev. A},
      volume={55},
      number={1},
       pages={27\ndash 42},
}

\bib{WilliamsEtAl15}{article}{
      author={Williams, M.~O.},
      author={Kevrekidis, I.~G.},
      author={Rowley, C.~W.},
       title={A data-driven approximation of the {K}oopman operator: Extending
  dynamic mode decomposition},
        date={2015},
     journal={J. Nonlinear Sci.},
      volume={25},
      number={6},
       pages={1307\ndash 1346},
}

\bib{WuEtAl25}{article}{
      author={Wu, H.-C.},
      author={Wang, J.},
      author={Li, X.},
       title={Quantum algorithms for nonlinear dynamics: Revisiting {C}arleman
  linearization with no dissipative conditions},
        date={2025},
     journal={SIAM J. Sci. Comput.},
      volume={47},
      number={2},
       pages={A943\ndash a970},
}

\bib{YeLoureiro24}{article}{
      author={Ye, E.},
      author={Loureiro, N.~F.},
       title={Quantized tensor networks for solving the {V}lasov--{M}axwell
  equations},
        date={2024},
     journal={J. Plasma Phys.},
      volume={30},
      number={3},
}

\end{biblist}
\end{bibdiv}

\end{document}